\documentclass[11pt]{article}
\usepackage{amsmath,amsthm,amssymb}
\usepackage[usenames,dvipsnames]{xcolor}
\usepackage{enumerate}
\usepackage{graphicx}
\usepackage{cite}
\usepackage{comment}
\usepackage{oands}
\usepackage{tikz}
\usepackage{changepage}
\usepackage{bbm}
\usepackage{mathtools}
\usepackage[margin=1in]{geometry}

\usepackage[pagewise,mathlines]{lineno}
\usepackage{appendix}
\usepackage{stmaryrd} 
\usepackage{multicol}
\usepackage{microtype}
\usepackage{subcaption}
\usepackage[colorinlistoftodos,textsize=footnotesize]{todonotes}

\definecolor{dgreen}{RGB}{0,180,0}

\usepackage[pdftitle={Conformal invariance of non-simple CLE on the Riemann sphere},
  pdfauthor={Ewain Gwynne, Jason Miller, and Wei Qian},
colorlinks=true,linkcolor=NavyBlue,urlcolor=RoyalBlue,citecolor=PineGreen,bookmarks=true,bookmarksopen=true,bookmarksopenlevel=2,unicode=true,linktocpage]{hyperref}

\setcounter{tocdepth}{2}

\theoremstyle{plain}
\newtheorem{thm}{Theorem}[section]
\newtheorem{cor}[thm]{Corollary}
\newtheorem{lem}[thm]{Lemma}
\newtheorem{prop}[thm]{Proposition}

\theoremstyle{plain}
\newtheorem{theorem}{Theorem}[section] 
\newtheorem{lemma}[theorem]{Lemma}
 
\newtheorem{corollary}[theorem]{Corollary}

\def\@rst #1 #2other{#1}
\newcommand\MR[1]{\relax\ifhmode\unskip\spacefactor3000 \space\fi
  \MRhref{\expandafter\@rst #1 other}{#1}}
\newcommand{\MRhref}[2]{\href{http://www.ams.org/mathscinet-getitem?mr=#1}{MR#2}}

\theoremstyle{definition}
\newtheorem{defn}[thm]{Definition}
\newtheorem{remark}[thm]{Remark}

\newtheorem{definition}[theorem]{Definition}

\numberwithin{equation}{section}

\newcommand{\dsb}{\begin{adjustwidth}{2.5em}{0pt}
\begin{footnotesize}}
\newcommand{\dse}{\end{footnotesize}
\end{adjustwidth}}

\newcommand{\ssb}{\begin{adjustwidth}{2.5em}{0pt}}
\newcommand{\sse}{\end{adjustwidth}}

\newcommand{\aryb}{\begin{eqnarray*}}
\newcommand{\arye}{\end{eqnarray*}}
\def\alb#1\ale{\begin{align*}#1\end{align*}}
\def\allb#1\alle{\begin{align}#1\end{align}}
\newcommand{\eqb}{\begin{equation}}
\newcommand{\eqe}{\end{equation}}
\newcommand{\eqbn}{\begin{equation*}}
\newcommand{\eqen}{\end{equation*}}

\newcommand{\BB}{\mathbbm}
\newcommand{\ol}{\overline}
\newcommand{\ul}{\underline}
\newcommand{\op}{\operatorname}

\newcommand{\re}{\operatorname{Re}}

\newcommand{\eqD}{\overset{d}{=}}
\newcommand{\ep}{\varepsilon}
\newcommand{\rta}{\rightarrow}

\newcommand{\wt}{\widetilde}
\newcommand{\wh}{\widehat} 
\newcommand{\mcl}{\mathcal}

\newcommand{\bdy}{\partial}

\newcommand{\CLE}{{\mathrm {CLE}}}
\newcommand{\SLE}{{\mathrm {SLE}}}

\newcommand{\eps}{\varepsilon}

\newcommand{\CK}{\mathcal {K}}
\newcommand{\CL}{\mathcal {L}}

\let\originalleft\left
\let\originalright\right
\renewcommand{\left}{\mathopen{}\mathclose\bgroup\originalleft}
\renewcommand{\right}{\aftergroup\egroup\originalright}

\title{Conformal invariance of $\CLE_\kappa$ on the Riemann sphere for $\kappa \in (4,8)$}
\date{  }
\author{
\begin{tabular}{c} Ewain Gwynne \ \ \ Jason Miller \ \ \ Wei Qian\\ {\it University of Cambridge} \end{tabular}
}

\begin{document}

\maketitle

\begin{abstract}
The conformal loop ensemble ($\CLE$) is the canonical conformally invariant probability measure on non-crossing loops in a simply connected domain in $\BB C$ and is indexed by a parameter $\kappa \in (8/3,8)$.  We consider $\CLE_\kappa$ on the whole-plane in the regime in which the loops are self-intersecting ($\kappa \in (4,8)$) and show that it is invariant under the inversion map $z \mapsto 1/z$.  This shows that whole-plane $\CLE_\kappa$ for $\kappa \in (4,8)$ defines a conformally invariant measure on loops on the Riemann sphere. The analogous statement in the regime in which the loops are simple ($\kappa \in (8/3,4]$) was proven by Kemppainen and Werner and together with the present work covers the entire range $\kappa \in (8/3,8)$ for which $\CLE_\kappa$ is defined.  As an intermediate step in the proof, we show that $\CLE_\kappa$ for $\kappa \in (4,8)$ on an annulus, with any specified number of inner-boundary-surrounding loops, is well-defined and conformally invariant. 
\end{abstract}

\tableofcontents

\section{Introduction}
\label{sec-intro}

The Schramm-Loewner evolution (SLE$_\kappa$) is the canonical conformally invariant probability measure on non-crossing curves in a simply connected domain in $\BB C$.  It was originally introduced by Schramm~\cite{schramm0} in 1999 as a candidate to describe the scaling limits of discrete planar lattice models from statistical mechanics.  The parameter $\kappa > 0$ controls the ``windiness'' of the curve.  For $\kappa \in (0,4]$ the curve is simple (i.e., does not have self-intersections), for $\kappa \in (4,8)$ the curve is self-intersecting but not space-filling and for $\kappa \geq 8$ it is space-filling \cite{schramm-sle}.

Since its invention by Schramm, SLE$_\kappa$ has been shown to be the scaling limit of various discrete random curves arising in statistical mechanics, both on deterministic lattices and on random planar maps.  Examples of such models include loop-erased random walk~\cite{lsw-lerw-ust} ($\kappa=2$), Ising model interfaces~\cite{smirnov-ising} ($\kappa=3$), percolation interfaces~\cite{smirnov-cardy,camia-newman-sle6} ($\kappa=6$).   

There are several different flavors of SLE$_\kappa$.  The most common of these are chordal, radial, and whole-plane.  Chordal SLE$_\kappa$ describes a curve connecting two distinct boundary points in a simply connected domain, radial describes a curve connecting a boundary point to an interior point, and whole-plane describes a curve connecting two points in the Riemann sphere.  A key property of SLE$_\kappa$ is \emph{conformal invariance}: if $\eta$ is an SLE$_\kappa$ from $x$ to $y$ in $D$ and $f : D\rta\wt D$ is a conformal map, then the law of $f(\eta)$ is that of an SLE$_\kappa$ from $f(x)$ to $f(y)$ in $\wt D$.

The focus of the present work is on the conformal loop ensemble ($\CLE_\kappa$), introduced by Sheffield \cite{shef-cle}, which is the loop version of SLE$_\kappa$. A CLE$_\kappa$ consists of a random countable collection of non-crossing loops in a simply connected domain $D\subset \BB C$, each of which locally looks like an $\SLE_\kappa$ curve. 
Just like $\SLE_\kappa$ arises as the scaling limit of a single interface in a number of discrete models, $\CLE_\kappa$ arises as the scaling limit of the full collection of interfaces: see, e.g.,~\cite{smirnov-cardy,camia-newman-sle6,smirnov-ising,kemp-smirnov-fk-bdy,benoist-hongler-cle3} for models on deterministic lattices and~\cite{shef-burger,gms-burger-cone,bhs-site-perc} for models on random planar maps. 

CLE$_\kappa$ is defined only for $\kappa \in (8/3,8)$. 
 When $\kappa \in (8/3,4]$, the loops of a $\CLE_\kappa$ are simple, do not intersect each other, and do not intersect the domain boundary.  When $\kappa \in (4,8)$, the loops are self-intersecting (but not self-crossing) and intersect (but do not cross) each other and the domain boundary.  The boundary cases $\kappa = 8/3$ and $\kappa=8$ correspond to an empty loop ensemble and the loop ensemble consisting of a single space-filling SLE$_8$-type loop, respectively.  In this paper we will primarily be interested in the case when $\kappa \in (4,8)$.  

The original definition of CLE is for $D\not=\BB C$.
This version of CLE is conformally invariant: if $f : D\rta \wt D$ is a conformal map and $\Gamma$ is a CLE$_\kappa$ in $D$, then $f(\Gamma)$ is a CLE$_\kappa$ in $\wt D$.  As we will describe in more detail below, unlike SLE$_\kappa$, the conformal invariance property of CLE$_\kappa$ is not built into its definition and requires a non-trivial proof.
 
When one speaks of $\CLE$ in $D$, one can either refer to its nested or non-nested versions.  The latter is obtained from the former by taking the outermost loops and the former is obtained from the latter by sampling an independent non-nested $\CLE_\kappa$ in each of the connected components of the complement of the loops and then iterating this procedure.  
In this article, we will be interested in a variant of CLE$_\kappa$ which is defined in the whole plane.
For this setting, only the nested version makes sense. Roughly speaking (and we will come back to this later), the whole plane CLE is the limit of a nested CLE in $D$ when $D$ tends to the whole plane. 

The construction of $\CLE_\kappa$ is based on a so-called \emph{branching SLE$_\kappa(\kappa-6)$ exploration tree} introduced in \cite{shef-cle}. For $\kappa\in(8/3,4]$, it was shown in \cite{shef-werner-cle} that one can also construct CLE$_\kappa$ using Brownian loop-soups. However for $\kappa\in(4,8)$, the branching SLE$_\kappa(\kappa-6)$ exploration tree remains the only method to construct $\CLE_\kappa$.
We will describe this process and its relationship to CLE$_\kappa$ in detail in Section~\ref{sec-branching}; see also \cite[Section~2]{msw-gasket} for a concise review in the case $\kappa \in (4,8)$. 
For now, we give a brief summary. 
SLE$_\kappa(\kappa-6)$ is a variant of chordal SLE$_\kappa$ which is target invariant in the following sense. 
Two SLE$_\kappa(\kappa-6)$ curves in a simply connected domain $D\subset\BB C$ with the same starting point $x\in\bdy D$ and different target points (either in the interior or the boundary of the domain) can be coupled together to agree until the first time that the two target points lie in different complementary connected components of the curve~\cite{sw-coord}. 
To define branching SLE$_\kappa(\kappa-6)$, one fixes a countable dense set $\{x_n\}_{n\in\BB N}$ in $D$ and constructs, using the target invariance property of SLE$_\kappa(\kappa-6)$, a ``tree" of $\SLE_\kappa(\kappa-6)$ processes starting from $x$ and targeted at the points $\{x_n\}_{n\in\BB N}$ with the following property.
The SLE$_\kappa(\kappa-6)$ ``branches" targeted at $x_i,x_j$ are the same until $x_i,x_j$ lie in different complementary connected components of the curve other and then evolve independently thereafter. 
It is shown in~\cite{shef-cle} that CLE$_\kappa$ can be constructed from branching SLE$_\kappa(\kappa-6)$ in such a way that the branch targeted at any given point corresponds to the exploration that one would obtain if one were to explore the loops of the $\CLE_\kappa$ starting from $x$, then follow the loops of the $\CLE_\kappa$ with the rule that whenever this process divides the domain into two parts, one continues exploring in the subdomain which contains the target point.  

Many of the important properties of $\CLE_\kappa$ are not obvious from its definition, including the fact that the collection of loops does not depend on the choice of root $x \in\bdy D$ and that the loops defined are in fact continuous paths.  In the case that $\kappa \in (8/3,4]$, these facts were established in \cite{shef-werner-cle} by showing that the outermost loops agree in law with the boundaries of so-called Brownian loop-soup clusters.  For $\kappa \in (4,8)$, these properties were established in \cite{shef-cle} conditionally on certain results for $\SLE_\kappa(\kappa-6)$ curves, which were later proved in \cite{ig1,ig3,ig4} using the connection between $\SLE$ and the Gaussian free field (GFF).  (See also \cite{cle-percolations} for a treatment of the case $\kappa \in (8/3,4]$ based on the GFF.)

The focus of the present work is on the whole-plane version of $\CLE_\kappa$.  This can be constructed by taking an increasing sequence of simply connected domains $D_n$ with $\bigcup_{n=1}^\infty D_n = \BB C$, for each $n \in \BB N$ letting $\Gamma_n$ be a $\CLE_\kappa$ on $D_n$, and then taking $\Gamma$ to be the limit of $\Gamma_n$ as $n \to \infty$ (see \cite{mww-nesting} for a detailed proof that the limit exists and does not depend on the sequence $(D_n)$).
Whole-plane CLE$_\kappa$ can equivalently be constructed by means of a whole-plane analog of the above branching $\SLE_\kappa(\kappa-6)$ construction (see Section~\ref{sec-branching}).  It is immediate from the construction that whole-plane $\CLE_\kappa$ is invariant under rescalings, rotations, and translations.  That is, whole-plane $\CLE_\kappa$ is invariant under conformal transformations $\BB C \to \BB C$ which fix $\infty$.  The purpose of the present work is to show that whole-plane $\CLE_\kappa$ for $\kappa \in (4,8)$ is also invariant under the inversion map $z \mapsto 1/z$ and therefore defines a conformally invariant family of loops on the Riemann sphere.

\begin{thm}
 \label{thm:main_result}
 Fix $\kappa \in (4,8)$ and suppose that $\Gamma$ is a whole-plane $\CLE_\kappa$.  Then the law of $\Gamma$ is invariant under inversion.  In particular, the law of $\Gamma$ is invariant under all M\"obius transformations of the Riemann sphere.	
\end{thm}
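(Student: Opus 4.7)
The plan is to reduce invariance under $z \mapsto 1/z$ to a conformal invariance statement for loops contained in annular regions.  By construction, whole-plane $\CLE_\kappa$ is already invariant under all conformal automorphisms of $\BB C$ that fix $\infty$ (translations, rotations, and dilations), so the only new content is the interchange of $0$ and $\infty$.  Since $z \mapsto 1/z$ sends each annulus $A_{r,R} := \{r < |z| < R\}$ to $A_{1/R,1/r}$ and acts as a conformal self-map of the symmetric annulus $A_R := A_{1/R,R}$ (exchanging its two boundary components), it suffices to control the law of the restriction of $\Gamma$ to $A_R$ for every $R > 1$.

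To this end, I would first develop a notion of $\CLE_\kappa$ on an annulus $A$ with a prescribed number $N \geq 0$ of loops that separate its two boundary components (the \emph{inner-surrounding} loops).  The natural candidate definition is obtained by sampling $\CLE_\kappa$ in a simply connected domain $D \supset A$, retaining only the loops contained in $A$, and conditioning on having exactly $N$ inner-surrounding loops.  The first task is to show this conditional law does not depend on $D$; this should follow from the domain Markov property of $\CLE_\kappa$ applied to the innermost loop of $\Gamma^D$ that surrounds $A$: conditional on this loop, the configuration inside is an independent nested $\CLE_\kappa$ in a simply connected region containing $A$, so varying $D$ only changes what happens outside this loop.

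The central technical step, and the main obstacle, is proving that annular $\CLE_\kappa$ with $N$ separating loops is conformally invariant under every conformal map $\phi : A \to A'$ between annuli, including the orientation-reversing maps that swap the two boundary components (which is the type induced by inversion).  For $\kappa \in (4,8)$ the Brownian loop-soup construction is unavailable, so the argument must proceed directly from the branching $\SLE_\kappa(\kappa-6)$ exploration.  I would represent annular $\CLE_\kappa$ as the output of a branching $\SLE_\kappa(\kappa-6)$ process started from a point on the outer boundary of $A$, targeted at points approaching the inner boundary, run until $N$ separating loops have been discovered, and then completed by sampling independent $\CLE_\kappa$'s in the complementary pockets on both sides of the exploration.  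Conformal invariance and target invariance of $\SLE_\kappa(\kappa-6)$ should transport this construction under $\phi$; the orientation-reversing case will require an additional symmetry argument, exploiting that whole-plane $\CLE_\kappa$ viewed as an unoriented loop ensemble is reflection-symmetric.

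With annular conformal invariance established, the theorem follows from a limiting argument.  For each $R > 1$, annular $\CLE_\kappa$ in $A_R$ is invariant under the inversion self-map of $A_R$.  Realize whole-plane $\CLE_\kappa$ as the $R \to \infty$ limit of annular $\CLE_\kappa$'s in $A_R$, where the number of separating loops is sampled from the distribution of the loops of $\Gamma$ that surround $0$ and lie in $A_R$; this identification follows from the domain Markov property combined with the construction of whole-plane $\CLE_\kappa$ as a limit of $\CLE_\kappa$ in large simply connected domains recalled in the introduction.  Passing inversion invariance through the limit yields $\Gamma \eqD 1/\Gamma$, and combining with the already-known invariance under $\aut(\BB C)$ gives invariance under all M\"obius transformations of the Riemann sphere.
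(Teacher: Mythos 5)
Your overall architecture (reduce inversion invariance to a conformal invariance statement for $\CLE_\kappa$ on annuli, then take a limit) matches the paper's, but there are two genuine gaps, one of which is exactly where the whole difficulty of the theorem lives. First, your candidate definition of annular $\CLE_\kappa$ --- sample $\CLE_\kappa$ in $D\supset A$ for a \emph{deterministic} annulus $A$, keep the loops contained in $A$, and condition on the number of separating loops --- is not a workable object for $\kappa\in(4,8)$: the loops are non-simple and intersect the circles $\partial A$, so discarding the loops not contained in $A$ destroys the ensemble near $\partial A$, and conditioning on the number of separating loops does not restore any boundary symmetry. Your argument that the law is independent of $D$ is also circular: conditioning on the innermost loop of $\Gamma^D$ surrounding $A$ reduces the question to the same question for $\CLE_\kappa$ in the random simply connected domain it bounds, whose law still depends on $D$. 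The paper instead takes $A$ to be the \emph{random} annular component between two actual loops of the ensemble (the region between $\partial\BB D$ and the $(M+1)$st origin-surrounding loop), where the restriction of $\Gamma$ is the full set of loops in that region and the conditional law given the bounding loop can be shown to depend only on the modulus.

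Second, and more seriously, the ``central technical step'' in your outline --- that the branching $\SLE_\kappa(\kappa-6)$ construction of annular $\CLE_\kappa$ transports under boundary-swapping conformal maps, with ``an additional symmetry argument'' for the orientation-reversing case --- is precisely the step that fails if attempted directly. The exploration tree is rooted on one boundary component and is structurally asymmetric between inner and outer boundaries; as the introduction of the paper points out, the $\CLE_\kappa$ built from a whole-plane $\SLE_\kappa(\kappa-6)$ is \emph{not} the same as the one built from its time-reversal, so reversibility of the exploration does not give the result. The reflection symmetry $z\mapsto\bar z$ you invoke fixes both boundary circles of a centered annulus and cannot exchange them. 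What is missing is a characterization of annular $\CLE_\kappa$ by a property that is \emph{manifestly} invariant under $z\mapsto\rho/z$ --- the annulus Markov property of Definition~\ref{def-annulus-markov} --- together with a proof that this property determines the law uniquely (Theorem~\ref{thm-cle-annulus}), which the paper obtains via a resampling Markov chain, positive-probability couplings of $\SLE_\kappa$ arcs, and an ergodic decomposition argument. Your proposal contains no substitute for this uniqueness step, and without it the inversion invariance of the annular law (Corollary~\ref{cor-annulus-inversion}), and hence of $\Gamma$, does not follow.
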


The analog of Theorem~\ref{thm:main_result} in the case $\kappa \in (8/3,4]$ was proved by Kemppainen and Werner \cite{werner-sphere-cle} using the Brownian loop-soup representation of $\CLE_\kappa$ \cite{shef-werner-cle}.  The argument that we give to prove Theorem~\ref{thm:main_result} will be based on the exploration tree construction from \cite{shef-cle}.  We expect that the arguments here could be generalized using the tools of \cite{cle-percolations} to establish the inversion symmetry for $\kappa \in (8/3,4]$ as well, but for simplicity we will focus on the case $\kappa \in (4,8)$.

Theorem~\ref{thm:main_result} is similar in spirit to reversibility results for SLE$_\kappa$, which say that time-reversing the curve does not change its law~\cite{zhan-reversibility,ig2,ig3,ig4}. The CLE$_\kappa$ analog of this is that swapping ``inside" and ``outside" for the origin-surrounding loops does not change the law of the CLE.

Theorem~\ref{thm:main_result} is natural from the perspective of loop models considered on random planar maps with the sphere topology.  It has been conjectured that the loops associated with many such models, after conformally embedding into the Riemann sphere, converge in the scaling limit to $\CLE_\kappa$.  Inverting the embedded loop ensemble (and hence the limiting $\CLE_\kappa$) corresponds to choosing a different collection of marked points to define the conformal embedding of the random planar map.  It would in principle be possible to deduce Theorem~\ref{thm:main_result} from the convergence of such a loop model to $\CLE_\kappa$ in a sufficiently strong topology, however the proof we will present here is directly based solely on continuum theory.

In the same vein, Theorem~\ref{thm:main_result} has applications to the continuum theory of Liouville quantum gravity (LQG)~\cite{shef-kpz,shef-zipper,wedges}.  For example, it follows from the results of \cite{sphere-constructions,wedges} that the following is true.  If one considers an independent $\CLE_\kappa$ for $\kappa \in (4,8)$ on top of a $(\gamma=4/\sqrt{\kappa})$-LQG sphere marked by the points $x,y$ and explores the loops which separate $x$ and $y$ from $x$ towards $y$, then the quantum surface parameterized by the component which contains $y$ is that of a quantum disk (weighted by its quantum area).  Since the definition of $\CLE_\kappa$ on the sphere \emph{a priori} depends on the choice of a marked point (in this case $x$), it is not obvious that if one explores these same loops in the reverse direction (i.e., from $y$ to $x$), then the quantum surface parameterized by the component which contains $x$ is also a quantum disk (weighted by its quantum area).  Theorem~\ref{thm:main_result}, however, supplies the missing symmetry to deduce this statement.

At a first glance, one might guess that Theorem~\ref{thm:main_result} follows from the branching $\SLE_\kappa(\kappa-6)$ construction and the reversibility of whole-plane $\SLE_\kappa(\kappa-6)$ for $\kappa \in (4,8)$ established in \cite{ig4}. However, this is not the case since the whole-plane $\CLE_\kappa$ which is associated with a whole-plane $\SLE_\kappa(\kappa-6)$ is \emph{not the same} as the whole-plane $\CLE_\kappa$ associated with the time-reversal of the whole-plane $\SLE_\kappa(\kappa-6)$.  We will explain this point in more detail in see Section~\ref{sec-branching}.

\subsection*{Overview of proof strategy: inverting CLE in an annulus}

The basic idea of the proof of Theorem~\ref{thm:main_result} is as follows.  Suppose that $\Gamma$ is a whole-plane $\CLE_\kappa$ and that $\{\gamma_n\}_{n\in\BB Z}$ is the sequence of loops of $\Gamma$ which surround $0$, numbered from outside in, where a loop is said to surround $0$ if its winding number around $0$ is non-zero.  We can choose the normalization of the indices so that $\gamma_0$ is the largest loop which intersects $\ol{\BB D}$.  For each $n \in \BB Z$, let $D_n$ be the connected component of $\BB C \setminus \mcl \gamma_n$ which contains $0$.  We will then fix $M \in \BB N$ and let $A_{M}$ be the annular connected component of $D_0 \setminus \gamma_{M+1}$. The main step of the proof is to show that the conditional law of the restriction of $\Gamma$ to $A_M$, given $A_M$, is invariant under the inversion map of $A_M$. Note that we already know that the restriction of $\Gamma$ to $D_0$ has the law of a $\CLE_\kappa$ on $D_0$ (see Lemma~\ref{lem-cle-markov} below) so by conformal invariance this can be thought of as a problem about $\CLE_\kappa$ on the disk. 
 
To accomplish this, we will show that $\CLE_\kappa$ on an annulus with any fixed number  of inner-boundary-surrounding loops is well-defined and conformally invariant (including inversion invariant) for\footnote{$\CLE_\kappa$ on an annulus for $\kappa \in (8/3,4]$ with no inner-boundary-surrounding loops is constructed in~\cite{sww-cle-doubly} using the Brownian loop soup. We learned from Wendelin Werner [private communication] that one can deduce from the results of~\cite{werner-sphere-cle} that also $\CLE_\kappa$ on an annulus with any fixed number of inner-boundary-surrounding loops is well-defined and conformally invariant for $\kappa \in (8/3,4]$.}
 $\kappa \in (4,8)$, and that the law of the restriction of $\Gamma$ to $A_M$ is that of a $\CLE_\kappa$ on $A_M$. 

For $\rho \in (0,1)$, we define the open annulus
\eqb \label{eqn-annulus-def}
\BB A_\rho := \BB D\setminus \ol{B_\rho(0)}, \quad \forall \rho \in (0,1). 
\eqe
The following theorem gives a way to define $\CLE_\kappa$ on $\BB A_\rho$ with a specified number of loops which surround the inner boundary.

\begin{thm}[$\CLE_\kappa$ on an annulus] \label{thm-cle-annulus0}
Let $\kappa \in (4,8)$ and $M\in\BB N_0$. 
Let $\Gamma$ be a $\CLE_\kappa$ on $\BB D$ and let $\gamma_{M+1}$ be the $(M+1)$st outermost loop in $\Gamma$ surrounding 0. 
On the event $\{\gamma_{M+1} \cap \bdy\BB D = \emptyset\}$ (which has probability 1 if $M \geq 1$), let $\mcl A_M$ be the non-simply connected component of $\BB D\setminus \gamma_{M+1}$ and let $f_M : \mcl A_M \rta \BB A_\rho$ for some $\rho > 0$ be the conformal map which fixes 1 (note that $\rho$ is random and determined by $\gamma_{M+1}$). 
Let $\Gamma_{\BB A_\rho}$ be the image under $f_M$ of the restriction of $\Gamma$ to $\mcl A_M$. 
Almost surely, the conditional law of $\Gamma_{\BB A_\rho}$ given $\gamma_{M+1}$ depends only on $\rho$, and this conditional law is invariant under rotations of $\BB A_\rho$ and under the inversion map $z\mapsto \rho/z$.
\end{thm}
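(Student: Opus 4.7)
My strategy combines the conformal Markov property of $\CLE_\kappa$, the branching $\SLE_\kappa(\kappa-6)$ exploration, and the reversibility of $\SLE_\kappa(\kappa-6)$ from \cite{ig4}. Write $\nu_g$ for the conditional law of $\Gamma_{\BB A_\rho}$ given $\gamma_{M+1}=g$. The first step is to use the branching $\SLE_\kappa(\kappa-6)$ construction together with the target invariance of $\SLE_\kappa(\kappa-6)$ to establish a Markov property at $\gamma_{M+1}$: conditionally on $\gamma_{M+1}$, the restriction of $\Gamma$ to $A_M$ and the restriction to the disk component $D_{M+1}$ of $\BB D\setminus\gamma_{M+1}$ containing $0$ are independent, with the latter having the law of $\CLE_\kappa$ on $D_{M+1}$. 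In particular, $\nu_g$ depends on $g$ only through the annular region $A_g$. Rotation invariance of $\CLE_\kappa$ on $\BB D$ (a special case of conformal invariance) then yields a rotation equivariance: if $R_\theta$ is a rotation of $\BB D$, then comparing the conformal maps $f_M$ and $f_M^{R_\theta}$, both normalized to send $1\mapsto 1$, gives $\nu_{R_\theta(g)}=R_{\alpha(g,\theta)}(\nu_g)$ for some rotation $R_{\alpha(g,\theta)}$ of $\BB A_\rho$.

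To show $\nu_g$ depends on $g$ only through the modulus $\rho$, I would couple two CLEs with $\gamma_{M+1}$ equal to $g_1$ and $g_2$ of the same modulus. The idea is that, given $\gamma_{M+1}$, the exploration restricted to $A_M$ can be described as a concatenation of $\SLE_\kappa(\kappa-6)$-type processes run in the simply connected pockets carved out as the exploration proceeds; conformal invariance of $\SLE_\kappa(\kappa-6)$ then allows one to transport the construction through a conformal map $A_{g_1}\to A_{g_2}$, identifying $\nu_{g_1}$ and $\nu_{g_2}$ up to a rotation of $\BB A_\rho$. Combined with the rotation equivariance above, this yields full rotation invariance of $\nu_\rho$.

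The principal obstacle is inversion invariance. My plan is to compare an ``outside-in'' exploration of $A_M$ (from $\partial\BB D$ inward toward $\gamma_{M+1}$, as in the standard construction) with an ``inside-out'' exploration (from a point on $\gamma_{M+1}$ outward). The latter can be constructed using the reversibility of whole-plane $\SLE_\kappa(\kappa-6)$ for $\kappa\in(4,8)$ from \cite{ig4}, which says that the time reversal of a whole-plane $\SLE_\kappa(\kappa-6)$ from $0$ to $\infty$ has the same law; applied to the exploration targeted at $0$, this should give an alternative construction of the CLE configuration in $A_M$ that begins from $\gamma_{M+1}$ rather than from $\partial\BB D$. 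Under the conformal map $f_M$, the inversion $z\mapsto\rho/z$ exchanges the two boundaries of $\BB A_\rho$, so agreement in law between the two explorations, combined with rotation invariance, yields invariance under $z\mapsto\rho/z$. The main technical difficulty will be lifting the reversibility of a single $\SLE_\kappa(\kappa-6)$ curve to the level of the entire branching exploration in the multiply connected domain $A_M$, and tracking how the excursions and pocket creations that produce the non-surrounding loops transform under time reversal.
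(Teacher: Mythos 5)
Your plan for the central step --- inversion invariance --- rests on the reversibility of (whole-plane or radial) $\SLE_\kappa(\kappa-6)$ from \cite{ig4}, and this is precisely the approach that does not work; the paper flags this explicitly in the introduction. The obstruction is not merely the technical bookkeeping you mention at the end. The $\CLE_\kappa$ is a particular functional of the branching exploration: each loop is the concatenation of an arc traced by the branch $\eta_z|_{[\sigma_{z,j},\tau_{z,j}]}$ together with a separate arc $\wh\eta_{z,j}$ that the branch does \emph{not} trace, and the rule for which arc is which is tied to the orientation of the exploration (it is governed by the up/down-crossings of $\theta^z$ through $2\pi\BB Z$). The time-reversal of the exploration is again an $\SLE_\kappa(\kappa-6)$ by \cite{ig4}, but the loop ensemble one reads off from the reversed curve is a \emph{different} functional and does not equal the original $\Gamma$; equivalently, there is no a priori ``inside-out'' exploration of the \emph{same} $\CLE_\kappa$ starting from $\gamma_{M+1}$. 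Producing such an exploration and showing it has the right law is essentially equivalent to the inversion invariance you are trying to prove, so the argument is circular as stated. Your intermediate step that $\nu_g$ depends on $g$ only through the modulus has a similar (milder) gap: conditioning on $\gamma_{M+1}$ is a conditioning on the future of the exploration, so the restriction of the exploration to $A_M$ is not manifestly a conformally covariant object of the annulus alone, and the proposed transport through a conformal map $A_{g_1}\to A_{g_2}$ needs justification.

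The paper takes an entirely different route to get around this. It isolates an \emph{annulus Markov property} (Definition~\ref{def-annulus-markov}): a resampling condition on the loops crossing a radial segment $P$ of the annulus, in which one complementary $P$-excursion is a chordal $\SLE_\kappa$ given everything else, and the remaining loops form independent $\CLE_\kappa$'s in the complementary pockets. It then proves (i) the configuration $\Gamma_{\BB A_\rho}$ of the theorem satisfies this property (Theorem~\ref{thm-cle-markov}, Section~\ref{sec-markov}, via a lattice approximation of $\gamma_{M+1}$ to handle the conditioning on the future), and (ii) there is at most one law with $M$ inner-boundary-surrounding loops satisfying it (Theorem~\ref{thm-cle-annulus}, Section~\ref{sec-resampling}, via a coupling of resampling Markov chains and an ergodic decomposition). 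Since the Markov property is manifestly preserved by rotations and by $z\mapsto\rho/z$, uniqueness immediately yields the invariances and the fact that the law depends only on $\rho$. To repair your proof you would need either to supply a genuinely new argument that the reversed exploration reconstructs the same loop ensemble (which is not known and is essentially the content of the theorem), or to adopt a characterization-plus-uniqueness strategy of the kind the paper uses.
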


In the setting of Theorem~\ref{thm-cle-annulus0}, it is easily seen that the support of the law of $\rho$ is all of $(0,1)$, and the conditional law of $\Gamma_{\BB A_\rho}$ depends continuously on $\rho$, which allows us to define this conditional law for each fixed $\rho\in (0,1)$.  We call a loop ensemble sampled according to this law \emph{$\CLE_\kappa$ on $\BB A_\rho$ with $M$ inner-boundary-surrounding loops}.
It is an interesting open problem to determine the law of the conformal modulus $\rho$ in the setting of Theorem~\ref{thm-cle-annulus0}.
  
We also remark that, for $\kappa\in(4,8)$, since the loops in $\CLE_\kappa$ are non-simple, a $\CLE_\kappa$ on $\BB A_\rho$ with $M$ inner-boundary-surrounding loops is allowed to have more than $M$ loops which disconnect the inner and outer boundaries. 

In Section~\ref{sec-cle-annulus}, we will give an alternative definition of $\CLE_\kappa$ on $\BB A_\rho$ with $M$ inner-boundary-surrounding loops in terms of the so-called \emph{annulus Markov property}, which is analogous to the domain Markov property of $\SLE_\kappa$ (see Definition~\ref{def-annulus-markov}).

The main steps in the proof of Theorem~\ref{thm:main_result} consist of proving that (a) the loop ensemble $\Gamma_{\BB A_\rho}$ described in Theorem~\ref{thm-cle-annulus0} satisfies the annulus Markov property and (b) there is at most one law on loop ensembles on $\BB A_\rho$ which satisfies this Markov property.
Since both $\Gamma_{\BB A_\rho}$ and its image under inversion satisfy the annulus Markov property, their laws must be the same. This implies that the law of the whole-plane $\CLE_\kappa$ restricted to $A_M$ is invariant under the inversion map.  We will then deduce the inversion invariance of the whole-plane $\CLE_\kappa$ by looking at its restriction to annular regions that tend to the whole-plane.
 
The proof that $\Gamma_{\BB A_\rho}$ satisfies the annulus Markov property is given in Section~\ref{sec-markov}, building on the basic Markov property for $\CLE_\kappa$ established in~\cite{shef-cle}. 
The proof of the uniqueness statement is given in Section~\ref{sec-resampling} using re-sampling arguments similar to those used to prove various reversibility and uniqueness statements for SLE in~\cite{ig2,ig4,msw-sle-range}. 
Unlike the arguments of~\cite{ig2,ig4,msw-sle-range}, however, we will \emph{not} directly use the Gaussian free field (although various results from~\cite{ig3,ig4} are implicitly used in our arguments since they are needed to show that $\CLE_\kappa$ is well-defined). 

Appendix~\ref{sec-sle-cle-lemmas} contains the proofs of several basic facts about SLE and CLE which are used elsewhere in the paper and are collected here to avoid interrupting the main argument.

\medskip

\noindent\textbf{Acknowledgements.} 
We thank an anonymous referee for helpful comments on an earlier version of this paper.
We thank Wendelin Werner for helpful discussions.
EG was supported by a Herchel Smith fellowship and a Trinity College junior research fellowship.
WQ acknowledges the support of an Early Postdoc Mobility grant of the SNF, EPSRC grant EP/L018896/1 and a JRF of Churchill college.

\section{Preliminaries}
\label{sec-prelim}

We first introduce some basic notation and terminology in Sections~\ref{sec-basic} and~\ref{sec-loop-prelim}.
In Section~\ref{sec-branching}, we review the construction of whole-plane $\CLE_\kappa$ via branching $\SLE_\kappa(\kappa-6)$.
In Section~\ref{sec-cle-annulus}, we state the Markov property which characterizes $\CLE_\kappa$ on an annulus and state a more precise version of Theorem~\ref{thm-cle-annulus0}.
In Appendix~\ref{sec-sle-cle-lemmas}, we record some elementary lemmas for SLE and CLE. 

\subsection{Basic notation}
\label{sec-basic}

\noindent
We write $\BB N$ for the set of positive integers and $\BB N_0 = \BB N\cup \{0\}$. 
\vspace{6pt}

\noindent
For $a,b \in \BB R$ with $a<b$, we define the discrete interval $[a,b]_{ \BB Z} := [a, b]\cap \BB Z $.
\vspace{6pt}
    
\noindent
For a collection $\mcl A$ of subsets of $\BB C$ (which will typically be loops) we write $\bigcup \mcl A$ for the union of the elements of $\mcl A$. 
\vspace{6pt}

\subsection{Basic definitions for loops and loop configurations}
\label{sec-loop-prelim}

In this subsection, we will define loops and loop configurations as well as some basic properties thereof.
We will also define complete separable metrics on the space of loops and on the space of locally finite loop configurations.
Most of the definitions in this subsection are standard, so the reader may want to skim it. 

\subsubsection{Loops}
\label{sec-loop-def}

A \emph{parameterized loop} is a continuous function $\wh\gamma : \bdy\BB D  \rta \BB C$. 
A \emph{loop} is an equivalence class $\gamma$ of parameterized loops, with two parameterized loops declared to be equivalent if they differ by pre-composition with an orientation-preserving homeomorphism $\bdy\BB D\rta\bdy\BB D$.
A \emph{parameterization} of $\gamma$ is a choice of equivalence class representative.
We define a metric on the set of loops $\gamma$ in $\BB C$ by
\eqb \label{eqn-loop-metric}
\BB d^{\op{Loop}}(\gamma_1,\gamma_2) := \inf_{\wh\gamma_1 ,\wh\gamma_2} \sup_{u\in \bdy\BB D} |\wh\gamma_1(u) - \wh\gamma_2(u)|,
\eqe
where the infimum is over all choices of parameterizations $\wh\gamma_1$ for $\gamma_1$ and $\wh\gamma_2$ for $\gamma_2$. This defines a complete metric on the space of loops in $\BB C$ (see, e.g.,~\cite[Lemma 2.1]{ab-random-curves}, which treats the case of curves). It is also easily seen that the space of parameterized loops is separable with respect to $\BB d^{\op{Loop}}$. 
 
\begin{defn} 
An \emph{arc} of a loop $\gamma$ is a curve (viewed modulo increasing re-parameterization of time) which admits a parameterization of the form $\alpha(t) = \wh\gamma(e^{i t})$ for $t\in [a,b]$, where $[a,b] \subset [0,2\pi]$ is a non-trivial interval and $\wh\gamma$ is a parameterization of $\gamma$.
We say that an arc $\alpha$ is \emph{proper} if it is not all of $\gamma$.
\end{defn}

\subsubsection{Loop configurations}
\label{sec-loop-config}

A \emph{loop configuration} on a domain $D\subset \BB C$ is a countable multiset $\Gamma$ of loops which are each contained in $\ol D$ (we say ``multiset" instead of ``set" since we need to allow multiple copies of the same loop to make our metric on loop configurations complete). 
For $A\subset \ol D$, we write
\eqb \label{eqn-loop-restrict}
\Gamma|_A := \left\{\gamma \in \Gamma : \gamma \subset \ol A \right\} \quad \op{and}\quad 
\Gamma(A) := \left\{\gamma \in \Gamma : \gamma \cap \ol A \not=\emptyset \right\} .
\eqe

\begin{defn} \label{def-locally-finite}
A loop configuration $\Gamma$ is called \emph{locally finite} if for each $\ep >0$ and each compact set $A\subset\ol{D}$, the number of loops in $\Gamma$ of Euclidean diameter greater than $\ep$ which intersect $A$ is finite. 
\end{defn}

We will now define a metric on the space of locally finite loop configurations on $D$ whereby, roughly speaking, two loop configurations are close if their large loops can be ``matched up" in such a way that the corresponding loops are close with respect to $\BB d^{\op{Loop}}$.  
We need to be somewhat careful about the definition since we want to ensure that our metric is complete (see Lemma~\ref{lem-lc-metric} below).
This prevents us from using, e.g., the $\BB d^{\op{Loop}}$-Hausdorff distance on discrete subsets of the space of loops as in~\cite{shef-cle} since a sequence of discrete sets of loops can converge to a non-discrete set of loops with respect to this metric. 

We first define our metric on finite loop configurations.  
If $\Gamma^1,\Gamma^2$ are two such loop configurations, we define $\BB d^{\op{LC}}(\Gamma^1,\Gamma^2)$ to be 1 if $\#\Gamma^1 \not=\#\Gamma^2$ and otherwise we define
\eqb \label{eqn-lc-metric0}
\BB d^{\op{LC}}(\Gamma^1,\Gamma^2) := 1\wedge\left\{ \min_{\psi : \Gamma^1\rta\Gamma^2} \sum_{\gamma \in \Gamma^1} \BB d^{\op{Loop}}(\gamma , \psi(\gamma) ) \right\}  
\eqe
where the minimum is over all bijections $\psi : \Gamma^1 \rta \Gamma^2$.

We next consider the case when our domain $\ol D$ is compact. 
For $\ep  > 0$ and a locally finite loop configuration $\Gamma$ on $D$, we write $\Gamma_\ep$ for the (multi)set of loops in $\Gamma$ which have diameter greater than $\ep $ (which is finite).  
We then define
\eqb \label{eqn-lc-metric}
\BB d_D^{\op{LC}}(\Gamma^1,\Gamma^2) := \int_0^1    \BB d^{\op{LC}}(\Gamma_\ep^1,\Gamma_\ep^2)  \,d\ep  ,
\eqe 
so that for loop configurations $\{\Gamma^n\}_{n\in\BB N}$ and $\Gamma$, we have $\BB d_D^{\op{LC}}(\Gamma^n,\Gamma ) \rta 0$ if and only if the $\BB d^{\op{LC}}$-distance between $\Gamma_\ep^n$ and $\Gamma_\ep$ tends to zero for Lebesgue-a.e.\ $\ep  > 0$. 

Finally, if $\ol D$ is not necessarily compact, we define the \emph{localized loop configuration metric} (using the notation~\eqref{eqn-loop-restrict}) by
\eqb \label{eqn-lc-metric-loc}
\BB d_D^{\op{LC},\op{loc}}(\Gamma^1,\Gamma^2) := \int_1^\infty e^{-R}   \BB d_{B_R(0)}^{\op{LC}}\left(\Gamma^1(B_R(0)) ,\Gamma^2(B_R(0)) \right) \,dR .
\eqe
Henceforth, whenever we talk about a random loop configuration on $D$ we will use the Borel $\sigma$-algebra with respect to $\BB d_D^{\op{LC}}$ if $\ol D$ is compact or the Borel $\sigma$-algebra with respect to $\BB d_D^{\op{LC},\op{loc}}$ if $\ol D$ is not compact.

\begin{lem} \label{lem-lc-metric}
For any domain $D\subset\BB C$, the metric $\BB d_D^{\op{LC}}$ defined just above (or the metric $\BB d_D^{\op{LC},\op{loc}}$ in the case when $\ol D$ is not compact) is complete and separable on the space of locally finite loop configurations.
\end{lem}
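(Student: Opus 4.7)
The plan is to reduce the non-compact case to the compact case and then prove separability and completeness for compact $\ol D$ by analyzing the finite cutoffs $\Gamma_\ep$. Since $\BB d_D^{\op{LC},\op{loc}}$ in~\eqref{eqn-lc-metric-loc} is a weighted integral of the compact-restriction metrics $\BB d_{B_R(0)}^{\op{LC}}$, the reduction is a routine Fubini-and-diagonal argument: from a Cauchy sequence I extract a subsequence whose restrictions to $B_R(0)$ are Cauchy in $\BB d_{B_R(0)}^{\op{LC}}$ for almost every $R$, apply the compact-case completeness for each such $R$, and check consistency as $R$ grows.

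For separability on compact $\ol D$, I take a countable $\BB d^{\op{Loop}}$-dense set $\CL_0$ of loops in $\ol D$ (which exists by separability of $\BB d^{\op{Loop}}$) and consider the countable family of finite multisets of elements of $\CL_0$. Given a locally finite $\Gamma$ and $\delta > 0$, I pick rational $\ep < \delta/2$, approximate the finite multiset $\Gamma_\ep$ by a multiset from $\CL_0$ of the same cardinality well enough that~\eqref{eqn-lc-metric0} is below $\delta/2$, and bound the tail $\int_0^\ep$ of the integral in~\eqref{eqn-lc-metric} by $\ep$ using the cap $\BB d^{\op{LC}} \le 1$.

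For completeness I take a Cauchy sequence $\{\Gamma^n\}$ and extract a subsequence $\{\Gamma^{n_k}\}$ with $\BB d_D^{\op{LC}}(\Gamma^{n_k}, \Gamma^{n_{k+1}}) < 2^{-k}$. By Fubini there is a full-measure set $E \subset (0,1)$ on which $\sum_k \BB d^{\op{LC}}(\Gamma^{n_k}_\ep, \Gamma^{n_{k+1}}_\ep) < \infty$. For $\ep \in E$ the summand equals $1$ whenever cardinalities differ, so $\#\Gamma^{n_k}_\ep$ is eventually a constant $N(\ep)$; composing near-optimal bijections realizing the minimum in~\eqref{eqn-lc-metric0} between consecutive $\Gamma^{n_k}_\ep$ produces labelings $\gamma^{n_k}_{\ep,1},\dots,\gamma^{n_k}_{\ep,N(\ep)}$ whose coordinate sequences are $\BB d^{\op{Loop}}$-Cauchy, hence converge to loops $\gamma^\ep_1,\dots,\gamma^\ep_{N(\ep)}$ forming a finite configuration $\Gamma_\ep$. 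The key compatibility statement is that $\Gamma_{\ep_2} \subset \Gamma_{\ep_1}$ whenever $\ep_1 < \ep_2$ lie in $E$, and this needs no compatible choice of labelings: each $\gamma^{n_k}_{\ep_2,j}$ is literally a specific loop in $\Gamma^{n_k}$ of diameter $> \ep_2 > \ep_1$, so it belongs to $\Gamma^{n_k}_{\ep_1}$ and carries some $\ep_1$-label $\sigma_k(j) \in [1,N(\ep_1)]_{\BB Z}$, which stabilizes at some $\sigma^*$ along a subsequence by finiteness of the index set, forcing $\gamma^{\ep_2}_j = \gamma^{\ep_1}_{\sigma^*}$. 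Picking $\{\ep_j\}_{j\in\BB N} \subset E$ with $\ep_j \downarrow 0$ and setting $\Gamma := \bigcup_{j\in\BB N} \Gamma_{\ep_j}$, the same pigeonhole argument shows $\{\gamma \in \Gamma : \diam \gamma > \ep\} \subset \Gamma_\ep$ for every $\ep \in E$, giving local finiteness; dominated convergence applied to the $\ep$-integrand in~\eqref{eqn-lc-metric} gives $\BB d_D^{\op{LC}}(\Gamma^{n_k}, \Gamma) \to 0$, which extends to the full Cauchy sequence by the usual metric space argument.

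The main obstacle is the cross-$\ep$ consistency: a priori the near-optimal bijections depend on the threshold $\ep$, so one could fear that limits computed at different thresholds are inconsistent or fail to nest. The pigeonhole trick above sidesteps having to build a single globally compatible system of matchings via further diagonal extraction, but relies essentially on treating each label $\gamma^{n_k}_{\ep,i}$ as naming an \emph{actual} loop in the multiset $\Gamma^{n_k}$, so that asking for the same loop's $\ep_1$-label is meaningful. All other steps reduce to completeness and separability of the loop metric $\BB d^{\op{Loop}}$ combined with finiteness of $\Gamma_\ep$ for each $\ep > 0$.
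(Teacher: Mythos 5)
Your proof is correct and follows essentially the same route as the paper's: reduce to compact $\ol D$, prove completeness for the finite cutoffs $\Gamma_\ep$ by composing near-optimal bijections so that individual loop-coordinates become $\BB d^{\op{Loop}}$-Cauchy, check cross-threshold consistency, and reassemble. In fact you are somewhat more careful than the paper on the two delicate points it glosses over — the passage from $L^1$-Cauchyness of $\ep\mapsto \BB d^{\op{LC}}(\Gamma^n_\ep,\Gamma^m_\ep)$ to a.e.-pointwise Cauchyness via a subsequence and Fubini, and the justification (your pigeonhole/label-stabilization argument) of the nesting $\Gamma_{\ep_2}\subset\Gamma_{\ep_1}$ for $\ep_1<\ep_2$.
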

\begin{proof}
Trivially, the space of all loops is separable with respect to the metric~\eqref{eqn-loop-metric} and the space of finite loop configurations on $D$ is dense in the space of all loop configurations on $D$ with respect to the metric~\eqref{eqn-lc-metric} or~\eqref{eqn-lc-metric-loc}. This gives separability.

To check completeness, consider a Cauchy sequence of loop configurations $\{\Gamma^n\}_{n\in\BB N}$. 
First assume that all of the loop configurations $\Gamma^n$ are finite, with the same cardinality $N$.
We will prove convergence with respect to the metric~\eqref{eqn-lc-metric0} by induction on $N$. 
The case $N=1$ just follows from the completeness of the metric~\eqref{eqn-loop-metric}.

Now suppose that $N\geq 2$ and we have proven the convergence of all Cauchy sequences of loop configurations which all have $N'\leq N-1$ loops. 
To prove the convergence of $\{\Gamma^n\}_{n\in\BB N}$, it suffices to show convergence along a subsequence. 
By the definition of $\BB d^{\op{LC}}$~\eqref{eqn-lc-metric}, after possibly passing to a subsequence we can arrange that for each $n\in\BB N$, there is a bijection $\psi_n : \Gamma^n\rta \Gamma^{n+1}$ such that 
\eqb \label{eqn-cauchy-loop-sum}
\sum_{\gamma \in \Gamma^n} \BB d^{\op{Loop}}(\gamma , \psi_n(\gamma) ) \leq 2^{-n} . 
\eqe
Now fix a loop $\gamma^1 \in \Gamma^1$ and for $n\geq 2$, let $\gamma^n := (\psi_{n-1}\circ\psi_{n-2}\circ\dots\circ\psi_1)(\gamma^1)  \in \Gamma^n$.
By~\eqref{eqn-cauchy-loop-sum}, the sequence of loops $\{\gamma^n\}_{n\in\BB N}$ is Cauchy with respect to the metric~\eqref{eqn-loop-metric}, so converges to a limiting loop $\gamma$. On the other hand, the loop configurations $\Gamma^n\setminus \{\gamma^n\}$ each have $N-1$ loops and are Cauchy with respect to~\eqref{eqn-lc-metric0}. 
Combining these statements with the inductive hypothesis concludes the proof in the case of $N$ loops. 

We now assume that our loop configurations $\Gamma^n$ are all on a domain $D\subset \BB C$ with $\ol D$ compact and prove convergence with respect to the metric~\eqref{eqn-lc-metric}. 
For each $\ep > 0$, the sequence of finite loop ensembles $\{\Gamma_\ep^n\}_{n\in\BB N}$ (as defined in~\eqref{eqn-lc-metric}) is Cauchy with respect to the metric~\eqref{eqn-lc-metric0}.  
By the definition of $\BB d^{\op{LC}}$ in~\eqref{eqn-lc-metric0}, for each $\ep > 0$ and each large enough $n,m\in\BB N$, we have $\# \Gamma_\ep^n = \#\Gamma_\ep^m$.
Therefore, the case of finite loop configurations shows that there is a limiting loop configuration $\Gamma_\ep'$ such that $\Gamma_\ep^n\rta \Gamma_\ep'$. 
For Lebesgue-a.e.\ $\wt\ep \leq \ep$ (i.e., every such pair of $\wt\ep,\ep$ for which there is not a loop of $\Gamma_{\wt\ep}'$ or $\Gamma_\ep'$ of diameter exactly $\wt\ep$ or $\ep$), the set of the loops in $\Gamma_{\wt\ep}'$ with diameter greater than $\ep$ coincides with $\Gamma_\ep'$. Therefore, there is a unique loop configuration $\Gamma$ with $\Gamma_\ep = \Gamma_\ep'$ for each $\ep > 0$ and $\Gamma^n \rta \Gamma$.

The case when $\ol D$ is not compact and we work with the local metric~\eqref{eqn-lc-metric-loc} is treated similarly. 
\end{proof}

In addition to local finiteness, the other important condition which we will typically impose on our loop configurations is that the loops do not cross or trace themselves or each other, in a rather strong sense. Let us first define the condition for a single loop.

\begin{defn} \label{def-non-crossing}  
A loop $\gamma$ in $D\subset\BB C$ is \emph{non-crossing} if for each arc $\alpha$ of $\gamma$, the following is true.
\begin{itemize}
\item $\alpha$ does not trace the complementary arc $\wt\alpha$ of $\alpha$ in $\gamma$ for any non-trivial interval of time.   
\item $\alpha$ is contained in the closure of a single connected component $U$ of $  D \setminus  \wt\alpha $.
\item If $f : U\cup \bdy U \rta \ol{\BB D}$ is a conformal map (with $\bdy U$ viewed as a collection of prime ends), then $f(\alpha)$ is a continuous curve. 
\end{itemize}    
\end{defn}

See Figure~\ref{fig-non-crossing} for an example of a non-crossing loops and three examples of loops which are not non-crossing. 

\begin{figure}[t!]
 \begin{center}
\includegraphics[scale=1]{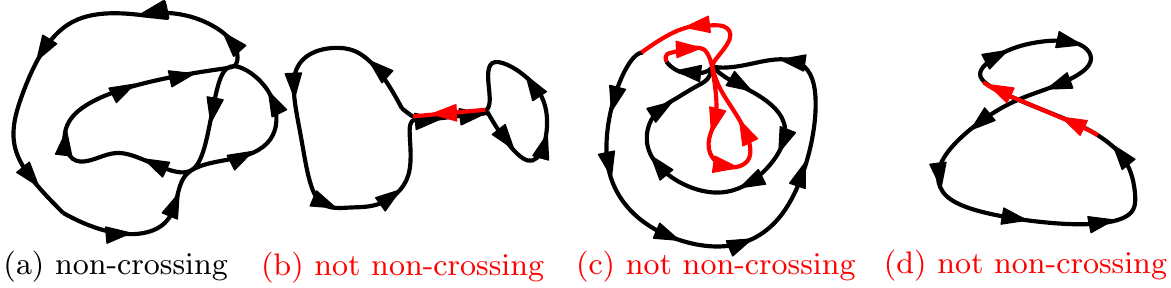}
\vspace{-0.01\textheight}
\caption{A non-crossing loop and three loops which are not non-crossing. For loops (b)-(d), an arc $\alpha$ which violates one of the conditions in Definition~\ref{def-non-crossing} is shown in red. In particular, loops (b), (c), and (d), resp., violate the first, second, and third conditions in the definition. 
}\label{fig-non-crossing}
\end{center}
\vspace{-1em}
\end{figure}

\begin{defn} \label{def-non-crossing-ensemble}  
A loop ensemble $\Gamma$ is \emph{non-crossing} if for any finite collection $\gamma_0,\gamma_1,\dots,\gamma_n$ of loops in $\Gamma$ and any arc $\alpha_0$ of $\gamma_0$, the following is true. 
\begin{itemize}
\item If we let $\wt\alpha_0$ be the complementary arc of $\alpha_0$ in $\gamma_0$, then $\alpha_0$ does not trace the set $\ol{\wt\alpha_0 \cup \gamma_1\cup\dots\cup\gamma_n }$ for any non-trivial interval of time.
\item $\alpha_0$ is contained in the closure of a single connected component $U$ of $  D \setminus \ol{\wt\alpha_0 \cup \gamma_1\cup\dots\cup\gamma_n }$.
\item If $f : U\cup \bdy U \rta \ol{\BB D}$ is a conformal map (with $\bdy U$ viewed as a collection of prime ends), then $f(\alpha_0)$ is a continuous curve. 
\end{itemize}    
\end{defn}

Each of the loops in a non-crossing loop ensemble is non-crossing, as can be seen by applying the definition in the case of a single loop. 
Furthermore, the loops in a non-crossing loop ensemble are necessarily distinct (so such a loop ensemble is a set, non a multi-set): indeed, if $\gamma_0 = \gamma_1$ then the first condition fails for any arc $\alpha_0$ of $\gamma_0$. 
$\CLE_\kappa$ for $\kappa \in (4,8)$ on a domain bounded by a curve is a.s.\ non-crossing since each arc of a $\CLE_\kappa$ loop is an $\SLE_\kappa$-type curve and the loops do not cross or trace each other.

\subsection{Construction of whole-plane CLE using branching SLE}
\label{sec-branching}
 
Sheffield~\cite{shef-cle} constructed $\op{CLE}_\kappa$ for each $\kappa \in (8/3,8)$ using a branching $\op{SLE}_\kappa(\kappa-6)$ process in a proper simply connected subdomain of $\BB C$. Here we will describe the analogous construction for whole-plane $\op{CLE}_{\kappa}$ for $\kappa \in (4,8)$. Throughout, we assume that $\kappa \in (4,8)$ is fixed.

\begin{remark}\label{rmk:whole_plane_CLE}
\cite[Appendix A]{mww-nesting} gives a different definition of whole-plane $\CLE_\kappa$, based on taking limits of CLE on large domains.  It is easy to see using the Markov property of whole-plane $\CLE_\kappa$ (Lemma~\ref{lem-cle-markov}) that this construction gives the same object as our construction. 
\end{remark}

\subsubsection{Whole-plane branching $\SLE_\kappa(\kappa-6)$}

Let us first recall the definition of whole-plane $\SLE_\kappa(\kappa-6)$ from~\cite[Section~2.1]{ig4}.
 Whole-plane $\SLE_\kappa(\kappa-6)$ from 0 to $\infty$ is the curve $\eta$ generated by the whole-plane Loewner evolution with driving process $W$, where $(W,O) : \BB R\rta \bdy\BB D\times\bdy\BB D$ is the unique stationary solution to the following SDE:  
\eqb \label{eqn-sde}
dW_t = -\frac{\kappa}{2} W_t \, dt + i \sqrt\kappa \, dB_t + \left( \frac{\kappa-6}{2} \right) \frac{O_t + W_t}{O_t-W_t} W_t \,dt \quad\text{and}\quad
dO_t =  \frac{O_t + W_t}{ W_t - O_t} O_t \,dt .
\eqe
More precisely, if we let $g_t$ be the conformal map from the unbounded connected component of $\BB C\setminus \eta([0,t])$ onto $\BB C\setminus \ol{\BB D}$ such that $g_t(z)=e^{-t}z+o_{|z|}(|z|)$ as $z\to \infty$, then $W_t=g_t(\eta(t))$ and for Lebesgue-a.e.\ $t\in\BB R$, $O_t$ is the image under $g_t$ of the unique point on the outer boundary of $ \eta([0,t])$ other than $\eta(t)$ at which the left and right outer boundaries of $\eta([0,t])$ meet. 
The existence and uniqueness of this solution is proven in~\cite[Proposition 2.1]{ig4}. 

For distinct $z,w\in \BB C\cup \{\infty\}$, whole-plane $\SLE_\kappa(\kappa-6)$ from $z$ to $w$ is defined to be the image of whole-plane $\SLE_\kappa$ from 0 to $\infty$ under a M\"obius transformation taking 0 to $z$ and $\infty$ to $w$. 
We will typically consider whole-plane $\SLE_\kappa(\kappa-6)$ started from $\infty$. 

By the Schramm-Wilson coordinate change formula~\cite[Theorem 3]{sw-coord}, whole-plane $\SLE_\kappa(\kappa-6)$ is target invariant in the sense that the law of whole-plane $\SLE_\kappa(\kappa-6)$ from $\infty$ to $z$ and from $\infty$ to $w$ agree up until the first time that the curve separates $z$ from $w$. 
This allows us to find a coupling $\{\eta_z\}_{z\in\BB Q^2}$ where each $\eta_z$ is a whole-plane $\SLE_\kappa(\kappa-6)$ from $\infty$ to $z$ and for $z,w\in\BB Q^2$, the curves $\eta_z$ and $\eta_w$ agree, modulo time parameterization, until the first time that $z$ and $w$ lie in different complementary connected components of the curve and evolve in a conditionally independent manner thereafter. We call $\{\eta_z\}_{z\in\BB Q^2}$ the \emph{whole-plane branching $\SLE_{\kappa}(\kappa-6)$ process}.

\subsubsection{Construction of whole-plane $\CLE_\kappa$}

Now let $\{\eta_z\}_{z\in\BB Q^2}$ be a branching $\SLE_\kappa(\kappa-6)$ process started from $\infty$, where for all $z\in\BB Q^2$, $ \eta_z $ is the branch from $\infty$ to $z$. If we apply a M\"obius transformation that sends $0, \infty$ to $\infty, z$, then the image of $\eta_z$ is a whole-plane $\SLE_\kappa(\kappa-6)$ from $0$ to $\infty$, generated by a Loewner driving pair $(W^{z } , O^{z }) : \BB R\rta \BB R^2$, as in~\eqref{eqn-sde}.  
Let $\theta^z $ be the continuous version of $ \op{arg} W^z - \op{arg} O^z$. 
For $z\in \BB Q^2$ let $\wt{\mcl T}_z$ be the set of times $t \in \BB R$ such that the following is true. We have $\theta^z_t \in 2\pi \BB Z$ and the last time $s < t$ such that $\theta_s^z\in 2\pi \BB Z$ satisfies $\theta_s^z \not=\theta_t^z$.  Since $\theta^z$ is continuous, the set $\wt{\mcl T}_z$ is discrete, so we can write $\wt{\mcl T}_z = \{t_{z,i}  \}_{i\in \wt{\mcl N}_z}$, where $\wt{\mcl N}_z$ is the intersection of $\BB Z$ with an interval in $\BB R$ (possibly empty or all of $\BB R$), and the enumeration is chosen so that $t_{z,i}  < t_{z,i+1} $ for each $i \in \wt{\mcl N}_z$.  
  
\begin{lem} \label{lem-branching-law}
Let $z\in\BB Q^2$. Almost surely, we have $\wt{\mcl N}_z = \BB Z$. Furthermore, the law of each $\eta_z|_{[t_{z,i}  , t_{z,i+1} ]}$ is that of a radial $\op{SLE}_{\kappa}(\kappa-6)$ process from $\eta_z(t_{z,i} )$ to $z$ in the connected component of $\BB C\setminus \eta_z((-\infty, t_{z,i} ])$ containing $z$, stopped at the first time it disconnects the boundary of this component from $z$. If $\theta^z_{t_{z,i} } - \theta^z_{t_{z,i-1} } = 2\pi$, the force point is located to the right of $\eta_z(t_{z,i} )$, and if $\theta^z_{t_{z,i} } - \theta^z_{t_{z,i-1} } = -2\pi$, the force point is located to the left of $\eta_z(t_{z,i} )$. 
\end{lem}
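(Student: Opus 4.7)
The strategy is to combine the strong Markov property of the Loewner SDE~\eqref{eqn-sde} at the times $t_{z,i}$ with the standard correspondence between whole-plane and radial $\SLE_\kappa(\kappa-6)$ that applies at coincidence times of the driving pair $(W,O)$.

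First I would verify that $\wt{\mcl N}_z = \BB Z$ almost surely. The stationary solution $(W^z,O^z)$ from~\cite[Proposition 2.1]{ig4} makes $\theta^z$ a continuous semimartingale whose Brownian component $\sqrt\kappa \, B$ is a.s.\ unbounded above and below on $\BB R$. Since the deterministic drift terms in the SDE for $\theta^z$ do not blow up (they are bounded away from the coincidence set $\{\theta^z \in 2\pi\BB Z\}$ and the singularity is integrable when crossed), standard comparison arguments give that $\theta^z$ itself is a.s.\ unbounded in both time directions. Continuity plus the fact that the drift does not pin $\theta^z$ tangentially at multiples of $2\pi$ (since the Brownian motion pushes past each level) then shows that $\theta^z$ a.s.\ crosses every integer multiple of $2\pi$ infinitely often as $t\to\pm\infty$, and each such crossing is a genuine change of value; hence $\wt{\mcl N}_z = \BB Z$.

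Next I would fix $i$ and apply the strong Markov property of the SDE~\eqref{eqn-sde} at the stopping time $t_{z,i}$. At this time $\theta^z_{t_{z,i}}\in 2\pi\BB Z$, i.e., $W^z_{t_{z,i}} = O^z_{t_{z,i}}$, which is precisely the coincidence of the Loewner tip with the force point. Geometrically, this is the moment at which $\eta_z$ has just closed off a loop surrounding $z$, so the connected component $D_i$ of $\BB C\setminus \eta_z((-\infty,t_{z,i}])$ containing $z$ is a simply connected Jordan-type domain with $\eta_z(t_{z,i})\in \bdy D_i$. Strong Markov applied to the continuation gives that, conditionally on $\eta_z|_{(-\infty,t_{z,i}]}$, the pair $(W^z,O^z)|_{[t_{z,i},\infty)}$ evolves according to~\eqref{eqn-sde} restarted from the coincident initial condition $W^z_{t_{z,i}} = O^z_{t_{z,i}}$. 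I would then invoke the identification established in~\cite[Section~2.1]{ig4}: the solution of~\eqref{eqn-sde} started from the coincidence set, when read through the conformal map sending $D_i$ to $\BB D$ with $z\mapsto 0$, is precisely the Loewner driving pair of a radial $\SLE_\kappa(\kappa-6)$ in $\BB D$ targeted at $0$ with force point placed adjacent to the tip. This identifies $\eta_z|_{[t_{z,i},t_{z,i+1}]}$ with a radial $\SLE_\kappa(\kappa-6)$ from $\eta_z(t_{z,i})$ to $z$ in $D_i$; the stopping rule defining $t_{z,i+1}$, namely the next time $\theta^z$ reaches a new multiple of $2\pi$, corresponds in the radial chart to the first time the process disconnects $\bdy D_i$ from $z$, exactly as in the claim.

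Finally, the side on which the force point sits is read off from $\theta^z_{t_{z,i}} - \theta^z_{t_{z,i-1}}$: a value of $+2\pi$ means the preceding loop was traced counterclockwise about $z$, which, after conformally mapping $D_i$ to $\BB D$ with $z\mapsto 0$, places $\bdy D_i$ to the right of the incoming direction at $\eta_z(t_{z,i})$, so the force point $O^z$ emerges from $W^z$ on the right; a value of $-2\pi$ reverses orientations and places the force point on the left. I expect the main obstacle to lie precisely in this orientation bookkeeping: one has to match the counterclockwise convention baked into~\eqref{eqn-sde} (via the $+i\sqrt\kappa\, dB_t$ term and the sign of the force-point drift $(O+W)/(O-W)$) with the standard left/right convention for the force point of radial $\SLE_\kappa(\kappa-6)$. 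Once this sign identification is made carefully, the previous steps assemble into the stated lemma.
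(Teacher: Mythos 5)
Your proposal is correct and follows essentially the same route as the paper: the paper's proof simply observes (via harmonic measure) that $t_{z,i}$ is the first time $\eta_z$ disconnects $\eta_z((-\infty,t_{z,i-1}])$ from $z$ and then cites the Markov property of whole-plane $\SLE_\kappa(\kappa-6)$ from~\cite[Proposition 2.2]{ig4}, which is exactly the strong Markov/radial-restart identification you spell out in more detail. Your additional unpacking of $\wt{\mcl N}_z=\BB Z$ and the force-point orientation is consistent with what that cited proposition delivers.
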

\begin{proof}
By harmonic measure considerations, $t_{z,i} $ is the first time at which $\eta_z$ disconnects $\eta_z ( (-\infty, t_{z,i-1} ])$ from $z$. 
The lemma is immediate from this together with the Markov property of whole plane $\op{SLE}_{\kappa}(\kappa-6)$~\cite[Proposition 2.2]{ig4}.  
\end{proof}  

Let $\{\tau_{z,j} \}_{j\in \BB Z}$ be the times $t_{z,i} \in \wt{\mcl T}_z$ such that $\theta^z_{t_{z,i} } - \theta^z_{t_{z,i-1} } = 2\pi $, enumerated in increasing order.

We define a sequence of loops $\{\gamma_{z,j}\}_{j\in\BB Z}$ surrounding $z$ (enumerated from outside in) as follows.
For each $j \in \BB N$, let $\sigma_{z,j}$ be the last time $s < \tau_{z,j}$ such that $\theta_s^z \in 2\pi \BB Z$, so that by the definition of $\mcl T_z$ we have $\theta^z_{\tau_{z,j} } - \theta^z_{\sigma_{z,j}} = 2\pi$. 

The curve $\eta_z|_{[\sigma_{z,j} , \tau_{z,j}]}$ traces part (but not all) of a loop surrounding $z$. 
To describe the rest of this loop, we let $\wh\eta_{z,j}$ be the branch of the branching $\SLE_\kappa(\kappa-6)$ process from $\eta_z(\tau_{z,j})$ to $\eta_z(\sigma_{z,j})$.
This process can be described as the limit of the segment of $\eta_w$ from $\eta_z(\tau_{z,j})$ to $w$ as $w\rta \eta_z(\sigma_{z,j})$ along sequences of rational points in the connected component of $\BB C\setminus \eta_z((-\infty,\tau_{z,j}])$ with $\eta_z(\sigma_{z,j})$ and $\eta_z(\tau_{z,j})$ on its boundary. Its conditional law given $\eta_z|_{(-\infty,\tau_{z,j}]}$ is that of an $\SLE_\kappa$ in the appropriate connected component of $\BB C\setminus \eta_z((-\infty,\tau_{z,j}])$. 

Let $\gamma_{z,j}$ be the loop obtained by concatenating the curves $\eta_z|_{[\sigma_{z,j} ,\tau_{z,j}]}$ and $\wh\eta_{z,j}$. 
We define the whole-plane $\op{CLE}_{\kappa}$ by
\eqb \label{eqn-Gamma-def}
\Gamma := \{\gamma_{z,j} : z\in \BB Q^2 ,\, j\in \BB Z\}  .
\eqe 
Then $\Gamma$ is a non-crossing, locally finite collection of loops in $\BB C$ (Definitions~\ref{def-locally-finite} and~\ref{def-non-crossing}).
Indeed, the fact that $\Gamma$ is non-crossing follows from the fact that the curves $\eta_z$ for $z\in\BB Q^2$ do not cross or trace themselves or each other.
The fact that $\Gamma$ is locally finite follows from the local finiteness of $\CLE_\kappa$ on a bounded Jordan domain~\cite[Theorem 1.17]{ig4} and the Markov property of whole-plane $\CLE_\kappa$ which is stated and proven just below.

\begin{figure}[ht!] 
 \begin{center}
\includegraphics[scale=.75]{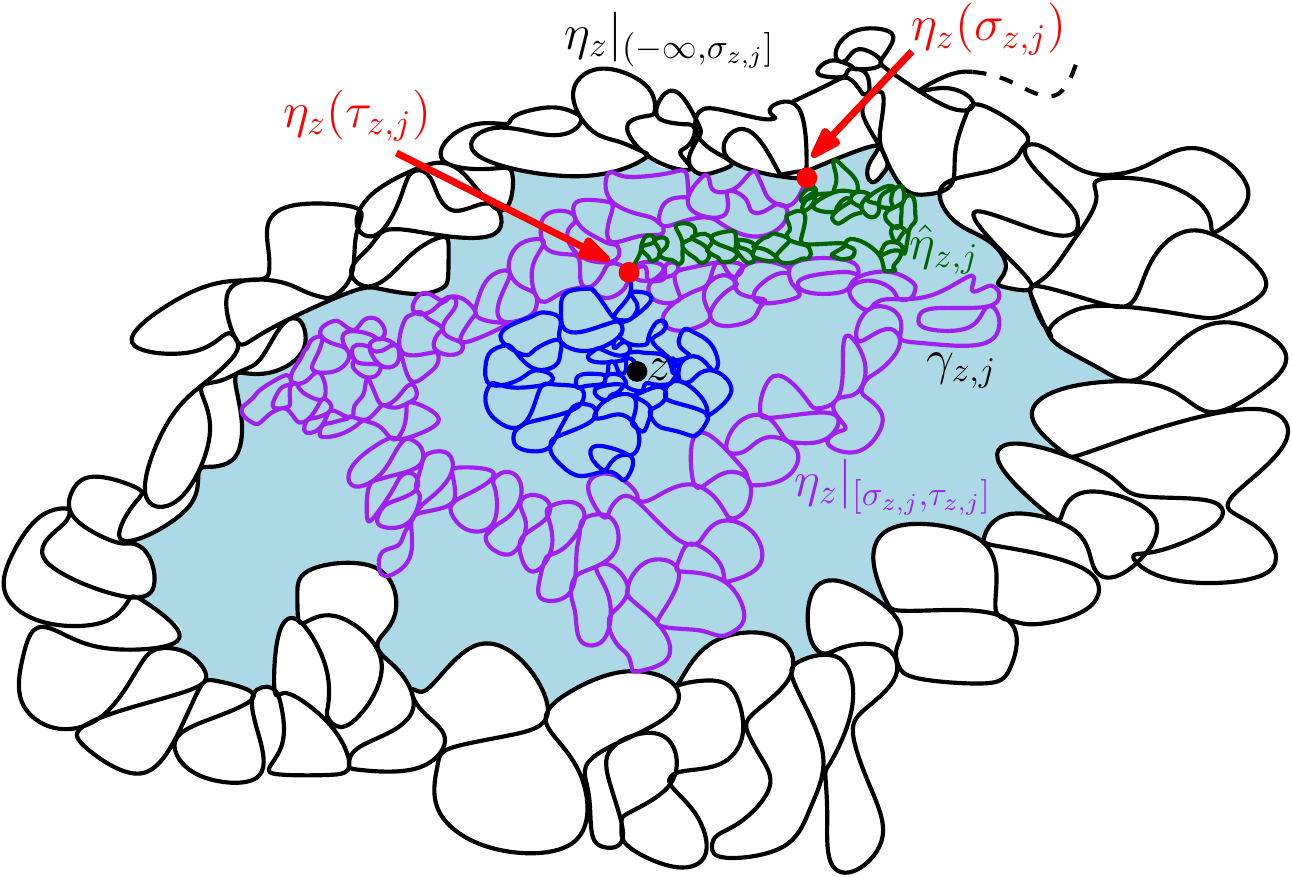} 
\caption{Illustration of the construction of whole-plane $\CLE_{\kappa}$ from branching $\SLE_{\kappa}(\kappa-6)$. Shown is the branch $\eta_z$ targeted at $z$ for some $z\in\BB C$ and a single loop $\gamma_{z,j}$ which it traces part of. The curve $\eta_z $ traces the black segment, then the purple segment, then the blue segment (it does not trace the green segment). The loop $\gamma_{z,j}$ is the concatenation of the purple segment $\eta_z|_{[\sigma_{z,j},\tau_{z,j}]}$ and the green segment $\wh\eta_{z,j}$. The loop $\gamma_{z,j+1}$ (not shown) is the concatenation of part of the blue segment of $\eta_z$ plus an additional curve segment not traced by $\eta_z$.}\label{fig-cle-def}

\end{center}
\end{figure}

\begin{remark} \label{remark-branching-extend}
The curves $\eta_z$ above are defined only for points $z$ in a countable dense subset of $\BB C$. However, one can define $\eta_z$ as a continuous curve for Lebesgue-a.e.\ point $z\in\BB C$ as follows. Suppose $z\in \BB C$ is surrounded by arbitrarily small loops in $\Gamma$ (which is a.s.\ the case for fixed $z$). Let $\{\gamma_{z,j}\}_{j\in\BB Z}$ be the bi-infinite sequence of loops surrounding $z$, numbered from outside in. For $j\in \BB Z$, let $z_j \in \BB Q^2$ such that $z_j$ lies in the same connected component of $\BB C\setminus \gamma_{z,j}$ as $z$. Let $s_j$ be the time at which $\eta_{z_j}$ finishes tracing the boundary of this complementary connected component. For each $j < j'$, the curve $\eta_{z_j}$ agrees with $\eta_{z_{j'}}$ until time $s_j$. Furthermore, the diameters of the loops $\gamma_{z,j}$ tend to zero as $j\rta\infty$. It follows that the curves $\eta_{z_j}$ converge to a limiting curve $\eta_z$ as $j\rta\infty$, which agrees with each $\eta_{z_j}$ until time $s_j$, viewed modulo monotone re-parameterization.
\end{remark}

\subsubsection{Markov property of whole-plane $\CLE_\kappa$}

\begin{lem} \label{lem-cle-markov}
Let $\Gamma$ be a whole-plane $\CLE_\kappa$. 
Also let $z\in \BB C$ and let $\gamma_z^*$ be a random loop in $\Gamma$ which surrounds $z$ with the following property. If $\eta_z$ is the branch targeted at $z$ of the branching $\SLE_{\kappa }(\kappa -6)$ process which traces the loops in $\Gamma$, then the time $\tau_z^*$ at which $\eta_z$ finishes tracing the part of $\gamma_z^*$ that it should trace is a stopping time for $\eta_z$. 
If we condition on $\gamma_z^*$ and the set of loops in $\Gamma$ which are contained in the unbounded connected component of $\BB C\setminus \gamma_z^*$, then the conditional law of the rest of $\Gamma$ is that of an independent $\CLE_{\kappa }$ in each bounded connected component of $\BB C\setminus \gamma_z^*$. 
\end{lem}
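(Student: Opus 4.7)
The plan is to apply the Markov property of whole-plane $\SLE_\kappa(\kappa-6)$ at the stopping time $\tau_z^*$ (as stated in~\cite[Proposition~2.2]{ig4}), together with the target-invariance and branching structure underlying the construction of Section~\ref{sec-branching}, and then to identify the resulting decomposition of $\Gamma$ with Sheffield's bounded-domain $\CLE_\kappa$ construction from~\cite{shef-cle}. Let $\eta_z$ be the branch of the branching $\SLE_\kappa(\kappa-6)$ process targeted at $z$, and write $U_z$ for the connected component of $\BB C \setminus \eta_z((-\infty,\tau_z^*])$ containing $z$. Let $\wh\eta_z^*$ be the $\SLE_\kappa$ curve which, together with $\eta_z|_{[\sigma_z^*,\tau_z^*]}$, forms $\gamma_z^*$.

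First I would dispose of the bounded components of $\BB C \setminus \gamma_z^*$ that do \emph{not} lie inside $U_z$. By the target-invariance of whole-plane $\SLE_\kappa(\kappa-6)$, every branch $\eta_w$ with $w$ not in $U_z$ has already separated from $\eta_z$ by time $\tau_z^*$ and thereafter evolves in its own complementary component of $\eta_z((-\infty,\tau_z^*])$, conditionally independent of everything else given the time of separation. The loops produced by this independent branching $\SLE_\kappa(\kappa-6)$ in each bounded component $V$ of $\BB C \setminus \eta_z((-\infty,\tau_z^*])$ not containing $z$ then form an independent $\CLE_\kappa$ in $V$ by Sheffield's bounded-domain construction. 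Those $V$ whose boundary lies entirely on $\gamma_z^*$ are exactly the bounded complementary components of $\BB C \setminus \gamma_z^*$ that are not contained in $U_z$, giving the required statement for them.

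Next I would address the components of $\BB C \setminus \gamma_z^*$ contained in $U_z$. By the Markov property of whole-plane $\SLE_\kappa(\kappa-6)$ at $\tau_z^*$, the future $\eta_z|_{[\tau_z^*,\infty)}$ together with the branches $\eta_w$ for $w \in U_z$ form a branching $\SLE_\kappa(\kappa-6)$ in $U_z$ started from $\eta_z(\tau_z^*)$, conditionally on $\eta_z|_{(-\infty,\tau_z^*]}$. Combined with the curve $\wh\eta_z^*$, whose conditional law (recorded in Section~\ref{sec-branching}) is that of an $\SLE_\kappa$ in $U_z$ from $\eta_z(\tau_z^*)$ to $\eta_z(\sigma_z^*)$, this is precisely Sheffield's bounded-domain construction of $\CLE_\kappa$ in $U_z$ rooted at $\eta_z(\tau_z^*)$, with $\gamma_z^*$ playing the role of the outermost loop surrounding $z$. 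Applying the bounded-domain $\CLE_\kappa$ Markov property from~\cite{shef-cle} inside $U_z$ at this outermost loop then gives independent $\CLE_\kappa$'s in $D^*$ and in each component of $U_z \setminus \gamma_z^*$ other than $D^*$, completing the decomposition.

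The hard part will be the identification in the previous paragraph: showing that the future of the branching $\SLE_\kappa(\kappa-6)$ inside $U_z$ after $\tau_z^*$, augmented by the ``return'' curve $\wh\eta_z^*$, genuinely coincides with the bounded-domain branching $\SLE_\kappa(\kappa-6)$ construction of $\CLE_\kappa$ in $U_z$ rooted at $\eta_z(\tau_z^*)$. This requires matching the boundary force-point convention inherited from the whole-plane $\SLE_\kappa(\kappa-6)$ Markov property with the bounded-domain radial $\SLE_\kappa(\kappa-6)$ convention, as well as a measurability argument to check that conditioning on $\gamma_z^*$ together with the loops in the unbounded component of $\BB C \setminus \gamma_z^*$ is equivalent to conditioning on the $\sigma$-algebra generated by $\eta_z|_{(-\infty,\tau_z^*]}$, $\wh\eta_z^*$, and the branches $\eta_w$ for $w$ outside $U_z$.
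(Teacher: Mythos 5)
Your first paragraph and your overall strategy (branching construction, target invariance/renewal property, the conditional law of $\wh\eta_z^*$, Sheffield's bounded-domain construction, and a conditional-independence argument for the loops in the unbounded component) match the paper's proof, and your treatment of the bounded components of $\BB C\setminus\gamma_z^*$ that lie outside $U_z$ is essentially what the paper does. However, the step you yourself flag as ``the hard part'' is set up incorrectly, and as stated it would fail. The future of the branching process in $U_z$ after $\tau_z^*$, augmented by $\wh\eta_z^*$, is \emph{not} Sheffield's construction of a $\CLE_\kappa$ in $U_z$ rooted at $\eta_z(\tau_z^*)$ with $\gamma_z^*$ as its outermost $z$-surrounding loop. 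First, $\gamma_z^*$ contains all of $\bdy U_z$ (the segment $\eta_z|_{[\sigma_z^*,\tau_z^*]}$ is exactly what cuts out $U_z$) and is already determined by $\eta_z|_{(-\infty,\tau_z^*]}$ and $\wh\eta_z^*$; the outermost $z$-surrounding loop of a $\CLE_\kappa$ in $U_z$ is a nondegenerate random loop inside $U_z$, so the two objects cannot be identified in law. Second, a branching $\SLE_\kappa(\kappa-6)$ restarted from scratch in $U_z$ at $\eta_z(\tau_z^*)$ resets the winding bookkeeping for every target point: $\tau_z^*$ is a renewal time for $\theta^z$ but not for $\theta^w$ when $w\neq z$, so the loops generated by such a restarted process are not the loops of $\Gamma$ (its outermost $z$-surrounding loop would correspond roughly to $\gamma_{z,j+1}$, and for other targets the loop-completion times simply do not match). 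Consequently the bounded-domain $\CLE_\kappa$ Markov property ``at the outermost loop'' cannot be applied in $U_z$ in the way you propose; indeed, the restriction of $\Gamma$ to $U_z$ is emphatically not a $\CLE_\kappa$ in $U_z$, which is why this lemma requires an argument at all.

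The fix is to go one level deeper, which is what the paper does: condition on $\wh\eta_z^*$ in addition to $\eta_z|_{(-\infty,\tau_z^*]}$, and observe that \emph{every} bounded connected component of $\BB C\setminus\gamma_z^*$ (whether inside $U_z$, where it is a component of $U_z\setminus\wh\eta_z^*$, or outside) is a connected component of $\BB C\setminus\bigl(\eta_z((-\infty,\tau_z^*])\cup\wh\eta_z^*\bigr)$ whose boundary is entirely traced by the left boundaries, or entirely by the right boundaries, of $\eta_z$ and $\wh\eta_z^*$. One then applies the renewal property of Lemma~\ref{lem-branching-law} to the branches targeted at points of each such component \emph{at the time they become confined to that component} (at which point the winding does renew), so that the loops inside each component are produced by an independent bounded-domain branching $\SLE_\kappa(\kappa-6)$ and hence form an independent $\CLE_\kappa$ there. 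Your final measurability point is genuine and is resolved as you suspect: the loops in the unbounded component of $\BB C\setminus\gamma_z^*$ are determined by $\eta_z|_{(-\infty,\tau_z^*]}$, $\wh\eta_z^*$, and the branching processes in the remaining components, which are conditionally independent of the loops in the bounded components of $\BB C\setminus\gamma_z^*$.
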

\begin{proof}
Let $\wh\eta_z^* $ be the segment of $\gamma_z^*$ which is not traced by $\eta_z$, as above. 
As explained just above~\eqref{eqn-Gamma-def}, the conditional law of $\wh\eta_z^*$ given $\eta_z((-\infty,\tau_z^*])$ is that of a chordal $\SLE_{\kappa }$ in the appropriate connected component of $\BB C\setminus \eta_z((-\infty,\tau_z^*])$. 
The loop $\gamma_*^z  $ is contained in $\eta_z((-\infty,\tau_z^*]) \cup \wh\eta_z^*$ and every bounded connected component of $\BB C\setminus \gamma_z^*$ is also a connected component of $\BB C\setminus (\eta_z((-\infty,\tau_z^*]) \cup \wh\eta_z^*)$ whose boundary is entirely traced by either the left boundaries of $\eta_z$ and $\wh\eta_z^*$ or the right boundaries of these two curves. 
By the renewal property of branching $\SLE_{\kappa }(\kappa -6)$ (which follows from Lemma~\ref{lem-branching-law} applied to the branches targeted at points in the components) and the construction of $\CLE_\kappa$ on a proper simply connected domain from branching $\SLE_\kappa(\kappa-6)$~\cite{shef-cle}, the conditional law given $\eta_z|_{(-\infty,\tau_z^*]}$ and $ \wh\eta_z^*$ of the set of loops in $\Gamma$ which are contained in the bounded connected components of $\BB C\setminus \gamma_z^*$ is that of an independent $\CLE_\kappa$ in each of these components. 
Furthermore, the set of loops of $\Gamma$ which are contained in the unbounded connected component of $\BB C\setminus \gamma_z^*$ is determined by $\eta_z|_{(-\infty,\tau_*^z]}$, $\wh\eta_z^*$, and the radial branching $\SLE_{\kappa }(\kappa -6)$ processes in the connected components of $\BB C\setminus (\eta_z((-\infty,\tau_z^*]) \cup \wh\eta_z^*)$ which are not bounded connected components of $\BB C\setminus \gamma_z^*$. Since these processes are conditionally independent given $\eta_z|_{(-\infty,\tau_*^z]}$, $\wh\eta_z^*$ from the set of loops of $\Gamma$ which are contained in the unbounded connected component of $\BB C\setminus \gamma_z^*$, we get the statement of the lemma.
\end{proof}

\subsubsection{CLE$_\kappa$ loops intersecting a set}

At several places in the paper, we will need the following basic property of CLE$_\kappa$.

\begin{lem} \label{lem-cle-cover}
Let $\kappa \in (4,8)$, let $D \subset\BB C$ be simply connected, and let $\Gamma$ be a $\CLE_\kappa$ on $D$. 
Suppose $U \subset D$ is open and $K\subset U$ is a connected Borel set. 
Almost surely,
\eqb \label{eqn-cle-cover} 
K\subset \ol{\bigcup \left\{\gamma\in\Gamma : \gamma\cap K\not=\emptyset , \gamma\subset U \right\}} . 
\eqe
\end{lem}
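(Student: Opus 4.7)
Plan: I will reduce to checking the inclusion at countably many points of $K$, and at each such point produce a shrinking sequence of $\CLE_\kappa$ loops that witness membership in the closure. I will tacitly assume $\#K \geq 2$ (the singleton case being degenerate: almost surely no loop of $\Gamma$ passes through a fixed point, so the right-hand side is typically empty).

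Set $A:=\bigcup\{\gamma\in\Gamma:\gamma\cap K\neq\emptyset,\,\gamma\subset U\}$ and pick a countable dense subset $K_0\subset K$ (which exists since $K$ is separable as a subspace of $\BB C$). Since $\ol A$ is closed and $K\subset\ol{K_0}$, it suffices to show that a.s.\ $K_0\subset\ol A$, which by countability reduces to showing that for each fixed $z\in K_0$, a.s.\ $z\in\ol A$. The key ingredient is that almost surely $\Gamma$ contains an infinite sequence $\{\gamma_{z,j}\}_{j\in\BB N}$ of loops surrounding $z$ with $\diam\gamma_{z,j}\to 0$. For $z\in\BB Q^2\cap D$ this follows immediately from the branching $\SLE_\kappa(\kappa-6)$ construction of Section~\ref{sec-branching} combined with the local finiteness of $\CLE_\kappa$; for a general fixed $z\in D$ one uses the conformal invariance of $\CLE_\kappa$ on $D$, noting that $\op{Aut}(D)$ acts transitively on interior points (equivalently, Remark~\ref{remark-branching-extend} gives the property for Lebesgue-a.e.\ $z$, and then it upgrades to every $z$ by the same symmetry).

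Now fix $z\in K_0$ and let $D_j$ denote the bounded connected component of $\BB C\setminus\gamma_{z,j}$ containing $z$. Since $\gamma_{z,j}$ is connected, $D_j$ lies in its convex hull, which gives $\diam D_j\leq\diam\gamma_{z,j}$ and $\gamma_{z,j}\subset B(z,2\diam\gamma_{z,j})$. Choose any $y\in K\setminus\{z\}$ and $\epsilon>0$ with $B(z,\epsilon)\subset U$. For all sufficiently large $j$ we have $2\diam\gamma_{z,j}<\min(\epsilon,|y-z|)$, so $\gamma_{z,j}\cup\ol{D_j}\subset U$ and $y\notin\ol{D_j}$. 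The relatively open subsets $K\cap D_j$ and $K\setminus\ol{D_j}$ of $K$ are nonempty (they contain $z$ and $y$ respectively) and disjoint, so connectedness of $K$ forces $K\cap\bdy D_j\neq\emptyset$; since $\bdy D_j\subset\gamma_{z,j}$ this yields $\gamma_{z,j}\cap K\neq\emptyset$, and combined with $\gamma_{z,j}\subset U$ we see $\gamma_{z,j}\in A$. Finally $d(z,\gamma_{z,j})\leq\diam D_j\to 0$, so $z\in\ol A$. The only real obstacle is the claim about arbitrarily small surrounding loops at each fixed interior point; once that is in hand everything else is a soft connectedness argument.
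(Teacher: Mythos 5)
Your argument is correct and is essentially the paper's own proof: fix a point $z$ in a countable dense subset of $K$, use the a.s.\ existence of arbitrarily small $\Gamma$-loops surrounding $z$, and use connectedness of $K$ together with smallness of the loop to force it to intersect $K$ and lie in $U$; you merely spell out the separation argument and the convex-hull diameter bound that the paper leaves implicit. Your aside about singletons is a fair (if trivial) observation — the lemma is only ever applied with non-degenerate connected $K$ such as line segments, and the paper's proof tacitly makes the same assumption.
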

\begin{proof}
For each $z\in K$ and $\ep > 0$, a.s.\ there is a loop in $\Gamma$ with Euclidean diameter at most $\ep$ which disconnects $z$ from $\infty$. 
Since $z\in K\subset U$ and $K$ is connected, for small enough $\ep > 0$, this loop must be contained in $U$ and must intersect $K$. 
Hence a.s.\ $z$ belongs to the closure on the right side of~\eqref{eqn-cle-cover}. 
Since $z\in K$ is arbitrary, we get that this closure a.s.\ contains a dense subset of $K$, hence it a.s.\ contains $K$. 
\end{proof}

\subsection{CLE on an annulus}
\label{sec-cle-annulus}
 
In this subsection, we will state a result to the effect that for each $\kappa \in (4,8)$ and $M\in\BB N_0$, there is a unique law on locally finite, non-crossing loop configurations on an annulus which has exactly $M$ loops with non-trivial winding number and which satisfies a certain Markov property (Theorem~\ref{thm-cle-annulus}). 
The results stated in this section are proven in Sections~\ref{sec-markov} and~\ref{sec-resampling}. 
We define this law to be $\CLE_\kappa$ on the annulus for $\kappa \in (4,8)$. 
We will also state a result which says that restricting $\CLE_\kappa$ on the disk to the non-simply-connected complementary connected component of a simple loop gives a $\CLE_\kappa$ on the annulus, under our definitions (Theorem~\ref{thm-cle-markov}), which shows that the definition in this subsection is equivalent to the one in Theorem~\ref{thm-cle-annulus0}.
As discussed in Section~\ref{sec-intro}, the relevance to the proof of our main result is that the re-sampling property which characterizes $\CLE_\kappa$ on the annulus is invariant under inversion, so the law of $\CLE_\kappa$ on the annulus is invariant under inversion (see Corollary~\ref{cor-annulus-inversion}).

Recall the annulus $\BB A_\rho$ for $\rho\in (0,1)$ from~\eqref{eqn-annulus-def}.  
The idea behind the Markov property which characterizes $\CLE_\kappa$ on $\BB A_\rho$ is to choose a subset of loops in $\BB A_\rho$ such that each connected component of the complement of their closed union is simply connected (e.g., the set of loops which intersect a line segment from the inner boundary to the outer boundary). We then require that the law of the restriction of the $\CLE_\kappa$ to the complement of this closed union is that of a $\CLE_\kappa$ in each of these simply connected components. 
By itself, such a property is not enough to characterize $\CLE_\kappa$ on $\BB A_\rho$ since the set of loops which intersect a path between the inner and outer boundaries of $\BB A_\rho$ will always include all of the loops which disconnect the inner and outer boundaries. So, we also need to impose a re-sampling condition on these loops. 
To state this re-sampling condition, we first introduce some notation, which is illustrated in Figure~\ref{fig-stubs}. 

\begin{defn}[$P$-excursions of loops] \label{def-excursion}
Let $P\subset \BB C$ be a compact set and let $U\subset \BB C$ be an open set containing $P$. 
Also let $\gamma $ be a loop in $\BB C$ (with some arbitrary choice of parameterization).
We say that an arc $\alpha$ of $\gamma$ is a \emph{$P$-excursion of $\gamma$ into $U$} if $\alpha \subset \ol U$, $\alpha \cap P\not=\emptyset$, and $\alpha$ is not properly contained in any larger arc of $\gamma$ with these properties.  
We say that $\alpha$ is \emph{proper} if $\alpha\notin \{\gamma,\emptyset\}$ (equivalently, $\gamma \cap P \not=\emptyset$ and $\gamma\not\subset \ol U$).
An arc $ \alpha'$ is called a \emph{complementary $P$-excursion of $\gamma$ out of $U$} if $\alpha'$ does not overlap with any $P$-excursion of $\gamma$ into $U$ and $\alpha'$ is not contained in any larger arc of $\gamma$ with this property. 
\end{defn}
 
By definition, a loop is the concatenation of its $P$-excursions into $U$ and its complementary $P$-excursions out of $U$, and these arcs overlap only at their endpoints.

\begin{defn}[Sets of loops and excursions] \label{def-stub}
Let $\Gamma$ be a locally finite collection of non-crossing loops in a domain $D\subset \BB C$. For a compact set $P \subset \ol D$ and an open set $U\subset D$ with $P\subset U$, we write $\Gamma(P ; U)$ for the set of loops in $\Gamma$ which intersect $P$ and are contained in $U$. 
We write $\Gamma(P) := \Gamma(P; \ol D)$ for the set of all loops in $\Gamma$ which intersect $P$.
We also write $\mcl S_\Gamma(P; U)$ for the set of all proper $P$-excursions into $U$ of loops in $\Gamma$. Note that $\mcl S_\Gamma(P; U)$ is a finite set since $\Gamma$ is locally finite.
\end{defn}

\begin{figure}[t!]
 \begin{center}
\includegraphics[scale=.85]{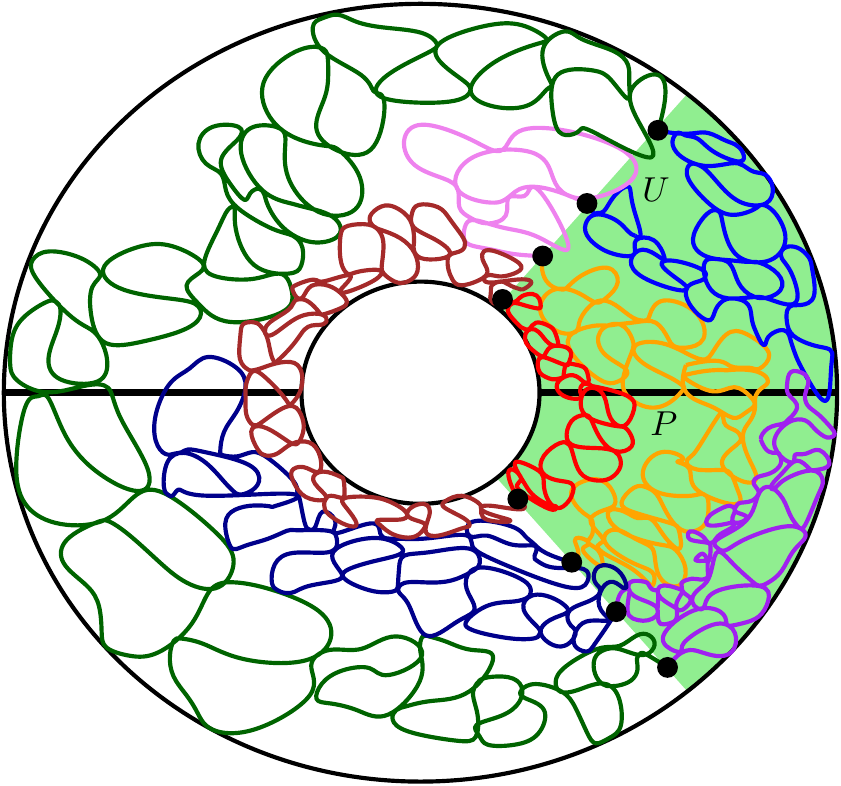}
\vspace{-0.01\textheight}
\caption{Illustration of a possible configuration of $P $-excursions into $U $ (red, orange, purple, blue) and complementary $P $-excursions out of $U $ (dark green, dark blue, pink, brown) for a loop configuration on $\BB A_\rho$. Here there are two loops which disconnect the inner and outer boundaries of $\BB A_\rho$ (union of purple, dark green, blue, pink, orange, and dark blue arcs; and union of red and brown arcs). The loop consisting of the red and brown arcs has winding number 1 around the inner boundary, and the other loop has winding number zero. 
}\label{fig-stubs}
\end{center}
\vspace{-1em}
\end{figure}

See Figure~\ref{fig-stubs} for an illustration of Definitions~\ref{def-excursion} and~\ref{def-stub}. 
Each $P$-excursion $\alpha \in \mcl S_\Gamma(P;U)$ has two endpoints in $\bdy U$, which we call its initial and terminal endpoints.
There is a distinguished bijection between the set of initial endpoints of elements of $\mcl S_\Gamma(P;U)$ and the set of terminal endpoints of elements of $\mcl S_\Gamma(P;U)$ whereby a terminal endpoint $x$ corresponds to an initial endpoint $y$ if there is a complementary $P $-excursion out of $U $ of a loop of $\Gamma$ which has $x$ and $y$ as its initial and terminal endpoints, respectively. In this case we write $y = x^*$. Note that the initial and terminal endpoints of a $P$-excursion into $U $ need not correspond to each other under this bijection (although they sometimes do).  
In fact, the bijection $x\leftrightarrow x^*$ is \emph{not} determined by $\mcl S_\Gamma(P;U)$ since it depends on how loops behave when they are outside of $\ol U$. 
  
\begin{defn}[Annulus Markov property] \label{def-annulus-markov}
Let $\rho \in (0,1)$ and let $\Gamma$ be a random locally finite, non-crossing collection of loops on the annulus $\BB A_\rho$. 
We say that $\Gamma$ satisfies the \emph{annulus Markov property} if the following is true. 
Let $P = [\rho e^{i\theta} , e^{i\theta}]$ be a radial line segment between the inner and outer boundaries of $\BB A_\rho$ and let $U = \{r e^{i s} : r\in (\rho,1) , s \in (\theta-\pi/4 ,\theta+ \pi/4 )\}$ be the annular slice centered at $P$ of opening angle $\pi/2$. 
If $\mcl S_\Gamma(P;U) \not=\emptyset$, choose a $P$-excursion into $U$ from $\mcl S_\Gamma(P;U)$ in a manner which is measurable with respect to $\sigma( \Gamma(P;U) , \mcl S_\Gamma(P;U) )$. Let $x$ be its terminal endpoint and let $\eta_x$ be the complementary $P $-excursion out of $U$ from $x$ to $x^*$. 
\begin{enumerate}
\item Almost surely, $P\subset \ol{\bigcup \Gamma(P;U)}$. \label{item-annulus-markov-contain}
\item If $\mcl S_\Gamma(P;U) \not=\emptyset$ and we condition on $\Gamma(P;U)$, $\mcl S_\Gamma(P;U)$, and all of the complementary $P $-excursions of loops in $\Gamma$ out of $U $ except for $\eta_x$, then the conditional law of $\eta_x$ is that of a chordal $\SLE_\kappa$ from $x$ to $x_*$ in the connected component of $\BB A_\rho\setminus \ol{\bigcup \Gamma(P)\setminus \eta_x}$ with $x$ on its boundary.   \label{item-annulus-markov-sle}
\item If we further condition on $\eta_x$ (equivalently, we condition on $\Gamma(P)$) then the conditional law of $\Gamma|_{\BB A_\rho\setminus \ol{\bigcup \Gamma(P)}}$ is that of a collection of independent $\CLE_\kappa$'s in the connected components of $\BB A_\rho\setminus \ol{\bigcup \Gamma(P)}$.  \label{item-annulus-markov-cle}
\end{enumerate}
\end{defn}

Since $P$ intersects the inner and outer boundaries of $\BB A_\rho$, condition~\ref{item-annulus-markov-contain} in Definition~\ref{def-annulus-markov} implies that each of the sets $\bdy \BB A_\rho \cup \ol{\bigcup \Gamma(P)}$ and $\bdy\BB A_\rho\cup \ol{\bigcup \Gamma(P) \setminus \eta_x}$ is connected.  
Hence, a.s.\ each of the connected components of each of $\BB A_\rho\setminus \ol{\bigcup \Gamma(P)\setminus \eta_x}$ and $\BB A_\rho\setminus \ol{\bigcup \Gamma(P) }$ is simply connected, so it makes sense to talk about CLE$_\kappa$ on these connected components.

\begin{thm} \label{thm-cle-annulus}
For each $\kappa \in (4,8)$, $\rho  \in (0,1)$, and $M\in\BB N_0$, there is a unique law on non-crossing, locally finite collections of loops in $\BB A_\rho$ which satisfies the annulus Markov property and a.s.\ includes exactly $M$ loops whose winding number around the inner boundary is non-zero (the winding number of each such loop must be 1 since the loops do not cross themselves).  
\end{thm}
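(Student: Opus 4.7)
The plan is to split Theorem~\ref{thm-cle-annulus} into an existence statement and a uniqueness statement and to treat them separately. For existence, I would take the loop ensemble $\Gamma_{\BB A_\rho}$ of Theorem~\ref{thm-cle-annulus0}, obtained by conformally mapping the restriction of a $\CLE_\kappa$ on $\BB D$ to the non-simply-connected component of $\BB D\setminus\gamma_{M+1}$ onto $\BB A_\rho$, and I would verify the three conditions of Definition~\ref{def-annulus-markov} for its conditional law (extended to every fixed $\rho\in(0,1)$ by the continuity-in-$\rho$ statement noted after Theorem~\ref{thm-cle-annulus0}). For uniqueness, I would run a resampling argument in the spirit of the SLE reversibility and uniqueness proofs of~\cite{ig2,ig4,msw-sle-range}.

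For the existence step, condition~\ref{item-annulus-markov-contain} is an immediate application of Lemma~\ref{lem-cle-cover} to the simply connected pocket bounded by $\gamma_{M+1}$ (or, after rerooting, by any loop outside $P$), with the connected set $P$ playing the role of $K$ and the slice $U$ playing the role of $U$. Conditions~\ref{item-annulus-markov-sle} and~\ref{item-annulus-markov-cle} are consequences of the branching $\SLE_\kappa(\kappa-6)$ construction of $\CLE_\kappa$ together with Lemma~\ref{lem-cle-markov}: one discovers the loops of $\Gamma(P;U)$ from outside in by following branches targeted at points of $P$, the excursions of these loops into $U$ are measurable with respect to this discovery, and the complementary excursion out of $U$ that completes the distinguished loop is a chordal $\SLE_\kappa$ in the appropriate pocket by the characterization of the remaining segment of a $\CLE_\kappa$ loop supplied by~\cite{shef-cle}. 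A further application of Lemma~\ref{lem-cle-markov}, conditioning on $\eta_x$ and hence on all of $\Gamma(P)$, then gives independent $\CLE_\kappa$'s in the simply connected components of $\BB A_\rho\setminus\ol{\bigcup \Gamma(P)}$.

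For uniqueness, let $\Gamma^1,\Gamma^2$ be two laws satisfying the annulus Markov property with exactly $M$ inner-boundary-surrounding loops. By condition~\ref{item-annulus-markov-cle}, to match their laws it suffices to match the joint law of the closed set $\ol{\bigcup \Gamma(P)}$ together with $(\Gamma(P;U),\mcl S_\Gamma(P;U))$, since the remaining loops then form independent $\CLE_\kappa$'s given this information. The resampling property~\ref{item-annulus-markov-sle} equips the space of such configurations with a Markov kernel (resample one complementary excursion out of $U$ as a chordal $\SLE_\kappa$ in the appropriate pocket) for which both $\Gamma^1$ and $\Gamma^2$ are stationary. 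Iterating this resampling along a rotating sequence of radial segments $P_1,\dots,P_k$ covering all of $\BB A_\rho$, and using the characterization of chordal $\SLE_\kappa$ via its conformal domain Markov property, should then force the joint laws of the excursion skeletons under $\Gamma^1$ and $\Gamma^2$ to coincide. The constraint on the number of inner-boundary-surrounding loops is needed both to guarantee that the total number of excursions across each slice is a.s.\ finite and to rule out spurious stationary laws that differ in their winding count.

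The uniqueness step is the hard part. Because $\kappa\in(4,8)$, loops touch one another and the boundary of $\BB A_\rho$, so the pockets of $\BB A_\rho\setminus\ol{\bigcup \Gamma(P)\setminus \eta_x}$ are random simply connected domains with fractal boundaries, random prime ends $x,x^*$, and an intricate combinatorial structure encoded by the pairing $x\leftrightarrow x^*$ (which is itself not measurable with respect to $(\Gamma(P;U),\mcl S_\Gamma(P;U))$ alone, as remarked before Definition~\ref{def-annulus-markov}). Showing that the iterated resampling kernel has a unique invariant distribution in the class of laws with exactly $M$ inner-boundary-surrounding loops therefore requires careful control of the boundary intersection structure of $\CLE_\kappa$ and of the pairing; this is where I expect the bulk of the technical work to lie, and where the $\SLE_\kappa(\kappa-6)$ estimates from~\cite{ig3,ig4} will implicitly enter, even though the Gaussian free field is not used directly.
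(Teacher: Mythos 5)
Your overall architecture matches the paper's: existence comes from the restricted disk $\CLE_\kappa$ of Theorem~\ref{thm-cle-annulus0} (plus continuity in $\rho$), and uniqueness comes from viewing the annulus Markov property as a stationarity condition for a resampling Markov chain. However, the uniqueness step as you describe it has a genuine gap. Knowing that $\Gamma^1$ and $\Gamma^2$ are both stationary for the resampling kernel does not by itself identify them, and your proposed mechanism --- iterate the resampling along rotating radial segments and invoke ``the characterization of chordal $\SLE_\kappa$ via its conformal domain Markov property'' to force the excursion skeletons to agree --- is not an argument: a Markov kernel can perfectly well have many stationary measures, and nothing in the domain Markov property of a single resampled $\SLE_\kappa$ arc rules this out. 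The paper's actual mechanism has two indispensable pieces you are missing. First, a \emph{coupling} statement (Proposition~\ref{prop-coupling}): chains started from \emph{any} two initial configurations can be coupled so that they coincide with positive probability after finitely many steps. This is the technical heart of Section~\ref{sec-resampling}; it uses two antipodal segments $P_\pm$, absolute-continuity lemmas for SLE and CLE (Lemmas~\ref{lem-sle-abs-cont},~\ref{lem-cle-abs-cont},~\ref{lem-miller-wu-dim}), a reduction to initial configurations of ``simple topology,'' and the topological unwinding Lemma~\ref{lem-loop-excursion}. Second, the ergodic decomposition theorem (Theorem~\ref{thm-ergodic-decomp}): distinct ergodic stationary measures are mutually singular, so positive-probability coupling forbids two of them, and every stationary measure is a mixture of ergodic ones. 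Without this second step the coupling alone proves nothing, and without the first step there is nothing to feed into the second.

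Your existence step is also substantially under-specified. Conditions~\ref{item-annulus-markov-sle} and~\ref{item-annulus-markov-cle} of Definition~\ref{def-annulus-markov} require conditioning on all $P$-excursions into $U$ \emph{and} all complementary excursions out of $U$ except one, and identifying the conditional law of that single remaining arc as a chordal $\SLE_\kappa$. This is not a direct consequence of Lemma~\ref{lem-cle-markov} or of the branching construction, because one is conditioning on only \emph{part} of several loops simultaneously. The paper needs the general Markov property for compact connected sets (Lemma~\ref{lem:general-markov}), a Markov property for $\SLE_\kappa$-decorated $\CLE_\kappa$ (Lemma~\ref{lem-sle-cle-markov}), and the $(n,k)$-exploration process (Lemma~\ref{lem:nk}) to extract this. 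Moreover, the pair $(P,U)$ pulled back to the disk via $f_M^{-1}$ is \emph{random} (it depends on $\gamma_{M+1}$), which is why Section~\ref{sec-annulus-markov} conditions on the event $\{\gamma_{M+1}^\eps = \wh\gamma\}$ for a deterministic lattice approximation $\wh\gamma$ and then sends $\eps \to 0$ --- an issue your sketch does not address.
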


Theorem~\ref{thm-cle-annulus} is an annulus analog of the Markovian characterization for CLE$_\kappa$ on a simply connected domain given in~\cite[Theorem 5.3]{shef-cle}.
If $\Gamma$ satisfies the condition of Definition~\ref{def-annulus-markov}, then so does its images under $z\mapsto \rho/z$ and $z\mapsto e^{i\theta} z$ for any $\theta  \in [0,2\pi]$. We therefore obtain the following corollary.

\begin{cor} \label{cor-annulus-inversion}
If $\kappa \in (4,8)$, $\rho  \in (0,1)$, $M\in\BB N_0$, and $\Gamma$ is distributed according to the unique law of Theorem~\ref{thm-cle-annulus}, then the law of $\Gamma$ is invariant under conformal automorphisms (inversion and rotation) of $\BB A_\rho$. 
\end{cor}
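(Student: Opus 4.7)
The plan is to apply the uniqueness statement in Theorem~\ref{thm-cle-annulus}. Let $\Gamma$ be sampled from the unique law characterized there (with parameters $\kappa,\rho,M$), and let $f$ be a conformal automorphism of $\BB A_\rho$, either a rotation $z\mapsto e^{i\theta}z$ or the inversion $z\mapsto\rho/z$. I will verify that the image loop configuration $f(\Gamma):=\{f(\gamma):\gamma\in\Gamma\}$ also satisfies the hypotheses of Theorem~\ref{thm-cle-annulus} with the same parameters $\kappa,\rho,M$; uniqueness then forces $f(\Gamma)\eqD\Gamma$, which is the corollary.

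Non-crossingness (Definition~\ref{def-non-crossing-ensemble}) is preserved by $f$ because its three defining conditions are phrased in terms of arcs, connected components of complements of closed sets, and continuity of conformal extensions to $\ol{\BB D}$, all of which are preserved under the homeomorphism $f:\ol{\BB A_\rho}\to\ol{\BB A_\rho}$ (for the third condition, one uses that a composition of conformal maps is conformal). Local finiteness transfers because $f$ and $f^{-1}$ are bi-Lipschitz on $\ol{\BB A_\rho}$, so Euclidean diameters of loops and their $f$-images are comparable. For the count of loops with nonzero winding number around $0$, note that a loop in $\BB A_\rho$ has nonzero winding number around $0$ if and only if it separates $\bdy B_\rho(0)$ from $\bdy\BB D$; for rotations winding numbers are unchanged, while for $f(z)=\rho/z$ a direct parameterization computation shows they are negated. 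In either case the count is preserved, so $f(\Gamma)$ has exactly $M$ loops of nonzero winding number around $0$.

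The substantive step is the annulus Markov property of Definition~\ref{def-annulus-markov} for $f(\Gamma)$. Fix a radial segment $P'=[\rho e^{i\theta'},e^{i\theta'}]$ and the associated opening-angle-$\pi/2$ slice $U'$. A direct check shows that $f^{-1}(P')$ is again a radial segment $P=[\rho e^{i\theta},e^{i\theta}]$ and $f^{-1}(U')$ is its canonical slice, since $f$ permutes the circles $\{|z|=r\}$ for $r\in[\rho,1]$ while either rotating or reflecting the angular coordinate. Consequently $f$ induces a bijection between the proper $P$-excursions of $\Gamma$ into $U$ and the proper $P'$-excursions of $f(\Gamma)$ into $U'$, and a measurable selection rule on the $f(\Gamma)$ side pulls back to one on the $\Gamma$ side. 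Condition~\ref{item-annulus-markov-contain} is purely topological and transfers under the homeomorphism $f$. Condition~\ref{item-annulus-markov-cle} follows from the conformal invariance of $\CLE_\kappa$ on simply connected domains applied componentwise. Condition~\ref{item-annulus-markov-sle} follows from the conformal invariance of chordal $\SLE_\kappa$: the relevant complementary component for $f(\Gamma)$ is the $f$-image of the corresponding component for $\Gamma$, and the endpoint pairing $y\leftrightarrow y^*$ is transported by $f$ because it depends only on which pairs of $\bdy U$-endpoints are joined by complementary excursions of a common loop, which is preserved by the homeomorphism $f$. The main (minor) obstacle is unpacking the definitions to verify that $f^{-1}$ sends radial segments and their canonical slices to radial segments and their canonical slices; once this is done, the rest is routine transport along $f$ combined with the conformal invariance of $\SLE_\kappa$ and $\CLE_\kappa$ on simply connected domains.
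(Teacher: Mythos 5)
Your proposal is correct and follows the same route as the paper: the paper simply observes (in the sentence preceding the corollary) that the annulus Markov property and the count of inner-boundary-surrounding loops are preserved under $z\mapsto \rho/z$ and $z\mapsto e^{i\theta}z$, and then invokes the uniqueness in Theorem~\ref{thm-cle-annulus}. You have merely spelled out the verification (radial segments and their slices map to radial segments and their slices, and the three conditions transfer by conformal invariance of $\SLE_\kappa$ and $\CLE_\kappa$ on simply connected domains) that the paper leaves implicit.
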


Corollary~\ref{cor-annulus-inversion} will be a key tool in the proof of our main theorem. It also allows us to make the following definition. 

\begin{defn} \label{def-cle-annulus}
Let $\mcl A\subset \BB C$ be an open domain with the topology of an annulus and let $f : \mcl A\rta \BB A_\rho$ be a conformal map into an annulus for some $\rho > 0$. For $\kappa \in (4,8)$ and $M\in\BB N_0$, we define \emph{$\CLE_\kappa$ on $\mcl A$ with $M$ inner-boundary-surrounding loops} to be the image under $f^{-1}$ of a loop configuration on $\BB A_\rho$ distributed according to the unique law of Theorem~\ref{thm-cle-annulus}. 
\end{defn}

The choice of $f$ in Definition~\ref{def-cle-annulus} does not matter due to Corollary~\ref{cor-annulus-inversion}. 
The following theorem combines with Theorem~\ref{thm-cle-annulus} and Corollary~\ref{cor-annulus-inversion} to give Theorem~\ref{thm-cle-annulus0}. 

\begin{thm} \label{thm-cle-markov}
Let $M\in\BB N_0$, let $\Gamma$ be a $\CLE_\kappa$ on $\BB D$, and let $\gamma_{M+1}$ be the $(M+1)$st outermost loop in $\Gamma$ surrounding 0. 
On the event $\{\gamma_{M+1} \cap \bdy\BB D = \emptyset\}$ (which has probability 1 if $M \geq 1$), let $\mcl A_M$ be the non-simply connected component of $\BB D\setminus \gamma_{M+1}$ and let $f_M : \mcl A_M \rta \BB A_\rho$ for some $\rho > 0$ be the conformal map which fixes 1 (note that $\rho$ is random and determined by $\gamma_{M+1}$). 
Almost surely, the conditional law given $\gamma_{M+1}$ of the loop ensemble $f_M(\Gamma|_{\mcl A_M})$ on the event $\{\gamma_{M+1} \cap \bdy\BB D = \emptyset\}$ satisfies the annulus Markov property of Definition~\ref{def-annulus-markov}, so has the law of a $\CLE_\kappa$ on $\BB A_\rho$ with $M$ inner-boundary-surrounding loops.  
\end{thm}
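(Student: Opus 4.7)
The plan is to verify, conditional on $\gamma_{M+1}$ and working on the event $\{\gamma_{M+1} \cap \bdy\BB D = \emptyset\}$, the three conditions of Definition~\ref{def-annulus-markov} for the pushforward loop ensemble $\Gamma' := f_M(\Gamma|_{\mcl A_M})$. By conformal invariance I may equivalently work with the pullbacks $K := f_M^{-1}(P)$ and $V := f_M^{-1}(U)$ in $\mcl A_M$, which are respectively a radial-type crosscut from the inner boundary $\gamma_{M+1}$ to the outer boundary $\bdy\BB D$ and an open annular slice around it, and verify the three conditions for $\Gamma|_{\mcl A_M}$ with the roles of $(P,U)$ played by $(K,V)$.

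For Condition~\ref{item-annulus-markov-contain} (covering), I apply Lemma~\ref{lem-cle-cover} to $\Gamma$ on $\BB D$ with open set $V$ and connected Borel set $K$: a.s.\ each point of $K$ is surrounded by arbitrarily small loops of $\Gamma$, and for small enough diameter these are contained in $V \subset \mcl A_M$ and hence belong to $\Gamma|_{\mcl A_M}$. For Condition~\ref{item-annulus-markov-cle} (conditional $\CLE_\kappa$ in complementary components), I exhaust the loops of $\Gamma$ that intersect $K$ by running the branching $\SLE_\kappa(\kappa-6)$ process targeted at a countable dense subset of $K$ and applying Lemma~\ref{lem-cle-markov} at each stopping time where a new touching loop is completed. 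This yields that, conditional on the closed union of all loops of $\Gamma$ meeting $K$, the restriction of $\Gamma$ to each complementary component of $\mcl A_M$ is an independent $\CLE_\kappa$; pushing forward by $f_M$ gives the desired statement. For Condition~\ref{item-annulus-markov-sle} (SLE law of the residual excursion), let $\wt\eta_y := f_M^{-1}(\eta_x)$. This is an arc of a single loop $\gamma^* \in \Gamma|_{\mcl A_M}$; by the branching $\SLE_\kappa(\kappa-6)$ construction of Section~\ref{sec-branching}, $\gamma^*$ is the concatenation of an $\SLE_\kappa(\kappa-6)$ portion and a chordal $\SLE_\kappa$ fill-in, and by exploiting target invariance and re-ordering the exploration so that $\wt\eta_y$ is the last piece revealed, $\wt\eta_y$ plays the role of the chordal $\SLE_\kappa$ fill-in given everything else. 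Its conditional law is therefore chordal $\SLE_\kappa$ from $f_M^{-1}(x)$ to $f_M^{-1}(x^*)$ in the appropriate complementary component, and pushing forward by $f_M$ (using conformal invariance of chordal $\SLE_\kappa$) yields Condition~\ref{item-annulus-markov-sle}.

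The main obstacle is Condition~\ref{item-annulus-markov-sle}. Identifying the conditional law of a single arc of one $\CLE_\kappa$ loop, given the loops contained entirely in $V$ together with the complete collection of $K$-excursions and all but one of the complementary excursions, is delicate because one must verify that this conditioning does not encode extra information that would perturb the law of $\wt\eta_y$ away from chordal $\SLE_\kappa$. Concretely, one needs to show that after conditioning on $\Gamma(K;V)$, $\mcl S_{\Gamma}(K;V)$, and all complementary $K$-excursions out of $V$ other than $\wt\eta_y$, the residual randomness in the loop $\gamma^*$ is exactly the chordal $\SLE_\kappa$ fill-in predicted by the branching construction. This will require a careful re-ordering of the $\SLE_\kappa(\kappa-6)$ exploration so that the missing excursion $\wt\eta_y$ is the last piece to be revealed, together with an application of the SLE domain Markov property in the correct simply connected subdomain.
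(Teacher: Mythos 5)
Your reduction in the first paragraph contains the central gap: the pullbacks $K := f_M^{-1}(P)$ and $V := f_M^{-1}(U)$ are \emph{random} sets determined by $\gamma_{M+1}$, and on top of that you are conditioning on $\gamma_{M+1}$, which is itself a functional of $\Gamma$. Every tool you invoke afterwards --- Lemma~\ref{lem-cle-cover}, the branching exploration, Lemma~\ref{lem-cle-markov} --- is stated for deterministic sets and for the \emph{unconditioned} CLE. The phrase ``by conformal invariance I may equivalently work with the pullbacks'' is not a valid reduction, because there is no a priori sense in which the conditional law of $\Gamma|_{\mcl A_M}$ given $\gamma_{M+1}$ behaves like a CLE explored along a deterministic crosscut of $\mcl A_M$: that is essentially what the theorem asserts. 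The paper's proof is organized entirely around this point. It first proves the Markov property w.r.t.\ a \emph{deterministic} pair $(P,U)$ for a CLE on $\BB D$ (Corollary~\ref{cor:markov}), then approximates $\gamma_{M+1}$ by deterministic $\eps$-lattice loops $\wh\gamma$, conditions on $\{\gamma_{M+1}^\eps = \wh\gamma\}$, checks that this event and $\gamma_{M+1}$ itself are measurable with respect to the $\sigma$-algebra $\Sigma_x$ one conditions on (so that the order of conditioning can be interchanged), and finally sends $\eps\to 0$ using that $f_M\circ g_M^\eps$ converges to the identity. Nothing in your proposal plays the role of this discretization-and-measurability argument, and without it the conditioning on $\gamma_{M+1}$ cannot be reconciled with the exploration along $K$.

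The second gap is Condition~\ref{item-annulus-markov-sle}, which you correctly flag as the main obstacle but then resolve only by restating the goal. The ``re-ordering of the exploration so that $\wt\eta_y$ is revealed last'' is precisely the content of the $(n,k)$-exploration process and Lemma~\ref{lem:nk}, and making it rigorous requires several intermediate results you do not supply: a general Markov property for conditioning on all loops meeting a compact connected set attached to the boundary (Lemma~\ref{lem:general-markov}, proved by an iterative exploration with a geometric-tail termination argument), a strong Markov property along a path (Lemma~\ref{lem:strong_markov}), a Markov property for an SLE-decorated CLE (Lemma~\ref{lem-sle-cle-markov}) --- needed because once the excursion's complement is fixed the remaining curve is an $\SLE_\kappa$ \emph{conditioned to avoid a set}, and one must keep exploring the CLE around it --- and an $\eps$-tube approximation of $P$ with a verification that the discovery order of loops stabilizes as $\eps\to 0$. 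A single appeal to target invariance and the domain Markov property does not identify the conditional law given the specific $\sigma$-algebra $\Sigma_x$ of Definition~\ref{def:markovUP}, which records the full set of $P$-excursions into $U$ and \emph{all other} complementary excursions; this information is not revealed by any single branch of the exploration tree. (A smaller issue of the same flavor: your use of Lemma~\ref{lem-cle-markov} for Condition~\ref{item-annulus-markov-cle} is misdirected --- that lemma describes the loops \emph{inside} the bounded components of a discovered origin-surrounding loop, whereas here one needs the Markov property for exploration along a crosscut, i.e.\ Lemmas~\ref{lem:general-markov} and~\ref{lem:strong_markov}.)
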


\section{CLE satisfies the annulus Markov property}
\label{sec-markov}

In this section we will prove Theorem~\ref{thm-cle-markov}.
To accomplish this, we will need to establish several versions of the Markov property for $\CLE_\kappa$ on $\BB D$ which build on the basic Markov property established in~\cite[Theorem~5.3]{shef-cle} (restated as Lemma~\ref{lem:Sheffield} below).  
We start in Section~\ref{sec-general-markov} by proving a Markov property for the conditional law of the rest of the $\CLE_\kappa$ when we condition on all of the loops which intersect a fixed compact set $K$ with $K\cap\bdy\BB D\not=\emptyset$.  This property for $\kappa \in (4,8)$ is the analog of the restriction property of $\CLE_\kappa$ for $\kappa \in (8/3,4]$, which was used to characterize the simple  $\CLE$s \cite{shef-werner-cle}.  Since the annulus Markov property requires us to condition on only \emph{part} of some loops, we will also need a suitable Markov property for an $\SLE_\kappa$ coupled with a $\CLE_\kappa$, which we establish in Section~\ref{sec-sle-cle-markov}. 
In Section~\ref{sec:nk}, we combine the preceding two sections to establish a variant of the annulus Markov property for $\CLE_\kappa$ on the disk (when we do not condition on one of the origin-surrounding loops). 
In Section~\ref{sec-annulus-markov}, we conclude the proof of Theorem~\ref{thm-cle-markov}.

\subsection{General Markov property}
\label{sec-general-markov}

Fix $\kappa\in(4,8)$. Let $\Gamma$ be a $\CLE_\kappa$ in the unit disk $\BB D$.
By \cite[Theorem 5.3]{shef-cle} and the local finiteness of $\CLE_\kappa$ \cite{ig4}, it is known that $\CLE_\kappa$ has the following Markovian property.

\begin{lemma}[\!\!\cite{shef-cle, ig4}]\label{lem:Sheffield}
Suppose that $I\subset\partial\BB D$ is a deterministic arc.  Given the set $\Gamma(I)$ of loops in $\Gamma$ that intersect $I$, the conditional law of the rest of $\Gamma$ is that of an independent $\CLE_\kappa$ in each connected component of $ \BB D\setminus \ol{\bigcup \Gamma(I)} $. 
\end{lemma}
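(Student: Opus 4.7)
My plan is to combine the branching $\SLE_\kappa(\kappa-6)$ construction of $\CLE_\kappa$ (Section~\ref{sec-branching}, adapted to the disk) with the strong Markov property of branching $\SLE_\kappa(\kappa-6)$ and with local finiteness of $\CLE_\kappa$ from~\cite{ig4}. The idea is to realize $\Gamma(I)$ as the collection of loops discovered by the exploration up to a suitable stopping time, after which everything in the complementary components is independent by the Markov property of the tree.

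First I would root the branching $\SLE_\kappa(\kappa-6)$ exploration of $\Gamma$ at an endpoint $x$ of $I$. For each point $z$ in a countable dense subset $\{z_n\}\subset\BB D$, the branch $\eta_{z_n}$ from $x$ to $z_n$, together with its ``closing'' chordal $\SLE_\kappa$ arcs (as in the construction of the loops $\gamma_{z,j}$ in Section~\ref{sec-branching}), traces every loop of $\Gamma$ that separates $z_n$ from the portion of the domain still containing it. In particular, every loop of $\Gamma$ hitting $I$ is eventually traced in full by some finite collection of such branch segments. For each such loop $\gamma$ I would identify a ``completion time'' $\tau_\gamma$, after which $\gamma$ has been fully revealed, and set $\tau^* := \sup_{\gamma\in\Gamma(I)} \tau_\gamma$. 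Local finiteness from~\cite{ig4} — which bounds, for each $\ep>0$, the number of loops in $\Gamma(I)$ of diameter at least $\ep$ — lets one argue that $\tau^*$ is a stopping time for the natural filtration generated by the exploration, and that at time $\tau^*$ the revealed data consist exactly of $\Gamma(I)$ together with the branch segments used to trace it. Applying the strong Markov property of the branching exploration at $\tau^*$, the remaining branches, now restricted to each simply connected component of $\BB D\setminus \ol{\bigcup \Gamma(I)}$ and targeted at its own interior points, form an independent branching $\SLE_\kappa(\kappa-6)$ rooted at an appropriate boundary prime end of that component. By definition of $\CLE_\kappa$ via branching $\SLE_\kappa(\kappa-6)$, this produces an independent $\CLE_\kappa$ in each component, which is the statement of the lemma.

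The main obstacle is the verification that $\tau^*$ is genuinely a stopping time, i.e.\ that the event ``a fully traced loop hits $I$'' can be decided from the history of the exploration up to its completion, without peeking into the future. This amounts to reading off, from the Loewner driving pair $(W^{z_n},O^{z_n})$ and the combinatorics of the tree, the moments at which a loop closes, and using local finiteness to rule out any infinite accumulation of macroscopic loops near $I$ that could cause $\tau^*$ to fail to be a proper stopping time or to leave countably many loops undetected. Once $\tau^*$ is established as a stopping time, the rest of the argument is a direct application of results already in~\cite{shef-cle} and~\cite{ig4}, which is why the lemma is attributed to those works.
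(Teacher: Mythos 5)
The paper does not actually prove this lemma---it is imported wholesale from \cite[Theorem 5.3]{shef-cle} together with local finiteness from \cite{ig4}---so the fair comparison is with the argument in those references and with the paper's own proof of the generalization, Lemma~\ref{lem:general-markov}, which shows what a careful version of such a discovery argument must look like. Your sketch assembles the right ingredients but the central step fails as formulated. First, $\tau^*=\sup_{\gamma\in\Gamma(I)}\tau_\gamma$ is not a stopping time for any filtration you have specified: the completion times live on different branches of the exploration tree, which carries no single time axis, and a loop is only completed by the closing arc $\wh\eta_{z,j}$, itself a limit of yet other branches. Second, and more seriously, the claim that at time $\tau^*$ ``the revealed data consist exactly of $\Gamma(I)$ together with the branch segments used to trace it'' is false: a branch $\eta_{z_n}$ targeted at an interior point traces, on its way to a nested loop of $\Gamma(I)$, arcs of many intermediate loops that do \emph{not} belong to $\Gamma(I)$, so conditioning on the exploration up to that time conditions on strictly more than $\sigma(\Gamma(I))$, whereas the lemma asserts a conditional law given $\Gamma(I)$ alone. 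Third, for $\kappa\in(4,8)$ loops at arbitrary nesting depth can touch $I$, so $\Gamma(I)$ is not exhausted by any finite stage of the exploration; local finiteness by itself does not show that an iterative discovery procedure a.s.\ reveals all of $\Gamma(I)$.

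The route that works, and the one the paper itself follows when extending the statement to general compact sets, is to explore along the boundary rather than toward interior points: the concatenation of the arcs of the \emph{maximal} loops hitting $I$ is itself a chordal $\SLE_\kappa(\kappa-6)$ between the endpoints of $I$ (Lemma~\ref{lem-cle-concatenate}), hence reveals no information beyond those loops, and \cite[Theorem 5.4, Property 5]{shef-cle} gives the conditional independence of the remainder. One then iterates into the complementary components still bordering $I$ and shows, via the harmonic-measure and hitting estimate used in the proof of Lemma~\ref{lem:general-markov} (Lemma~\ref{lem-cle-hit}), that each point of $I$ is a.s.\ swallowed after a stochastically geometric number of iterations, so the procedure exhausts $\Gamma(I)$. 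If you want a self-contained proof rather than a citation, that is the argument to write down.
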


Now, we want to extend this Markov property to a more general version where the arc $I$ is replaced by a more general set.

\begin{lemma}[General Markov property]\label{lem:general-markov}
Let $K\subset \overline{\BB D}$ be a deterministic compact \emph{connected} set which intersects $\partial\BB D$.  
Then given $\Gamma(K)$, the conditional law of the rest of $\Gamma$ is that of an independent $\CLE_\kappa$ in each connected component of $\BB D\setminus \ol{\bigcup \Gamma(K)}$.
\end{lemma}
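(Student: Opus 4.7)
The plan is to extend the boundary-arc Markov property (Lemma~\ref{lem:Sheffield}) to general compact connected boundary-touching sets $K$ via an approximation argument combined with an inductive application of the basic case.

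As a preliminary observation, applying Lemma~\ref{lem-cle-cover} to the connected set $K$ gives $K \subset \ol{\bigcup \Gamma(K)}$ almost surely, and together with $K \cap \partial\BB D \neq \emptyset$ this forces every connected component of $\BB D \setminus \ol{\bigcup \Gamma(K)}$ to be simply connected; hence it is meaningful to speak of an independent $\CLE_\kappa$ on each.

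The main step would be to approximate $K$ from outside by a decreasing sequence $K_n \supset K$ with $K_n \downarrow K$ in Hausdorff distance, each $K_n$ being a finite ``tree'': a connected union of closed sub-arcs of $\partial\BB D$ and piecewise-linear segments in $\ol{\BB D}$. For each such $K_n$ I would establish the Markov property by induction on the number of tree edges, the base case being Lemma~\ref{lem:Sheffield}. For the inductive step, pick a boundary arc $I \subset K_n \cap \partial\BB D$ of the tree and condition on $\Gamma(I)$, which by Lemma~\ref{lem:Sheffield} yields independent $\CLE_\kappa$'s in the components of $\BB D \setminus \ol{\bigcup \Gamma(I)}$. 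In each component $V$ that meets $K_n \setminus I$, the set $K_n \cap \ol V$ is a strictly smaller tree which still touches $\partial V$ (through the vertices where $K_n$ was attached to $I$, now absorbed into $\ol{\bigcup \Gamma(I)} \subset \partial V$); a conformal map $V \to \BB D$ combined with the inductive hypothesis then completes the step.

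Finally, I would pass from $K_n$ to $K$ by a limiting argument. Local finiteness of $\Gamma$ together with compactness of loops yields $\Gamma(K_n) \downarrow \Gamma(K)$ almost surely (any loop not intersecting the closed set $K$ has positive distance from $K$, hence lies outside $\Gamma(K_n)$ for $n$ large), so the complementary components of $\ol{\bigcup \Gamma(K_n)}$ converge to those of $\ol{\bigcup \Gamma(K)}$. Combined with a continuity argument for the conditional law as we refine the conditioning, this transfers the Markov property from $K_n$ to $K$. The main obstacle will be the inductive step: carefully controlling the conformal maps of the intermediate components $V$ and their prime-end boundary behavior so that Lemma~\ref{lem:Sheffield} can legitimately be applied to the image arc at each stage, and verifying that the approximation $K_n \downarrow K$ is fine enough for the final limit of conditional laws to yield the desired Markov property for $K$.
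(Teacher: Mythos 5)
Your overall strategy (approximate $K$, bootstrap from Lemma~\ref{lem:Sheffield}, pass to the limit) is the same as the paper's, and your preliminary observation and final limiting step are fine in spirit. But the core of your argument --- the induction on the number of tree edges --- has a genuine gap. After you condition on $\Gamma(I)$ for a boundary arc $I$ of $K_n$ and pass to a complementary component $V$, the leftover tree $K_n\cap\ol V$ meets $\bdy V$ only at finitely many \emph{points} (the attachment vertices), not along arcs. Lemma~\ref{lem:Sheffield} conditions on the loops hitting a boundary \emph{arc}, so neither the base case nor the inductive hypothesis applies to such a configuration: the first edge you would need to explore in $V$ is an interior segment emanating from a single boundary point, and there is no version of the basic Markov property for that. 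This is exactly why the paper replaces $K$ by its $\eps$-neighborhood $K^\eps$, so that at every stage of the exploration the set $\bdy O_n^\eps\cap K^\eps$ contains genuine arcs.

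Even granting some thickening, a second problem remains: exploring an interior segment cannot be completed in finitely many rounds, so no induction on a finite number of edges can close. Each round of conditioning only reveals the loops meeting the portion of the segment currently on the boundary of the explored region; the segment keeps protruding into the new domain, and one must iterate countably many times and then \emph{prove} that the closure of the union of all discovered loops equals $\ol{\bigcup\Gamma(K^\eps)}$. The paper does this with a quantitative argument: at each step the harmonic measure of $K^\eps\cap\bdy O_n^\eps$ from a fixed point $z\in K^\eps$ is bounded below, so by Lemma~\ref{lem-cle-hit} the number of steps before $z$ is swallowed is dominated by a geometric random variable. Your proposal contains no substitute for this termination argument, and without it the claimed identity $\ol{\bigcup_n\CK_n}=\ol{\bigcup\Gamma(K_n)}$ (implicit in your inductive step actually covering all of $\Gamma(K_n)$) is unjustified. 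To repair the proof you would need to replace the finite induction by the paper's infinite exploration plus the harmonic-measure/geometric-domination step, at which point the tree structure of $K_n$ buys you nothing over working directly with $K^\eps$.
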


Note that Lemma~\ref{lem-cle-cover} implies that in the setting of Lemma~\ref{lem:general-markov}, a.s.\ $\ol{\bigcup \Gamma(K)}$ contains $K$, and hence is connected. 
In particular, connected components of $\BB D\setminus \ol{\bigcup \Gamma(K)}$ are simply connected, so it makes sense to talk about CLE$_\kappa$ in these components.
Lemma~\ref{lem:general-markov} is the $\kappa \in (4,8)$ analog of the spatial Markov property of $\CLE_\kappa$ for $\kappa \in (8/3,4]$ which was established in~\cite{shef-werner-cle}. For $\kappa \in (8/3,4]$, it is shown in~\cite{shef-werner-cle} that this property together with conformal invariance uniquely characterizes the law of $\CLE_\kappa$ (hence showing that $\CLE_\kappa$ loops are distributed as the outer boundaries of Brownian loop-soup clusters).

\begin{proof}[Proof of Lemma~\ref{lem:general-markov}]
Due to conformal invariance of CLE, the origin plays no particular role. Therefore, we can assume that origin is not in $K$ and denote by $O$ the connected component of $\BB D \setminus\ol{\bigcup \Gamma(K)}$ that contains the origin. It then suffices to prove that conditionally on $\Gamma(K)$, $\Gamma$ restricted to $O$ is a $\CLE_\kappa$ in $O$ which is independent of the restriction of $\Gamma$ to any other connected component of $\BB D \setminus \ol{\bigcup \Gamma(K)}$.

We will explore the $\CLE$ from the boundary towards the origin. In order to use Lemma~\ref{lem:Sheffield}, we first consider the $\eps$-neighborhood of $K$ and denote it by $K^\eps$ (e.g., this is particularly necessary when $K$ is a line). Let $O^\eps$ be the connected component of ${\BB D}\setminus \ol{\bigcup \Gamma(K^\eps)}$ containing the origin. 
Let us first prove the following statement:
\eqb \label{eqn-cle-markov0}
\text{Conditionally on $\Gamma(K^\eps)$, $\Gamma$ restricted to $O^\eps$ is an independent $\CLE_\kappa$ in $O^\eps$.}
\eqe

Note that $K^\eps\cap\partial{\BB D}$ is the union of countably many arcs.
 Hence, as a first step, we can discover all the loops  in $\Gamma$ that intersect $K^\eps\cap\partial \BB D$ and we denote the closure of their union by $\CK_1^\eps$. Let $O_1^\eps$ be the connected component of $\BB D \setminus \CK_1^\eps$ that contains the origin.  We know by Lemma~\ref{lem:Sheffield} that conditionally on the loops which make up $\CK_1^\eps$, $\Gamma|_{O_1^\eps}$ is a $\CLE_\kappa$ in $O_1^\eps$ which is independent from the restriction of $\Gamma$ to each other connected component of $\BB D \setminus \CK_1^\eps$.
If $O_1^\eps \cap K^\eps=\emptyset$, then we would have proved~\eqref{eqn-cle-markov0}. Otherwise, we continue to explore $\Gamma$ restricted to $O_1^\eps$ and discover all the loops that intersect $\partial O_1^\eps \cap K^\eps$. 
This process can be iterated:
Suppose that at step $n$, we let $\CK_n^\eps$ be the closure of the union of all of the loops we have discovered so far and let $O_n^\eps$ be the component of $\BB D \setminus \CK_n^\eps$ which contains the origin.  Then Lemma~\ref{lem:Sheffield} and induction shows that conditionally on the loops which make up $\CK_n^\eps$, $\Gamma|_{O_n^\eps}$ is a $\CLE_\kappa$ in $O_n^\eps$ which is independent from the restriction of $\Gamma$ to each other connected component $\BB D \setminus \CK_n^\eps$.
If $O_n^\eps \cap K^\eps=\emptyset$, then we would have proved~\eqref{eqn-cle-markov0}. Otherwise, we continue to explore $\Gamma$ restricted to $O_n^\eps$ and discover all the loops that intersect $\partial O_n^\eps \cap K^\eps$. 

If the above exploration process ends in finitely many steps, then we would have already proved~\eqref{eqn-cle-markov0}.  Otherwise, we need to prove that $\ol{\cup_n \CK_n^\eps} = \ol{\cup \Gamma(K^\epsilon)}$.  It is clear that $\ol{\cup_n \CK_n^\eps} \subseteq \ol{\cup \Gamma(K^\epsilon)}$.  If the containment were strict, then it means that there is a loop $\gamma$ which intersects $K^\epsilon$ which is not discovered by the exploration process.  In other words, $\gamma \subseteq O_n^\eps$ for all $n$.
Since $K^\eps$ is open, $\gamma$ must intersect the interior of $K^\eps$ , so the intersection of the component containing $0$ of $\BB D \setminus \ol{\cup_n \CK_n^\eps}$ and $K^\eps$ has non-empty interior.  We will prove that this is impossible.  It suffices to show that for any point $z\in K^\eps$, there is a.s.\ a finite $n$ for which $z \not\in O_n^\eps$.

Now fix  $z\in K^\eps$. 
For each step $n$ such that $z\in O_n^\eps$, the harmonic measure seen from $z$ inside $O_n^\eps$ of $K^\eps \cap \partial O_n^\eps$ is greater than the harmonic measure seen from $z$ inside $K^\eps \cap O_n^\eps$ of $K^\eps\cap \partial O_n^\eps$, which is again greater than the harmonic measure seen from $z$ inside $K^\eps$ of $K^\eps\cap \partial{\BB D}$, which is equal to some $p>0$ which depends on $\eps$, but not $n$.
On the other hand, $z$ and the origin are relatively far away from each other inside $O_n^\eps$ in the sense that if one starts two independent Brownian motions from $z$ and the origin, then the probability that they meet before hitting $\partial O_n^\eps$ is decreasing in $n$, hence bounded above by some $q<1$.
This means that if one maps $O_n^\eps$ to the unit disk by some conformal map $\varphi$ sending $z$ to the origin, then $\varphi(K^\eps\cap \partial O_n^\eps)$ will have Lebesgue measure at least $p$ and the distance from $\varphi(0)$ to the origin is greater than some $d\in(0,1)$.
By combining this with Lemma~\ref{lem-cle-hit} at each step such that $z\in O_n^\eps$, the probability that one discovers in the $(n+1)$st step a loop that encircles $z$ and disconnects $z$ from the origin is bounded below by some constant $c>0$.
Therefore, the number of steps that it takes to have $z\not\in O_n^\eps$ is stochastically dominated by a geometric random variable, thus a.s.\ finite.
We have thus proved $\ol{\cup_n \CK_n^\eps} = \ol{\cup \Gamma(K^\epsilon)}$, hence have also proved~\eqref{eqn-cle-markov0}. 

Now it only remains to let $\eps$ go to $0$.  Let us first show that $\ol{\bigcup \Gamma(K^\eps)}$ converges a.s.\ to $\ol{\bigcup \Gamma(K)}$ w.r.t.\ the Hausdorff distance. Note that $\ol{\bigcup \Gamma(K^\eps)}$ is decreasing in $\eps$, i.e., $\ol{\bigcup \Gamma(K^{\eps_1})} \subset \ol{\bigcup \Gamma(K^{\eps_2})}$ if $\eps_1<\eps_2$, hence it converges to the limit $ \cap_{\eps>0} \ol{\bigcup \Gamma(K^\eps)}$.
For any set $A\subset \overline{\BB D}$ which is at positive distance away from $K$, there are a.s.\ at most finitely many loops in $\Gamma$ that intersect both $A$ and $K^\eps \setminus K$, provided that $\eps$ is small enough, due to the fact that CLE is locally finite.
This implies that if $\ol{\bigcup \Gamma(K)} \cap A=\emptyset$, then $\ol{\bigcup \Gamma(K^\eps)} \cap A=\emptyset$ when $\eps$ is small enough. Hence $ \cap_{\eps>0} \ol{\bigcup \Gamma(K^\eps)}$ is contained in $\ol{\bigcup \Gamma(K)}$. Since it obviously also contains $\ol{\bigcup \Gamma(K)}$, the two sets are actually equal. 
Therefore $O^\eps$ a.s.\ increases to $O$.
Therefore, $\Gamma$ restricted to $O$ is distributed as the limit of $\Gamma$ restricted to $O^\eps$ which is an independent $\CLE$ in $O$.
\end{proof} 

Lemma~\ref{lem:general-markov} allows us to explore the CLE along any given simple path $P: [0,1]\to \ol{\BB D}$ such that $P(0)\in\partial \BB D$ by taking $K=P[0,t]$ for $t\in [0,1]$. This exploration also satisfies the following strong Markov property.

\begin{lemma}[Exploring along a line]\label{lem:strong_markov}
If $T$ is a stopping time for the filtration generated by $\{ \Gamma(P[0,t]) \}_{t\in [0,1]}$, then conditionally on $\Gamma(P[0,T])$, the rest of $\Gamma$ is distributed as an independent $\CLE_\kappa$ in each connected component of $\BB D\setminus \ol{\bigcup \Gamma(P[0,T])}$.
\end{lemma}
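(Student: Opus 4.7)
The plan is to extend Lemma~\ref{lem:general-markov}, which handles deterministic compact connected sets meeting $\partial\BB D$, to the random sets $P[0,T]$ via the standard two-step reduction: first handle stopping times taking countably many values, then approximate a general stopping time from above by such discrete ones.

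First, I would treat the case where $T$ takes values in a countable set $\{t_n\}_{n\in\BB N}$. The event $\{T=t_n\}$ lies in $\sigma(\Gamma(P[0,t_n]))$, and on this event $\Gamma(P[0,T]) = \Gamma(P[0,t_n])$. Since $P[0,t_n]$ is a deterministic compact connected subset of $\ol{\BB D}$ containing $P(0)\in\partial\BB D$, Lemma~\ref{lem:general-markov} applied with $K=P[0,t_n]$ gives the desired conditional description on each event $\{T=t_n\}$, and summing over $n$ yields the strong Markov property in the discrete case.

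For a general stopping time $T$, I would set $T_k := \lceil 2^k T\rceil / 2^k$ and (working with the right-continuous completion of the filtration if necessary so that each $T_k$ is a stopping time) note that $T_k$ takes countably many dyadic values and $T_k\downarrow T$ pointwise. Applying the discrete case to each $T_k$ gives the strong Markov identity at level $T_k$. Continuity of $P$ and compactness give $\bigcap_k P[0,T_k] = P[0,T]$, so the nested families of loops satisfy $\bigcap_k \Gamma(P[0,T_k]) = \Gamma(P[0,T])$.

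The main obstacle is the passage to the limit $k\to\infty$. The crux is to show that $\ol{\bigcup \Gamma(P[0,T_k])}$ decreases to $\ol{\bigcup \Gamma(P[0,T])}$ in Hausdorff distance, and that the $\CLE_\kappa$ law on each complementary component depends continuously on this set. I would argue as in the $\varepsilon$-neighborhood step at the end of the proof of Lemma~\ref{lem:general-markov}: the nested closed sets $\ol{\bigcup \Gamma(P[0,T_k])}$ decrease to some closed limit $L \supset \ol{\bigcup \Gamma(P[0,T])}$, and for any $z\in L$ one can extract a sequence $z_k\to z$ with $z_k\in\gamma_k\in\Gamma(P[0,T_k])$; local finiteness of $\Gamma$ then forces either some loop $\gamma$ to lie in every $\Gamma(P[0,T_k])$ (hence in $\Gamma(P[0,T])$, so that $z\in\gamma\subset\ol{\bigcup \Gamma(P[0,T])}$), or $\op{diam}(\gamma_k)\to 0$, in which case $z\in P[0,T]$ and Lemma~\ref{lem-cle-cover} places $z$ inside $\ol{\bigcup \Gamma(P[0,T])}$. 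The ensuing Carath\'eodory convergence of the complementary simply connected components, combined with conformal invariance of $\CLE_\kappa$ and a test-function (or backward martingale) argument, allows the strong Markov identity at $T_k$ to pass to the limit to yield the claim for $T$.
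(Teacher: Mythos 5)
Your proposal is correct and follows essentially the same route as the paper: approximate $T$ from above by dyadic-valued stopping times, apply Lemma~\ref{lem:general-markov} on each event $\{T_k = t\}$ (which is measurable with respect to $\sigma(\Gamma(P[0,t]))$), and pass to the limit using the monotone decrease of $\ol{\bigcup\Gamma(P[0,T_k])}$ to $\ol{\bigcup\Gamma(P[0,T])}$. Your treatment of the limiting step (local finiteness plus Lemma~\ref{lem-cle-cover} to identify the Hausdorff limit, then Carath\'eodory convergence of the components) is in fact more detailed than the paper's, which handles this step in one line by analogy with the end of the proof of Lemma~\ref{lem:general-markov}.
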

\begin{proof}
For any stopping time $T$, we define $T_n$ to be the smallest real number greater than $T$ which is in the set $D_n:=\{k 2^{-n}, k\in\BB Z\} \cap [0,1]$. Then $T_n$ is  a stopping time for the exploration process and can take on only finitely many possible values. Lemma~\ref{lem:general-markov} implies that the strong Markov property holds with $T_n$ in place of $T$. As $n\to\infty$, $T_n$ decreases towards $T$ and $\ol{\bigcup\Gamma(P[0,T])}$ decreases towards $\ol{\bigcup\Gamma(P[0,T_n])}$ a.s. Therefore, the strong Markov process also holds at time $T$.
\end{proof}

\subsection{Markov property for SLE decorated CLE}
\label{sec-sle-cle-markov}
 
In this subsection we prove a variant of Lemma~\ref{lem:general-markov} which applies to a $\CLE_\kappa$ coupled with an $\SLE_\kappa$.  This variant is needed to describe the conditional law of the complementary $P$-excursion out of $U$ in the setting of Definition~\ref{def-annulus-markov}.

Let $B \subset\overline{\BB D}$ be a closed set which does not disconnect $-i, i$.  Let $\eta$ be an $\SLE_\kappa$ from $-i$ to $i$ in $\BB D$ conditioned to avoid $B$. Conditionally on $\eta$, let $\Gamma$ be a $\CLE_\kappa$ in $\BB D\setminus \eta$ (i.e., a collection of independent $\CLE_\kappa$'s in each of the connected components of $\BB D\setminus \eta$). Also let $I\subset\bdy{\BB D}$ be a connected arc and let $\Gamma(I)$ be the set of loops in $\Gamma$ which intersect $I$.  We define $D_0$ to be the connected component of $\BB D\setminus \ol{\bigcup \Gamma(I) }$ with $-i$ and $i$ on its boundary. Let $B_0:=B\cap D_0$.

\begin{lemma} \label{lem-sle-cle-markov}
The conditional law of $\eta$ given $\Gamma(I)$ and the event $\{\eta\cap I=\emptyset\}$ is that of an $\SLE_\kappa$ in $D_0$ conditioned to avoid $B_0$. 
\end{lemma}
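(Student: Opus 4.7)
The plan is to first reduce to the case $B=\emptyset$ via a Radon-Nikodym argument, then prove the case $B=\emptyset$ by a sequential $\CLE_\kappa$ exploration. For the reduction, note that the law of an $\SLE_\kappa$ in $\BB D$ from $-i$ to $i$ conditioned to avoid $B$ is absolutely continuous with respect to the unconditional $\SLE_\kappa$ law, with Radon-Nikodym derivative proportional to $\mathbf{1}_{\{\eta\cap B=\emptyset\}}$. On the event $\{\eta\cap I=\emptyset\}$ and given $\Gamma(I)$, the curve $\eta$ a.s.\ lies in $D_0$, so $\eta\cap B=\eta\cap B_0$. Hence it suffices to treat the case $B=\emptyset$ and conclude that the conditional law of $\eta$ is $\SLE_\kappa$ in $D_0$; the full statement will then follow by reinstating the conditioning $\{\eta\cap B_0=\emptyset\}$.

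For the case $B=\emptyset$, I would explore the loops of $\Gamma(I)$ one at a time, analogously to the proof of Lemma~\ref{lem:general-markov}. First, apply Lemma~\ref{lem:general-markov} to the $\CLE_\kappa$ $\Gamma|_{D^\eta}$ in the simply connected component $D^\eta$ of $\BB D\setminus\eta$ whose boundary contains $I$: conditionally on $\eta$ and $\Gamma(I)$, the rest of $\Gamma|_{D^\eta}$ is an independent $\CLE_\kappa$ in each connected component of $D^\eta\setminus\ol{\bigcup\Gamma(I)}$, and the configuration in the other component of $\BB D\setminus\eta$ is independent of $\Gamma(I)$. Next, discover the loops of $\Gamma(I)$ sequentially via the strong Markov property (Lemma~\ref{lem:strong_markov}) applied to a radial exploration along $I$. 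The inductive claim to prove is that after $n$ loops are discovered, the conditional law of $\eta$ given the discovered loops and $\{\eta\cap I=\emptyset\}$ is $\SLE_\kappa$ in the current ``slit'' domain, defined as the component of $\BB D\setminus \ol{\bigcup \text{(discovered loops)}}$ containing $\pm i$ on its boundary. Passing to the limit $n\to\infty$, using the local finiteness of $\CLE_\kappa$ to ensure convergence of the slit domains to $D_0$, will yield the desired conditional law.

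The main obstacle is verifying the inductive step: that after discovering an additional loop of $\Gamma(I)$, the conditional law of $\eta$ remains $\SLE_\kappa$ in the updated slit domain. This is delicate because $\SLE_\kappa$ in $\BB D$ conditioned to avoid a set $A$ is not the same as $\SLE_\kappa$ in $\BB D\setminus A$ for $\kappa\neq 8/3$, so the required $\SLE_\kappa$ law in the updated domain must arise from an exact cancellation between the partition-function weight for the $\SLE$ restriction and the conditional density of the newly discovered $\CLE_\kappa$ loop relative to the marginal loop density in each candidate domain. This cancellation is natural in view of the branching $\SLE_\kappa(\kappa-6)$ construction of $\CLE_\kappa$ from Section~\ref{sec-branching} together with the $\SLE_\kappa$ domain Markov property, but making it fully rigorous — especially organizing the joint density of $\eta$ and the next discovered loop so that the $\SLE_\kappa$ structure is preserved at each step — is the technical heart of the proof.
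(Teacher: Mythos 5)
Your reduction to the case $B=\emptyset$ is fine: the joint law of $(\eta,\Gamma)$ with the conditioning on avoiding $B$ is the unconditioned joint law weighted by $\mathbf 1_{\{\eta\cap B=\emptyset\}}$, so the avoidance constraint can be reinstated at the end as an extra conditioning event (the paper handles $B$ in essentially this way, carrying the event $\{\wt\eta\cap\wt B=\emptyset\}$ along as pure conditioning). The problem is the second half. The inductive step that you defer to — that after discovering one more loop of $\Gamma(I)$ the conditional law of $\eta$ is again an unweighted chordal $\SLE_\kappa$ in the updated slit domain — is not a technicality to be organized later; it \emph{is} the lemma. Moreover, the route you sketch for it does not close with the tools available. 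Lemma~\ref{lem:strong_markov} is a statement about a $\CLE_\kappa$ on $\BB D$ and cannot be applied to the $\SLE_\kappa$-decorated $\CLE_\kappa$, which is precisely the object whose Markov property is in question; and the ``exact cancellation between the partition-function weight for the $\SLE$ restriction and the conditional density of the newly discovered loop'' is not a computation one can actually perform, since no marginal density for a $\CLE_\kappa$ loop relative to a reference measure is available in this framework. As written, the proposal restates the difficulty rather than resolving it.

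The paper avoids the induction entirely by an embedding argument. Starting from a single $\CLE_\kappa$ $\wt\Gamma$ on $\BB H$, it uses \cite[Theorem 5.4, Property 4]{shef-cle} to exhibit an arc $\wt\eta$ of a loop of $\wt\Gamma$ whose conditional law, given the complementary arc $L$ and the loops hitting a boundary interval, is exactly chordal $\SLE_\kappa$ in a random domain $D$, with independent $\CLE_\kappa$'s in the components of $D\setminus\wt\eta$ — i.e., the pair $(\wt\eta,\wt\Gamma|_D)$ \emph{is} the $\SLE_\kappa$-decorated $\CLE_\kappa$ of the lemma, realized inside a larger $\CLE_\kappa$. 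Conditioning on $\Gamma(I)$ then becomes conditioning on $\wt\Gamma(\wt I)$ for a boundary arc $\wt I$, which is handled by the boundary-arc Markov property \cite[Theorem 5.4, Property 5]{shef-cle}; a second application of Property 4 inside the resulting component identifies the conditional law of $\wt\eta$ as chordal $\SLE_\kappa$ in the smaller domain. The ``cancellation'' you are hoping for is exactly what this embedding encodes: the reason no weighting appears is that $\eta$ is secretly a loop arc of a bigger $\CLE_\kappa$, for which the Markov property is already known. If you want to salvage your sequential approach, you would in effect have to prove this embedding statement first, at which point the induction is unnecessary.
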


\begin{figure}[t!]
 \begin{center}
\includegraphics[scale=1.4]{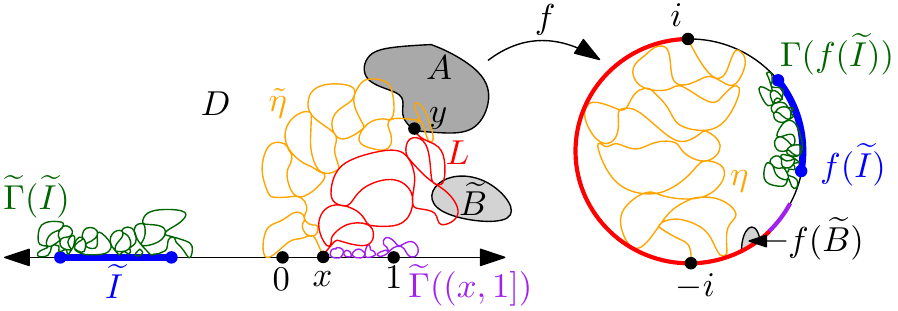}
\vspace{-0.01\textheight}
\caption{Illustration of the proof of Lemma~\ref{lem-sle-cle-markov}.
Starting with a $\CLE_\kappa$ $\wt\Gamma$ on $\BB H$, we construct an $\SLE_\kappa$ curve $\wt\eta$ in a random domain $D$ (the complementary connected component of the red and purple sets containing the orange curve) with the property that the conditional law of $\wt\Gamma|_{D\setminus \wt\eta}$ given $D$ and $\wt\eta$ is that of a collection of independent $\CLE_\kappa$'s in the connected components of $D\setminus \wt\eta$. We show using~\cite[Theorem 5.4, Properties 4 and 5]{shef-cle} that the pair $(\wt\eta , \wt\Gamma|_{D\setminus\wt\eta})$ satisfies the Markov property in the statement of the lemma, then conformally map $D$ to $\BB D$ to conclude.
}\label{fig-sle-cle-markov}
\end{center}
\vspace{-1em}
\end{figure}

\begin{proof}
The idea of the proof is to start with a $\CLE_\kappa$ process $\wt\Gamma$ (we will take this $\CLE_\kappa$ to be on $\BB H$, for convenience) and express the pair $(\eta,\Gamma)$ in the statement of the lemma as a functional of $\wt\Gamma$. We will then be able to compute the conditional law of $\eta$ given $\Gamma(I)$ using~\cite[Theorem 5.4, Properties 4 and 5]{shef-cle}. See Figure~\ref{fig-sle-cle-markov} for an illustration. 

Let $\wt\Gamma$ be a $\CLE_\kappa$ on $\BB H$. Fix a non-trivial connected closed set $A \subset \BB H$ which is disjoint from $[0,1] $. On the event $\{\ol{\bigcup \wt\Gamma([0,1])}\cap A\not=\emptyset\}$, let $x$ be the rightmost point of $[0,1]$ which lies on one of the finitely many loops of $\wt\Gamma([0,1])$ which intersect $A$. Let $ \gamma$ be the loop of $\wt\Gamma$ with $x\in  \gamma$ and let $ L$ be the counterclockwise arc of $ \gamma$ from $x$ to the first point $y$ at which $ \gamma$ (traversed counterclockwise) hits $A$. Let $\wt\eta$ be the counterclockwise arc of $ \gamma$ from $y$ to $x$ (so that $\wt\eta$ traverses $ \gamma\setminus L$). By~\cite[Theorem 5.4, Property 4]{shef-cle}, if we condition on $L$ and $\wt\Gamma((x,1])$, then the conditional law of $\wt\eta$ is that of an $\SLE_\kappa$ from $y$ to $x$ in the connected component of $\BB H \setminus \ol{L \cup \bigcup \wt\Gamma((x,1])}$ with $y$ and $x$ on its boundary. Call this connected component $D$. 
Fix a closed set $\wt B \subset \ol{\BB H}$ which is disjoint from $[0,1]$ and does not disconnect $x$ and $y$, chosen in a measurable way w.r.t.\ $L$ and $\wt\Gamma((x,1])$.

On the event $\{\ol{\bigcup \wt\Gamma([0,1])}\cap A\not=\emptyset\} $, if we condition on $L$, $\wt\Gamma((x,1])$, and the event $ \{ \wt\eta\cap \wt B=\emptyset \}$, then the conditional law of $\wt\eta$ is that of an $\SLE_\kappa$ from $y$ to $x$ in $D$ conditioned to avoid $\wt B$. If we further condition on $ \wt\eta$ then the conditional law of $\wt\Gamma|_{D\setminus\wt\eta}$ is that of a collection of independent $\CLE_\kappa$'s in the connected components of $D\setminus \wt\eta$. 
In other words, the conditional law of $(\wt\eta , \wt\Gamma)$ given $L$, $\wt\Gamma([x,1])$, and the event $ \{ \wt\eta\cap \wt B=\emptyset \}$ is as in the statement of the lemma but with $(D , \wt B)$ in place of $(\BB D , B)$. 

Let $\wt I \subset (-\infty,0]$ be a connected arc, also chosen in a measurable way w.r.t.\ $L$ and $\wt\Gamma((x,1])$. We will describe the conditional law of $\wt\eta$ given $L$, $\wt\Gamma((x,1])$, and $\wt\Gamma(\wt I)$. By~\cite[Theorem 5.4, Property 5]{shef-cle}, if we condition on $\wt\Gamma(\wt I)$, then the conditional law of $\wt\Gamma$ is that of a collection of independent $\CLE_\kappa$'s in the connected components of $\BB H\setminus \ol{\bigcup  \wt\Gamma(\wt I) }$. If $  \gamma \notin \wt\Gamma(\wt I)$, equivalently $ \gamma \cap \wt I = \emptyset$, then $\wt\eta$ is contained in the closure of a single connected component $U$ of $\BB H\setminus \ol{\bigcup\wt\Gamma(\wt I)}$. 
We observe that the event $\{ \gamma \cap \wt I = \emptyset \}$ is the same as the event that the loop arc $L$ is not a subset of a loop in $\wt\Gamma(\wt I)$, so this event is determined by $L$ and $\wt\Gamma(\wt I)$. 
With positive probability, $\ol{L \cap \bigcup \wt\Gamma((x,1])} \cap (-\infty,0] = \emptyset$, in which case $D$ is unbounded and $(-\infty,0] \subset \bdy D$. 
By~\cite[Theorem 5.4, Property 4]{shef-cle} applied to the $\CLE_\kappa$ $\wt\Gamma|_U$ and the boundary arc $\bdy U \cap [0,1]$ of $\bdy U$, we find that the conditional law of $\wt\eta$ given $L$, $\wt\Gamma((x,1])$, and $\wt\Gamma(\wt I)$ on the event 
\eqb \label{eqn-sle-cle-markov-event}
\{\wt \eta \cap \wt I = \emptyset\} \cap E \quad \text{where} \quad E := \{ L \cap \wt I = \emptyset \} \cap \{ (-\infty,0]\subset \bdy D \} \cap \{\ol{\bigcup \wt\Gamma([0,1])}\cap A\not=\emptyset\}
\eqe 
is that of a chordal $\SLE_\kappa$ from $x$ to $y$ in $U$. 
This implies that the conditional law of $\wt\eta$ given $L$, $\wt\Gamma((x,1])$, $\wt\Gamma(\wt I)$ and  $\{\wt\eta \cap \wt B=\emptyset\}$ on the event $\{\wt \eta \cap \wt I = \emptyset\}  \cap E$ is  that of a chordal $\SLE_\kappa$ from $x$ to $y$ in $U$ conditioned to avoid $\wt B$. 
We emphasize that the event $E$ is determined by $L$ and $\wt\Gamma((x,1])$. 
 
We now want to transfer from the random doubly marked domain $(D,x,y)$ to the deterministic domain $(\BB D , -i ,i)$.

On the event $E$ of~\eqref{eqn-sle-cle-markov-event}, let $f : D\rta \BB D$ be the conformal map which takes $y$ to $-i$, $x$ to $i$, and the right endpoint of $\wt I$ to the upper endpoint of $I$. Note that $f$ is measurable w.r.t.\ $L$ and $\wt\Gamma((x,1])$. Define
\eqbn
\eta := f(\wt\eta)    \quad \text{and} \quad  \Gamma := f(\wt\Gamma|_D) .
\eqen
Now, let $B$ and $I$ be as in the statement of the lemma. Choose $\wt B= f^{-1}(B)$ and $\wt I= f^{-1}(I)$. 

Since $f$ is measurable w.r.t.\ $L$ and $\wt\Gamma((x,1])$, we can apply the results of the third paragraph of the proof: If we condition on $L$ and $\wt\Gamma((x,1])$, which determine $f$, and the event $\{\wt\eta \cap \wt B=\emptyset\}$, then on $E $, the conditional law of the pair $(\eta,\Gamma)$ is as in the statement of the lemma.  

By the fourth paragraph of the proof, if we further condition on $\Gamma(I)$, then on the event $\{\wt \eta \cap \wt I = \emptyset\} \cap E$, the conditional law of $\eta$ is that of a chordal $\SLE_\kappa$ from $-i$ to $i$ in $D_0$ conditioned to avoid $B_0$, which proves the lemma. 
\end{proof}

\subsection{The $(n,k)$-exploration process}
\label{sec:nk}

To verify Definition~\ref{def-annulus-markov} for $\CLE_\kappa$, we want to use the Markov properties of $\CLE_\kappa$ as described in the preceding two subsections. For this purpose, we first need to define and analyze a ``Markovian" way of picking out a complementary $P$-excursion out of $U$. Since it takes no extra effort, we will allow for a slightly more general choice of $P$ and $U$ than the ones in Definition~\ref{def-annulus-markov}.
 
Let $\Gamma$ be a $\CLE_\kappa$ on $\BB D$ and let $P : [0,1] \rta \ol{\BB D}$ be a simple path (either deterministic or random but independent of $\Gamma$). Also let $U \subset \BB D$ be an open set with $P \cap \BB D\subset U$.

For $n, k \in \BB N  $, we define the \emph{$(n,k)$-exploration process} of $\Gamma$ along $P$, relative to $U$, to be the pair $(\alpha_{n,k} , \mcl L_n)$ defined as follows. See Figure~\ref{fig:nk-exploration} for an illustration of the definitions.

\begin{figure}[h!]
 \begin{center}
\includegraphics[scale=.65]{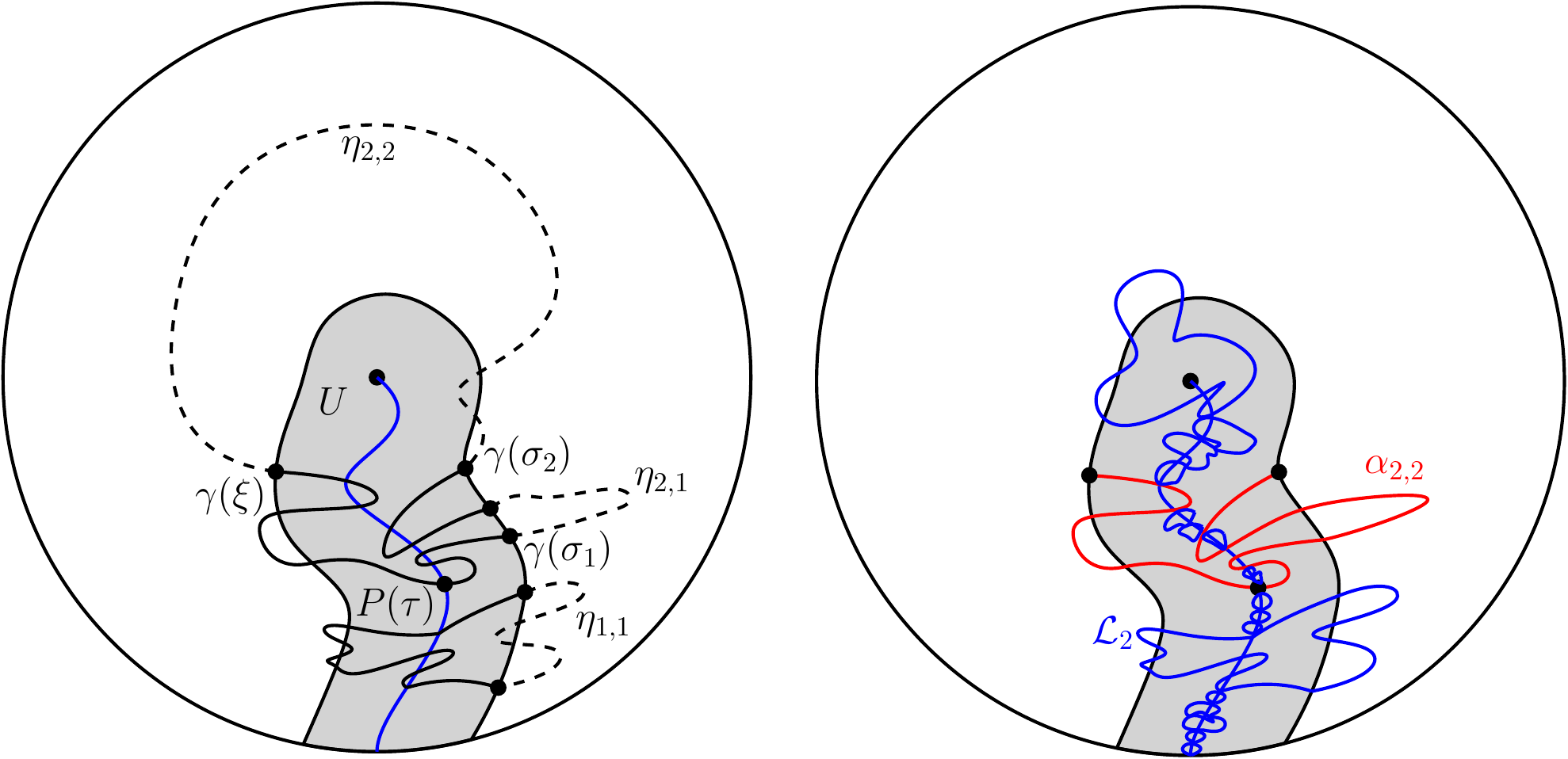}
\vspace{-0.01\textheight}
\caption{Illustration of the $(n,k)$-exploration process. On the left, we show $\eta_{1,1}$, $\eta_{2,1}$, and $\eta_{2,2}$ and the times $\sigma_1$, $\sigma_2$, and $\xi$ involved in the definition of $\eta_{2,2}$. Only the loops which exit $U$ are drawn in this picture. On the right, we show $\alpha_{2,2}$ and $\CL_2$ for the same loop configuration as on the left. We show simple loops for clarity, but in actuality the loops will intersect themselves and each other. }\label{fig:nk-exploration}
\end{center}
\vspace{-1em}
\end{figure} 

\begin{enumerate}
\item Let $t_n$ be the $n$th smallest $t \in [0,1]$ for which the following is true: there is some loop $\gamma \in \Gamma$ such that $\gamma \not\subset \ol U$ and $P$ hits $\gamma$ for the first time at time $t$; or let $t_n = 1$ if there are fewer than $n$ such times $t\in [0,1]$. Note that by the local finiteness of $\Gamma$, there are at most finitely many loops which intersect both $P$ and $\BB D\setminus \ol U$, so $t_n$ is well-defined.
\item If $t_n < 1$, we parameterize the loop $\gamma$ in the counterclockwise direction by $[0,1]$ so that $\gamma(0)=\gamma(1)=P(t_n)$.

Let $\sigma_0 = 0$. If $j \in \BB N_0$, inductively let $\sigma_{j+1}$ be the first time after $\sigma_j$ at which $\gamma$ completes a crossing from $P$ to $\BB D\setminus U$, i.e., the smallest $s \in (\sigma_j,1]$ for which $\gamma(s) \not\in U$ and there is an $s' \in(\sigma_j, s)$ for which $\gamma(s') \in P$. Let $\sigma_{j+1} = 1$ if $\gamma$ does not make any crossings from $P$ to $\BB D\setminus \ol U$ after time $\sigma_j$.
 
\item Let $\ol\xi$ be the \emph{last} time that the time-reversal of $\gamma|_{[\sigma_k ,1]}$ completes a crossing from $P$ to $\BB D\setminus U$, or let $\ol\xi$ be the starting time for this time-reversal if it does not make any such crossings. Let $\xi$ be the time for $\gamma$ corresponding to $\ol\xi$ and let $\eta_{n,k}:= \gamma|_{ [\sigma_k ,\xi]}$. Note that by definition, $\eta_{n,k}$ does not hit $P$. 

\item Let $\alpha_{n,k}$ be the concatenation of the time-reversal of $\gamma|_{[\xi,1]}$ and $\gamma|_{[0,\sigma_k]}$.  That is, $\alpha_{n,k}$ is the part of $\gamma$ not traced by $\eta_{n,k}$. Let $\mcl L_n$ be the set of loops of $\Gamma$ which intersect $P$ other than $\gamma$ 
\end{enumerate}

To make the connection to the setting of Section~\ref{sec-cle-annulus}, we observe that (in the terminology of Definition~\ref{def-excursion} and~\ref{def-stub}), the curve $\eta_{n,k}$ is a complementary $P$-excursion of the loop $\gamma$ out of $U$. 
In fact, $\{\eta_{n,k} : n,k \in \BB N\}$ is precisely the set of all complementary $P$-excursions of loops in $\Gamma$ out of $U$. 
The $\sigma$-algebra generated by $\alpha_{n,k}$ and $\mcl L_n$ is the same as the $\sigma$-algebra generated by the set $\Gamma(P;U)$ of loops which intersect $P$ and are contained in $U$, the set $\mcl S_\Gamma(P;U)$ of $P$-excursions into $U$ of loops in $\Gamma$, and the set of all $P$-excursions out of $U$ of loops in $\Gamma$ other than $\eta_{n,k}$.

\begin{lemma}\label{lem:nk}
In the above setting, if we condition on $\alpha_{n,k}$ and $\mcl L_n$, then either $\eta_{n,k}$ is empty ($\alpha_{n,k}$ is itself a loop), or otherwise the conditional law of $\eta_{n,k}$ is that of a chordal $\SLE_\kappa$ between the two endpoints of $\alpha_{n,k}$ in the connected component of $\BB D\setminus \left( \alpha_{n,k} \cup \ol{\bigcup \mcl L_n} \right)$ with these two endpoints on its boundary.  Furthermore, if we condition on $\alpha_{n,k}$, $\mcl L_n$, and $\eta_{n,k}$, then the conditional law of the rest of $\Gamma$ is that of an independent $\CLE_\kappa$ in each of the connected components of $\BB D\setminus \left( \eta_{n,k} \cup \alpha_{n,k} \cup \ol{\bigcup \mcl L_n} \right)$. 
\end{lemma}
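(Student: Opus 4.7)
The proof combines the general Markov property (Lemma~\ref{lem:general-markov}) with the SLE--CLE Markov property (Lemma~\ref{lem-sle-cle-markov}), applied to a suitably chosen exploration of $\Gamma$. I treat the two statements of the lemma in turn.

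The second statement is the easier one. Once $\alpha_{n,k}$, $\mcl L_n$, and $\eta_{n,k}$ are revealed, we know the entire set $\Gamma(P)=\mcl L_n\cup\{\gamma\}$ with $\gamma=\alpha_{n,k}\cup\eta_{n,k}$. Lemma~\ref{lem:general-markov} applied with $K=P([0,1])$ (after first attaching, if necessary, a small boundary-connected arc so that the hypothesis ``$K$ intersects $\bdy\BB D$'' is met, which costs us nothing since any such arc can be revealed first by Lemma~\ref{lem:strong_markov}) then yields the claimed decomposition of the rest of $\Gamma$ into independent $\CLE_\kappa$'s on the connected components of $\BB D\setminus\ol{\bigcup\Gamma(P)}$.

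For the first statement, the idea is to exhibit an exploration of $\Gamma$ whose output $\sigma$-algebra is exactly $\sigma(\alpha_{n,k},\mcl L_n)$ while $\eta_{n,k}$ remains unexplored, and then to read off the conditional law of $\eta_{n,k}$ from Lemma~\ref{lem-sle-cle-markov}. I build such an exploration in two stages. In Stage~1, using the strong Markov property (Lemma~\ref{lem:strong_markov}), I explore $\Gamma$ along $P$ in order of increasing parameter, revealing the loops of $\Gamma(P)$ one by one and classifying each as either contained in $\ol U$ or exiting $U$. This identifies $\gamma$ (the $n$-th loop encountered that exits $U$), the entrance point $P(t_n)$, and, after running past the end of $P$, the full set $\mcl L_n$. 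In Stage~2, I reveal the arc $\alpha_{n,k}$ of $\gamma$ without touching $\eta_{n,k}$, by tracing $\gamma$ from $P(t_n)$ in the forward (counterclockwise) direction up to the stopping time $\sigma_k$ and in the backward (clockwise) direction up to the stopping time $\xi$. The availability of such partial Markovian traces comes from the description of the CLE loop $\gamma$ via the branching $\SLE_\kappa(\kappa-6)$ process of Section~\ref{sec-branching}, together with the target-invariance and reversibility of SLE established in~\cite{ig2,ig3,ig4}. The union of the two partial traces is exactly $\alpha_{n,k}$, and the remaining unexplored arc of $\gamma$ is $\eta_{n,k}$. An application of Lemma~\ref{lem-sle-cle-markov} then identifies the conditional law of $\eta_{n,k}$ as that of a chordal $\SLE_\kappa$ between the two endpoints of $\alpha_{n,k}$ in the connected component of $\BB D\setminus(\alpha_{n,k}\cup\ol{\bigcup\mcl L_n})$ having these endpoints on its boundary. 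The degenerate case $\eta_{n,k}=\emptyset$ corresponds to $\sigma_k=\xi$, in which case $\alpha_{n,k}$ exhausts $\gamma$ and there is nothing to prove.

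The main technical obstacle is Stage~2: verifying that $\sigma_k$ and $\xi$ are stopping times for the SLE-type explorations of $\gamma$, and that the forward and backward partial traces can be performed coherently so as to reveal exactly $\alpha_{n,k}$. This requires re-expressing the geometric definitions of $\sigma_k$ and $\xi$, phrased in terms of the crossings of $\gamma$ between $P$ and $\BB D\setminus U$, as stopping rules for the underlying $\SLE_\kappa(\kappa-6)$ driving processes, and invoking the $\SLE_\kappa$ reversibility results of~\cite{ig2,ig3,ig4} to handle the backward trace.
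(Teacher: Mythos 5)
Your high-level plan --- reveal $\mcl L_n$ and $\alpha_{n,k}$ by a Markovian exploration that leaves $\eta_{n,k}$ untouched, then invoke Lemma~\ref{lem-sle-cle-markov} --- is the right one, and your treatment of the second assertion via Lemma~\ref{lem:general-markov} is essentially fine, since conditioning on $(\alpha_{n,k},\mcl L_n,\eta_{n,k})$ amounts to conditioning on $\Gamma(P)$ (the identification of $\gamma$ and its splitting into $\alpha_{n,k}$ and $\eta_{n,k}$ being a deterministic function of $\Gamma(P)$, $P$ and $U$). But your Stage~1 does not work as stated, and this is a genuine gap rather than a deferred technicality. Lemma~\ref{lem:strong_markov} reveals the \emph{entire} loops of $\Gamma(P[0,t])$; the moment $P$ first touches $\gamma$ at time $t_n$, that exploration has already revealed all of $\gamma$, including $\eta_{n,k}$. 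So you cannot first run along $P$ to collect $\mcl L_n$ and only afterwards deal with $\gamma$: the loops of $\mcl L_n$ that $P$ meets after time $t_n$ can only be discovered Markovianly by an exploration that has already set $\eta_{n,k}$ aside, and that continued exploration must then be performed conditionally on the still-unexplored arc, i.e.\ with an $\SLE_\kappa$ piece conditioned to avoid the region being explored. This is exactly why Lemma~\ref{lem-sle-cle-markov} is formulated for an $\SLE_\kappa$ \emph{conditioned to avoid a set $B$}, and why in the paper it is applied iteratively during the discovery of $\mcl L_n$, not once at the end.

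The second, related gap is that there is no boundary arc along which to launch your Stage~2 partial trace of $\gamma$: the available Markov properties only let one peel off loop arcs along arcs of the current domain boundary. The paper therefore thickens $P$ to its $\eps$-neighbourhood $P^\eps$, explores the $\SLE_\kappa(\kappa-6)$ branch of Lemma~\ref{lem-cle-concatenate} along successive arcs of $\partial O_i^\eps\cap P^\eps$, verifies by a separate topological argument (the arcs $I_\omega$ cutting the tube $P^\eps$) that for small $\eps$ the $n$th loop exiting $U$ met by the tube exploration coincides with the $n$th such loop met by $P$, defines approximations $\eta^\eps_{n,k},\alpha^\eps_{n,k},\mcl L^\eps_n$, identifies the conditional law of $\eta^\eps_{n,k}$ as an $\SLE_\kappa$ conditioned to avoid $P^\eps$ (here reversibility enters, as you anticipate, to convert $\xi$ into a stopping time for the time-reversal), iterates Lemma~\ref{lem-sle-cle-markov} to finish discovering the loops meeting $P^\eps$, and only then passes to the limit $\eps\to 0$. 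None of this is routine, and the ordering is forced: the excursion must be set aside \emph{during} the exploration of $\mcl L_n$, not after it. As written, your argument would need to be restructured along these lines to close.
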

 
\begin{proof}

As in the proof of Lemma~\ref{lem:general-markov}, due to conformal invariance, the origin plays no special role. We can therefore assume that $P$ does not contain the origin and denote by $C$ the connected component of $\BB D\setminus \left( \alpha_{n,k} \cup \ol{\bigcup \mcl L_n} \right)$ which contains the origin.
It then suffices to prove that, conditionally on  $\alpha_{n,k}$ and $\mcl L_n$, one has the following. If $C$ does not contain $\eta_{n,k}$ or if $\eta_{n,k}$ is empty, then the conditional law of $\Gamma|_{C}$ is that of a $\CLE_\kappa$ in $C$ independent from $\eta_{n,k}$ and the restrictions of $\Gamma$ to the other connected components of $\BB D\setminus \left( \alpha_{n,k} \cup \ol{\bigcup \mcl L_n} \right)$. Otherwise if $\eta_{n,k} \not=\emptyset$ and $\eta_{n,k} \subset C$, then the conditional law of $\eta_{n,k}$ is that of a chordal $\SLE_\kappa$ between the two endpoints of $\alpha_{n,k}$ in $C$, and if we
further condition on $\eta_{n,k}$, then $\Gamma$ restricted to $\BB D\setminus \left(\eta_{n,k} \cup \alpha_{n,k} \cup \ol{\bigcup \mcl L_n} \right)$ is an independent $\CLE_\kappa$ in each of its connected components.

By Lemma~\ref{lem:strong_markov}, we can explore the loops that intersect $P$ in the order that $P$ intersects them. Define $T$ to be the $(n-1)$st time that $P[0,t]$ intersects a loop that exits $U$ (if $n=1$, then let $T:=0$). Then $T$ is a stopping time for the filtration generated by $\Gamma(P[0,t])$ and therefore Lemma~\ref{lem:strong_markov}  implies that conditionally on $\Gamma(P([0,T]))$, the rest of $\Gamma$ is distributed as an independent $\CLE_\kappa$ in each connected component of $\BB D\setminus \ol{\bigcup \Gamma(P([0,T]))}$ (note that these connected components are simply connected, as explained just after the statement of Lemma~\ref{lem:general-markov}).
Let $O$ be the connected component of $\BB D\setminus \ol{\bigcup \Gamma(P([0,T]))}$ containing the origin.
Then conditionally on $O$ and the restriction of $\Gamma$ in $\BB D\setminus O$, the conditional law of the restriction of $\Gamma$ to $O$ is that of a $\CLE_\kappa$ in $O$.
We will now focus on explaining how to continue exploring $\Gamma|_{\ol O}$.
The basic idea is similar to the proof of Lemma~\ref{lem:general-markov}: we use an inductive procedure (based on Lemmas~\ref{lem:Sheffield} and~\ref{lem-sle-cle-markov}) to define for each $\ep > 0$ a collection of loops and a curve which satisfy the desired Markov property and converge, in an appropriate sense, to $\mcl L_n$ and $\eta_{n,k}$ as $\ep\rta 0$.   

For  $\eps>0$, let $P^\eps$ be the $\eps$-neighborhood of $P$. 
Let $\CL^\eps$ be the collection of all the loops in $\Gamma|_{\ol O}$ that intersect $P^\eps$. Let $\eta_{n,k}^\eps$ be the part of the excursion that we will eventually leave out if it is non-empty (we will give the precise definition of $\eta_{n,k}^\eps$ later on). Let $\alpha^\eps_{n,k}$ be the complement of $\eta_{n,k}^\eps$ in the loop that $\eta_{n,k}^\eps$  is tracing.
Let $\CL^\eps_n$ be $\CL^\eps$ minus the loop containing $\eta_{n,k}^\eps$ (if $\eta_{n,k}^\eps\not=\emptyset$).
Let $C^\eps$ be the connected component of $O\setminus \left( \alpha^\eps_{n,k} \cup \ol{\bigcup \mcl L^\eps_n} \right)$  which contains the origin. 

We would like to first  prove the following statement:
\medskip

\noindent($*$) \emph{Suppose we condition on $\CL_n^\eps$ and $\alpha^\eps_{n,k}$. If $C^\eps$ does not contain $\eta^\eps_{n,k}$ or if $\eta^\eps_{n,k}$ is empty, then the conditional law of $\Gamma|_{C^\eps}$ is that of a $\CLE_\kappa$ in $C^\eps$ independent from  $\eta_{n,k}^\eps$ and the restrictions of $\Gamma$ to the other connected components of $O\setminus \left( \alpha_{n,k}^\eps \cup \ol{\bigcup \mcl L^\eps_n} \right)$. Otherwise if $\eta_{n,k}^\eps \not=\emptyset$ and $\eta_{n,k}^\eps \subset C^\eps$, then the conditional law of $\eta^\eps_{n,k}$ is that of a chordal $\SLE_\kappa$ between the two endpoints of $\alpha^\eps_{n,k}$ in $C^\eps$, and if we
further condition on $\eta^\eps_{n,k}$, then $\Gamma$ restricted to $C^\eps\setminus \left(\eta_{n,k} \cup \alpha_{n,k} \cup \ol{\bigcup \mcl L^\eps_n} \right)$ is an independent $\CLE_\kappa$ in each of its connected components.}
\medskip

We will later prove that as $\eps$ goes to zero,  the sets $\eta^\eps_{n,k}, \alpha^\eps_{n,k}, \ol{\bigcup \CL^\eps_n}, \ol{\bigcup\CL^\eps}$ respectively converge to $\eta_{n,k}, \alpha_{n,k}, \ol{\bigcup\CL_n \cap O}, \ol{\bigcup \CL\cap O}$. This will imply the present lemma.

We will prove ($*$) by performing an exploration process w.r.t.\ $P^\eps$.
Let us define our exploration process by induction on a parameter $i$. Let $O^\eps_0 := O$.
Now suppose $i\in\BB N_0$ and we have completed the $i$ first steps which enable us to define the domain $O^\eps_i\subset O$ with the property that conditionally on $O^\eps_i$, $\Gamma$ restricted to $O^\eps_i$ is a CLE in $O^\eps_i$ which is independent from the restriction of $\Gamma$ to the complement of $O^\eps_i$. Let us explain how to carry out the $(i+1)$st step. See Figure~\ref{fig:sec3-proof} for an illustration.

\begin{figure}[b!]
 \begin{center}
\includegraphics[trim = 0mm 0mm 0mm 45mm, clip, scale=.75]{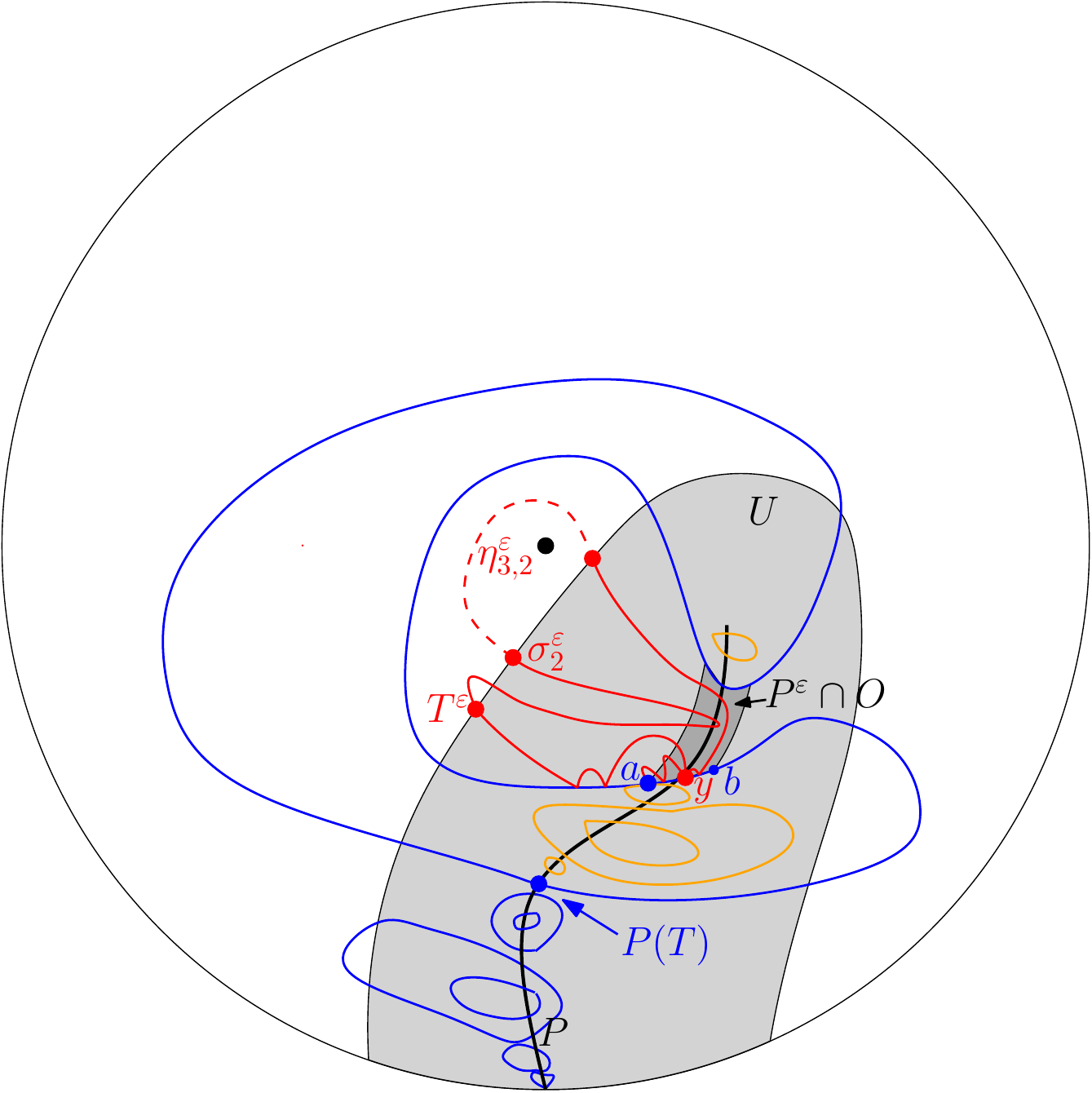}
\vspace{-0.01\textheight}
\caption{We illustrate the exploration process for $n=3, k=2$ in the case where $\eta^\eps_{n,k}\not=\emptyset$ and $\eta^\eps_{n,k}\subset C^\eps$. We first explore along $P$ until it hits the $(n-1)$th loop exiting $U$ at time $T$. All the loops discovered in this step are shown in blue. Then we continue exploring $\Gamma$ restricted to the connected component $O$ of the complement of the blue loops containing the origin. Conditionally on $O$, we are allowed to further condition on  $\Gamma|_{\BB D\setminus O}$ so that the conditional law of $\Gamma|_{\ol O}$ is still a CLE in $O$.
In particular, it is important to acquire the knowledge of all the loops in  $\Gamma|_{\BB D\setminus O}$  that intersect $P$ which have not been previously discovered (drawn in orange).  For the arc $I$ (with endpoints $a,b$) which is the first arc among $\partial O\cap P^\eps$  hit by $P$, if the number of loops exiting $U$ that $P$ has hit has not reached $n$ upon hitting $I$, then we explore along an SLE$_\kappa(\kappa-6)$ process from $a$ to $b$ (shown in red). We depict a case where  $\eta^\eps_{3,2}$ is contained in this step and show it in dashed line.
We show simple loops for clarity, but in actuality the loops will intersect themselves and each other.}

\label{fig:sec3-proof}
\end{center}
\vspace{-1em}
\end{figure}

We call a connected component of $\partial O^\eps_i\cap P^\eps$ an \emph{arc}. There can be countably many arcs of $\partial O_i^\epsilon \cap P^\eps$  but since $\bdy O^\eps_i$ is a continuous curve at most finitely many of them intersect $P$. We can order these finitely many arcs according to the first point on the arc hit by $P$. Let $I$ be the first such arc hit by $P$. 
Given the loops of $\Gamma$ in $\BB D\setminus O^\eps_i$, we know the exact number of loops exiting $U$ that $P$ has hit before hitting $I$. If this number is at least $n$, then it means that $C^\eps$ does not contain $\eta^\eps_{n,k}$ or $\eta^\eps_{n,k}$ is empty. Then we can continue to explore $\Gamma|_{O_i^\eps}$ using the procedure defined in Lemma~\ref{lem:general-markov} w.r.t.\ $P^\eps$. This proves ($*$).
Otherwise, if this number is equal to $n-1$, then let $a$ and $b$ be the endpoints of $I$ (in the counterclockwise direction).

We explore along an $\SLE_\kappa(\kappa-6)$ process $\eta^\eps$ in $O^\eps_i$ from $a$ to $b$, with a marked point immediately to the right of $a$, namely the one constructed by concatenating certain arcs of loops in $\Gamma$ which intersect the arc from $a$ to $b$ as in Lemma~\ref{lem-cle-concatenate}.

\begin{enumerate}[(a)]

\item If $\eta^\eps$ never exits $U$, then define $O^\eps_{i+1}$ to be the connected component containing the origin of the complement in $O^\eps_i$ of all the loops that $\eta^\eps$ has traced.
By the induction hypotheses and Lemma~\ref{lem:Sheffield}, conditionally on $O^\eps_{i+1}$, $\Gamma$ restricted to $O^\eps_{i+1}$ is a CLE in $O^\eps_{i+1}$ which is independent from the restriction of $\Gamma$ to the complement of $O^\eps_{i+1}$.  We can then go on to the $(i+1)$st step.

If in the successive steps $i\in \BB N$, we always end up in situation (a) (hence we can go on infinitely), then it means that $C^\eps$ does not contain $\eta^\eps_{n,k}$ or that $\eta^\eps_{n,k}$ is empty. Therefore, we are in the same situation as in Lemma~\ref{lem:general-markov} and hence ($*$) is true.

\item Otherwise,  let $T^\eps$ be the first time that $\eta^\eps$ exits $U$. Let $\gamma$ be the loop that $\eta^\eps$ is tracing at time $T^\eps$.  When  $\eps$ is small enough, then $\gamma$ is exactly the $n$th loop exiting $U$ that $P$ encounters. To see this, it is enough to show that $\gamma$ is the first loop exiting $U$ that $P$ encounters after hitting $I$. See Figure~\ref{fig:p_eps} for illustration.
Note that there are a.s.\ finitely many loops that intersect $P^\eps$ and exit $U$, hence if $\eps$ is small enough, all the loops intersecting $P^\eps$ exiting $U$ also intersect $P$. Moreover, they a.s.\ all cross $P$. For each of these loops $\omega$, let $\partial\omega$ denote the outer boundary of $\omega$, which is a simple loop. Let $z_\omega$ be the first point that $P$ intersects $\omega$. Then when $\eps$ is small enough, the connected component $I_\omega$ of $\partial\omega\cap P^\eps$ containing $z_\omega$ cuts the tube $P^\eps$, in the sense that it disconnects $P(0)$ and $P(1)$ within $P^\eps$. Therefore, the order in which we discover the different loops that intersect both $P^\eps$ and $\BB D\setminus U$ is the same as the order in which $P$ encounters the corresponding arcs $I_\omega$. In particular, $\gamma$ is indeed the first loop exiting $U$ that $P$ encounters after hitting $I$. 
 
 \begin{figure}[b!]
 \begin{center}
\includegraphics[scale=.85]{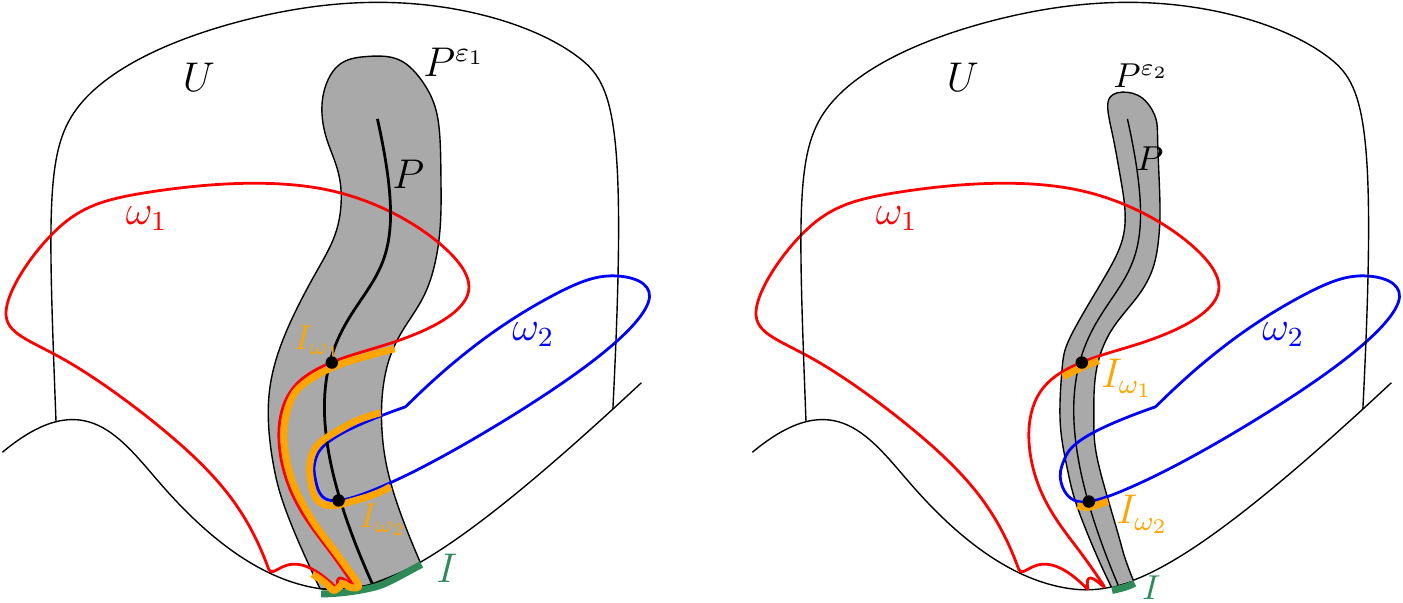}
\vspace{-0.01\textheight}
\caption{\textbf{Left:} $\eps_1$ is not small enough and $I_{\omega_2}$ does not cut $P^{\eps_1}$. When we explore along the green arc $I$, we discover $\omega_1$ before $\omega_2$, which is not the right order. \textbf{Right:} $\eps_2$ is small enough and both $I_{\omega_1}, I_{\omega_2}$  cut $P^{\eps_2}$, hence we will discover $\omega_1$ and $\omega_2$ in the same order as $P$ encounters them.
}
\label{fig:p_eps}
\end{center}
\vspace{-1em}
\end{figure}

If $\eta^\eps[0, T^\eps]$ disconnects the origin from $b$ inside of $O^\eps_i$, then it again means that $C^\eps$ does not contain $\eta^\eps_{n,k}$ or $\eta^\eps_{n,k}$ is empty. Let $O^\eps_{i+1}$ be the connected component containing the origin of $O^\eps_i\setminus \eta^\eps[0, T^\eps]$.  We are again in the same situation as Lemma~\ref{lem:general-markov}, hence ($*$) holds. 

\item Otherwise, if $\eta^\eps[0, T^\eps]$ does not disconnect the origin from $b$, then let $y$ be the marked point of the $\SLE_\kappa(\kappa-6)$ process $\eta^\eps$ at time $T^\eps$. Equivalently, $y$ is also the left-most point on $I$ at which $\gamma$ intersects $I$, where $I$ is the arc of $\bdy O_i^\eps \cap \bdy P^\eps$ which we are currently exploring. Let $\eta^\eps_\gamma$ be the clockwise part of $\gamma$ from $\eta^\eps(T^\eps)$ to $y$, i.e., $\eta^\eps_\gamma$ is the remaining part of $\gamma$ that $\eta^\eps[0,T^\eps]$ has not yet discovered. According to the construction of CLE in~\cite{shef-cle} using branching $\SLE_\kappa(\kappa-6)$ processes, $\eta^\eps_\gamma$ is distributed as an $\SLE_\kappa$ in $O^\eps_i\setminus \eta^\eps[0,T^\eps]$ from $\eta^\eps(T^\eps)$ to $y$. Moreover, conditionally on  $O^\eps_i, \eta^\eps[0, T^\eps]$, and $\eta^\eps_\gamma$, $\Gamma$ restricted to each of the connected components of $O^\eps_i \setminus \!\left(\eta^\eps[0, T^\eps] \cup \eta^\eps_\gamma\right)$  is an independent $\CLE_\kappa$ in that component. We denote by $\Gamma^\eps_{n,k}$ the restriction of $\Gamma$ to 
$O^\eps_i \setminus \left(\eta^\eps[0, T^\eps] \cup \eta^\eps_\gamma\right)$.

We parameterize $\eta^\eps_\gamma$ in a way such that $\eta^\eps_\gamma(0)=\eta^\eps(T^\eps)$ and $\eta^\eps_\gamma(1)=y$.
If $\eta^\eps_\gamma$ makes at most $k-2$ crossings from $P^\eps$ to $\BB D\setminus U$, then let $\eta^\eps_{n,k} = \emptyset$. 
Then we are again in the same situation as Lemma~\ref{lem:general-markov}, hence ($*$) holds. 
Otherwise, let $\sigma^\eps_k$ be the $(k-1)$st time that $\eta^\eps_\gamma$ completes a crossing from $P^\eps$ to $\BB D \setminus U$. 
Then conditionally on $O^\eps_i$ and on $\eta^\eps|_{[0,T^\eps]} , \eta^\eps_\gamma|_{[0,\sigma^\eps_k]}$, the process $\eta^\eps_\gamma|_{[\sigma^\eps_k,1]}$ is an $\SLE_\kappa$ in $O^\eps_i \setminus \!\left(\eta^\eps[0, T^\eps] \cup \eta^\eps_\gamma[0, \sigma^\eps_k]\right)$.
Let $\wt \eta^\eps_\gamma$ be the time-reversal of $\eta^\eps_\gamma$. Let $\ol\xi^\eps$ be the last time that $\wt \eta^\eps_\gamma$ completes a crossing from $P$ to $\BB D\setminus U$. Let $\xi^\eps$ be the time for $\eta^\eps_\gamma$ corresponding to $\ol\xi^\eps$ and let $\eta^\eps_{n,k}:= \eta^\eps_\gamma|_{[\sigma^\eps_k,\xi^\eps]}$.
Let $O^\eps_{i+1}:=O^\eps_i \setminus  \left(\eta^\eps[0, T^\eps] \cup \eta^\eps_\gamma[0, \sigma^\eps_k] \cup \eta^\eps_\gamma[\xi^\eps,1]\right)$.
Then conditionally on $O^\eps_{i+1}$,  the curve $\eta^\eps_{n,k}$ is an $\SLE_\kappa$ in $O^\eps_{i+1}$ conditioned to avoid $P^\eps$.
Therefore, $(\eta^\eps_{n,k}, \Gamma^\eps_{n,k})$ is distributed as an $\SLE_\kappa$ decorated $\CLE_\kappa$ in $O^\eps_{i+1}$ where the $\SLE_\kappa$ curve is conditioned to avoid $P^\eps$ i.e., $(\eta^\eps_{n,k}, \Gamma^\eps_{n,k})$ has the law considered in Section~\ref{sec-sle-cle-markov} with $B = P^\ep$.

Using Lemma~\ref{lem-sle-cle-markov}, we can then continue to explore $(\eta^\eps_{n,k}, \Gamma^\eps_{n,k})$ along any arc on the boundary of $O^\eps_{i+1}$. 
More precisely, if $\partial O^\eps_{i+1}\cap P^\eps$ is non-empty, then we can discover all the loops in $O^\eps_{i+1}$ that intersect $\partial O^\eps_{i+1}\cap P^\eps$ and denote by $O_{i+2}^\epsilon$ the connected component containing the origin of the complement in $O^\eps_{i+1}$ of all the newly discovered loops.
Since $\eta^\eps_{n,k}$ is conditioned to avoid $P^\eps$, it will also avoid all the arcs of $\partial O^\eps_{i+1}\cap P^\eps$.
By Lemma~\ref{lem-sle-cle-markov}, conditionally on $O_{i+2}^\epsilon$, the restriction of $(\eta^\eps_\gamma, \Gamma_{n,k})$ to $O_{i+2}^\epsilon$ is still an $\SLE_\kappa$ decorated $\CLE_\kappa$, where the $\SLE_\kappa$ is conditioned to avoid $P^\eps$.  We can then iterate this process until some step $N\in\BB N \cup \{\infty\}$ such that  $\partial O_N^\epsilon \cap P^\eps$ is empty. If $N=\infty$, then we define $O_\infty^\epsilon$ to be the interior of $\bigcap_{n=1}^\infty O_n^\epsilon$. It then follows that conditionally on $O_N$, the curve $\eta^\eps_{n,k}$ is an $\SLE_\kappa$ in $O_N^\epsilon$ conditioned to avoid $P^\eps$ and that if we further condition on $\eta^\eps_{n,k}$, then $\Gamma$ restricted to each of the connected components of $O_N\setminus\eta^\eps_{n,k}$ is distributed as an independent $\CLE_\kappa$ in that component.
It is clear that when $N<\infty$, $O_N^\epsilon$ is exactly the connected component containing the origin of $\BB D\setminus \left(\alpha^\eps_{n,k} \cup \ol{\bigcup \CL^\eps_n}\right)$. Similar arguments as in Lemma~\ref{lem:general-markov} imply that the same is true when $N=\infty$.
\end{enumerate}

Now that we have proved ($*$), we will send $\eps$ to $0$. The fact that $\ol{\bigcup\Gamma(P^\ep ) \cap O}$ converges to $\ol{\bigcup \Gamma(P) \cap O}$ follows from the same arguments as in Lemma~\ref{lem:general-markov}.  
We have also argued in (b) that for $\eps$ small enough, the $n$th loop $\gamma$ that exits $U$ in the $P^\eps$ exploration process indeed coincides with the $n$th loop that exits $U$ that $P$ encounters. Therefore $\ol{\bigcup \CL^\eps_n}$ converges to $\ol{\bigcup \CL_n \cap O}$.
For similar reasons, $\eta^\eps_{n,k}$ will also coincide with $\eta_{n,k}$ for $\eps$ small enough, since any loop a.s.\ makes finitely many crossings from $P^\eps$ to $\BB D\setminus U$ and any such crossing that intersects $P$ also crosses $P$.  This completes the proof.
\end{proof}

As a consequence of Lemma~\ref{lem:nk}, we obtain the following variant of the annulus Markov property for $\CLE_\kappa$ in $\BB D$. For the statement, we recall the notation from Section~\ref{sec-cle-annulus}.

\begin{definition}\label{def:markovUP}
Define the path $P$ and the open set $U$ as in the beginning of this subsection.
Choose a $P$-excursion into $U$ from $\mcl S_\Gamma(P;U)$ in a manner which is measurable w.r.t.\ $\sigma(\Gamma(P;U), \mcl S_\Gamma(P;U))$. Let $x$ be its terminal endpoint and let $\eta_x$ be the complementary $P $-excursion out of $U$ from $x$ to the corresponding endpoint $x^*$. 
Let $\Sigma_x$ be the $\sigma$-algebra generated by $\Gamma( P; U)$, $\mcl S_\Gamma(P; U)$, and all of the complementary $ P$-excursions of loops in $\Gamma$ out of $ U$ except for $\eta_x$.
We say that $\Gamma$ satisfies \emph{Markov property w.r.t. $(P, U)$} if the following is true: 
\begin{enumerate}
\item \label{itm0} Almost surely, $\ol{\bigcup \Gamma(P;U)}$ is connected.

\item\label{itm1} If $\mcl S_\Gamma(P;U) \not=\emptyset$ and we condition on $\Sigma_x$, then the conditional law of $\eta_x$ is that of an independent chordal $\SLE_\kappa$ from $x$ to $x_*$ in the connected component of $\BB D\setminus \ol{\bigcup \Gamma(P)\setminus \eta_x}$ with $x$ on its boundary.  

\item\label{itm2} If we further condition on $\eta_x$ (equivalently, we condition on $\Gamma(P)$) then the conditional law of $\Gamma|_{\BB D\setminus \ol{\bigcup \Gamma(P)}}$ is that of a collection of independent $\CLE_\kappa$'s in the connected components of $\BB D\setminus \ol{\bigcup \Gamma(P)}$. 
\end{enumerate}

\end{definition}

As in Definition~\ref{def-annulus-markov}, the purpose of condition~\ref{itm0} in Definition~\ref{def:markovUP} is to ensure that the connected components involved in conditions~\ref{itm1} and~\ref{itm2} are simply connected, so it makes sense to talk about CLE$_\kappa$ in these connected components.

\begin{corollary}\label{cor:markov}
Let $\Gamma$ be a $\CLE_\kappa$ on $\BB D$. Then $\Gamma$ satisfies the Markov property w.r.t.\ $(P, U)$.
 \end{corollary}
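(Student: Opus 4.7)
The plan is to derive Corollary~\ref{cor:markov} from Lemma~\ref{lem:nk} and Lemma~\ref{lem-cle-cover}. The main substance (the $\SLE_\kappa$ and $\CLE_\kappa$ conditional laws) is already contained in Lemma~\ref{lem:nk}; the corollary is essentially a translation into the measurable-selection language of Definition~\ref{def:markovUP}, via a decomposition over the countable family of indices $(n,k)$.

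For condition~\ref{itm0}, I would apply Lemma~\ref{lem-cle-cover} with $K = P \cap \BB D$ (which is contained in $U$ by assumption and connected because $P$ is a simple path) to obtain $P \cap \BB D \subset \ol{\bigcup \Gamma(P;U)}$. Since each loop in $\Gamma(P;U)$ is itself connected and intersects $P \cap \BB D$, and $P\cap \BB D$ is connected, the union $(P\cap \BB D) \cup \bigcup \Gamma(P;U)$ is connected, so its closure---which coincides with $\ol{\bigcup \Gamma(P;U)}$---is connected.

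For conditions~\ref{itm1} and~\ref{itm2}, I would decompose over the countable family of possible $(n,k) \in \BB N \times \BB N$. The measurable selection of an in-excursion in Definition~\ref{def:markovUP} induces a $\Sigma_x$-measurable random pair $(N,K)$ such that $\eta_x$ coincides with $\eta_{N,K}$ from the $(n,k)$-exploration process of Section~\ref{sec:nk}. The key observation is that on the event $\{(N,K) = (n,k)\}$, the $\sigma$-algebra $\Sigma_x$ agrees with $\sigma(\alpha_{n,k}, \mcl L_n)$: both record precisely the collection of all loops of $\Gamma$ intersecting $P$ with the single out-excursion $\eta_{n,k} = \eta_x$ removed. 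Indeed, from $\Sigma_x$ one knows $\Gamma(P;U)$ (loops contained in $U$), the in-excursions $\mcl S_\Gamma(P;U)$, and all out-excursions except $\eta_x$; conversely, $\mcl L_n$ records all loops intersecting $P$ other than the one carrying $\eta_{n,k}$, while $\alpha_{n,k}$ records the remainder of that loop. Applying Lemma~\ref{lem:nk} on each piece $\{(N,K) = (n,k)\}$ and summing over $(n,k)$ then yields the chordal $\SLE_\kappa$ conditional law of $\eta_x$ given $\Sigma_x$ and the independent $\CLE_\kappa$ conditional law of $\Gamma|_{\BB D \setminus \ol{\bigcup \Gamma(P)}}$ given $\Sigma_x$ and $\eta_x$.

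The main (and essentially only) obstacle is this $\sigma$-algebra identification on each event $\{(N,K) = (n,k)\}$, which is bookkeeping once one unpacks the definitions and uses that $P$ and $U$ are either deterministic or independent of $\Gamma$. Everything else is inherited directly from Lemma~\ref{lem:nk}, so once the identification is in place the three conditions of Definition~\ref{def:markovUP} follow immediately.
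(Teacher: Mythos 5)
Your proposal is correct and follows essentially the same route as the paper: condition 1 via Lemma~\ref{lem-cle-cover}, and conditions 2 and 3 by observing that the event $\{\eta_x=\eta_{n,k}\}$ is $\Sigma_x$-measurable, identifying $\Sigma_x$ with $\sigma(\alpha_{n,k},\mcl L_n)$ on that event (an identification the paper records just before Lemma~\ref{lem:nk}), and applying Lemma~\ref{lem:nk} piecewise over the countable family $(n,k)$.
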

\begin{proof}
By Lemma~\ref{lem-cle-cover}, condition~\ref{itm0} of Definition~\ref{def:markovUP} is satisfied.
We observe that if $n,k \in \BB N$ and $\eta_{n,k}$ is the curve defined in the $(n,k)$-exploration process for $(P,U)$, then $\eta_{n,k}$ is a complementary $P$-excursion out of $U$ for some loop in $\Gamma$. Furthermore, for any $\sigma(\Gamma(P;U), \mcl S_\Gamma(P;U))$-measurable choice of $x$ as in Definition~\ref{def:markovUP}, the event $\{\eta_x = \eta_{n,k}\}$ is $\Sigma_x$-measurable.  
If we fix $n,k \in \BB N$, then by Lemma~\ref{lem:nk}, if we condition on $\Sigma_x$ and the event $\{\eta_x = \eta_{n,k}\}$, then the properties~\ref{itm1} and~\ref{itm2} in Definition~\ref{def:markovUP} are satisfied.
Since each complementary $P$-excursion of $\Gamma$ out of one of $U$ is one of the $\eta_{n,k}$'s, this concludes the proof. 
\end{proof}

\subsection{Annulus Markov property: proof of Theorem~\ref{thm-cle-markov}}
\label{sec-annulus-markov}

\begin{figure}[h]
\centering
\includegraphics[width=0.85\textwidth]{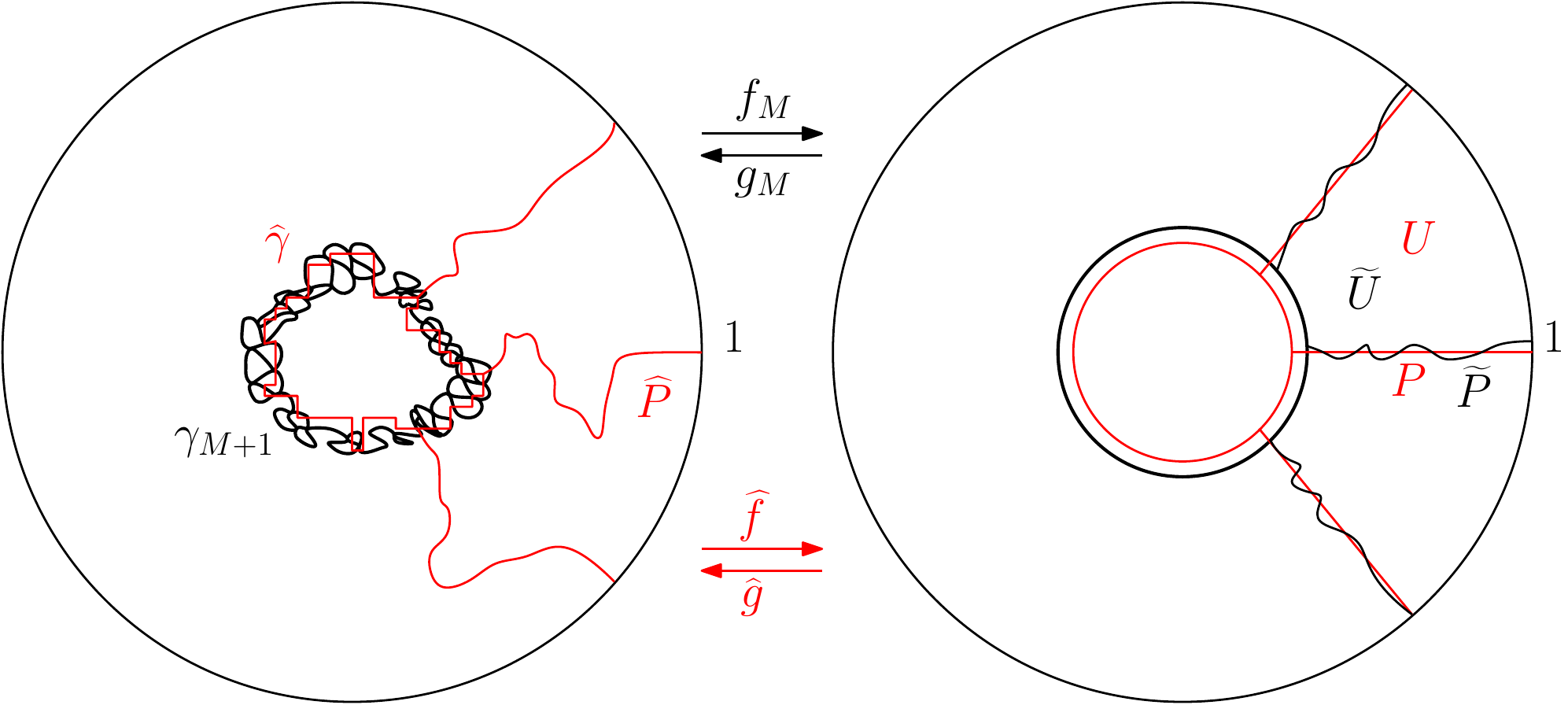}
\caption{Illustration of the proof of the annulus Markov property. We depict in black the loop $\gamma_{M+1}$ from the CLE and the conformal map $f_M$ from the doubly connected component of $\BB D\setminus \gamma_{M+1}$ onto an appropriate $\BB A_\rho$ and its inverse $g_M$. We depict in red the deterministic loop $\wh\gamma$ (which is supposed to approximate the outer boundary of $\gamma_{M+1}$) as well as the conformal map $\wh f$ from the doubly connected component of $\BB D\setminus \wh\gamma$ onto an appropriate $\BB A_{\wh\rho}$ and its inverse $\wh g$. We also depict the corresponding images of $U,P$ under successive conformal maps as defined in the text.}
\label{fg:UP}
\end{figure}

Recall that $\gamma_{M+1}$ is the $(M+1)$st outermost loop in $\Gamma$ surrounding~$0$, $\mcl A_M$ is the non-simply-connected component of $\BB D\setminus \gamma_{M+1}$, and $f_M : \mcl A_M \rta \BB A_\rho$ is a conformal map to an appropriate annulus.  
Let $(P,U)$ be as in the annulus Markov property. 
Recall that $U = \{r e^{i s} : r\in (\rho,1) , s \in (\theta-\pi/4 ,\theta+\pi/4)\}$. 

The idea of the proof is to apply Lemma~\ref{lem:nk} to the pair $(\wt P, \wt U)$ where $\wt P:=f_M^{-1}(P)$ and $\wt U:=f_M^{-1}(U)$.
The main difficulty is that $(\wt P, \wt U)$ is random: it depends on $\gamma_{M+1}$. 

To get around this difficulty, we will condition $\gamma_{M+1}$ to stay close to some deterministic $\eps$-lattice loop and argue as in the proof of the usual strong Markov property for stopping times.

Let $ \gamma_{M+1}^\eps$ be the outer boundary of the closure of the union of all $\eps$-lattice squares (i.e., squares with corners in $\epsilon \BB Z^2$) that are entirely contained in the domain encircled by the outer boundary of $\gamma_{M+1}$. 
Then $\gamma_{M+1}^\eps$ is a simple loop  that surrounds the origin. 
Let $\wh\gamma\subset{\BB D}$ be a deterministic loop encircling the origin which is the outer boundary of some union of connected $\eps$-lattice squares. 
Since there are only finitely many possible choices for $\gamma_{M+1}^\eps$, it holds for some choice of $\wh\gamma$ that $\BB P[\gamma_{M+1}^\eps = \wh\gamma] > 0$.
Let $\wh A$ be the annulus between $\wh \gamma$ and $\partial \BB D$ and let $\wh\rho$ be its conformal modulus. Let $\wh f$ be the conformal map from $\wh A$ onto $\BB A_{\wh \rho}$ which fixes 1.  
Let $\wh P:=\wh f^{-1}(P)$. 

By Corollary~\ref{cor:markov}, we know that $\Gamma$ satisfies the Markov property for the pair $(\wh P, \wh U)$. That is, a.s.\ $\wh P \subset \ol{\bigcup \Gamma(\wh P ,\wh U)}$ and if $x$ is the terminal endpoint of an element of $\mcl S_\Gamma(\wh P;\wh U)$ chosen in a measurable manner w.r.t.\ $\sigma(\Gamma(\wh P;\wh U), \mcl S_\Gamma(\wh P;\wh U))$, then conditionally on the $\sigma$-algebra $\Sigma_x$ of Definition~\ref{def:markovUP} for $(\wh P , \wh U)$, $\eta_x$ and $\Gamma|_{\BB D\setminus \ol{\bigcup\Gamma(\wh P)}}$ satisfy the properties~\ref{itm1} and~\ref{itm2} in Definition~\ref{def:markovUP}.  
Based on this, we will successively deduce the following properties for $\Gamma$:
\begin{enumerate}
\item Note that the event $\{\eta_x\not\subset \gamma_{M+1}\} \cap \{\gamma_{M+1}^\eps = \wh\gamma\}$ is measurable w.r.t.\  $\Sigma_x$, since it is equivalent to the event that among all the discovered loops, there exists a loop $\gamma$ such that $ \gamma^\eps =\wh\gamma$ and that $\gamma$ is the $(M+1)$st loop that surrounds the origin (this is determined by $\Sigma_x$, since one can see from the information in $\Sigma_x$ whether the loop containing $\eta_x$ encircles the origin based on the location of the endpoints of $\eta_x$).
Therefore, if we  condition on $\Sigma_x$ then on the event $\{\eta_x\not\subset \gamma_{M+1}\} \cap \{\gamma_{M+1}^\eps = \wh\gamma\}$, the conditional laws of $\eta_x$ and $\Gamma|_{\BB D\setminus \ol{\bigcup\Gamma(\wh P)}}$ still satisfy the properties~\ref{itm1} and~\ref{itm2} in Definition~\ref{def:markovUP}.

\item On the event $  \{\eta_x\not\subset \gamma_{M+1}\} \cap\{ \gamma^\eps_{M+1}  = \wh\gamma\}$, $\gamma_{M+1}$ is measurable w.r.t.\ $\Sigma_x$.
Therefore, if we condition on $\gamma_{M+1}, \Sigma_x$ and on the event $\{\eta_x\not\subset \gamma_{M+1}\} \cap \{\gamma_{M+1}^\eps = \wh\gamma\}$, the conditional laws of $\eta_x$ and $\Gamma|_{\BB D\setminus \ol{\bigcup\Gamma(\wh P)}}$ still satisfy properties~\ref{itm1} and~\ref{itm2} in Definition~\ref{def:markovUP}.

\item \label{itm:annulus}
We now change the order of conditioning and get the following statement. Conditionally on $\gamma_{M+1}$, on the event $\{\gamma_{M+1}^\eps = \wh\gamma\}$, the loop ensemble $\Gamma|_{\mcl A_M}$ satisfies the following property:  For any $\wh P$-excursion in  $\mcl S_\Gamma(\wh P;\wh U)$ chosen in  a measurable manner w.r.t.\ $\sigma(\Gamma(\wh P; \wh U), \mcl S_\Gamma(\wh P;\wh U))$ which does not trace a part of  $\gamma_{M+1}$, if its terminal endpoint is $x$ and we further condition on $\Sigma_x$, then  the conditional laws of $\eta_x$ and $\Gamma|_{\BB D\setminus \ol{\bigcup\Gamma(\wh P)}}$ still satisfy properties~\ref{itm1} and~\ref{itm2} in Definition~\ref{def:markovUP}. 

\item We condition on the loop $\gamma_{M+1}$ and the event $\{\gamma_{M+1}^\eps = \wh\gamma\}$ throughout the current paragraph.
Let $\wt P^\eps :=f_M(\wh P)$, $\wt U^\eps :=f_M(\wh U)$, and $\wt \Gamma  := f_M(\Gamma|_{\mcl A_M})$. 
For any $\wt P^\eps$-excursion in  $\mcl S_{\wt \Gamma^\eps}(\wt P^\eps;\wt U^\eps))$ chosen in a measurable manner w.r.t.\ $\sigma(\gamma_{M+1}  , \wt\Gamma(\wt P^\eps; \wt U^\eps), \mcl S_{\wt\Gamma}(\wt P^\eps ;\wt U^\eps ))$ with terminal endpoint $\wt x^\eps$, let $\wt \eta_{\wt x^\eps}^\eps$ be the complementary $\wt P^\eps$-excursion out of $\wt U^\eps$ started from $\wt x^\eps$. 
Note that $\sigma(\gamma_{M+1}  , \wt\Gamma(\wt P^\eps; \wt U^\eps), \mcl S_{\wt\Gamma}(\wt P^\eps ;\wt U^\eps ))$ is contained in $\sigma(\gamma_{M+1} ,  \Gamma(\wh P ; \wh U ) , \mcl S(\wh P ; \wh U ) )$.
Moreover, every $\wt P^\eps$-excursion in  $\mcl S_{\wt \Gamma^\eps }(\wt P^\eps ;\wt U^\eps))$ is the image under $f_M$ of some $\wh P^\eps$-excursion in $\mcl S_\Gamma(\wh P^\eps;\wh U^\eps))$ which does not trace a part of  $\gamma_{M+1}$.
Therefore, for any $\wt x^\eps$ and $\wt\eta_{\wt x^\eps}^\eps$ chosen as before, there exist a $\wh P$-excursion $ \mcl S_\Gamma(\wh P;\wh U)$ which is measurable w.r.t.\  $\sigma( \Gamma(\wh P; \wh U) , \mcl S(\wh P; \wh U) )$, with terminal endpoint $x$ and corresponding complementary $\wh P$-excursion $\eta_x$ such that  $\wt\eta_{\wt x^\eps}^\eps =f_M(\eta_x)$. Moreover, $e$ does not trace a part of  $\gamma_{M+1}$.

Now, if we apply $f_M$ to the objects in the statement in~\ref{itm:annulus}, then we can deduce the following statements for  $\wt\Gamma^\eps$.
Almost surely, $\wt P^\eps \subset \ol{\bigcup \wt\Gamma^\eps(\wt P^\eps,\wt U^\eps)}$. Moreover, conditionally on $\gamma_{M+1}$ and $\{\gamma_{M+1}^\eps = \wh\gamma\}$, for any $\wt P$-excursion in  $\mcl S_{\wt\Gamma^\eps}(\wt P^\eps;\wt U^\eps))$ chosen in a measurable manner w.r.t.\ $\sigma(\wt\Gamma^\eps(\wt P^\eps ; \wt U^\eps ), \mcl S_{\wt\Gamma^\eps}(\wt P^\eps;\wt U^\eps))$ with terminal endpoint $\wt x^\eps$, if we further condition on $\Sigma_{x}$,  the conditional laws of $\wt\eta_{\wt x^\eps}^\eps$ and $\wt\Gamma^\eps|_{\BB A_\rho\setminus \ol{\bigcup\wt\Gamma^\eps(\wt P^\eps)}}$ still satisfy properties~\ref{itm1} and~\ref{itm2} in Definition~\ref{def:markovUP}.  

The last step is to replace $\Sigma_x$ in the above statement by $\wt\Sigma_{\wt x^\eps}^\eps$, which is defined to be the sigma algebra generated by $\wt\Gamma^\eps( \wt P^\eps ; \wt U^\eps)$, $\mcl S_{\wt\Gamma^\eps}(\wt P^\eps; \wt U^\eps)$, and all of the complementary $ \wt P^\eps$-excursions of loops in $\wt\Gamma^\eps$ out of $ \wt U^\eps$ except for $\wt\eta_{\wt x^\eps}^\eps$.

Note that on the event $\{\gamma_{M+1}^\eps = \wh\gamma\}$, we have $\wt\Sigma_{\wt x^\eps}^\eps=\sigma(\Sigma_x, \gamma_{M+1})$. 
Therefore, we can replace $\Sigma_x$ by $\wt\Sigma_{\wt x^\eps}^\eps$  in the above statement.

We have therefore proved that conditionally on $\{\gamma_{M+1}^\eps = \wh\gamma\}$ and $\gamma_{M+1}$,  $f_M(\Gamma|_{  \mcl A_M})$ satisfies the annulus Markov property  for the pair $(\wt P, \wt U)$. 

\item
Let $g^\eps_M$ be the conformal map from $\BB A_{\rho^\eps}$ onto the doubly connected component of $\BB D\setminus \gamma_{M+1}^\eps$ which fixes $1$ where $\rho^\eps$ is the conformal radius of $\BB D\setminus \gamma_{M+1}^\eps$. Note that $g^\eps_M$ is a.s.\ determined by $\gamma_{M+1}$.
Now, if we look at the union of $\{\gamma_{M+1}^\eps = \wh\gamma\}$ for all $\wh\gamma$, then we get that conditionally on $\gamma_{M+1}$, $f_M(\Gamma|_{ \mcl A_M})$ satisfies the annulus Markov property for the pair $(f_M( g_M^\eps(P) ), f_M( g_M^\eps(U) ))$. As $\eps$ goes to zero, the pair $(f_M( g_M^\eps(P) ), f_M( g_M^\eps(U) ))$ converges to $(P, U)$, since $f_M\circ g^\eps_M $ converges uniformly to the identity.
Therefore, conditionally on $\gamma_{M+1}$, $f_M(\Gamma|_{ \mcl A_M})$ also satisfies the annulus Markov property for the pair $(P,U)$. 
Since this annulus Markov property itself does not depend on $\gamma_{M+1}$,  we can take away the conditioning and we get that $f_M(\Gamma|_{ \mcl A_M})$ satisfies the annulus Markov property for the pair $(P,U)$. \qed
\end{enumerate}

\section{The annulus Markov property uniquely characterizes CLE}
\label{sec-resampling}

In the preceding section, we showed that the construction of Theorem~\ref{thm-cle-markov} gives a loop ensemble on $\BB A_\rho$ with $M$ inner-boundary-surrounding loops which satisfies the annulus Markov property.
By Lemma~\ref{lem-miller-wu-dim} and the branching $\SLE_\kappa(\kappa-6)$ construction of $\CLE_\kappa$, we see that $\rho$ has positive probability to lie in any fixed open subset of $(0,1)$. By considering the regular conditional law given $\rho$ of the loop ensemble of Theorem~\ref{thm-cle-markov}, we get the existence part of Theorem~\ref{thm-cle-annulus} for a dense set of $\rho \in (0,1)$. 
By slightly perturbing the inner loop, it is easily seen that this regular conditional law depends continuously on $\rho$, so we can take limits to get the existence part of Theorem~\ref{thm-cle-annulus} in general.

The goal of this section is to establish the uniqueness part of Theorem~\ref{thm-cle-annulus}. 
To do this we will consider a Markov chain based on the annulus Markov property of Definition~\ref{def-annulus-markov}.
A law on loop ensembles satisfying the annulus Markov property will be a stationary measure for the Markov chain. 
We will then argue that the Markov chain has a unique stationary measure as follows.
We will show (Proposition~\ref{prop-coupling}) that the Markov chains started from any two initial configurations can be coupled together so that they agree with positive probability after finitely many steps. 
This will imply in particular that two stationary measures cannot be mutually singular. 
General ergodic theory considerations (as explained in Section~\ref{sec-stationary}) will then lead to the uniqueness of the stationary measure. 

A similar idea (but with a simpler Markov chain) is used in~\cite{ig2} to deduce the reversibility of $\SLE_\kappa(\rho_1;\rho_2)$ with $\rho_1,\rho_2 > -2$ from the reversibility of $\SLE_\kappa(\rho)$ for $\kappa \in (0,4)$ and $\rho > -2$. See also~\cite[Appendix A]{msw-sle-range} for an extension of this result proven using the same basic technique. 
 
Let us now define the Markov chain we will consider.  
Let $P_+ := [\rho,1]$ and let $P_- := [-1,-\rho]$. Also define the annular slices
\alb
U_+ &:= \{r e^{i\phi} : r\in (\rho,1) , \phi \in (-\pi/4 , \pi/4) \} \quad \text{and} \\
U_- &:= \{r e^{i\phi} : r\in (\rho,1) , \phi \in (3\pi/4 , 5\pi/4) \} .
\ale

The state space of our Markov chain will be the space of non-crossing, locally finite loop configurations on $\BB A_\rho$ which have exactly\footnote{Throughout most of this subsection we assume that $M\geq 1$ for convenience, which implies in particular that $\mcl S_\Gamma(P_\pm ; U_\pm)\not=\emptyset$. The case when $M = 0$ can be treated by a similar, but simpler argument.} $M \in \BB N $ inner-boundary-surrounding loops. 
Given such a loop configuration $\Gamma_0$, we define a new loop configuration $\Gamma_1$ as follows.
\begin{enumerate}
\item Sample a sign $\xi$ uniformly at random from $\{-,+\}$. 
\item Condition on $\xi$ and choose a uniformly random $P_\xi$-excursion into $U_\xi$ from the set $\mcl S_{\Gamma_0} (P_\xi; U_\xi)$. 
Let $x$ be its terminal endpoint. 
\item Condition on $\xi$ and $x$ and let $ \eta '$ be an independent chordal $\SLE_\kappa$ in the connected component of
\eqb \label{eqn-markov-chain-union}
\BB A_\rho \setminus  \ol{\bigcup \left( \Gamma_0(P_\xi) \setminus \{\gamma_x\} \right) \cup   \alpha_x} 
\eqe 
which has $x$ on its boundary, where here $\eta_x$ is the complementary $P_\xi$-excursion out of $U_\xi$ starting from $x$, $\gamma_x$ is the loop which contains $\eta_x$, and $\alpha_x$ is the complementary arc of $\eta_x$ in $\gamma_x$. 
\item The set of loops $\Gamma_1(P_\xi)$ is defined to be the same as $\Gamma_0(P)$ except that the loop segment $\eta_x$ is replaced by $ \eta '$. 
\item Conditioned on $\Gamma_1(P_\xi)$, sample the rest of $\Gamma_1$ by sampling an independent $\CLE_\kappa$ in each connected component of $\BB A_\rho \setminus \Gamma_1(P_\xi)$. 
\end{enumerate}
By definition, a probability measure on non-crossing, locally finite loop configurations that satisfies the annulus Markov property and has $M$ inner-boundary-surrounding loops is a stationary measure for the above Markov chain. 
Hence to prove the uniqueness part of Theorem~\ref{thm-cle-annulus} we only need to establish the following.

\begin{prop} \label{prop-stationary}
The above Markov chain has a unique stationary measure on locally finite, non-crossing loop configurations on $\BB A_\rho$. 
\end{prop}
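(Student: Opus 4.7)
The plan, following the strategy outlined just before the statement, has two parts: (i) establish a coupling lemma (the Proposition~\ref{prop-coupling} referenced above) asserting that for any two configurations $\Gamma_0, \Gamma_0'$ in the state space, the Markov chains started from them can be jointly coupled so that, with positive probability, they coincide exactly after some bounded number $N_0$ of steps; and (ii) extract uniqueness of the stationary measure from (i) by a general ergodic argument.

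For the coupling (i), I would run both chains jointly using the same random $\xi$'s, alternating $\xi = +,-,+,-,\dots$. Each step resamples one complementary $P_\xi$-excursion as a chordal $\SLE_\kappa$ in a random simply-connected subdomain of $\BB A_\rho$, and then refreshes every simply-connected pocket cut out by the loops of $\Gamma(P_\xi)$ with an independent $\CLE_\kappa$. The key tool is absolute continuity: $\SLE_\kappa$ laws in slightly different domains are mutually absolutely continuous with Radon--Nikodym derivatives bounded away from $0$ on suitable events (cf.\ \cite{ig1,ig4}), and similarly for $\CLE_\kappa$ via the Markov properties of Section~\ref{sec-markov}. Exploiting these, one should be able to couple the successive resampled arcs and pockets to agree on progressively larger subdomains, so that after $N_0$ alternating steps between $U_+$ and $U_-$, the two configurations coincide globally on $\BB A_\rho$ with uniformly positive probability. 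Since each step preserves the number $M$ of inner-boundary-surrounding loops, the Markov chain stays inside the state space throughout.

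For the ergodic argument (ii), suppose toward contradiction that there were two distinct ergodic stationary probability measures $\mu_1 \neq \mu_2$. These must be mutually singular (as is standard for ergodic stationary measures of a Markov chain, via the Birkhoff ergodic theorem applied to any $f$ with distinct $\mu_i$-integrals), so one can find disjoint Borel sets $A_1, A_2$ with $\mu_i(A_i) = 1$ such that, for $\mu_i$-a.e.\ starting point $\omega_i \in A_i$, the trajectory started at $\omega_i$ almost surely spends asymptotic fraction $1$ of its time in $A_i$. Pick such $\omega_1 \in A_1$ and $\omega_2 \in A_2$ and apply the coupling from (i): with positive probability the two chains agree after $N_0$ steps, and we extend the coupling so they remain equal thereafter. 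On this event, the tail trajectory is common to both, yet its time averages would have to converge simultaneously to limits consistent with $\mu_1$ and with $\mu_2$, a contradiction. Hence there is at most one ergodic stationary measure; combined with the ergodic decomposition and the existence discussion at the start of the section, the stationary measure is unique.

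The main obstacle is constructing the coupling in (i). The configurations $\Gamma_0, \Gamma_0'$ can differ arbitrarily, and the endpoints of the resampled excursion depend on the loops intersecting $P_\xi$, which generally do not coincide between the chains; moreover, a single resampling refreshes only the randomness associated with one $\xi$, so propagating agreement around the annulus is a delicate process. Obtaining a lower bound on the probability of exact agreement after $N_0$ steps will require combining absolute continuity estimates for $\SLE_\kappa$ and $\CLE_\kappa$ with careful control of how loops winding around the annulus and crossing both $P_+$ and $P_-$ are progressively brought into alignment, while ensuring throughout that the count of inner-boundary-surrounding loops remains equal to $M$.
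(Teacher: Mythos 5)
Your two-part strategy is the paper's: a positive-probability exact coupling after finitely many steps (this is Proposition~\ref{prop-coupling}), followed by a soft ergodic argument. Part (ii) of your proposal is essentially the paper's proof in Section~\ref{sec-stationary} (distinct ergodic stationary measures are mutually singular, which is incompatible with a coupling under which the two chains agree with positive probability; the ergodic decomposition then upgrades uniqueness of the ergodic stationary measure to uniqueness of the stationary measure), and your Birkhoff-average variant of the singularity argument works. The one thing you should not wave away there is measurability: the space of non-crossing, locally finite loop configurations is \emph{not} closed in the space of locally finite configurations (Figure~\ref{fig-non-complete}), so the standard ergodic decomposition theorem for complete separable state spaces does not apply directly; one must check that the state space is Borel in its completion and that the transition kernel is Borel measurable, which is exactly what Theorem~\ref{thm-ergodic-decomp} and Lemmas~\ref{lem-non-crossing-msrble} and~\ref{lem-markov-msrble} are for.

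The genuine gap is part (i), which is where essentially all of the work in Section~\ref{sec-resampling} lies, and your proposal contains only a plan for it --- you say as much in your final paragraph. ``Couple the resampled arcs on progressively larger subdomains via absolute continuity'' does not engage with the actual obstruction: absolute continuity statements such as Lemma~\ref{lem-sle-abs-cont} apply only once the two resampling domains overlap in a controlled way (e.g., share a crossing of $U_-$), and for arbitrary initial configurations they need not, because loops can wind around the annulus and make arbitrarily many crossings between $P_+$ and $P_-$. The paper handles this with a two-stage reduction: first an induction on the total number $K$ of crossings from $P_+$ to $P_-$, using the topological Lemma~\ref{lem-loop-excursion} to locate an excursion whose endpoints lie in the same component of $\bdy U_\pm\setminus\bdy\BB A_\rho$ so that it can be ``pulled back'' with positive probability via Lemma~\ref{lem-miller-wu-dim}, reducing to $K=2M$ and then (Lemma~\ref{lem-base-coupling}) to configurations of simple topology in which only the $M$ inner-boundary-surrounding loops exit $U_+'$; then, in that case (Proposition~\ref{prop-simple-coupling}), a coupling of the $M$ big loops one at a time \emph{from outside in} (so that each successive pair of $\SLE_\kappa$ resampling domains is guaranteed to overlap across $U_-$), followed by a $\CLE_\kappa$ absolute-continuity coupling of the infinitely many small loops meeting $P_-$ (Lemma~\ref{lem-cle-abs-cont}) and a second round of $M$ resamplings on the $P_-$ side. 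None of these steps appears in your sketch, so the coupling --- and hence the proposition --- is not proved. (A minor further point: a uniform bound $N_0$ over the whole state space is neither claimed nor needed; in the paper the number of steps depends on the initial pair of configurations.)
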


To prove Proposition~\ref{prop-stationary}, fix two initial loop configurations $\Gamma_0$ and $\wt\Gamma_0$ (each of which is a deterministic, non-crossing, locally finite loop configuration on $\BB A_\rho$ with $M$ inner-boundary-surrounding loops)  and let $\{\Gamma_n\}_{n\in\BB N_0}$ and $\{\wt\Gamma_n\}_{n\in\BB N_0}$ be the Markov chains started from $\Gamma_0$ and $\wt\Gamma_0$, respectively. For $n\in\BB N$, we denote the objects in the definition of the Markov chain above with $\Gamma_{n-1}$ in place of $\Gamma_0$ and $\Gamma_n$ in place of $\Gamma_1$ with a subscript $n$ (so, e.g., $\xi_n \in \{-,+\}$ and $\eta_n'$ is the chordal $\SLE_\kappa$ curve above). We make a similar convention for $\{\wt\Gamma_n\}_{n\in\BB N}$ except that we also add a tilde to the notation. The main step in the proof of Proposition~\ref{prop-stationary}, and hence the uniqueness part of Theorem~\ref{thm-cle-annulus}, is the following statement. 
 
\begin{prop} \label{prop-coupling}
For any choice of initial configurations $(\Gamma_0, \wt\Gamma_0)$, there exists $n \in\BB N$ and a coupling of $\Gamma_n$ and $\wt\Gamma_n$ for which $\BB P[\Gamma_n = \wt\Gamma_n] > 0$. 
\end{prop}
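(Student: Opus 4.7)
The plan is to couple the Markov chains $(\Gamma_n)$ and $(\wt\Gamma_n)$ via shared randomness and to show that for some $n=n(\Gamma_0,\wt\Gamma_0)$, the $n$-step transition laws $P_n(\Gamma_0,\cdot)$ and $P_n(\wt\Gamma_0,\cdot)$ are not mutually singular, so that a maximal coupling produces $\BB P[\Gamma_n=\wt\Gamma_n]>0$. I would take $\xi_n=\wt\xi_n$ alternating $+,-,+,-,\dots$, so that successive steps resample excursions on opposite halves of the annulus. The strategy has two parts: first, show that with positive probability both chains simultaneously reach a configuration close to a canonical ``template'' after finitely many steps; second, use a Doeblin/maximal-coupling argument to coalesce exactly onto it.

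As the canonical template take the event that there are exactly $M$ loops through $P_+\cup P_-$, each an inner-boundary-surrounding loop with exactly one $U_+$-excursion and one $U_-$-excursion, and no other loop of $\Gamma$ intersects $P_+\cup P_-$. Once both chains have reached a configuration of this form with \emph{identical} loops through $P_+\cup P_-$, the conditional law of the next step is the same in the two chains (it depends only on the preserved data), so coupling the resampled chordal $\SLE_\kappa$ arc and the resampled $\CLE_\kappa$'s in the complementary components to use the same sample gives $\Gamma_m=\wt\Gamma_m$ for all $m\geq n$. To reach the template, I would argue by ``washout'': at each $+$-step, with positive probability the new SLE arc $\eta_n'$ stays close to a reference path inside $U_+$ (e.g.\ the straight segment from $x_n$ to $x_n^*$) and does not re-cross $P_-$, and, independently, the freshly resampled $\CLE_\kappa$'s in the components of $\BB A_\rho\setminus\ol{\bigcup \Gamma_n(P_+)}$ contain no loop intersecting $P_-$. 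These events have positive probability by the full support of chordal $\SLE_\kappa$ on paths and by standard one-point estimates for $\CLE_\kappa$ of the kind used in Section~\ref{sec-markov} (such as Lemma~\ref{lem-cle-cover}). Conditioned on this good event, each consecutive $+$-then-$-$ pair of steps strictly reduces the number of $P_\pm$-excursions of loops in $\Gamma$ that originate from $\Gamma_0$; since this quantity is a.s.\ finite, after boundedly many iterations both chains reach a configuration close to the template.

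The main obstacle will be the final coalescence onto \emph{exactly} the same configuration, which is delicate in a continuous state space. Here I would rely on absolute continuity of chordal $\SLE_\kappa$ in nearby domains: the driving Brownian motions of two SLE processes whose target domains are close can be coupled, via a Girsanov change of measure, so that the resulting curves coincide on a common subarc with positive probability; analogously, $\CLE_\kappa$ samples in close domains can be coupled, by transferring through a conformal map close to the identity, so that they agree in a subregion with positive probability. Combining these local coupling estimates with the washout argument, one additional pair $(+,-)$ of steps beyond reaching the template allows the resampled excursions and the resampled CLEs in both chains to be coupled to coincide on all of $\BB A_\rho$ with positive conditional probability, yielding the required $\BB P[\Gamma_n=\wt\Gamma_n]>0$.
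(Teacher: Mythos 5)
Your overall strategy --- use positive-probability SLE events to simplify the topology of the configurations, then use absolute continuity of SLE/CLE in nearby domains to coalesce exactly --- is the same in spirit as the paper's. However, there are two genuine gaps.

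First, the ``washout'' step hides the main topological difficulty. The chain resamples a single complementary $P_\xi$-excursion out of $U_\xi$ per step, with its two endpoints fixed on $\bdy U_\xi$. If those endpoints lie in \emph{different} connected components of $\ol{\bdy U_+\setminus\bdy\BB A_\rho}$, then any resampled arc joining them in the allowed complementary component may be topologically forced to cross $P_-$, so the number of crossings does not decrease no matter how the $\SLE_\kappa$ behaves. Moreover, a loop of winding number $1$ must always retain at least one such crossing, so your claimed strict decrease ``per $(+,-)$ pair'' cannot hold unconditionally. What is needed --- and what the paper supplies as Lemma~\ref{lem-loop-excursion}, via a lift to the universal cover --- is that whenever the number of crossings of some loop exceeds its winding number, there \emph{exists} a complementary excursion (on one side or the other) with both endpoints in the same component of $\ol{\bdy U_\pm\setminus\bdy\BB A_\rho}$; only such an excursion can be pulled back without hitting $P_\mp$. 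Without this existence statement your induction on the crossing count does not close. (Relatedly, your template event ``no other loop of $\Gamma$ intersects $P_+\cup P_-$'' is impossible --- the freshly resampled CLEs in the complementary components a.s.\ produce infinitely many small loops meeting $P_\pm$ --- and must be weakened to ``every non-inner-boundary-surrounding loop meeting $P_\pm$ is contained in $U_\pm$,'' as in Proposition~\ref{prop-simple-coupling}.)

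Second, the final coalescence cannot be achieved in ``one additional pair $(+,-)$ of steps'' when $M\geq 2$: each step resamples exactly one excursion of one loop, so making all $M$ inner-boundary-surrounding loops of the two chains agree requires on the order of $M$ steps on each side. The paper does this one loop at a time, from outside in (Lemma~\ref{lem-outer-loop-coupling}), and the ordering is not cosmetic: the absolute-continuity coupling of the two $\SLE_\kappa$ excursions (Lemma~\ref{lem-sle-abs-cont}) requires the two domains in which they live to share a crossing of $U_-$, which is only guaranteed once the previously treated, outermore loops already agree in $U_-$. One then needs a separate recoupling of the infinitely many small loops meeting $P_-$ (Lemma~\ref{lem-one-side-coupling}) before the last $M$ resampling steps can force exact equality of $\Gamma_{2M}(P_-)$ and $\wt\Gamma_{2M}(P_-)$, after which the step-$5$ CLE resampling finishes the job. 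Your Girsanov/maximal-coupling paragraph gestures at the right tools but does not account for this one-excursion-per-step constraint or the induction it forces.
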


We will explain how to extract Proposition~\ref{prop-stationary} from Proposition~\ref{prop-coupling} in Section~\ref{sec-stationary}. 

The basic idea of the proof of Proposition~\ref{prop-coupling} is to use the absolute continuity statements for SLE and CLE from Appendix~\ref{sec-sle-cle-lemmas} to couple together larger and larger pieces of $\Gamma_n$ and $\wt\Gamma_n$ with positive probability. This will be carried out in two steps. In Section~\ref{sec-simple-coupling}, we treat the case when the topology of $\Gamma_0$ and $\wt\Gamma_0$ is particularly simple: we require that all of the loops which intersect $P_+$ except for the inner-boundary-surrounding loops are contained in a neighborhood $U_+'$ of $U_+$ and the inner-boundary-surrounding loops each make only one excursion out of this neighborhood.
In Section~\ref{sec-full-coupling}, we reduce the general case to this case by using Lemma~\ref{lem-miller-wu-dim} to ``pull" the excursions which get far from $U_+$ back to $U_+'$ one at a time. See the start of each of the individual subsections for a more detailed overview of the arguments involved.

Before proceeding with the proof, we record the following basic topological lemma.

\begin{lem} \label{lem-excursion-count}
Let $\gamma $ be an arbitrary loop in $\BB A_\rho$ (not necessarily non-self-crossing). The following quantities are equal.
\begin{enumerate}
\item The number of crossings of $\gamma$ from $P_+$ to $P_-$. \label{item-down-crossing}
\item The number of crossings of $\gamma$ from $P_-$ to $P_+$.\label{item-up-crossing}
\item The number of complementary $P_+$-excursions of $\gamma$ out of $U_+$ which hit $P_-$. \label{item-down-excursion}
\item The number of complementary $P_-$ excursions of $\gamma$ out of $U_-$ which hit $P_+$. \label{item-up-excursion}
\end{enumerate}  
\end{lem}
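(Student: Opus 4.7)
The four quantities all count the same combinatorial feature of $\gamma$: the number of ``transitions'' between $P_+$ and $P_-$ along a cyclic traversal. I plan to show $(1) = (2)$ by a cyclic-word argument on $\pm$-labels, $(1) = (3)$ by building an explicit bijection between crossings and complementary excursions, and then $(3) = (4)$ by invoking the symmetric version of the same bijection.

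For $(1) = (2)$, parametrize $\gamma$ cyclically and set $V_\pm := \gamma^{-1}(P_\pm) \subset \bdy\BB D$; these are disjoint closed sets. List the connected components of $V_+ \cup V_-$ in cyclic order around $\bdy\BB D$; each is labeled $+$ or $-$ according to which $V_\sigma$ contains it. A crossing from $P_+$ to $P_-$ is precisely a $+$-labeled component immediately followed by a $-$-labeled one in this cyclic word, and symmetrically for $P_-$ to $P_+$. In any finite cyclic word on $\{+,-\}$, the numbers of $+{\to}-$ and $-{\to}+$ transitions are equal, since the two kinds of transitions must interleave as we cycle through. Degenerate cases (one of $V_\pm$ empty, or infinitely many components) are checked directly: either all four quantities vanish together, or they are simultaneously infinite.

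For $(1) = (3)$, list the $P_+$-excursions $\alpha_1, \alpha_2, \ldots$ of $\gamma$ into $U_+$ in cyclic order, with intervening complementary excursions $\beta_1, \beta_2, \ldots$ out of $U_+$; by Definition~\ref{def-excursion} each $\beta_i$ runs from the terminal endpoint of $\alpha_i$ to the initial endpoint of $\alpha_{i+1}$, and these account for all complementary excursions. Two structural claims drive the bijection. First, since $P_- \cap \ol U_+ = \emptyset$, no $\alpha_i$ meets $P_-$, so every $P_-$-visit of $\gamma$ lies in some $\beta_i$. Second, no $\beta_i$ contains an interior point of $P_+$: such a point would belong to a non-degenerate subarc of $\gamma$ contained in $\ol U_+$ and hitting $P_+$, hence to some $\alpha_j$ by maximality in Definition~\ref{def-excursion}, contradicting the non-overlap condition for $\beta_i$. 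Given these, each crossing $\gamma|_{[t,s]}$ from $P_+$ to $P_-$ satisfies $\gamma(t) \in \alpha_i$ and $\gamma(s) \in \beta_j$ for unique indices; the absence of intermediate $P_\pm$-visits forces $j = i$, so we map the crossing to $\beta_i$. Conversely, given a complementary excursion $\beta_i$ hitting $P_-$, let $s^\star$ be the first $P_-$-time in $\beta_i$ and $t^\star$ the last $P_+$-time in $\alpha_i$; then $\gamma|_{[t^\star, s^\star]}$ is a crossing from $P_+$ to $P_-$. The two assignments are mutually inverse, establishing $(1) = (3)$.

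The identical argument with the roles of $+$ and $-$ (and of $U_+$ and $U_-$) swapped yields $(2) = (4)$; combined with $(1) = (2)$ and $(1) = (3)$ this completes the chain. The main technical obstacle is the second structural claim in the bijection step: carefully verifying, from the maximality clause in Definition~\ref{def-excursion} alone, that a complementary excursion $\beta_i$ really has no interior $P_+$-point. A secondary subtlety is that loops may have infinitely many excursions into $U_+$ or may touch $P_\pm$ tangentially at the endpoints of some $\alpha_i$; both are handled by a monotone-approximation argument and by carefully examining endpoint behavior on $\bdy U_+$.
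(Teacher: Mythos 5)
Your proposal is correct and follows essentially the same route as the paper: the paper also gets (1)$=$(2) from the fact that $\gamma$ is a loop (crossings in the two directions must alternate), gets (1)$=$(3) by pairing each crossing from $P_+$ to $P_-$ with the complementary $P_+$-excursion out of $U_+$ that contains its $P_-$-endpoint (noting that the concatenation of such a complementary excursion with the immediately preceding excursion into $U_+$ contains exactly one such crossing), and gets (2)$=$(4) by symmetry. Your write-up just makes explicit the two structural facts (excursions into $U_+$ miss $P_-$; complementary excursions have no interior $P_+$-visits) that the paper's three-sentence proof leaves implicit.
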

\begin{proof}
The quantities~\ref{item-down-crossing} and~\ref{item-up-crossing} are equal since $\gamma$ is a loop.
The quantities~\ref{item-down-crossing} and~\ref{item-down-excursion} are equal since the concatenation of a complementary $P_+$-excursions of $\gamma$ out of $U_+$ and the $P_+$-excursion of $\gamma$ into $U_+$ immediately preceding it contains exactly one crossing from $P_+$ to $P_-$. 
Similarly, the quantities~\ref{item-up-crossing} and~\ref{item-up-excursion} are equal.
\end{proof}

\subsection{Initial configurations with simple topology}
\label{sec-simple-coupling}

In this subsection, we will establish Proposition~\ref{prop-coupling} in a special case when the topology of the initial configurations $\Gamma_0$ and $\wt\Gamma_0$ are particularly simple. 
We will need to work with a slightly larger annular slice which contains $U_+$ (the place where this is needed is Lemma~\ref{lem-base-coupling} below). 
To be concrete, we set
\eqb \label{eqn-bigger-slices}
U_+' := \{r e^{i\phi} : r\in (\rho,1) , \phi \in (-\pi/3 , \pi/3) \} . 
\eqe 
The main result of this subsection is the following proposition. 

\begin{prop} \label{prop-simple-coupling}
Suppose our initial configurations are such that $\#\mcl S_{\Gamma_0}(P_+ ; U_+') = \#\mcl S_{\wt\Gamma_0}(P_+ ; U_+') = M$. 
There is a coupling of $\{\Gamma_n\}_{n\in\BB N_0}$ and $\{\wt\Gamma_n\}_{n\in\BB N_0}$ such that $\BB P[\Gamma_{2M} = \wt\Gamma_{2M}] > 0$.
\end{prop}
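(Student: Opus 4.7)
The plan is to couple the two Markov chains over the $2M$ steps so that, with positive probability, the first $M$ steps, all taken with $\xi_n = \wt\xi_n = +$, synchronize the two chains on the ``$-$ side'' $\BB A_\rho \setminus \ol{U_+}$, while the next $M$ steps, all with $\xi_n = \wt\xi_n = -$, synchronize them on the ``$+$ side'' $\BB A_\rho \setminus \ol{U_-}$. These two regions cover $\BB A_\rho$, so once both synchronizations have succeeded we will have $\Gamma_{2M} = \wt\Gamma_{2M}$. The simple-topology hypothesis enters through Lemma~\ref{lem-excursion-count}: it forces each initial configuration to have exactly $M$ complementary $P_+$-excursions out of $U_+'$ (hence out of $U_+$), one per inner-boundary-surrounding loop, so I can label the surrounding loops $1,\dots,M$ in both chains by the angular order of their unique excursions into $U_+'$. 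This labeling is preserved by steps with $\xi=+$, which only alter the complementary excursions out of $U_+$ and never the $P_+$-excursions into $U_+$ of a surrounding loop.

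At step $n \leq M$ I would couple the sign choices and the uniform excursion selection so that both chains pick the excursion coming from surrounding loop number $n$; since the relevant excursion sets are finite this happens with positive probability. The resampled curves $\eta'_n$ and $\wt\eta'_n$ are then chordal $\SLE_\kappa$ processes in a priori different simply connected subdomains of $\BB A_\rho \setminus \ol{U_+}$, with endpoints $x_n, \wt x_n$ on $\bdy U_+$. Using the absolute-continuity lemmas for $\SLE_\kappa$ collected in Appendix~\ref{sec-sle-cle-lemmas}, together with the joint Markov property of the $\SLE$-decorated $\CLE$ in Lemma~\ref{lem-sle-cle-markov} to treat the subsequent $\CLE_\kappa$ resamplings in the complementary components, I would construct a coupling on which $\eta'_n = \wt\eta'_n$ and the re-sampled $\CLE$ pieces on the $-$ side agree, with strictly positive conditional probability given the history. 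After $M$ such steps the two chains agree on all $M$ outer arcs and on the resampled $\CLE$ pieces contained in $\BB A_\rho \setminus \ol{U_+}$. Steps $M+1,\dots,2M$ with $\xi=-$ then symmetrically synchronize $\BB A_\rho \setminus \ol{U_-}$, and together every piece of every surrounding loop and every resampled $\CLE$ region has been matched, giving $\Gamma_{2M} = \wt\Gamma_{2M}$.

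The hard part will be the per-step coupling of the $\SLE_\kappa$ pairs $(\eta'_n, \wt\eta'_n)$: these curves live in genuinely different simply connected domains and, absent further work, will have different prescribed endpoints $x_n \neq \wt x_n$ on $\bdy U_+$. The plan is to first use absolute continuity of $\SLE_\kappa$ under boundary perturbations near the starting and ending points (combined with the freedom in the excursion choice and the positive-probability event that both resampled curves stay in a common sub-annulus adjacent to $\bdy U_+$) in order to align the endpoints, and then to invoke mutual absolute continuity of $\SLE_\kappa$ with respect to the domain, restricted to the event that the curves stay inside a common simply connected sub-domain. Matching the $\CLE_\kappa$ pieces afterwards is a separate but analogous absolute-continuity argument. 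The simple-topology assumption is essential here: without it one could not set up a finite, loop-by-loop matching of excursions, and the number of crossings of $P_+$ made by a single surrounding loop could differ between the two chains, which would destroy the whole scheme.
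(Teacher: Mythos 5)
Your overall architecture (two rounds of $M$ resampling steps, first with $\xi=+$ and then with $\xi=-$, matching the surrounding loops one at a time) is the same as the paper's, but there is a genuine gap in what you ask the first round to achieve. You propose to couple so that $\eta_n' = \wt\eta_n'$ exactly in each of the first $M$ steps, and you acknowledge that the obstruction is the mismatch of endpoints $x_n \neq \wt x_n$ on $\bdy U_+$, which you plan to remove by ``boundary perturbations near the starting and ending points'' and ``freedom in the excursion choice.'' No such freedom exists: the endpoints of the complementary $P_+$-excursion out of $U_+$ of $\gamma_0^n$ are the endpoints of the (never-resampled) $P_+$-excursions into $U_+$, hence are deterministic functions of the initial configurations, and the Markov chain only replaces the arc \emph{between} them. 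Absolute continuity of $\SLE_\kappa$ under domain perturbation cannot identify two curves with distinct prescribed endpoints, so the event $\{\eta_n' = \wt\eta_n'\}$ has probability zero in the first round and the per-step coupling you describe cannot be constructed.

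What the paper does instead is to settle, in the first $M$ steps, for agreement of only the \emph{middle} segments of the resampled excursions: Lemma~\ref{lem-sle-abs-cont} is precisely a statement about coupling the portions of two $\SLE_\kappa$ curves in different domains, with different endpoints, between their first entrance into a common region $V\subset U_-$ and their subsequent exit, so that these portions coincide. This yields $\gamma_M^m\cap U_- = \wt\gamma_M^m\cap U_-$ (Lemma~\ref{lem-outer-loop-coupling}); Lemma~\ref{lem-cle-abs-cont} then matches the small loops meeting $P_-$ (Lemma~\ref{lem-one-side-coupling}). Only in the second round, with $\xi=-$, do the excursions being resampled have \emph{identical} endpoints and ambient domains agreeing near those endpoints — because the $-$ side has already been synchronized — and only then can Lemma~\ref{lem-mw-sle-abs-cont} produce exact equality $\eta_{M+n}'=\wt\eta_{M+n}'$, after which the final $\CLE_\kappa$ resampling in the complement of $\Gamma_{2M}(P_-)=\wt\Gamma_{2M}(P_-)$ finishes the proof. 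A secondary point: the paper processes the surrounding loops strictly from outside in, not in angular order of their excursions, because the inductive step of Lemma~\ref{lem-outer-loop-coupling} needs the intersection of the two SLE domains to contain a crossing of $U_-$, and this is supplied by the already-matched $(N-1)$st loop; your labeling convention would not obviously guarantee this.
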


Since each of the $M$ loops of $\Gamma_0$ which surround the inner boundary must have at least one complementary $P_+$-excursion out of $U_+'$, we always have $\#\mcl S_{\Gamma_0}(P_+ ; U_+') \geq M$. The hypothesis that $\#\mcl S_{\Gamma_0}(P_+ ; U_+') = M$ says that none of the loops of $\Gamma_0$ which intersect $P_+$ other than the inner-boundary-surrounding loops exit $U_+'$. Furthermore, each of the inner boundary surrounding loops has exactly one complementary $P_+$-excursion out of $U_+'$.  
Similar considerations hold for $\wt\Gamma_0$. See Figure~\ref{fig-simple-coupling} for an illustration of the setup. 

Proposition~\ref{prop-simple-coupling} is the main step in the proof of Proposition~\ref{prop-coupling}: once it is established, repeated applications of Lemma~\ref{lem-miller-wu-dim} will allow us to convert a general choice of $(\Gamma_0 ,\wt\Gamma_0)$ into one satisfying the hypotheses of Proposition~\ref{prop-simple-coupling} after finitely many iterations of the Markov chain.

\begin{defn}
Throughout this subsection, for $n\in\BB N_0$ we write $\gamma_n^1,\dots,\gamma_n^M$ for the inner-boundary-surrounding loops of $\Gamma_n$, labeled from outside in.
We similarly define $\wt\gamma_n^1,\dots,\wt\gamma_n^M$ with $\wt\Gamma_n$ in place of $\Gamma_n$. 
\end{defn}

\begin{figure}[t!]
 \begin{center}
\includegraphics[scale=.85]{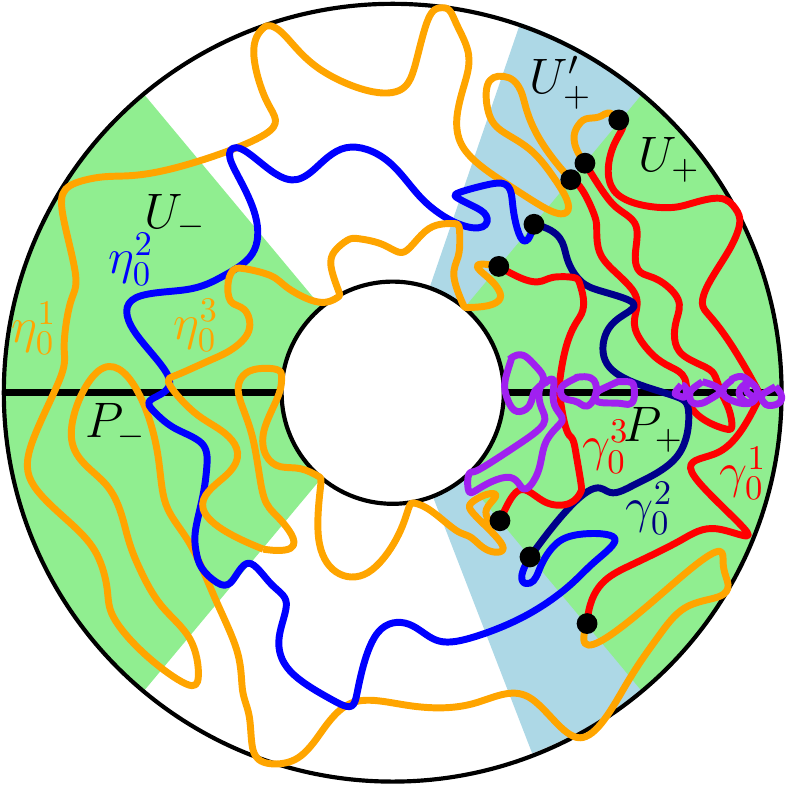}
\vspace{-0.01\textheight}
\caption{Illustration of the set of loops $\Gamma_0(P_+)$ in the setting of Proposition~\ref{prop-simple-coupling} in the case when $M=3$. Loops are shown without self-intersections for clarity, but in reality the loops intersect (but do not cross) themselves in a fractal set. The $P_+$-excursions into $U_+$ (resp.\ complementary $P_+$-excursions out of $U_+$) of the inner-boundary-surrounding loops $\gamma_0^1,\gamma_0^2,\gamma_0^3$ are shown in red or dark blue (resp.\ orange or blue). The other loops in $\Gamma_0(P_+)$ are shown in purple. 
The requirement that $\#\mcl S_{\Gamma_0}(P_+ ; U_+') = 3$ says that only the inner-boundary-surrounding loops are allowed to exit the light blue region $U_+'$, and furthermore each such loop has only one complementary $P_+$-excursion out of $U_+$ which exits $U_+'$. In the proof of Lemma~\ref{lem-outer-loop-coupling}, these excursions are called $\eta_0^1$, $\eta_0^2$, $\eta_0^3$. 
}\label{fig-simple-coupling}
\end{center}
\vspace{-1em}
\end{figure}

The proof of Proposition~\ref{prop-simple-coupling} has three main steps. 
\begin{enumerate}
\item We first show in Lemma~\ref{lem-outer-loop-coupling} that we can couple in such a way that after $M$ steps of the Markov chain, it holds with positive probability that the inner-boundary-surrounding loops of $\Gamma_M$ and $\wt\Gamma_M$ satisfy $\gamma_M^m \cap U_-  = \wt\gamma_M^m \cap U_- $ for each $m=1,\dots,M$ and moreover each of these loops has only one $P_-$-excursion into $U_-$. This is done by using the fact that the $P_+$-excursions of these loops out of $U_+ $ are re-sampled as $\SLE_\kappa$ curves in the Markov chain and applying Lemma~\ref{lem-sle-abs-cont} $M$ times, once for each pair of inner-boundary-surrounding loops.
\item  We next show in Lemma~\ref{lem-one-side-coupling} that we can modify our coupling in such a way that with positive probability, $\Gamma_M(P_- ; U_-) = \wt\Gamma_M(P_-; U_-)$ and $\mcl S_{\Gamma_M}(P_- ; U_-) = \mcl S_{\wt\Gamma_M}(P_- ; U_-)$. The equality $\mcl S_{\Gamma_M}(P_- ; U_-) = \mcl S_{\wt\Gamma_M}(P_- ; U_-)$ comes from the previous step, and the equality $\Gamma_M(P_- ; U_-) = \wt\Gamma_M(P_-; U_-)$ of the sets of ``small" loops intersecting $P_-$ comes from Lemma~\ref{lem-cle-abs-cont}. 
\item Finally, we show that after $M$ additional steps of the Markov chain, one can couple so that with positive probability, the complementary $P_-$-excursions of the inner-boundary-surrounding loops of $\Gamma_{2M}$ and $\wt\Gamma_{2M}$ out of $U_-$ agree. This is done using the fact that these excursions are re-sampled as $\SLE_\kappa$ curves in our Markov chain and (due to the previous step) these $\SLE_\kappa$ curves will be contained in domains which agree in a neighborhood of the initial and terminal points of the curves. This allows us to apply Lemma~\ref{lem-mw-sle-abs-cont} to couple the $\SLE_\kappa$ curves with positive probability. Once we have coupled so that all of the loops of $\Gamma_{2M}$ and $\wt\Gamma_{2M}$ which intersect $P_-$ agree, we are done by the definition of our Markov chain. 
\end{enumerate}

\begin{lem} \label{lem-outer-loop-coupling}
Suppose our initial configurations are such that $\#\mcl S_{\Gamma_0}(P_+ ; U_+') = \#\mcl S_{\wt\Gamma_0}(P_+ ; U_+') = M$. 
There is a coupling of $\Gamma_M$ and $\wt\Gamma_M$ such that with positive probability, the following is true.
\begin{enumerate}
\item $\Gamma_M(P_+ ; U_+) = \Gamma_0(P_+ ; U_+)$, $\mcl S_{\Gamma_M}(P_+ ; U_+) = \mcl S_{\Gamma_0}(P_+ ; U_+)$, and the same is true with $(\wt\Gamma_0 , \wt\Gamma_M)$ in place of $(\Gamma_0, \Gamma_M)$. \label{item-coupling-base}
\item For each $m=1,\dots,M$, the inner-boundary-surrounding loops satisfy $\gamma_M^m \cap U_-  = \wt\gamma_M^m \cap U_- $. \label{item-coupling-agree}
\item Each of the loops $\gamma_M^m$ (equivalently, each of the loops $\wt\gamma_M^m$) for $m=1,\dots,M$ has only one $P_-$-excursion into $U_- $. \label{item-coupling-excursions} 
\end{enumerate}
\end{lem}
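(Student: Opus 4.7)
I would run $M$ consecutive steps of the Markov chain on the $+$ side, arranging at each step to re-sample precisely the ``big'' outside-$U_+$ arc of one inner-boundary-surrounding loop, and jointly couple the two re-sampled $\SLE_\kappa$ curves so that each passes through $U_-$ along a single prescribed sub-arc.

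First, force $\xi_1 = \cdots = \xi_M = +$, an event of probability $2^{-M}$. The hypothesis $\#\mcl S_{\Gamma_0}(P_+;U_+') = M$ combined with local finiteness forces each inner-boundary-surrounding loop $\gamma_0^m$ to have exactly one complementary $P_+$-excursion $\eta_0^m$ out of $U_+$ that exits $U_+'$, and the enumeration $\eta_0^1,\dots,\eta_0^M$ (from outer to inner) captures the only arcs whose re-sampling can affect loop structure outside $U_+'$. The same holds for $\wt\Gamma_0$. At step $n$, I would condition on the positive-probability event that the uniformly chosen element of the finite set $\mcl S_{\Gamma_{n-1}}(P_+;U_+)$ is precisely the in-$U_+$ arc of the $n$-th outermost not-yet-resampled inner-boundary-surrounding loop (and analogously for $\wt\Gamma_{n-1}$); this forces the chain to re-sample exactly $\eta_{n-1}^n$ and $\wt\eta_{n-1}^n$ on the two sides. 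The new curves $\eta_n'$ and $\wt\eta_n'$ are chordal $\SLE_\kappa$'s in random simply connected slit domains $D_n$ and $\wt D_n$, each of which contains a common annular channel winding once around the inner boundary (namely the channel exterior to $\gamma_{n-1}^{n+1}$, resp.\ $\wt\gamma_{n-1}^{n+1}$, when $n < M$, and the innermost channel when $n = M$).

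For each $n\in\{1,\dots,M\}$, I would fix in advance a smooth simple arc $\beta_n$ in $\BB A_\rho$ with winding number $1$ around the inner boundary which crosses $U_-$ in exactly one sub-arc and lies, together with an open tubular neighborhood, in the common annular channel described above. By combining Lemma~\ref{lem-sle-abs-cont} with the elementary fact that chordal $\SLE_\kappa$ has positive probability to remain inside any prescribed open tubular neighborhood of a smooth arc joining its endpoints, I would couple $\eta_n'$ and $\wt\eta_n'$ so that, on a positive-probability event, both lie in a narrow tube around $\beta_n$. I would then sharpen this coupling using absolute continuity of the restrictions of $\eta_n'$ and $\wt\eta_n'$ to a neighborhood of $U_-$ (their laws there are mutually absolutely continuous once both curves are conditioned to lie in the tube) to a coupling that, with positive probability, additionally satisfies $\eta_n' \cap U_- = \wt\eta_n' \cap U_- = \beta_n \cap U_-$ --- a single $P_-$-excursion into $U_-$.

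Items~\ref{item-coupling-agree} and~\ref{item-coupling-excursions} are immediate from the construction after all $M$ steps. Item~\ref{item-coupling-base} follows from the structure of the Markov chain: at each step the only loop arc replaced is a complementary $P_+$-excursion out of $U_+$, which is disjoint from the open set $U_+$, and the CLE re-sampling in step~5 occurs in the connected components of $\BB A_\rho\setminus\ol{\bigcup \Gamma_n(P_+)}$, all of which are disjoint from $P_+$; consequently neither $\Gamma_n(P_+;U_+)$ nor $\mcl S_{\Gamma_n}(P_+;U_+)$ can change with $n$. The principal difficulty is the joint coupling in the third paragraph: $\eta_n'$ and $\wt\eta_n'$ have different endpoints on $\bdy U_+$ and live in different slit domains, so producing the desired positive-probability coupling on $U_-$ requires first independently forcing each curve into a common tube around $\beta_n$ and then exploiting the equivalence of the two conditional laws on a neighborhood of $U_-$ via a Radon--Nikodym argument.
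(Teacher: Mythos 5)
Your overall strategy is the same as the paper's: force $\xi_1=\cdots=\xi_M=+$, re-sample the unique big out-of-$U_+'$ excursion of one inner-boundary-surrounding loop per step working from outside in, and use Lemma~\ref{lem-sle-abs-cont} together with Lemma~\ref{lem-miller-wu-dim} to make the re-sampled $\SLE_\kappa$ traces agree across $U_-$. Your argument for item~\ref{item-coupling-base} also matches the paper's. However, there are two genuine gaps in the key coupling step.

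First, the crux of the induction is \emph{why} $D_n\cap\wt D_n$ contains a common connected region crossing $U_-$, and your identification of that region is wrong. You locate the ``common annular channel'' as the one bounded on the inside by $\gamma_{n-1}^{n+1}$, resp.\ $\wt\gamma_{n-1}^{n+1}$; but for $n<M$ these are the untouched loops $\gamma_0^{n+1}$ and $\wt\gamma_0^{n+1}$ of the two \emph{different} initial configurations, so the two channels are not the same set and nothing a priori guarantees their intersection crosses $U_-$. What actually makes the argument work (and forces the outside-in order) is that the \emph{outer} boundary of $D_n$ in $U_-$ is traced by the \emph{previously re-sampled} loop $\gamma_{n-1}^{n-1}$, which by the inductive hypothesis satisfies $\gamma_{n-1}^{n-1}\cap U_-=\wt\gamma_{n-1}^{n-1}\cap U_-$; in the base case $n=1$ one instead uses that, by the hypothesis $\#\mcl S_{\Gamma_0}(P_+;U_+')=M$, no loop other than $\gamma_0^1$ reaches $\bdy\BB D\setminus\bdy U_+'$, so this deterministic arc lies on $\bdy D_1\cap\bdy\wt D_1$. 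It is this shared boundary arc that produces a common connected $V_n\subset D_n\cap\wt D_n$ meeting both components of $\bdy U_-\setminus\bdy\BB A_\rho$. Relatedly, you cannot ``fix in advance'' a deterministic $\beta_n$ inside a channel that is random for $n\geq 2$; you must choose it measurably given the previous steps, which is harmless but should be said.

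Second, your target event $\eta_n'\cap U_-=\wt\eta_n'\cap U_-=\beta_n\cap U_-$ has probability zero: an $\SLE_\kappa$ trace cannot equal a smooth arc on any nontrivial piece. Reading it charitably as ``the traces agree in $U_-$ and are close to $\beta_n$,'' you still have not justified item~\ref{item-coupling-excursions}: mere containment of both curves in a narrow tube around $\beta_n$ does not prevent a curve from hitting $P_-$, backtracking along the tube out of $\ol{U_-}$, and returning to hit $P_-$ again, producing a second $P_-$-excursion into $U_-$. One needs either closeness to $\beta_n$ in the metric on curves modulo time parameterization (which is what Lemma~\ref{lem-miller-wu-dim} and the conclusion~\eqref{eqn-sle-abs-cont} actually give, and which rules out macroscopic backtracking along the tube), or the paper's explicit extra conditions: the coupled middle segments do not re-hit $P_-$ after first exiting $U_-$, and by Lemma~\ref{lem-miller-wu-dim} the curves have positive probability of never returning to $U_-$ after leaving $B_\ep(V_n)$. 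Finally, your two-stage ``tube, then Radon--Nikodym on a neighborhood of $U_-$'' device is essentially a re-derivation of Lemma~\ref{lem-sle-abs-cont}, which already packages exactly the needed coupling of middle segments of $\SLE_\kappa$ curves with different endpoints in different domains; invoking it directly is both simpler and safer.
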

\begin{proof} 
The idea of the proof is to apply Lemma~\ref{lem-sle-abs-cont} to couple the $P_+$-excursions out of $U_+'$ of the pairs of loops $(\gamma_0^m, \wt\gamma_0^m)$ one-by-one. 
We need to work from outside in since in order to apply Lemma~\ref{lem-sle-abs-cont}, we need to make sure that the $P_+$-excursions out of $U_+'$ for $\Gamma_0$ and $\wt\Gamma_0$ are contained in domains whose intersection includes a crossing between the two components of $\bdy U_-\setminus\bdy\BB A_\rho$. 

We will inductively construct for each $N = 1, \dots,  M$ a coupling of $\Gamma_N$ and $\wt\Gamma_N$ which satisfies the following conditions.
\begin{enumerate}
\item $\Gamma_N(P_+ ; U_+ ) = \Gamma_0(P_+ ; U_+ )$, $\mcl S_{\Gamma_N}(P_+ ; U_+ ) = \mcl S_{\Gamma_0}(P_+ ; U_+ )$, and $\gamma_N^m = \gamma_0^m$ for $m=N+1,\dots,M$; and the same is true with $(\wt\Gamma_0 , \wt\Gamma_N)$ in place of $(\Gamma_0, \Gamma_N)$. \label{item-coupling-base0}
\item For each $m=1,\dots,N$, the inner-boundary-surrounding loops satisfy $\gamma_N^m \cap U_-  = \wt\gamma_N^m \cap U_- $.  \label{item-coupling-agree0}
\item Each of the loops $\gamma_N^m$ (equivalently, each of the loops $\wt\gamma_N^m$) for $m=1,\dots,M$ has only one $P_-$-excursion into $U_- $. \label{item-coupling-excursions0}
\end{enumerate}
Taking $N=M$ concludes the proof.

For the construction, we will make use of the following notation. 
Let $\eta_0^1,\dots,\eta_0^M$ be the $M$ complementary $P_+$-excursions of $\Gamma_0$ out of $U_+$ which exit $U_+'$, enumerated so that $\eta_0^m$ is an arc of the $m$th outermost loop $\gamma_0^m$. Let $x_0^m$ and $x_0^{m,*}$ be the initial and terminal endpoints of $\eta_0^m$.  
Similarly define $\wt\eta_0^1,\dots,\wt\eta_0^M$ and $\wt x_0^1, \wt x_0^{1,*} , \dots,\wt x_0^M , \wt x_0^{M,*}$ with~$\wt\Gamma_0$ in place of~$\Gamma_0$. 
\medskip

\noindent\textit{Step 1: base case.} We will first construct a coupling of $\Gamma_1$ and $\wt\Gamma_1$ satisfying the above conditions for $N=1$.  
We first couple $(\xi_1 , x_1 )$ and $(\wt\xi_1 , \wt x_1  )$ so that with positive probability, $\xi_1 = \wt\xi_1 = +, x_1 = x_0^1 , \wt x_1 = \wt x_0^1$.  

Let $D$ be the connected component of $\BB A_\rho \setminus \ol{ \bigcup \left(  \Gamma_0(P_+ ) \setminus \{\gamma_0^1\} \right) \cup \alpha_0^1 }$ with $x_0^1$ and $x_0^{1,*}$ on its boundary, where here $\alpha_0^1$ is the complementary arc of $\eta_0^1$ in $\gamma_0^1$.  
Since $\Gamma_0$ is non-crossing, $\gamma_0^1$ is the \emph{outermost} loop of $\Gamma_0$ surrounding $\BB A_\rho$, and none of the loops in $\Gamma_0(P_+)$ exit $U_+'$ except for $\gamma_0^1,\dots,\gamma_0^M$, no loop of $\Gamma_0$ other than $\gamma_0^1$ can hit $\bdy\BB D\setminus \bdy U_+'$ and hence $\bdy\BB D\setminus \bdy U_+' \subset \bdy D$.  The definition of our Markov chain implies that the conditional law of $\eta_1'$ given $\{\xi_1 = + , x_1 =  x_0^1 \}$ is that of a chordal $\SLE_\kappa$ in $D$ from $x_0^1$ to $x_0^{1,*}$.  The analogous statements hold with $\wt\Gamma_0$ in place of $\Gamma_0$. 

If we let $\wt D$  be defined analogously to $D$ with $\wt\Gamma_0$ in place of $\Gamma_0$, then since $\bdy\BB D \setminus \bdy U_+'\subset \bdy D\cap \bdy \wt D$, it follows that $D\cap \wt D$ contains the closure of a connected subset $V$ of $U_- $ whose boundary intersects both connected components of $\bdy U_-  \setminus \bdy \BB A_\rho$.  By Lemma~\ref{lem-sle-abs-cont} (applied with this choice of $D$, $\wt D$, and $V$ and with $U = B_\ep(V)$ for a small enough $\ep > 0$), conditionally on $\{\xi_1 = \wt\xi_1 = +, x_1 = x_0^1 , \wt x_1 = \wt x_0^1\}$, we can further couple $\eta_1'$ and $ \wt\eta_1'$ in such a way that with positive probability, the segments of $\eta_1'$ and $\wt\eta_1'$ between their first entrance time of $V$ and their next subsequent exit time from $B_\ep(V)$ coincide, and neither of these segments hits $P_-$ between the first time after hitting $P_-$ at which it exits $U_- $ and the time when it exits $B_\ep(V)$. By Lemma~\ref{lem-miller-wu-dim} each of $\eta_1'$ and $\wt\eta_1'$ has positive probability not to return to $U_- $ after exiting $B_\ep(V)$. We have therefore proved that we can couple $(\xi_1 , x_1, \eta_1')$ and $(\wt\xi_1 , \wt x_1 , \wt\eta_1')$ in such a way that  with positive probability,
\eqb \label{eqn-one-curve-coupling}
 \xi_1 = \wt\xi_1 = +, \quad x_1 = x_0^1 , \quad \wt x_1 = \wt x_0^1,  \quad \eta_1' \cap U_-  = \wt\eta_1' \cap U_-   ,
\eqe 
and neither $\eta_1'$ nor $\wt\eta_1'$ hits $P_-$ again after the first time after hitting $P_-$ at which it exits $U_-$. 

By the definition of our coupling, if $\xi_1 = +$ and $x_1 = x_0^1$, then $\Gamma_1(P_+  ) \setminus \{\gamma_1^1\} = \Gamma_0(P_+ ) \setminus \{\gamma_0^1\}$, $\mcl S_{\Gamma_1}(P_+ ; U_+ ) = \mcl S_{\Gamma_0}(P_+ ; U_+ )$, and $\gamma_1^1$ is the concatenation of the arc $\gamma_0^1 \setminus \eta_0^1$ and the curve $\eta_1'$. 
The same is true for $\wt\Gamma_1$. Therefore, our desired conditions for $N=1$ hold whenever the event described in~\eqref{eqn-one-curve-coupling} occurs (note that the condition stated just after~\eqref{eqn-one-curve-coupling} is needed to obtain condition~\ref{item-coupling-excursions0} for $N=1$). 
\medskip

\noindent\textit{Step 2: inductive step.}
Suppose $N = 2,\dots, M$ and we have coupled $\Gamma_{N-1}$ and $\wt\Gamma_{N-1}$ so that the above conditions are satisfied with positive probability with $N-1$ in place of $N$. 
Suppose further that we are working on the positive probability event that these conditions are satisfied with $N-1$ in place of $N$.  
We will use a similar argument as in the case $N=1$.  
Recall that the $N$th outermost loops satisfy $\gamma_{N-1}^N = \gamma_0^N$ and let $D_N$ be the connected component $D_N$ of 
\eqbn
\BB A_\rho \setminus \ol{  \bigcup  \left( \Gamma_{N-1}(P_+ ) \setminus \{ \gamma_0^N \} \right) \cup \alpha_0^N  } 
\eqen
which has $x_0^N,x_0^{N,*}$ on its boundary, where here $\alpha_0^N$ is the complementary arc of $\eta_0^N$ in $\gamma_0^N$.
Since the loops $\gamma_{N-1}^1,\dots,\gamma_{N-1}^M$ are enumerated from outside in, $\gamma_{N-1}^N = \gamma_0^N$, and none of the loops in $\Gamma_{N-1}(P_+)$ exit $U_+'$ except for the inner-boundary-surrounding loops, we find that $D_N$ has a boundary arc which intersects both connected components of $\bdy U_- \setminus \bdy \BB A_\rho$ and is part of the loop $\gamma_{N-1}^{N-1}$. The same is true with $\wt\Gamma_{N-1}$ in place of $\Gamma_{N-1}$. 
If we let $\wt D_N$ be defined in the same manner as $D_N$ but with $\wt\Gamma_{N-1}$ in place of $\Gamma_{N-1}$, then since $\wt\gamma_{N-1}^{N-1} \cap U_-  =\gamma_{N-1}^{N-1} \cap U_- $, the set $D_N \cap \wt D_N$ contains the closure of a connected open subset $V_N$ of $U_- $ which intersects both connected components of $\bdy U_-  \setminus \bdy \BB A_\rho$. 
Using Lemmas~\ref{lem-sle-abs-cont} and~\ref{lem-miller-wu-dim} in exactly the same manner as in the case $N=1$, we can now obtain a coupling of $\Gamma_N$ and $\wt\Gamma_N$ satisfying our desired conditions. This completes the induction, hence the proof.  
\end{proof}

Building on Lemma~\ref{lem-outer-loop-coupling}, we now extend to a coupling of $\Gamma_M$ and $\wt\Gamma_M$ for which the (infinitely many) loops which intersect $P_-$ and are contained in $U_-$ agree.

\begin{lem} \label{lem-one-side-coupling}
Suppose our initial configurations are such that $\#\mcl S_{\Gamma_0}(P_+ ; U_+') = \#\mcl S_{\wt\Gamma_0}(P_+ ; U_+') = M$. 
There is a coupling of $\Gamma_M$ and $\wt\Gamma_M$ such that with positive probability, the following is true.
\begin{enumerate}
\item[$1'$.] $\Gamma_M(P_- ; U_- ) = \wt\Gamma_M(P_- ; U_- )$ and $\mcl S_{\wt\Gamma_M}(P_- ; U_- ) = \mcl S_{\wt\Gamma_M}(P_- ; U_- )$. \label{item-coupling-base'}
\item[$2'$.] $\#\mcl S_{\wt\Gamma_M}(P_- ; U_- ) = \# \mcl S_{\wt\Gamma_M}(P_- ; U_- ) = M$. 
\end{enumerate}
\end{lem}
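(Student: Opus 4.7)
The plan is to extend the coupling from Lemma~\ref{lem-outer-loop-coupling} by using the independent $\CLE_\kappa$ structure produced by step~5 of the Markov chain together with absolute continuity of $\CLE_\kappa$ across domains. I would first work on the positive-probability event $E$ supplied by Lemma~\ref{lem-outer-loop-coupling}. On $E$, for each $m=1,\dots,M$ we have $\gamma_M^m\cap U_-=\wt\gamma_M^m\cap U_-$, and each $\gamma_M^m$ has a single $P_-$-excursion into $U_-$, so the $M$ corresponding excursions for $\Gamma_M$ and $\wt\Gamma_M$ already coincide and contribute $M$ matching elements to $\mcl S_{\Gamma_M}(P_-;U_-)$ and $\mcl S_{\wt\Gamma_M}(P_-;U_-)$. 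It therefore remains, with positive probability, to arrange that (a)~no other loop of $\Gamma_M$ or $\wt\Gamma_M$ has a proper $P_-$-excursion into $U_-$, and (b)~the loops intersecting $P_-$ and contained in $U_-$ agree for $\Gamma_M$ and $\wt\Gamma_M$.

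A topological bookkeeping step then shows that the hypothesis $\#\mcl S_{\Gamma_0}(P_+;U_+')=M$ (that the only loops of $\Gamma_0$ escaping $U_+'$ are the inner-boundary-surrounding ones) is preserved through the $M$ Markov steps used in the coupling of Lemma~\ref{lem-outer-loop-coupling}: the small loops intersecting $P_+$ remain fixed by condition~1 there, and the only loops that are re-sampled are the inner-boundary-surrounding ones, which on $E$ enter $U_-$ only via the prescribed single arc. Combined with the disjointness $U_+'\cap U_-=\emptyset$, this gives that on $E$ the only loops of $\Gamma_M(P_+)$ whose closure meets $\ol{U_-}$ are $\gamma_M^1,\dots,\gamma_M^M$, and likewise for $\wt\Gamma_M$. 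By step~5 of the Markov chain, conditionally on $\Gamma_M(P_+)$ and $\wt\Gamma_M(P_+)$, the remaining loops form independent $\CLE_\kappa$'s in the connected components of $\BB A_\rho\setminus\ol{\bigcup\Gamma_M(P_+)}$, and the components that meet $U_-$ coincide, when intersected with $U_-$, with the corresponding components for $\wt\Gamma_M$.

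For each such complementary component $D$ with corresponding $\wt D$ for $\wt\Gamma_M$, I would fix a slight inner shrinking $V_D\subset D\cap U_-$ whose closure lies in $U_-$, apply Lemma~\ref{lem-cle-abs-cont} to couple the independent $\CLE_\kappa$'s in $D$ and $\wt D$ so that with positive conditional probability their sets of loops contained in $V_D$ agree, and then invoke Lemma~\ref{lem-miller-wu-dim} to guarantee, also with positive probability, that no loop of the $\CLE_\kappa$ in $D$ (or $\wt D$) reaches $P_-$ while escaping $V_D$; equivalently, every loop intersecting $P_-$ is already contained in $V_D\subset U_-$. Performing these couplings over the components whose closures meet $P_-$ simultaneously yields both $1'$ and $2'$ with positive probability. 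The main obstacle is the topological bookkeeping in the first step, namely controlling what the $M$ re-sampled $\SLE_\kappa$ curves contribute to $\Gamma_M(P_+)$ outside of $U_+'$ and verifying that no non-inner-boundary-surrounding loop of $\Gamma_M$ can reach $U_-$; this is precisely the reason Lemma~\ref{lem-outer-loop-coupling} is set up using the larger slice $U_+'$ rather than $U_+$.
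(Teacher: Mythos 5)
Your proposal follows the same route as the paper: run the coupling of Lemma~\ref{lem-outer-loop-coupling}, observe that on its event the sets $\ol{\bigcup\Gamma_M(P_+)}\cap U_-$ and $\ol{\bigcup\wt\Gamma_M(P_+)}\cap U_-$ coincide so that the complementary components meeting $P_-$ can be paired off, and then use the conditional independent-$\CLE_\kappa$ structure from step~5 of the Markov chain together with Lemma~\ref{lem-cle-abs-cont} to match the ``small'' loops hitting $P_-$. Two points need repair, one cosmetic and one genuine. First, you do not need (and cannot really use) Lemma~\ref{lem-miller-wu-dim} to keep the re-coupled loops inside $U_-$: that lemma controls a single $\SLE_\kappa$ curve, not the infinitely many loops of a $\CLE_\kappa$. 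The right move is to apply Lemma~\ref{lem-cle-abs-cont} with $X$ equal to the intersection of the component with $P_-$; its conclusion already gives both $\Gamma(X)=\wt\Gamma(X)$ \emph{and} that every loop of $\Gamma(X)$ has diameter at most $\ep$, which forces containment in $U_-$. Your version, which asks for the sets of loops \emph{contained in} $V_D$ to agree, is not what Lemma~\ref{lem-cle-abs-cont} delivers.

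The genuine gap is in ``performing these couplings over the components whose closures meet $P_-$ simultaneously.'' There are in general infinitely many such components, and each application of Lemma~\ref{lem-cle-abs-cont} only succeeds with \emph{some} positive probability; an infinite product of probabilities bounded away from $1$ need not be positive. The paper closes this by observing that, by continuity of the loops $\gamma_M^1,\dots,\gamma_M^M$, all but finitely many corresponding pairs of components have boundaries entirely traced by $\gamma_M^m\cap U_-=\wt\gamma_M^m\cap U_-$ and hence are \emph{identical} and contained in $U_-$; for those pairs one simply uses the same $\CLE_\kappa$ sample in both, and Lemma~\ref{lem-cle-abs-cont} is only invoked for the finitely many non-identical pairs. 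Without this observation your final step does not go through as stated.
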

\begin{proof}
Suppose we have coupled $\Gamma_M$ and $\wt\Gamma_M$ as in Lemma~\ref{lem-outer-loop-coupling}. We will use our coupling lemma for $\CLE_\kappa$'s on different domains (Lemma~\ref{lem-cle-abs-cont}) to modify this coupling to get a stronger coupling in which the statement of the lemma is satisfied.

Since none of the elements of $\Gamma_0(P_+ )$ except for the inner-boundary-surrounding loops intersect $U_- $, whenever the conditions of Lemma~\ref{lem-outer-loop-coupling} hold (which happens with positive probability),
\eqb \label{eqn-out-loop-agree}
\ol{\bigcup \Gamma_M(P_+)} \cap U_-  = \ol{\bigcup \wt\Gamma_M(P_+)} \cap U_-  .
\eqe 
By the definition of our Markov chain, the conditional law of $\Gamma_M$ given $\Gamma_M(P_+)$ on the event $\{\xi_M = +\}$ is given by the union of $\Gamma_M(P_+)$ and an independent $\CLE_\kappa$ in each connected component of $\BB A_\rho \setminus \ol{\bigcup \Gamma_M(P_+)}$. The analogous statement holds for $\wt\Gamma_M$. 
On the event that~\eqref{eqn-out-loop-agree} holds, there is a one-to-one correspondence between connected components of  $\BB A_\rho \setminus \ol{\bigcup\Gamma_M(P_+)}$ which intersect $P_-$ and connected components of $\BB A_\rho \setminus \ol{\bigcup\wt\Gamma_M(P_+)}$ which intersect $P_-$, wherein corresponding components share the same connected boundary arc of $\ol{\bigcup \Gamma_M(P_+)} \cap U_-  = \ol{\bigcup \wt\Gamma_M(P_+)} \cap U_- $. 
In fact, by the continuity of the loops $\gamma_M^1,\dots,\gamma_M^M$, a.s.\ all but finitely many corresponding pairs of such components have their boundaries entirely traced by $\gamma_M^m \cap U_-  = \wt\gamma_m^M\cap U_- $ for some $m=1,\dots,M$, in which case the two components are identical and contained in $U_- $. 
We may therefore apply Lemma~\ref{lem-cle-abs-cont} to each pair of non-identical corresponding components (with $X$ equal to the intersection of either of the components with $P_-$) to re-couple in such a way with positive probability, the conditions of Lemma~\ref{lem-outer-loop-coupling} are satisfied and the following additional conditions hold. 
\begin{enumerate} 
\setcounter{enumi}{3}
\item $\Gamma_M(P_-) \setminus \{\gamma_M^1,\dots,\gamma_M^M\} = \wt\Gamma_M(P_-) \setminus \{\wt\gamma_M^1,\dots,\wt\gamma_M^M\}$. \label{item-coupling-line}
\item Each loop of $\Gamma_M(P_-) \setminus  \{\gamma_M^1,\dots,\gamma_M^M\}$ (equivalently, each loop of $\wt\Gamma_M(P_-) \setminus \{\wt\gamma_M^1,\dots,\wt\gamma_M^M\}$) is contained in $U_- $. \label{item-coupling-contained}
\end{enumerate}

We will now argue that whenever the three conditions of Lemma~\ref{lem-outer-loop-coupling} plus the above two conditions are satisfied, the conditions in the statement of the lemma hold. Indeed, conditions~\ref{item-coupling-line} and~\ref{item-coupling-contained} above immediately imply that $\Gamma_M(P_- ; U_- ) = \wt\Gamma_M(P_- ; U_- )$. 
The relation $\mcl S_{\wt\Gamma_M}(P_- ; U_- ) = \mcl S_{\wt\Gamma_M}(P_- ; U_- )$ follows from condition~\ref{item-coupling-agree} of Lemma~\ref{lem-outer-loop-coupling} since conditions~\ref{item-coupling-line} and~\ref{item-coupling-contained} imply that no loops in $\Gamma_M(P_-)$ (resp.\ $\wt\Gamma_M(P_-)$) can exit $U_-$ except for $\gamma_M^1,\dots,\gamma_M^M$. 
The fact that $\#\mcl S_{\wt\Gamma_M}(P_- ; U_- ) = \# \mcl S_{\wt\Gamma_M}(P_- ; U_- ) = M$ follows from condition~\ref{item-coupling-excursions} of Lemma~\ref{lem-outer-loop-coupling}. 
\end{proof}

\begin{proof}[Proof of Proposition~\ref{prop-simple-coupling}]
It suffices to construct a coupling of $\Gamma_{2M}$ and $\wt\Gamma_{2M}$ such that $\Gamma_{2M}(P_-) = \wt\Gamma_{2M}(P_-)$ with positive probability: indeed, the definition of our Markov chain implies that conditionally on $\Gamma_{2M}(P_-)$, the conditional law of the rest of $\Gamma_{2M}$ is that of an independent $\CLE_\kappa$ in each connected component of $\BB D\setminus \ol{\bigcup \Gamma_{2M}(P_-)}$. To construct such a coupling, we will inductively construct for each $N  = 1,\dots , M$ a coupling of $\Gamma_{M+N}$ and $\wt\Gamma_{M+N}$ such that with positive probability, none of the loops in $\Gamma_{M+N}(P_-)$ or $\wt\Gamma_{M+N}(P_-)$ exit $U_- $ except for the inner-boundary-surrounding loops, and these loops satisfy $\gamma_{M+N}^m = \wt\gamma_{M+N}^m$ for each $m=1,\dots, N$. The argument is somewhat similar to that of Lemma~\ref{lem-outer-loop-coupling}, but simpler since we have a stronger relationship between the pairs of domains under consideration. 

Start by coupling $\Gamma_{ M}$ and $\wt\Gamma_{ M}$ as in Lemma~\ref{lem-one-side-coupling}. 
Throughout the proof we work on the (positive probability) event that the conditions of that lemma are satisfied.
For $m=1,\dots,N$ let $\beta_M^m$ be the unique $P_-$-excursion of $ \gamma_M^m$ into $U_- $ and let $x_M^m$ and $x_M^{m,*}$ be its terminal and initial endpoints, respectively.
Note that by the definition of the event in Lemma~\ref{lem-one-side-coupling}, these definitions are unaffected if we replace $\Gamma_M$ with $\wt\Gamma_M$. 

Let us now construct our desired coupling in the base case $N=1$. Let $D$ be the connected component of $\BB A_\rho\setminus \ol{\bigcup \left( \Gamma_M(P_-) \setminus \{\gamma_M^1\} \right) \setminus \alpha_M^1 }$ with $x_M^m$ and $x_M^{m,*}$ on its boundary (where here $\alpha_M^1$ is the complementary arc of $\beta_M^1$ in $\gamma_M^1$) and analogously define $\wt D$. 
Since $\gamma_M^1$ and $\wt\gamma_M^1$ are the outermost inner-boundary-surrounding loops in $\Gamma_M$ and $\wt\Gamma_M$, respectively, and since the $\beta_M^m$'s are the only elements of $\mcl S_{\Gamma_M}(P_- ; U_- )$, the set $\bdy(D\cap \wt D)$ contains a connected arc which includes $\bdy\BB D\setminus \bdy U_- $, $x_M^1$, and $x_M^{1,*}$ in its interior. The set $D\cap \wt D$ contains a neighborhood $U$ of this arc.

The conditional law of the curve $\eta_{M+1}'$ given $\Gamma_M$ and the event $\{\xi_{M+1} = -, x_{M+1} = x_M^1\}$ is that of an $\SLE_\kappa$ in $D$ from $x_M^1$ to $x_M^{1,*}$.
The analogous statement holds for $\wt\Gamma_M$.
Applying Lemma~\ref{lem-mw-sle-abs-cont} with the above choice of $D,\wt D$, and $U$ along with Lemma~\ref{lem-miller-wu-dim} with $P$ a path from $x_M^1$ to $x_M^{1,*}$ in $U$ shows that we can couple $\Gamma_{M+1}$ and $\wt\Gamma_{M+1}$ in such a way that with positive probability, $\eta_{M+1}' \subset U$ and $\eta_{M+1}' = \wt\eta_{M+1}'$.
If this is the case, then the definition of our Markov chain shows that our desired conditions are satisfied for $N=1$. This concludes the proof of the base case.

For the inductive step, we assume $N =2,\dots,M$ and the desired coupling of $\Gamma_{M+N-1}$ and $\wt\Gamma_{M+N-1}$ has been constructed. We then apply exactly the same argument as in the case $N=1$ in the connected component of $\BB A_\rho\setminus \gamma_{M+N-1}^{N-1} = \BB A_\rho\setminus \wt\gamma_{M+N-1}^{N-1}$ which has the inner boundary of $\BB A_\rho$ on its boundary. 
\end{proof}

\subsection{Proof of the coupling proposition in general}
\label{sec-full-coupling}

We will now deduce Proposition~\ref{prop-coupling} for a general choice of $(\Gamma_0 ,\wt\Gamma_0)$ from the special case given in Proposition~\ref{prop-simple-coupling}. 
Let $K = K(\Gamma_0,\wt\Gamma_0)$ be the total number of crossings from $P_+$ to $P_-$ by loops in $\Gamma_0$ plus the total number of crossings from $P_+$ to $P_-$ by loops in $\wt\Gamma_0$. We will prove Proposition~\ref{prop-coupling} by induction on $K$. 

Note that by Lemma~\ref{lem-excursion-count}, $K$ can equivalently be defined in terms of crossings from $P_-$ to $P_+$ or in terms of complementary $P_\pm$-excursions out of $U_\pm$ which hit $P_\mp$. We always have $K\geq 2M$ since each of $\Gamma_0$ and $\wt\Gamma_0$ has $M$ loops which surround the inner boundary of $\BB A_\rho$. 
 
We will now establish the base case of our inductive argument by reducing the case when $K=2M$ to the case when $\#\mcl S_{\Gamma}(P_+; U_+') = \#\mcl S_{\wt\Gamma}(P_+ ; U_+') = M$. This will be accomplished by using Lemma~\ref{lem-miller-wu-dim} to get rid of the $P_+$-excursions of $\Gamma_0$ and $\wt\Gamma_0$ out of $U_+'$ which do not hit $P_-$ one at a time. 

\begin{lem} \label{lem-base-coupling}
Suppose our initial configurations are such that $K=2M$.
There is an $N\in\BB N$ (depending only on $\Gamma_0$ and $\wt\Gamma_0$) and a coupling of $\Gamma_N$ and $\wt\Gamma_N$ such that $\BB P[\Gamma_N = \wt\Gamma_N] > 0$.
\end{lem}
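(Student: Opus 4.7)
The plan is to use finitely many Markov chain steps to arrange, with positive probability, that $\#\mcl S_{\Gamma}(P_+; U_+') = \#\mcl S_{\wt\Gamma}(P_+; U_+') = M$, and then to invoke Proposition~\ref{prop-simple-coupling} for $2M$ further steps. Since $K=2M$, Lemma~\ref{lem-excursion-count} forces each of the $M$ inner-boundary-surrounding loops of $\Gamma_0$ to cross from $P_+$ to $P_-$ exactly once, and forces no other loop in $\Gamma_0(P_+)$ to reach $P_-$; the same holds for $\wt\Gamma_0$. I would define the ``bad count'' $n_0 := \#\mcl S_{\Gamma_0}(P_+;U_+') - M \geq 0$ (and $\wt n_0$ analogously), which counts the complementary $P_+$-excursions of loops of $\Gamma_0$ out of $U_+$ that exit $U_+'$ but do not reach $P_-$. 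Simple connectedness of $\BB A_\rho \setminus \ol{U_+}$, combined with the fact that $P_-$ separates its two lateral boundary segments, implies that any such bad excursion has both endpoints on a single lateral segment of $\bdy U_+$, while each of the $M$ ``necessary'' ($P_-$-reaching) excursions has its endpoints on opposite lateral segments.

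The core step is to show that in one step of the Markov chain, with positive probability, the bad count of $\Gamma$ drops by $1$ whenever it is positive (and stays at $0$ otherwise), and independently the same holds for $\wt\Gamma$. To reduce it, I would choose $\xi_1 = +$ (probability $1/2$), and, conditional on this, pick uniformly (probability at least $1/\#\mcl S_\Gamma(P_+;U_+)>0$) the $P_+$-excursion into $U_+$ whose terminal endpoint is the initial endpoint $x$ of a ``maximally outer'' bad excursion $\eta_x$, meaning that no other bad excursion of any loop of $\Gamma$ lies between $\eta_x$ and the corresponding arc of $\bdy U_+$. The resampled $\eta_1'$ is then an $\SLE_\kappa$ from $x$ to $x^*$ in the domain $D$ of~\eqref{eqn-markov-chain-union}. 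Because $x$ and $x^*$ lie on the same lateral segment of $\bdy U_+$ and, by the outermost choice, no loop of $\Gamma$ obstructs the thin strip $\ol{U_+'\setminus U_+}$ adjacent to that segment between $x$ and $x^*$, $D$ contains a simple path from $x$ to $x^*$ inside $\ol{U_+'\setminus U_+}$ hugging the lateral segment. Lemma~\ref{lem-miller-wu-dim} then gives positive probability that $\eta_1'$ stays in a small neighborhood of this path inside $\ol{U_+'\setminus U_+}$, which merges the two $P_+$-excursions of $\gamma_x$ into $U_+'$ flanking $\eta_x$ into a single one and thus decreases $\#\mcl S_\Gamma(P_+;U_+')$ by $1$. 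If the bad count of $\Gamma$ is already $0$, I would instead pick any $P_+$-excursion into $U_+$ and, again by Lemma~\ref{lem-miller-wu-dim}, resample so that the new $\eta_1'$ stays in a small neighborhood of the original, thereby preserving the good/necessary character of every excursion and leaving the bad count at $0$; an analogous coupling is run on $\wt\Gamma$.

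Iterating the core step for $N_1 := \max(n_0, \wt n_0)$ steps produces, on a positive-probability event, a pair $(\Gamma_{N_1}, \wt\Gamma_{N_1})$ satisfying the hypothesis of Proposition~\ref{prop-simple-coupling}; applying that proposition on this event yields $\Gamma_{N_1+2M} = \wt\Gamma_{N_1+2M}$ on a further positive-probability event, so setting $N := N_1 + 2M$ completes the proof. The main technical obstacle is to verify that the ``maximally outer'' choice of $\eta_x$ indeed leaves $\ol{U_+'\setminus U_+}$ unobstructed between $x$ and $x^*$ on the correct side. This is where the inductive outside-in ordering is essential: previously resampled bad excursions now lie close to $\bdy U_+$ inside $\ol{U_+'\setminus U_+}$ and so do not obstruct later resamplings of innermore bad excursions; the necessary excursions cross $\bdy U_+$ only on the opposite lateral segment; and the remaining arc $\alpha_x$ of $\gamma_x$ cannot enter the chosen side of the thin strip between $x$ and $x^*$, since by assumption $\eta_x$ is itself outermost among the bad excursions of $\gamma_x$.
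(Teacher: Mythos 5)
Your overall architecture is the same as the paper's: use one Markov-chain step per ``bad'' excursion, apply Lemma~\ref{lem-miller-wu-dim} to resample it so that it stays inside $U_+'$, and reduce to Proposition~\ref{prop-simple-coupling}. The bookkeeping difference (decrementing both chains' bad counts in parallel versus inducting on the sum $\#\mcl S_{\Gamma_0}(P_+;U_+')+\#\mcl S_{\wt\Gamma_0}(P_+;U_+')$, where the second chain only needs to not increase) is immaterial.

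There is, however, a gap in the key geometric step: the selection rule for $\eta_x$ and the verification that the domain in~\eqref{eqn-markov-chain-union} contains a path from $x$ to $x^*$ inside $U_+'$. First, your assertion that ``the necessary excursions cross $\bdy U_+$ only on the opposite lateral segment'' is false and contradicts your own earlier (correct) observation: a $P_-$-reaching complementary excursion of an inner-boundary-surrounding loop has exactly one endpoint on \emph{each} lateral segment of $\bdy U_+$, so it can emanate from a point $w\in(x,x^*)$ directly into the thin strip $\ol{U_+'\setminus U_+}$ you want your path to hug. Second, complementary excursions out of $U_+$ that do \emph{not} exit $U_+'$ (small pokes) can also have endpoints in $(x,x^*)$. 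Your ``maximally outer among bad excursions'' criterion quantifies only over bad excursions, so it rules out neither obstruction, and the claimed unobstructed path need not exist. The correct choice is the one the paper makes: pick a $U_+'$-exiting excursion whose endpoint interval $[x,x^*]$ on the lateral segment contains no endpoint of \emph{any other} complementary $P_+$-excursion out of $U_+$ that exits $U_+'$ (necessary ones included), and take the path inside the region enclosed by $[x,x^*]\cup\eta_x$ intersected with $U_+'$, rather than forcing it into $\ol{U_+'\setminus U_+}$; with that choice no arc of a loop of $\Gamma_0(P_+)$ inside the enclosed region crosses $\bdy U_+'$, which is what yields the required path. With this selection rule substituted, the rest of your argument goes through.
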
 
\begin{proof}
Recall the slightly larger annular slices $U_+ ' \supset U_+$ from~\eqref{eqn-bigger-slices}. 
Define
\eqbn
N = N(\Gamma_0, \wt\Gamma_0) :=  \#\mcl S_{\Gamma_0}(P_+ ; U_+') + \#\mcl S_{\wt\Gamma_0}(P_+ ; U_+')  
\eqen 
and note that $N\geq 2M$. 
We will prove by induction on $N$ that the statement of the lemma holds for this choice of $N$.
The base case $N = 2M$ is Proposition~\ref{prop-simple-coupling}. 
Suppose now that $N \geq 2M+1$ and the statement of the lemma has been established for configurations with $N( \Gamma_0, \wt\Gamma_0) \leq N-1$.  
Since $N\geq 2M+1$ and $K=2M$, either $\Gamma_0$ or $\wt\Gamma_0$ has a complementary $P_+$-excursion out of $U_+$ which exits $U_+'$ but do not hit $P_-$. 
The idea of the proof is to ``pull" this excursion into $U_+'$ and thereby reduce $N$ by at least 1.

Assume without loss of generality that $\Gamma_0$ has a complementary $P_+$-excursion out of $U_+$ which exits $U_+'$. Call this excursion $\eta_x$ and let $x$ and $x^*$ be its endpoints. 
Then $x$ and $x^*$ lie in the same connected component of $\bdy U_+ \setminus \bdy \BB A_\rho$ (otherwise the excursion would have to hit $P_-$). 

By possibly choosing a different excursion, we can assume that the segment $[x,x^*]$ from $x$ to $x^*$ does not contain the endpoints of any other complementary $P_+$-excursions out of $U_+$ which exits $U_+'$.
If this is the case, then the region enclosed by $[x,x^*]$ and the excursion $\eta_x$ does not contain any segment which intersects $\bdy U_+'$ of a loop in $\Gamma_0(P_+)$.
Consequently, there is a path in this region from $x$ to $x^*$ which is entirely contained in $U_+'$. 
On the positive probability event $\{\xi_1 = +, x_1 = x\}$, this same path is also contained in the connected component of the set~\eqref{eqn-markov-chain-union} for $\Gamma = \Gamma_0$ which has $x$ and $x^*$ on its boundary and the conditional law of $\eta_1'$ is that of a chordal $\SLE_\kappa$ from~$x$ to~$x^*$ in this connected component. Using Lemma~\ref{lem-miller-wu-dim}, we therefore find that with positive probability, $\eta_1'$ is contained in~$U_+'$.

If this is the case, then the definition of $\Gamma_1$ shows that $\#\mcl S_{\Gamma_1}(P_+ ; U_+') = \#\mcl S_{\Gamma_0}(P_+ ; U_+') - 1$. 
Trivially, $\BB P[ \#\mcl S_{\wt\Gamma_1}(P_+ ; U_+') \leq \#\mcl S_{\wt\Gamma_0}(P_+ ; U_+')  ]  >0 $.  
Therefore, with positive probability $N(\Gamma_1 , \wt\Gamma_1) \leq N-1$, so by the inductive hypothesis applied with $(\Gamma_1 , \wt\Gamma_1)$ in place of $ (\Gamma_0 , \wt\Gamma_0)$ we conclude the proof.
\end{proof}

To treat the case when $K > 2M$, we will need the following purely topological lemma which will allow us to unwind loops which wrap around the origin multiple times. See Figure~\ref{fig-loop-excursion} for an illustration of the statement and proof. We need the statement only for non-crossing loops, but we state it for general loops since the proof does not use the non-crossing property.

\begin{figure}[t!]
 \begin{center}
\includegraphics[scale=.7]{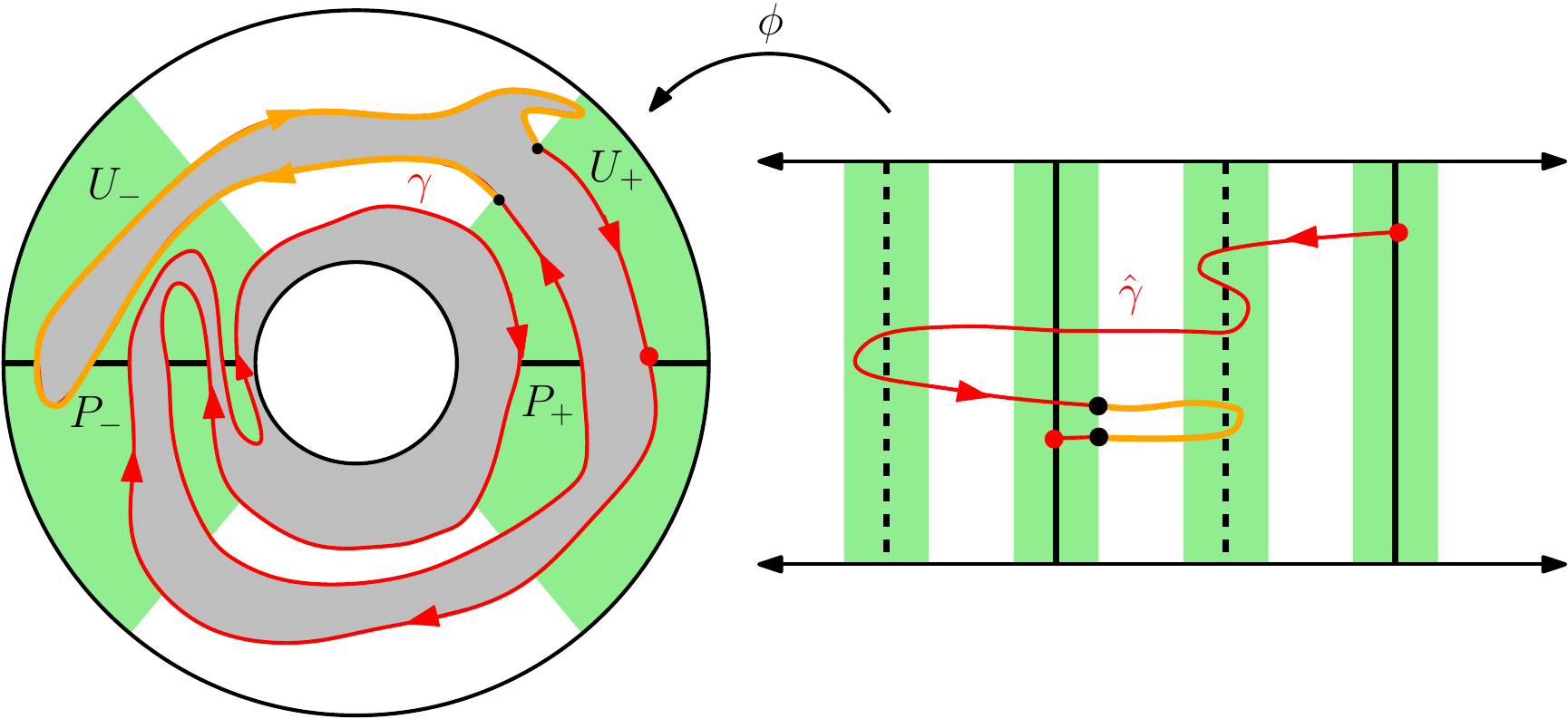}
\vspace{-0.01\textheight}
\caption{ Illustration of the statement and proof of Lemma~\ref{lem-loop-excursion}. \textbf{Left:} A loop $\gamma$ with winding number 1 around the inner boundary of $\BB A_\rho$. The interior of $\gamma$ is shown in grey. The orange arc of $\gamma$ is a complementary $P_+$-excursion out of $U_+$ which hits $P_-$ and has both of its endpoints in the same component of $\ol{\bdy U_+\setminus \bdy \BB A_\rho}$.
Here we have shown $\gamma$ as a simple loop for clarity, but in practice $\gamma$ will be a $\CLE_\kappa$-type loop.
\textbf{Right:} The lift $\wh\gamma$ of $\gamma$ to the universal cover of $\BB A_\rho$. Solid (resp.\ dashed) black segments are mapped to $P_+$ (resp.\ $P_-$). 
}\label{fig-loop-excursion}
\end{center}
\vspace{-1em}
\end{figure}

\begin{lem} \label{lem-loop-excursion}
Let $\gamma $ be an arbitrary loop in $\BB A_\rho$ (not necessarily non-self-crossing) and suppose that the winding number of $\gamma$ around the inner boundary of $\BB A_\rho$ is $N\in\BB N_0$. 
If $\gamma$ has at least $N+1$ complementary $P_+$-excursions out of $U_+$ which intersect $P_-$, then either $\gamma$ has a complementary $P_+$-excursion out of $U_+$ which intersects $P_-$ and has both of its endpoints in the same connected component of $\ol{\bdy U_+ \setminus \bdy \BB A_\rho}$; or the same is true with ``$+$" and ``$-$" interchanged.
\end{lem}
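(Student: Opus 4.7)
I will prove the following equivalent statement: under the hypothesis of the lemma, if no complementary $P_+$-excursion of $\gamma$ out of $U_+$ meeting $P_-$ has both endpoints in the same connected component of $\ol{\bdy U_+\setminus\bdy\BB A_\rho}$, then some complementary $P_-$-excursion of $\gamma$ out of $U_-$ meeting $P_+$ has both endpoints in the same component of $\ol{\bdy U_-\setminus\bdy\BB A_\rho}$. The plan is to lift $\gamma$ under $z\mapsto e^z$ to an arc $\wh\gamma$ in an infinite vertical strip, running from a base-point $p$ to $p+2\pi i N$. The lifts of $U_+$ become disjoint horizontal slabs separating the strip into gaps, and two observations realize each complementary $P_+$-excursion of $\gamma$ out of $U_+$ as a lifted arc contained in a single gap with endpoints on its top or bottom horizontal boundary: an intermediate value argument on the imaginary part forbids a lifted out-excursion from crossing any slab (which would require hitting a copy of $P_+$), and an incursion into a slab without hitting $P_+$ necessarily enters and exits on the same side and can be homotoped away. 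Each $U_+$-out-excursion thus acquires a type in $\{\mathrm{TT},\mathrm{BB},\mathrm{TB},\mathrm{BT}\}$ according to which horizontal side of its gap carries each endpoint; same-sided excursions are exactly those of type $\mathrm{TT}$ or $\mathrm{BB}$, and opposite-sided ones automatically cross the copy of $P_-$ in their gap.

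Equating the total vertical displacement of $\wh\gamma$ with the sum of per-excursion strip-index changes (namely $+1$ for $\mathrm{TB}$, $-1$ for $\mathrm{BT}$, and $0$ for $\mathrm{TT}$ and $\mathrm{BB}$, with $U_+$-in-excursions preserving the strip index) gives the winding identity $\#\mathrm{TB}-\#\mathrm{BT}=N$. Under the standing assumption the $P_-$-hitting $U_+$-out-excursions are exactly the opposite-sided ones, so the hypothesis $\#\mathrm{TB}+\#\mathrm{BT}\geq N+1$ combined with the winding identity forces $\#\mathrm{BT}\geq 1$ and $\#\mathrm{TB}\geq N+1$. Traversing $\gamma$ cyclically I let $O_j$ be the first $\mathrm{BT}$ $U_+$-out-excursion and $O_i$ the last $\mathrm{TB}$ $U_+$-out-excursion cyclically preceding $O_j$. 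A short side-label bookkeeping along $\gamma$ between $O_i$ and $O_j$, using that $O_i$ ends and $O_j$ begins on the bottom side and that no $\mathrm{TB}$ or $\mathrm{BT}$ out-excursion lies between them, forces every $U_+$-out-excursion strictly between $O_i$ and $O_j$ to be of type $\mathrm{BB}$ and every $U_+$-in-excursion between them to be of type $\mathrm{BB}$ as well, all occurring in the single $U_+$-strip just above the common gap of $O_i$ and $O_j$.

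An analogous strip-index calculation applied to $U_-$ shows that the $U_-$-in-excursions inside the $\mathrm{TB}$ excursion $O_i$ have a net $+1$ imbalance towards the ``bottom-to-top'' type, namely those entering from the component of $\bdy U_-\setminus\bdy\BB A_\rho$ adjacent to the upper gap sector and exiting from the opposite component; hence at least one such excursion exists, and I let $J_1$ be the last in $\gamma$-order. Symmetrically, $O_j$ contains a first ``top-to-bottom'' $U_-$-in-excursion $J_2$ beginning on the same top component of $\bdy U_-\setminus\bdy\BB A_\rho$. By the maximality of $J_1$ and minimality of $J_2$ no other $U_-$-in-excursion sits in the arc of $\gamma$ from the end of $J_1$ to the start of $J_2$, so the complementary $P_-$-excursion of $\gamma$ out of $U_-$ containing this arc has both endpoints on this single top component and is therefore same-sided. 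The arc contains each of the $\mathrm{BB}$ $U_+$-in-excursions identified in the previous paragraph, and each such in-excursion meets $P_+$; hence this $U_-$-out-excursion meets $P_+$, giving the required same-sided $U_-$-excursion intersecting $P_+$. The main obstacle is the side-label bookkeeping in the middle paragraph, which collapses the potentially complicated excursion pattern between $O_i$ and $O_j$ into the clean ``$\mathrm{TB}$, $\mathrm{BB}$ in, $\mathrm{BB}$ out, $\ldots$, $\mathrm{BB}$ in, $\mathrm{BT}$'' shape; once this reduction is in place the $U_-$-argument in the final paragraph follows directly from the $U_-$ analogue of the winding identity.
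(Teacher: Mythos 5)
Your overall strategy is the same as the paper's: lift to the universal cover, use the winding number to force an ``up'' crossing $O_i$ and a ``down'' crossing $O_j$ of a common gap, and extract the desired $U_-$-excursion from the arc between them. Everything through the winding identity $\#\mathrm{TB}-\#\mathrm{BT}=N$ and the selection of $O_i,O_j$ is sound. (The claim that the intermediate out- and in-excursions are all of type $\mathrm{BB}$ is an overstatement --- they can sit on either side of the slab adjacent to the shared endpoints of $O_i$ and $O_j$ --- but this is harmless: what you actually need is that they are same-sided, hence by the standing assumption disjoint from $P_-$, and that at least one intermediate $U_+$-in-excursion supplies a $P_+$-hit.)

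The genuine gap is the sentence asserting that ``no other $U_-$-in-excursion sits in the arc of $\gamma$ from the end of $J_1$ to the start of $J_2$,'' justified only by the extremality of $J_1,J_2$. This is false in general: $J_1$ is extremal only among the \emph{crossing-type} $P_-$-excursions into $U_-$, and after $J_1$ the remainder of $O_i$ may re-enter $\ol{U_-}$ from the far side, touch $P_-$, and exit on the same side, producing same-sided $P_-$-excursions into $U_-$ strictly between $J_1$ and $J_2$ (likewise inside $O_j$ before $J_2$). Hence the arc from the end of $J_1$ to the start of $J_2$ need not lie in a single complementary $P_-$-excursion out of $U_-$; worse, the complementary excursion that actually starts at the end of $J_1$ could be a short arc near $\bdy U_-$ that never reaches $P_+$, so it fails the ``intersects $P_+$'' requirement. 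The step is repairable with tools you already have: every $P_-$-excursion into $U_-$ lies inside a single complementary $P_+$-excursion out of $U_+$, the intermediate same-sided ones contain none (standing assumption), and the extremality of $J_1,J_2$ forces all remaining $P_-$-excursions into $U_-$ in that range to have both endpoints on the same boundary component of $U_-$ as the exit point of $J_1$; one should then take the complementary $P_-$-excursion out of $U_-$ containing the first visit to $P_+$ after $O_i$, rather than ``the one containing the arc.'' As written, however, the step does not go through. The paper sidesteps this bookkeeping entirely by locating two consecutive $P_+\!\to\!P_-$ crossings of opposite sign in the lift and pinning down the desired excursion via explicit first/last hitting times of the relevant lifted lines.
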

\begin{proof}
Choose a parameterization $\gamma :[0,1] \rta \BB A_\rho$ in such a way that $\gamma(0) \in P_+$. 
Let $\phi : \BB R \times [0,\rho] \rta \BB A_\rho$ be the conformal universal covering map normalized so that
\eqb \label{eqn-universal-cover}
\phi^{-1}(P_+) = \bigcup_{k\in\BB Z}\left(  \{2\pi k\} \times [0,\rho] \right)  \quad \text{and} \quad
\phi^{-1}(P_-) = \bigcup_{k\in\BB Z}\left(  \{2\pi k + \pi \} \times [0,\rho] \right) .
\eqe 
We note that the pre-images under $\phi$ of the two connected components of $\ol{\bdy U_- \setminus \bdy \BB A_\rho}$ are 
\eqb \label{eqn-universal-cover-component}
\bigcup_{k\in\BB Z}\left(   \{ 2\pi k + 3\pi/4\} \times [0,\rho] \right)  \quad \text{and} \quad
\bigcup_{k\in\BB Z}\left(  \{ 2\pi k + 5\pi/4\} \times [0,\rho] \right) , 
\eqe
and a similar statement holds for $\ol{\bdy U_+ \setminus \bdy \BB A_\rho}$. 
Let $\wh\gamma : [0,1] \rta \BB R\times [0,\rho]$ be the lift of $\gamma$ to $\BB R\times [0,\rho]$, so that $\phi\circ\wh\gamma = \gamma$, normalized so that $\re \wh\gamma(0)  = 0$. Since the winding number of $\gamma$ is $N$, we have $ \re\wh\gamma(1)  = 2\pi N$. 

By Lemma~\ref{lem-excursion-count} and our hypothesis on $\gamma$, the loop $\gamma$ has at least $N+1$ crossings from $P_+$ to $P_-$. 
Say that $[u,v]\subset [0,1]$ is a \emph{crossing interval} if $\gamma|_{[u,v]}$ is a crossing from $P_+$ to $P_-$. 
The crossing intervals are naturally ordered from left to right. 
If $[u,v]$ is a crossing interval, then by~\eqref{eqn-universal-cover} there is a $k\in\BB Z$ such that $\re\wh\gamma(u)  = 2\pi k$ and $\re\wh\gamma(v) \in \{2\pi k - \pi , 2\pi k + \pi\}$. 
Since $\re\wh\gamma(0) = 0$ and $\re\wh\gamma(1) = 2\pi N$ and there are at least $N+1$ crossing intervals, there must be two consecutive crossing intervals $[u_1,v_1]$ and $[u_2,v_2]$ (i.e., $u_1<v_1<u_2<v_2$ and there is no crossing interval in $[v_1,u_2]$) such that $\re\wh\gamma(v_1)-\re\wh\gamma(u_1)$ and $\re\wh\gamma(v_2) - \re\wh\gamma(u_2)$ have opposite signs.
Henceforth assume that $\re\wh\gamma(v_1)-\re\wh\gamma(u_1)  = \pi$ and $\re\wh\gamma(v_2)-\re\wh\gamma(u_2)  = -\pi$ (the other case is treated similarly).
Then for some $k\in\BB Z$, we have $\re\wh\gamma(u_1) = 2\pi k$ and $\re\wh\gamma(v_1) = 2\pi k + \pi$. 
Since the two crossing intervals are consecutive, there are two possibilities for $(\re\wh\gamma(u_2) , \re\wh\gamma(v_2))$: it is equal to either $(2  \pi k  ,  2\pi k -\pi ) $ or $(2\pi (k+1)   , 2\pi k + \pi) )$. Again, we assume that we are in the former case (the other case is treated similarly). 

We have $\re\wh\gamma(u_2) = 2\pi k$ and $\re\wh\gamma(t)  < 2\pi  k + \pi$ for $t \in [u_1,v_2]$. 
We will construct a complementary $P_-$-excursion out of $U_-$ which contains $\gamma(u_2)$ and satisfies the conditions in the statement of the lemma (two of the other four possible configurations above result in complementary $P_+$-excursions instead of complementary $P_-$-excursions).
We will first find the $P_-$-excursions of $\gamma$ into $U_-$ which come immediately before and after $\wh\gamma(u_2)$. 
Let $\ul s$ (resp.\ $\ol s$) be the last time $s$ before $u_2$ (resp.\ the first time $s$ after $u_2$) for which $\re\wh\gamma(s)  = 2\pi k -\pi $, so that $\gamma(\ul s), \gamma(\ol s) \in P_-$.
Let $\ul t$ (resp.\ $\ol t$) be the first time $s$ after $\ul s$ (resp.\ the last time $s$ before $\ol s$) with $\re\wh\gamma(s) = 2\pi k - 3\pi/4$.
By~\eqref{eqn-universal-cover-component}, $\gamma(\ul t)$ and $\gamma(\ol t)$ lie in the same connected component of $\ol{\bdy U_- \setminus \bdy \BB A_\rho}$ and $\gamma(\ul t)$ (resp.\ $\gamma(\ol t)$) is an endpoint of a $P_-$-excursion of $\gamma$ into $U_-$ which contains $\gamma(\ul s)$ (resp.\ $\gamma(\ol s)$). 
Furthermore, by the definition of $\ul s$ and $\ol s$ and since $\re\wh\gamma(t)  < 2\pi  k + \pi$ for $t \in [u_1,v_2] \supset [\ul s , \ol s]$, $\gamma$ does not hit $P_-$ between times $\ul s$ and $\ol s$, so $\gamma|_{[\ul t , \ol t]}$ is a complementary $P_-$-excursion of $\gamma$ out of $U_-$. 
Since $\gamma(u_2) \in P_+$, this excursion hits $P_+$ and by our choice of $\ul t , \ol t$, its endpoints lie in the same connected component of $\bdy U_+\cap \bdy U_-$.
\end{proof}

\begin{proof}[Proof of Proposition~\ref{prop-coupling}]
As explained at the beginning of this subsection, we will induct on $K$. The base case $K=2M$ was treated in Lemma~\ref{lem-base-coupling}. Suppose $K \geq 2M+1$ and we have established the proposition for all values of $K' \leq K$. 

Since $K\geq 2M+1$, either $\Gamma_0$ or $\wt\Gamma_0$ has at least $M+1$ complementary $P_+$-excursions out of $U_+$ which hit $P_-$. 
Suppose without loss of generality that $\Gamma_0$ has at least $M+1$ such excursions.
Since $\Gamma_0$ has $M$ loops with winding number 1 around the inner boundary of $\BB A_\rho$ and the rest of the loops have winding number zero, there must be a loop $\gamma \in \Gamma$ such that the following is true.
The number of complementary $P_+$-excursions of $\gamma$ out of $U_+$ which intersect $P_-$ exceeds the winding number of $\gamma$ around the inner boundary of $\BB A_\rho$ by at least 1.
By Lemma~\ref{lem-loop-excursion}, either $\gamma$ has a complementary $P_+$-excursion out of $U_+$ which intersects $P_-$ and has both of its endpoints in the same connected component of $\bdy U_+ \setminus \bdy \BB A_\rho$; or the same is true with ``$+$" and ``$-$" interchanged. Assume that the former condition (with complementary $P_+$-excursions) holds; the other case is treated identically. 

Let $x$ and $x^*$ be the endpoints of a complementary $P_+$-excursion of $\gamma$ out of $U_+$ which intersects $P_-$ and has both of its endpoints in the same connected component of $\bdy U_+ \setminus \bdy \BB A_\rho$. By possibly choosing a different excursion, we can assume that the segment $[x,x^*]$ from $x$ to $x^*$ does not contain the endpoints of any other complementary $P_+$-excursions out of $U_+$ which exits $P_-$. 
As in the proof of Lemma~\ref{lem-base-coupling}, this shows that there is a path from $x$ to $x^*$ in the set~\eqref{eqn-markov-chain-union} for $(\Gamma_0, \Gamma_1)$ with $x$ and $x^*$ on its boundary which does not hit $P_-$.  

On the event $\{\xi_1  = + ,\, x_1 = x\}$, the conditional law of $\eta_{x_1}'$ is that of a chordal $\SLE_\kappa$ from $x$ to $x^*$ in this component. 
By Lemma~\ref{lem-miller-wu-dim}, it holds with positive conditional probability given $\{\xi_1  = + ,\, x_1 = x\}$ that $\eta_{x_1}'$ does not hit $P_-$. 
In this case, the definition of the Markov chain implies that $\Gamma_1$ has at least one fewer complementary $P_+$-excursions out of $U_+$ which intersect $P_-$ than has $\Gamma_0$. It is easily seen that with positive probability $\wt\Gamma_0$ and $\wt\Gamma_1$ have the same number of complementary $P_+$-excursions out of $U_+$ which intersect $P_-$. 
This shows that with positive probability, the value of $K$ corresponding to $( \Gamma_1, \wt\Gamma_1)$ is strictly less than the value of $K$ corresponding to $( \Gamma_0, \wt\Gamma_0)$. 
By combining this with the inductive hypothesis, we conclude the proof.
\end{proof}

\subsection{Uniqueness of the stationary measure}
\label{sec-stationary}

We will now deduce Proposition~\ref{prop-stationary} from Proposition~\ref{prop-coupling}. 
This will be done using ergodic theory arguments similar to those in~\cite[Section 4]{ig2} or~\cite[Appendix A]{msw-sle-range}.
The key input in the argument is the following general theorem from Markov chain theory.

\begin{thm} \label{thm-ergodic-decomp}
Let $(\Omega,d  )$ be a separable metric space and suppose that $\Omega$ is a Borel measurable subset of its metric completion. Let $\Pi(x,dy)$ be the transition kernel of a Markov chain on $\Omega$ such that the measure $x\mapsto \Pi(x,\cdot)$ is a Borel measurable function from $\Omega$ to the space of probability measures on $\Omega$, when the latter is equipped with the Prokhorov distance. 
Let $\mu$ be a stationary probability measure for $\Pi$ (i.e., $\int_A\Pi(x,dy)\mu(dy) = \mu(A)$ for each Borel set $A\subset \Omega$).
Then $\mu$ is a convex combination of stationary ergodic measures for $\Pi$, i.e., there exists a probability measure $\pi_\mu$ on the space $\mcl M_{\op{e}}$ of stationary ergodic probability measures for $\Pi$ such that
\eqb \label{eqn-ergodic-decomp}
\mu = \int_{\mcl M_{\op{e}}} \nu \,  \pi_\mu(d\nu) .
\eqe 
Furthermore, any two distinct elements of $\mcl M_{\op{e}}$ are mutually singular.
\end{thm}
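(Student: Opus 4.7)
The plan is to reduce to the classical ergodic decomposition theorem for a measure-preserving transformation on a standard Borel probability space, applied to the shift map on the path space of the Markov chain.

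First I would observe that since $(\Omega,d)$ is separable and Borel measurable in its metric completion $\wh\Omega$ (which is Polish), $\Omega$ is a standard Borel space, and hence so is $\Omega^{\BB N_0}$ with the product Borel $\sigma$-algebra. The measurability of $x\mapsto\Pi(x,\cdot)$ then allows one to build the path law $\BB P_\mu$ on $\Omega^{\BB N_0}$ of the stationary Markov chain $(X_0,X_1,\ldots)$ with initial distribution $\mu$ and transition kernel $\Pi$. Stationarity of $\mu$ implies that $\BB P_\mu$ is invariant under the shift $T\colon\Omega^{\BB N_0}\to\Omega^{\BB N_0}$. I would then invoke the classical ergodic decomposition for measure-preserving systems on standard probability spaces to obtain a probability measure $\wt\pi$ on the set of $T$-invariant, $T$-ergodic probability measures on $\Omega^{\BB N_0}$ such that $\BB P_\mu=\int \BB Q\,d\wt\pi(\BB Q)$.

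The main step, and what I expect to be the chief obstacle, is to show that $\wt\pi$-a.e.\ ergodic component $\BB Q$ is itself the path law of a stationary Markov chain with transition kernel $\Pi$. The idea is to exploit countable generation. Fix a countable algebra $\{g_n\}$ of bounded continuous functions on $\Omega$ that is separating for Borel probability measures, together with a countable family $\{f_{m,N}\}$ of bounded measurable cylinder functions depending only on the first $N+1$ coordinates. For each such triple, the Markov property of $\BB P_\mu$ gives
\[
\int f_{m,N}(X_0,\ldots,X_N)\bigl[g_n(X_{N+1})-(\Pi g_n)(X_N)\bigr]\,d\BB P_\mu \;=\; 0,
\]
and by Fubini this identity passes to $\wt\pi$-a.e.\ $\BB Q$ in place of $\BB P_\mu$ for each triple. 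Taking a countable intersection of the resulting full-measure sets produces a set of $\BB Q$ of full $\wt\pi$-measure along which $\BB Q$ is Markov with transition kernel $\Pi$. The zeroth-coordinate marginal $\nu_{\BB Q}$ is then a stationary measure for $\Pi$, and $T$-ergodicity of $\BB Q$ is equivalent to the usual ergodicity of $\nu_{\BB Q}$ for the chain (every $\Pi$-invariant bounded measurable function is $\nu_{\BB Q}$-a.s.\ constant). Pushing $\wt\pi$ forward under $\BB Q\mapsto\nu_{\BB Q}$ produces the desired $\pi_\mu$ on $\mcl M_{\op{e}}$, and~\eqref{eqn-ergodic-decomp} is the restriction of $\BB P_\mu=\int\BB Q\,d\wt\pi(\BB Q)$ to the zeroth coordinate.

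For mutual singularity of distinct elements of $\mcl M_{\op{e}}$ I would argue directly: if $\nu_1\ne\nu_2$ are both stationary and ergodic for $\Pi$, pick a bounded measurable $f$ with $\int f\,d\nu_1 \ne \int f\,d\nu_2$ and apply Birkhoff's ergodic theorem to the shift on path space under each $\BB P_{\nu_i}$. The sets $A_i:=\{x\in\Omega:\BB P_x[\lim_{n\to\infty}n^{-1}\sum_{k=0}^{n-1}f(X_k)=\int f\,d\nu_i]=1\}$ are Borel and disjoint, and the ergodic theorem gives $\nu_i(A_i)=1$, so $\nu_1\perp\nu_2$. The recurring technical difficulty is the measurability bookkeeping: one must choose the countable generating families carefully so that the Markov property transfers to $\wt\pi$-a.e.\ component and the assignment $\BB Q\mapsto\nu_{\BB Q}$ is measurable when the codomain carries the Prokhorov topology, which is precisely where the hypothesis that $x\mapsto\Pi(x,\cdot)$ is Borel into the Prokhorov space is used.
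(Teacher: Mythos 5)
Your overall strategy is sound but genuinely different from the paper's. The paper does not reprove the ergodic decomposition at all: it cites the classical result for complete separable metric spaces (Varadhan, Chapter 6) and spends its entire proof on the reduction from ``$\Omega$ Borel in its completion'' to the complete case, by extending $\Pi$ to the completion $\wh\Omega$ with every point of $\wh\Omega\setminus\Omega$ made absorbing and then checking that stationary/ergodic measures for the extended kernel charging $\Omega$ correspond to those for $\Pi$. You instead absorb the incompleteness by observing that $\Omega$ is standard Borel and then attempt a direct proof via the shift on path space; this is essentially the classical proof itself, and your Birkhoff argument for mutual singularity is correct.

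There is, however, a genuine gap at the step where you claim that the identity $\int f_{m,N}(X_0,\dots,X_N)\bigl[g_n(X_{N+1})-(\Pi g_n)(X_N)\bigr]\,d\BB P_\mu=0$ ``passes by Fubini'' to $\wt\pi$-a.e.\ component $\BB Q$. Fubini only yields that the $\wt\pi$-average of $F(\BB Q):=\int f_{m,N}\bigl[g_n(X_{N+1})-(\Pi g_n)(X_N)\bigr]\,d\BB Q$ vanishes; since the integrand has no sign, this does not force $F(\BB Q)=0$ for a.e.\ $\BB Q$. What you actually need is that $F(\BB Q_\omega)=\BB E_{\BB P_\mu}\bigl[f_{m,N}\bigl(g_n(X_{N+1})-(\Pi g_n)(X_N)\bigr)\,\big|\,\mcl I\bigr](\omega)$ vanishes a.s., where $\mcl I$ is the shift-invariant $\sigma$-algebra, and the Markov property only kills the conditional expectation given $\mcl F_N=\sigma(X_0,\dots,X_N)$, which does not contain $\mcl I$. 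The standard repair is the (nontrivial) lemma that for a stationary Markov chain every shift-invariant event is $\BB P_\mu$-a.s.\ equal to an event of the form $\{X_N\in B\}$ with $B$ a $\Pi$-invariant Borel set; granting this, any bounded $\mcl I$-measurable $Z$ is a.s.\ a function of $X_N$, so $\BB E[Z\,f_{m,N}(g_n(X_{N+1})-\Pi g_n(X_N))]=0$ by conditioning on $\mcl F_N$, which is exactly the statement that $F$ vanishes a.e. Without this lemma (or an equivalent substitute) the assertion that a.e.\ ergodic component is again a $\Pi$-chain is unproved, and it is the heart of the theorem; everything else in your outline is routine once it is in place.
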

\begin{proof}
This is a classical result in Markov chain theory (see, e.g.,~\cite[Chapter 6]{varadhan-book}) but is usually stated with the stronger hypothesis that $(\Omega,d)$ is complete and separable. We will explain how to extract the given statement from the statement with this stronger hypothesis.\footnote{It is important for us to not require a completeness hypothesis; see Figure~\ref{fig-non-complete} and Lemma~\ref{lem-non-crossing-msrble}).}
To this end, let $(\wh\Omega,\wh d)$ be the metric completion of $(\Omega,d)$ and view $\Omega$ as a subset of $\wh\Omega$.
We define an extended Markov kernel $\wh\Pi$ on $\wh\Omega$ by
\eqbn
\wh\Pi(x,dy) 
:= \begin{cases}
\Pi(x,dy) \quad &x\in \Omega \\
\BB 1_x(dy) \quad &x\in \wh\Omega\setminus \Omega .
\end{cases}
\eqen
In other words, the Markov chain with transition kernel $\wh\Pi$ evolves according to $\Pi$ if we start in $\Omega$ and is constant if we start in $\wh\Omega\setminus \Omega$. 
Since $\Omega$ is a Borel measurable subset of $\wh\Omega$, it is easily seen that the Borel $\sigma$-algebra of $(\Omega,d)$ is contained in that of $(\wh\Omega,\wh d)$ (the latter $\sigma$-algebra is generated by $\wh d$-metric balls and their complements; the former $\sigma$-algebra is generated by the intersections of these sets with $\Omega$). 
Therefore, $x\mapsto \wh\Pi(x,\cdot)$ is a Borel measurable function on $\wh\Omega$. 

We identify Borel measures on $\Omega$ with Borel measures on $\wh\Omega$ which vanish on $\wh\Omega\setminus \Omega$. 
Such a measure $\mu$ is stationary (resp.\ ergodic) with respect to $\Pi$ if and only if it is stationary (resp.\ ergodic) with respect to $\wh\Pi$. 
Consequently, the version of the theorem for $\wh\Omega$ implies that for any $\Pi$-stationary probability measure $\mu$, there is a probability measure $\wh\pi_\mu$ on the space $\wh{\mcl M}_{\op{e}}$ of stationary ergodic probability measures for $\wh\Pi$ such that $\mu = \int_{\wh{\mcl M}_{\op{e}}} \wh\pi_\mu(d\nu)$. 
The measure $\wh\pi_\mu$ must assign full mass to elements $\nu$ of $\wh{\mcl M}_{\op{e}}$ with $\nu(\Omega)=1$. 
Each such stationary measure $\nu$ is stationary and ergodic for $\Pi$.   
This gives~\eqref{eqn-ergodic-decomp}. 

Since any two distinct elements of $\nu_{\wh{\mcl M}_{\op{e}}}$ are mutually singular and $\mcl M_{\op{e}}$ is a subset of $\wh{\mcl M}_{\op{e}}$ (under our identification), it follows that any two distinct elements of $\mcl M_{\op{e}}$ are mutually singular. 
\end{proof}

To apply Theorem~\ref{thm-ergodic-decomp} in our setting, we need to check some measurability statements, which we state now and prove (using standard arguments) at the end of this subsection.

\begin{figure}[t!]
 \begin{center}
\includegraphics[scale=.45]{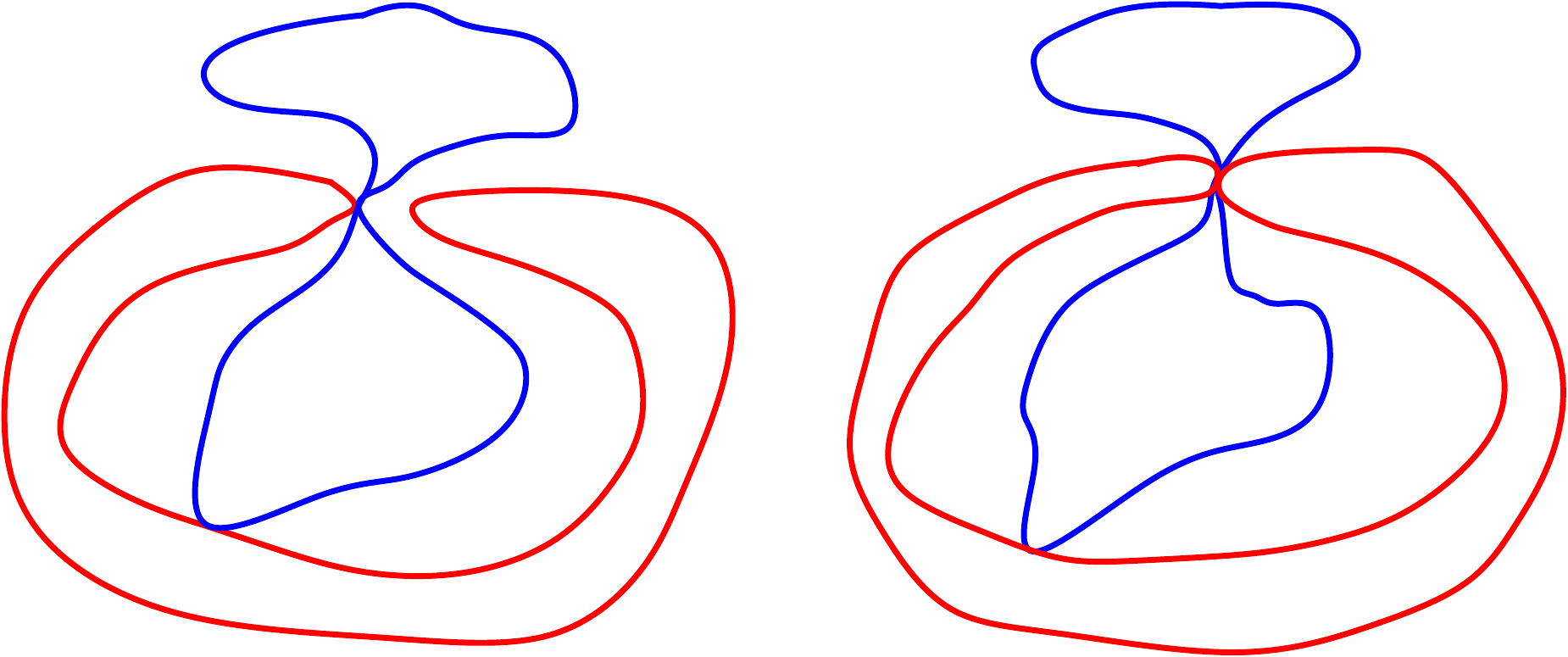}
\vspace{-0.01\textheight}
\caption{Illustration of why the set of locally finite, non-crossing loop configurations is \emph{not} a closed subset of the space of locally finite loop configurations. \textbf{Left:} A non-crossing configuration consisting of two loops. \textbf{Right:} A configuration consisting of two loops which is not non-crossing (since the blue loop intersects more than one complementary connected component of the red loop). The left picture can be made arbitrarily close to the right picture by making the red loop come arbitrarily close to hitting itself at its upper intersection point with the blue loop. }\label{fig-non-complete}
\end{center}
\vspace{-1em}
\end{figure} 

As illustrated in Figure~\ref{fig-non-complete}, the set of locally finite, non-crossing loop configurations is \emph{not} a closed subset of the space of locally finite loop configurations, so is not a complete metric space with respect to the metric of Section~\ref{sec-loop-prelim}. 
However, we do have the following much weaker statement. 

\begin{lem} \label{lem-non-crossing-msrble}
The space of non-crossing, locally finite loop ensembles on a domain $D\subset\BB C$ is a Borel measurable subset of the space of all locally finite loop ensembles with respect to the metric of~\eqref{eqn-lc-metric} (if $\ol D$ is compact) or~\eqref{eqn-lc-metric-loc} (if $\ol D$ is not compact).
\end{lem}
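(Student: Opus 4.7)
The plan is to express the set of non-crossing, locally finite loop configurations as a countable intersection of Borel subsets of the ambient space of locally finite loop configurations. The starting observation is that such a configuration is itself countable, so the quantifier ``for any finite collection'' in Definition~\ref{def-non-crossing-ensemble} naturally reduces to countably many conditions once the loops have been enumerated in a Borel-measurable way.

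First, I would build a Borel-measurable enumeration $\gamma_1(\Gamma), \gamma_2(\Gamma),\ldots$ of the loops of $\Gamma$. The multi-set of loops in $\Gamma$ of $\BB d^{\op{Loop}}$-diameter at least $\ep$ meeting $\ol{B_R(0)}$ is finite for each pair of rationals $\ep,R>0$ by local finiteness, and it depends Borel-measurably on $\Gamma$ through the definitions of $\BB d_D^{\op{LC}}$ and $\BB d_D^{\op{LC},\op{loc}}$. Ordering these finite multi-sets by decreasing diameter, with ties broken using a fixed Borel total ordering on the Polish space of loops, gives the desired enumeration. Applying the Kuratowski--Ryll-Nardzewski selection theorem to the continuous quotient map from the Polish space of parameterized loops onto the space of loops also yields a Borel-measurable choice of parameterization $\gamma \mapsto \hat\gamma$, so that every arc of $\gamma$ can be written as $\hat\gamma|_{[a,b]}$ for some $[a,b]\subset[0,2\pi]$.

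Next, I would unpack Definition~\ref{def-non-crossing-ensemble} into a countable intersection. Over tuples $(i_0,i_1,\ldots,i_n)$ of distinct positive integers, a countable intersection expresses the quantifier ``for any finite collection''. For each such tuple, the quantifier ``for any arc'' can be replaced by a quantifier over rational $(a,b)\in [0,2\pi]^2$: any failure of one of the three conditions for a generic arc $\hat\gamma_{i_0}|_{[a,b]}$ can be witnessed on a slightly smaller rational sub-arc $[a'',b'']\subset[a,b]$, which only enlarges the closed set $K = \ol{\wt\alpha_0 \cup \gamma_{i_1}\cup\cdots\cup\gamma_{i_n}}$, together with a rational witness sub-interval $[c',d']$ for condition~(i), a pair of rational points lying in distinct complementary components of $D\setminus K$ for condition~(ii), and a rational discrete approximation scheme for condition~(iii); in each case, the continuity of $\hat\gamma_{i_0}$ and the closed/open nature of the conditions ensure the rational witnesses suffice.

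Finally, for each fixed multi-index and each rational pair $(a,b)$, I would verify that each of the three conditions defines a Borel event. Condition~(i) reduces to the Borel event $\hat\gamma_{i_0}([c',d'])\not\subset K$ for rational $[c',d']\subset[a,b]$, since $K$ is a Borel function of $\Gamma$ with respect to Hausdorff distance on compact subsets of $\ol D$. Condition~(ii) can be tested by asking whether all rational points in $\hat\gamma_{i_0}([a,b])$ lie in the same connected component of $D\setminus K$, which is itself a Borel event via the existence of rational polygonal paths in $D\setminus K$. For condition~(iii), I would use a measurable version of Carath\'eodory kernel convergence: the component $U$ and its prime-end uniformization $f : U\cup\bdy U\to \ol{\BB D}$ depend Borel-measurably on $K$, and continuity of $f(\hat\gamma_{i_0}|_{[a,b]})$ can be recast as a countable intersection of Borel conditions comparing the $f$-images of rational time points. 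The main obstacle will be this last step, specifically the measurable selection of the uniformization and the correct handling of the prime-end boundary; once it is in hand, the non-crossing set is a countable intersection of Borel events, hence Borel, which completes the proof.
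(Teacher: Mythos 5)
Your proposal follows essentially the same route as the paper: both express the non-crossing set as a countable intersection of Borel events indexed by finite tuples of loops together with a countable family of arcs (the paper ranges the arc $\alpha_0$ over $P$-excursions into $U$ for rational piecewise-linear paths $P$ and rational unions of balls $U$, rather than over rational parameter sub-intervals of a measurably chosen parameterization), and both reduce the matter to the Borel-ness of the three conditions of Definition~\ref{def-non-crossing-ensemble} for a fixed tuple-with-arc. The one step you explicitly defer is precisely where the paper works hardest: it makes $\omega\mapsto f^\omega$ Borel by normalizing the uniformizing map at three explicitly chosen boundary prime ends, and it tests continuity of $(f^\omega)^{-1}(\alpha_0^\omega)$ by forming, for each $\ep\in\BB Q\cap(0,1)$, the curve $\beta_\ep^\omega$ that concatenates the excursions of diameter at least $\ep$ with the intervening arcs of $\bdy\BB D$, and then checking equicontinuity of these curves together with the vanishing of the time they spend tracing $\bdy\BB D$.
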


We note that Lemma~\ref{lem-non-crossing-msrble} together with the completeness of the space of locally finite loop ensembles (Lemma~\ref{lem-lc-metric}) shows that the space of non-crossing, locally finite loop ensembles is a Borel measurable subset of its completion. 
We also need the measurability of the transition kernel for our Markov chain.

\begin{lem} \label{lem-markov-msrble}
Let $\Phi$ be the operator which associates to each non-crossing, locally finite loop configuration $\Gamma_0$ the \emph{law} of the loop configuration $\Gamma_1$ produced by one step of the Markov chain introduced at the beginning of this section. 
If we endow the space of non-crossing, locally finite loop configurations with the topology of Section~\ref{sec-loop-config} and the space of probability measures on such loop configurations with the Prokhorov topology, then $\Phi$ is measurable.
\end{lem}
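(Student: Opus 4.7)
The strategy is to produce a jointly measurable map $F : \Omega \times \mcl G \to \mcl G$, where $\mcl G$ denotes the space of non-crossing, locally finite loop configurations on $\BB A_\rho$ with $M$ inner-boundary-surrounding loops (equipped with its Borel $\sigma$-algebra from $\BB d^{\op{LC}}$) and $\Omega$ is a standard probability space carrying all of the auxiliary randomness driving one step of the Markov chain, such that $F(\omega,\Gamma_0)$ has law $\Phi(\Gamma_0)$. Given such an $F$, Fubini yields that $\Gamma_0 \mapsto \BB E[g(F(\omega,\Gamma_0))]$ is measurable for every bounded continuous $g : \mcl G \to \BB R$, and since the Prokhorov topology on probability measures is generated by such test functions, this is exactly what is needed.

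I would construct $F$ by following the five steps defining the Markov chain. The sign $\xi$ and any auxiliary uniform $[0,1]$ variables are packed into $\Omega$ up front, independent of $\Gamma_0$. The key observation for step (2) is that local finiteness of $\Gamma_0$ implies only finitely many loops of $\Gamma_0$ intersect the compact set $P_\xi$; the decomposition of each such loop into $P_\xi$-excursions into $U_\xi$ and complementary excursions out of $U_\xi$ is determined by its hitting times of $P_\xi$ and $\bdy U_\xi$, which are Borel functionals with respect to $\BB d^{\op{Loop}}$. Hence $\Gamma_0(P_\xi;U_\xi)$ and $\mcl S_{\Gamma_0}(P_\xi;U_\xi)$ are measurable as random finite sets, and choosing a uniform element gives $x$, the loop $\gamma_x$, the excursion $\eta_x$, and the complementary arc $\alpha_x$ in a jointly measurable way. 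For step (3), the connected component $D$ together with its boundary points $x,x^*$ depends measurably on $(\Gamma_0,\xi,x)$, and chordal $\SLE_\kappa$ in a doubly marked simply connected domain depends continuously on $(D,x,x^*)$ in the Carathéodory topology at the marked points; thus $\eta'$ can be realized as a Borel function of $(D,x,x^*)$ and a fixed Brownian motion on $\Omega$ via the Loewner equation. Steps (4) and (5) are then immediate once one knows that the connected components of $\BB A_\rho\setminus\ol{\bigcup\Gamma_1(P_\xi)}$ depend measurably on $\Gamma_1(P_\xi)$, and that the law of $\CLE_\kappa$ in a simply connected domain depends measurably on the domain; the latter follows from the branching $\SLE_\kappa(\kappa-6)$ construction recalled in Section~\ref{sec-branching}, since each branch depends continuously on the target and the underlying domain.

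The main technical issue is verifying that the various continuity-in-domain statements for $\SLE_\kappa$ and $\CLE_\kappa$ lift to Borel measurability with respect to the loop configuration metric of Section~\ref{sec-loop-prelim} (and not merely, say, the Hausdorff metric on $\ol{\bigcup\Gamma_0(P_\xi)}$). One can sidestep any subtlety here by first discarding all loops of diameter below a cutoff $\epsilon>0$ — at which point only finitely many loops are involved and all operations described above are continuous in $\Gamma_0$ — constructing the corresponding kernel $\Phi^\epsilon$, and then letting $\epsilon\downarrow 0$. Local finiteness of $\Gamma_0$ and the continuous dependence of the CLE sampling on the domain ensure that $\Phi^\epsilon(\Gamma_0) \to \Phi(\Gamma_0)$ in the Prokhorov topology for each $\Gamma_0$, so the measurability of each $\Phi^\epsilon$ passes to the limit $\Phi$.
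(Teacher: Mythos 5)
Your overall architecture (realize one step of the chain as a jointly measurable function of $\Gamma_0$ and independent auxiliary randomness, then integrate against bounded continuous test functions) is fine, and your treatment of step (2) — measurability of the excursion decomposition and of the uniform choice of $x$ from a finite set — matches what is actually needed. The gap is in how you handle the dependence on the random domain in steps (3)–(5). First, Carath\'eodory convergence of $(D,x,x^*)$ does not by itself give convergence of the laws of chordal $\SLE_\kappa$ with respect to the metric on curves modulo time parameterization in the target domain: one needs the uniformizing maps to converge up to the boundary, which requires boundary regularity. More seriously, the $\epsilon$-cutoff does not sidestep this. Discarding the loops of $\Gamma_0(P_\xi)$ of diameter less than $\epsilon$ changes the set $K$ in~\eqref{eqn-markov-chain-union}, hence the domain in which $\eta'$ is sampled and the components in which the $\CLE_\kappa$ is resampled; the assertion $\Phi^\epsilon(\Gamma_0)\to\Phi(\Gamma_0)$ is therefore exactly a continuity-in-domain statement for $\SLE_\kappa$ and $\CLE_\kappa$ as the infinitely many small loops are restored — the very point the cutoff was meant to avoid — and you give no argument for it. (Also, even after the cutoff, the map from a finite set of loops to the component structure of the complement is not continuous, only measurable, so ``all operations are continuous'' is too strong; that part is harmless for measurability, but the limiting claim is the real issue.)

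The paper closes this gap differently: it keeps all the loops, fixes the resulting random domain $D$ (whose boundary is a curve, by local finiteness of $\Gamma_0$), and approximates $K$ from outside by the closed unions $K^n$ of dyadic squares of side length $2^{-n}$ meeting $K$. There are only finitely many possibilities for $K^n$, so the approximating marked domains $(D^n,x^n)$ are manifestly measurable functions of $\Gamma_0$, and because $\bdy D$ is a curve the uniformizing maps $f^n:\BB D\to D^n$ converge uniformly to $f:\BB D\to D$, including at the marked boundary points. Conformal invariance then upgrades this to convergence of the laws of the $\SLE_\kappa$ curve and of the resampled $\CLE_\kappa$'s in the relevant metrics, giving the desired measurability. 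If you want to keep your realization-plus-Fubini framing, you should replace the $\epsilon$-cutoff with this kind of interior dyadic approximation of the fixed domain $D$ determined by the full configuration $\Gamma_0$.
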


\begin{proof}[Proof of Proposition~\ref{prop-stationary}]
Lemmas~\ref{lem-non-crossing-msrble} and~\ref{lem-markov-msrble} show that the hypotheses of Theorem~\ref{thm-ergodic-decomp} are satisfied for the Markov chain in the proposition statement, defined on the space of non-crossing, locally finite loop configurations.
Consequently, any two distinct ergodic stationary probability measures for this Markov chain are mutually singular. Proposition~\ref{prop-coupling} implies that any two stationary probability measures for this Markov chain can be coupled together in such a way that they agree with positive probability.
Hence there can be only one ergodic stationary probability measure. 
On the other hand, Theorem~\ref{thm-ergodic-decomp} shows that any stationary probability measure for our Markov chain can be written as a convex mixture of ergodic stationary measures, hence there can be at most one stationary probability measure.
\end{proof}

Let us now check the measurability lemmas stated above. 
 
\begin{proof}[Proof of Lemma~\ref{lem-non-crossing-msrble}] 
For $n\in\BB N_0$, let $\mcl G_n$ be the set of $(n+2)$-tuples $\omega =  (\gamma_0^\omega , \gamma_1^\omega ,\dots,\gamma_n^\omega  ,\alpha_0^\omega)$  where $(\gamma_0^\omega, \gamma_1^\omega,\dots,\gamma_n^\omega)$ is an ordered collection of distinct loops and $\alpha_0^\omega$ is a proper arc of $\gamma_0^\omega$. We equip $\mcl G_n$ with the product metric corresponding to $n+1$ instances of the metric on loops and one instance of the metric on curves modulo time parameterization.  
We define sets $\mcl G_n^1, \mcl G_n^2, \mcl G_n^3 \subset \mcl G_n$ corresponding to the three conditions in Definition~\ref{def-non-crossing-ensemble}.
\begin{itemize}
\item $\mcl G_n^1$ is the set of $\omega\in\mcl G_n$ for which $\alpha_0^\omega$ does not have a non-trivial (i.e., more than a single point) sub-arc which is contained in $\ol{\wt\alpha_0^\omega \cup \gamma_1^\omega \cup\dots\cup\gamma_n^\omega}$, where $\wt\alpha_0^\omega$ denotes the complementary arc of $\alpha_0^\omega$ in $\gamma_0^\omega$.   
\item $\mcl G_n^2$ is the set of $\omega\in\mcl G_n$ for which $\alpha_0^\omega$ is contained in the closure of a single complementary connected component $U^\omega$ of $\BB C\setminus \ol{\wt\alpha_0^\omega \cup \gamma_1^\omega \cup\dots\cup\gamma_n^\omega}$.
\item $\mcl G_n^3 $ is the set of $\omega\in\mcl G_n^2$ for which the image of $\alpha_0^\omega$ under a conformal map $U^\omega \cup\bdy U^\omega \rta \ol{\BB D}$ is continuous. 
\end{itemize}   

We claim that each of $\mcl G_n^1$, $\mcl G_n^2$, and $\mcl G_n^3$ is a Borel measurable subset of $\mcl G_n$. 
Indeed, the compact set $ \ol{\wt\alpha_0^\omega \cup \gamma_1^\omega\cup\dots\cup\gamma_n^\omega}$ is a measurable function of $\omega$ (here we equip the space of compact subsets of $\BB C$ with the Hausdorff distance).
This immediately implies that $\mcl G_n^1$ and $\mcl G_n^2$ are measurable.  

We will now argue that $\mcl G_n^3$ is measurable. For $\omega \in \mcl G_n^2$, we can choose a conformal map $f^\omega :  \ol{\BB D} \rta U^\omega \cup \bdy U^\omega$ in such a way that $\omega \mapsto f^\omega$ is a Borel measurable function from $\mcl G_n^2$ to the space of continuous functions on $\BB D$ equipped with the topology of uniform convergence on compact subsets of $\BB D$.\footnote{For example, we can normalize $f^\omega$ as follows. Let $x \in \bdy U^\omega$ be the initial endpoint of $\alpha_0^\omega$. We require that $f^\omega(-i) = x$, $f^\omega(i)$ is the first point (prime end) on $\bdy U^\omega$ which lies at maximal distance from $f^\omega(-i)$ which we encounter when we traverse $\bdy U^\omega$ counterclockwise starting from $f^\omega(-i)$, and $f^\omega(1)$ is first point of $\bdy U$ which is equidistant from $f^\omega(i)$ and $f^\omega(-i)$ which we encounter when we traverse $\bdy U^\omega$ counterclockwise starting from $f^\omega(-i)$. }  

If $\omega \in \mcl G_n^1\cap \mcl G_n^2$, then $(f^\omega)^{-1}(\alpha_0^\omega)$ is the concatenation of countably many continuous curves (viewed modulo time parameterization) joining points of $\bdy\BB D$, corresponding to the images under $(f^\omega)^{-1}$ of the excursions of $\alpha_0^\omega$ away from $\bdy U$. 
By the continuity of $\alpha_0^\omega$, for each $\ep > 0$, there are only finitely many such curves in $\ol{\BB D}$ which have Euclidean diameter at least $\ep$.
Let $\beta_\ep^\omega$ be the continuous curve in $\ol{\BB D}$ (viewed modulo time parameterization) obtained by concatenating, in order, the curves with diameter at least $\ep$ along with the arcs of $\bdy\BB D$ which join the terminal and initial endpoints of the consecutive curves with diameter at least $\ep$. 
Then each $\beta_\ep^\omega$ is a continuous curve and is a measurable function of $\omega$. 
We have $\omega \in \mcl G_n^3$ iff $(f^\omega)^{-1}(\alpha_0^\omega)$ is continuous iff the curves $\beta_\ep^\omega$ converge to $(f^\omega)^{-1}(\alpha_0^\omega)$ modulo time parameterization as $\ep\rta 0$. This, in turn, is equivalent to the condition that the curves $\beta_\ep^\omega$ for $\ep \in \BB Q\cap (0,1) $ can be parameterized in such a way that they are equicontinuous and the maximal length of the time intervals on which these curves trace $\bdy\BB D$ tends to 0 as $\ep\rta 0$. 
Since each $\beta_\ep^\omega$ is a measurable function of $\omega$, this shows that $\mcl G_n^3$ is measurable. 

We will now deduce the measurability of the set of non-crossing loop ensembles from the measurability of each $\mcl G_n^1\cap\mcl G_n^2\cap \mcl G_n^3$. 
It is easy to see that there exists for each $n\in\BB N_0$ a countable collection of measurable functions $\{F_{n,m}\}_{m\in\BB N}$ from the space of locally finite loop ensembles into $\mcl G_n$ such that the set of non-crossing locally finite loop ensembles is precisely $\bigcap_{n=0}^\infty\bigcap_{m=1}^\infty F_{n,m}^{-1} (\mcl G_n^1\cap\mcl G_n^2\cap \mcl G_n^3)$.\footnote{To construct such functions, one can start by ordering the loops of $\Gamma$ according to their diameters, with ties broken by some measurable convention. By considering the possible ways of choosing $n+1$ loops of $\Gamma$, this gives a countable collection of measurable functions from $\Gamma$ to the set of ordered $n+1$-tuples of loops which output all of the possible ordered $n+1$-tuples of loops in $\Gamma$. 
We can then construct countably many measurable functions from the set of ordered $n+1$-tuples of loops to $\mcl G_n$ by choosing the arc $\alpha_0^\omega$ to be one of the $P$-excursions into $U$ (Definition~\ref{def-excursion}) of the first loop in the $n+1$-tuple, where $P$ ranges over all piecewise linear paths whose linear segments have rational endpoints and $U$ ranges over all open sets containing $P$ which are finite unions of Euclidean balls with rational centers and radii. }
Since each $\mcl G_n^1\cap \mcl G_n^2\cap \mcl G_n^3$ is measurable, this concludes the proof. 
\end{proof}

\begin{proof}[Proof of Lemma~\ref{lem-markov-msrble}]
Let us first observe that for any loop $\gamma$, compact set $P$, and open set $U\supset P$, the set of $P$-excursions of $\gamma$ into $U$ and the set of complementary $P$-excursions of $\gamma$ out of $U$, viewed as curves modulo time parameterization, is a measurable function of $\gamma$. 
Since $\Gamma_0$ is locally finite, it follows that the law of the point $x$ and hence also the law of the curve $\eta_x$ in the construction of $\Gamma_1$ is a measurable function of $\Gamma_0$ (recall that $(x,\eta_x)$ is chosen uniformly from a finite set of possibilities). 
Since also $\Gamma_0(P_+)$ and $\Gamma_0(P_-)$ are measurable functions of $\Gamma_0$, it follows that the set
\eqbn
 K  :=  \ol{\bigcup \left( \Gamma_0(P_\xi) \setminus \{\gamma_x \} \right) \cup \alpha_x }  ,
\eqen
defined as in~\eqref{eqn-markov-chain-union},
depends measurably on $\Gamma_0$, where here compact subsets of $\BB C$ are equipped with the Hausdorff distance. 

We will now argue that the law of the $\SLE_\kappa$ curve $\eta'$ is a measurable function of $\Gamma_0$. 
To this end, let $D$ be the connected component of $\BB A_\rho\setminus K$ with $x$ and $x^*$ on its boundary (so that $\eta'$ is an $\SLE_\kappa$ from $x$ to $x^*$ in $D$).  
Also let $z$ be the point of $P_-\cap D$ which is furthest from $\bdy D$ (with ties broken in some arbitrary measurable manner), and note that $z$ is a measurable function of $\Gamma_0$. 

For $n\in\BB N$, let $K^n$ be the closed union of the set of dyadic squares of side length $2^{-n}$ which intersect $K$ and let $D^n$ be the connected component of $\BB A_\rho\setminus K^n$ which contains $z$. 
Also let $x^n$ be the point of $\bdy D^n$ closest to $x$. 
Since there are only finitely many possibilities for $K^n$, and hence for $D^n$, and $x$ is a measurable functions of $\Gamma_0$, we see that $(D^n, x^n)$ is a measurable function of $\Gamma_0$. 

Since $\bdy D$ is a curve (which follows from the local finiteness of $\Gamma_0$), the conformal maps $f^n : \BB D\rta D^n$ taking $1$ to $x^n$ and 0 to $z$ converge uniformly to the conformal map $f : \BB D\rta D$ taking $1$ to $x$ and 0 to $z$ as $n\rta\infty$. Moreover, $f^n(f^{-1}(x^*)) \rta x^*$ uniformly. 
By the conformal invariance of the law of $\SLE_\kappa$ (viewed as a curve modulo time parameterization), we therefore get that the law of $\SLE_\kappa$ from $x$ to $x^*$ in $D^n$ converges to the law of $\SLE_\kappa$ from $x$ to $x^*$ in $D$. 
This gives the desired measurability of the law of $\eta'$. 

As a consequence, we find that the law of $\Gamma_1(P_\xi)$ is a measurable function of the law of $\Gamma_0$. 
By definition, the conditional law of $\Gamma_1$ given $\Gamma_1(P_\xi)$ is that of an independent $\SLE_\kappa$ in each of the connected components of $\BB A_\rho\setminus \ol{\bigcup \Gamma_1(P_\xi)}$. Each of these connected components is bounded by a curve and by local finiteness, only finitely many have diameter larger than each fixed $\ep > 0$. By approximating each such component by finite unions of small dyadic squares as above and using the conformal invariance of $\CLE_\kappa$, we find that the law of $\Gamma_1$ is a measurable function of $\Gamma_0$, as required.
\end{proof}

\subsection{Proof of inversion invariance}
\label{sec-inversion-proof}

We will now deduce Theorem~\ref{thm:main_result} from the results stated in Section~\ref{sec-cle-annulus}.
The basic idea is to use Theorem~\ref{thm-cle-markov} to find large annular sub-domains (regions between two loops) with the property that the restriction of the $\CLE_\kappa$ to the annular subdomain has the law of a $\CLE_\kappa$ in the annulus in the sense of Definition~\ref{def-cle-annulus}. 
We then apply Corollary~\ref{cor-annulus-inversion} to invert the $\CLE_\kappa$ in such an annular subdomain and take a limit as the domain increases to all of $\BB C \setminus \{0\}$. 
For technical reasons, it turns out to be more convenient to send the inner boundary of our domain to zero before sending the outer boundary to $\infty$, i.e., we first prove that the basic Markov property of Lemma~\ref{lem-cle-markov} holds for inverted $\CLE_\kappa$ (Lemma~\ref{lem-inverse-cle-markov}) then conclude the proof by looking at the origin-containing-components of larger and larger loops. 

Throughout this subsection, we let $\Gamma$ be a whole-plane $\CLE_\kappa$ and we let $\wh\Gamma$ have the law of the image of $\Gamma$ under $z\mapsto 1/z$. We seek to show that $\Gamma \eqD \wh\Gamma$. 

For $r  > 0$, let $  \gamma^r$ (resp.\ $ \gamma_r$) be the outermost (resp.\ innermost) origin-surrounding loop of $ \Gamma$ which intersects $\bdy B_r(0)$. 
If $r\in (0,1)$ and $ \gamma^1$ and $ \gamma_r$ do not intersect (which happens with probability tending to 1 as $r\rta 0$), then there is a unique connected component of $\BB C\setminus ( \gamma^1\cup  \gamma_r)$ which has the topology of an annulus. Let $  D_r$ be this connected component and otherwise (if $ \gamma^1\cap \gamma_r \not=\emptyset$) let $  D_r = \emptyset$. 
Define $\wh\gamma^r$, $\wh\gamma_r$, and $\wh D_r$ in an analogous manner but with the inverted $\CLE_\kappa$ $\wh\Gamma$ in place of $ \Gamma$.

\begin{lem} \label{lem-annulus-coupling}
In the notation introduced just above, we can find for each $r\in (0,1)$ a coupling of $\Gamma$ and $\wh\Gamma$ such that the following is true. We have $\{D_r\not=\emptyset\} = \{\wh D_r \not=\emptyset\}$ and on this event there is a.s.\ a conformal map $f_r : D_r\rta \wh D_r$ which takes $ \Gamma|_{ D_r}$ to $\wh\Gamma|_{\wh D_r}$ and takes the (a.s.\ unique) leftmost point of $\bdy D_r$ to the leftmost point of $\bdy\wh D_r$ (this last choice of normalization is arbitrary). 
Furthermore, the loop $\gamma^1$ and the loops of $\Gamma$ which it disconnects from 0 are independent from the loop $\wh\gamma^1$ and the loops of $\wh\Gamma$ which it disconnects from 0.
\end{lem}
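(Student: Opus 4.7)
The plan is to reduce the problem to a coupling of two $\CLE_\kappa$'s on annuli, which are uniquely characterized by their conformal modulus and number of inner-boundary-surrounding loops (Theorem~\ref{thm-cle-annulus}), and then to exploit the inversion and rotation invariance from Corollary~\ref{cor-annulus-inversion}.

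First, I would independently sample $(\gamma^1,\Gamma_{\mathrm{ext}})$ according to its marginal law under whole-plane $\CLE_\kappa$ and $(\wh\gamma^1,\wh\Gamma_{\mathrm{ext}})$ according to the analogous marginal for $\wh\Gamma$, where $\Gamma_{\mathrm{ext}}$ and $\wh\Gamma_{\mathrm{ext}}$ denote the loops of each configuration contained in the unbounded complementary component of the outer loop. This automatically yields the required independence condition. Since $\gamma^1$ is revealed by a stopping time for the branching $\SLE_\kappa(\kappa-6)$ targeted at $0$, Lemma~\ref{lem-cle-markov} shows that, conditional on the exterior data of $\Gamma$, the loops of $\Gamma$ in the bounded component of $\BB C\setminus\gamma^1$ containing $0$ form a $\CLE_\kappa$ in that component. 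The analogous statement for $\wh\Gamma$ inside $\wh\gamma^1$ is supplied by the to-be-established Lemma~\ref{lem-inverse-cle-markov}.

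Next, writing $\gamma_r=\gamma_{M+1}$ for the random $M\in\BB N_0$ making $\gamma_r$ the $(M+1)$st outermost origin-surrounding loop in the disk $\CLE_\kappa$ inside $\gamma^1$, I would apply Theorem~\ref{thm-cle-markov} to conclude that the conformal image of $\Gamma|_{D_r}$ in the standard annulus $\BB A_\rho$ is the unique $\CLE_\kappa$ on $\BB A_\rho$ with $M$ inner-boundary-surrounding loops, and similarly for $\wh\Gamma|_{\wh D_r}$ on $\BB A_{\wh\rho}$ with parameters $(\wh\rho,\wh M)$. It then remains to couple the interior data (conditionally on the independently-sampled exterior data) so that $(\rho,M)=(\wh\rho,\wh M)$ almost surely on $\{D_r\ne\emptyset\}=\{\wh D_r\ne\emptyset\}$; once this matching is achieved, the uniqueness in Theorem~\ref{thm-cle-annulus} lets us realize both annular CLEs as images of a single sample of the unique annular law under the respective normalized conformal maps (sending the a.s.\ unique leftmost point of $\bdy D_r$ to that of $\bdy\wh D_r$), which produces the desired $f_r$. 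The loops strictly inside $\gamma_r$ and $\wh\gamma_r$ are then filled in with independent disk $\CLE_\kappa$'s.

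The hard step is the joint matching of $(\rho,M)$ and $(\wh\rho,\wh M)$: one must produce a joint law of the interior data with the correct marginals under $\Gamma$ and $\wh\Gamma$, enforcing $(\rho,M)=(\wh\rho,\wh M)$ a.s.\ on $\{D_r\ne\emptyset\}$. The plan is to carry out a disintegration over the annular-CLE core: by Theorem~\ref{thm-cle-markov} combined with the standard disk Markov property, conditional on $\gamma_r$ the annular $\CLE_\kappa$ on $D_r$ is independent of the disk $\CLE_\kappa$ strictly inside $\gamma_r$, and likewise for $\wh\Gamma$. One can therefore first sample the annular core (supported on a common $\BB A_\rho$), and then sample the remaining pieces of $\Gamma$ and $\wh\Gamma$ conditionally on this core, using Corollary~\ref{cor-annulus-inversion} to reconcile the rotation/inversion ambiguity inherent to the unique annular law with the normalization requirement on $f_r$.
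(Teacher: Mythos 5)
Your overall architecture (independent exteriors, a single annular core transported to both $D_r$ and $\wh D_r$ by normalized conformal maps, justified by the uniqueness in Theorem~\ref{thm-cle-annulus}) is the same as the paper's, but two steps you rely on are not actually available, and both are repaired by the same missing idea. First, to identify $\wh\Gamma$ restricted to the inside of $\wh\gamma^1$ as a disk $\CLE_\kappa$ you invoke the ``to-be-established'' Lemma~\ref{lem-inverse-cle-markov}; this is circular, since in the paper that lemma is deduced from Lemma~\ref{lem-coupling-conv}, whose proof rests on the very coupling of Lemma~\ref{lem-annulus-coupling} that you are constructing. Second, your ``hard step'' of forcing $(\rho,M)=(\wh\rho,\wh M)$ almost surely presupposes that $(\rho,M)$ and $(\wh\rho,\wh M)$ have the same law; the disintegration over the annular core that you describe explains how to assemble the coupling \emph{once} this equality of laws is known, but nowhere establishes it. Without it no such coupling exists, so this is the substantive gap.

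The paper fills both gaps at once by never touching the law of $\wh\Gamma$ directly. Observe that $\wh\gamma^1$ and $\wh\gamma_r$ are the images under $z\mapsto 1/z$ of $\gamma_1$ and $\gamma^{1/r}$ respectively (inversion reverses the nesting order of origin-surrounding loops and sends $\bdy B_R(0)$ to $\bdy B_{1/R}(0)$), so $\wh D_r$ is the image of the annular component of $\BB C\setminus(\gamma^{1/r}\cup\gamma_1)$. By the scale invariance of whole-plane $\CLE_\kappa$, the triple consisting of the configuration, this annular region, and its number of inner-boundary-surrounding loops has the same law, modulo scaling, as the triple built from $(\gamma^1,\gamma_r)$; since conformal modulus and loop count are scale invariant, this gives $(\wh\rho,\wh M)\eqD(\rho,M)$. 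Moreover, Lemma~\ref{lem-cle-markov} and Theorem~\ref{thm-cle-markov} applied with $\gamma^{1/r}$ as the outer loop show that the restriction of $\Gamma$ to that region is an annulus $\CLE_\kappa$ given the region and the loop count, and Corollary~\ref{cor-annulus-inversion} (already proved) transfers this statement through the inversion to $\wh\Gamma|_{\wh D_r}$ without any appeal to Lemma~\ref{lem-inverse-cle-markov}. With these two facts in hand, your coupling construction goes through as written.
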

\begin{proof}
Let $  M_r$ be the number of origin-surrounding loops of $ \Gamma$ which are contained in $D_r$ and similarly define $\wh M_r$. 
Theorem~\ref{thm-cle-markov} together with Lemma~\ref{lem-cle-markov} implies that if we condition on $D_r$ and $M_r$, then on the event $\{ D_r \not=\emptyset\}$, the conditional law of $ \Gamma|_{ D_r}$ is that of a $\CLE_\kappa$ on $ D_r$ with $M_r$ inner-boundary-surrounding loops, as in Definition~\ref{def-cle-annulus}.
Furthermore, $\Gamma|_{D_r}$ is conditionally independent from $\Gamma\setminus \Gamma|_{D_r}$. 
By the scale invariance of the law of whole-plane $\CLE_\kappa$, the law of $(\Gamma , D_r ,  M_r)$ is the same (modulo scaling of $(\Gamma , D_r)$) if we replace $\gamma^1$ and $\gamma_r$ by $\gamma^{1/r}$ and $\gamma_1$. 
By applying an inversion map along with Corollary~\ref{cor-annulus-inversion}, we therefore get the following for each $r\in (0,1)$.
\begin{enumerate}
\item The law of the conformal moduli of $D_r$ and $\wh D_r$ coincide (here we define the conformal modulus of the empty set to be 0). 
\item The laws of $M_r$ and $\wh M_r$ coincide.
\item The conditional law of $\wh\Gamma|_{\wh D_r}$ given $\wh D_r$ and $\wh M_r$ on the event $\{\wh D_r\not=\emptyset\}$ is that of a $\CLE_\kappa$ on $\wh D_r$ with $\wh M_r$ inner-boundary-surrounding loops. 
\end{enumerate}
The statement of the lemma follows.
\end{proof}

Let $D$ (resp.\ $\wh D$) be the connected component of $\BB C\setminus \gamma^1$ (resp.\ $\BB C\setminus \wh\gamma^1$) which contains 0.
If we have coupled $\Gamma$ and $\wh\Gamma$ as in Lemma~\ref{lem-annulus-coupling}, we let $f : D\rta\wh D$ be the unique conformal map which takes $D$ to $\wh D$, which fixes the origin and takes the (a.s.\ unique) leftmost point of $\bdy D$ to the leftmost point of $\bdy\wh D$. 
Note that the law of $f$ does not depend on $r$ since in our coupling $D$ and $\wh D$ are independent. 

\begin{lem} \label{lem-coupling-conv}
Suppose $r\in (0,1)$ and we have coupled $\Gamma$ and $\wh\Gamma$ as in Lemma~\ref{lem-annulus-coupling}.
As $r\rta 0$, we have that $\max_{z\in D} (f(z) -f_r(z) ) \rta 0$ in law. 
\end{lem}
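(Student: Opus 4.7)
The proof proceeds by a compactness and Carath\'eodory-kernel argument: I will extract subsequential limits of $f_r$ as $r\to 0$ and identify each with $f$.

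\textbf{Step 1 (inner loops shrink).} First I would show $\diam(\gamma_r)\to 0$ and $\diam(\wh\gamma_r)\to 0$ almost surely. Let $\{\alpha_n\}_{n\in\BB Z}$ denote the nested bi-infinite sequence of origin-surrounding loops of $\Gamma$, numbered from outside in. By Remark~\ref{remark-branching-extend}, $\diam(\alpha_n)\to 0$ a.s.\ as $n\to\infty$. Each $\alpha_n$ is a.s.\ at positive distance from $0$, so for $r$ small no $\alpha_n$ with $n$ in any fixed finite range meets $\bdy B_r(0)$; consequently $\gamma_r=\alpha_{n_r}$ for some $n_r\to\infty$ as $r\to 0$, giving $\diam(\gamma_r)\to 0$, and the same for $\wh\gamma_r$.

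\textbf{Step 2 (normal family and identification).} Next I invoke Montel. The family $\{f_r\}$ takes values in the bounded set $\wh D$, so it is uniformly bounded on every compact subset of $D\setminus\{0\}$ (each such set lies in $D_r$ for $r$ sufficiently small, by Step 1). Along any subsequence $r_n\to 0$ one extracts a further subsequence along which $f_{r_n}\to f_\ast$ locally uniformly on $D\setminus\{0\}$, for some holomorphic $f_\ast$. By Hurwitz and injectivity of each $f_r$, $f_\ast$ is either injective or constant; the constant case is excluded by running the same argument on the inverse maps $f_r^{-1}:\wh D_r\to D_r$, whose limit would provide a left inverse to $f_\ast$. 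Because $f_r(\gamma_r)=\wh\gamma_r$ and $\diam(\wh\gamma_r)\to 0$, $f_\ast$ is bounded in a punctured neighborhood of $0$, so extends analytically with $f_\ast(0)=0$; the inverse-map argument also gives surjectivity, so $f_\ast:D\to\wh D$ is a conformal isomorphism. For $r$ small enough that $\gamma_r\subset B_\delta(0)$ with $\delta<|\re p|$, where $p$ is the leftmost point of $\gamma^1$, the leftmost point of $\bdy D_r=\gamma^1\cup\gamma_r$ is $p$, which is also the leftmost point of $\bdy D$; the analogous statement holds on the image side. Hence $f_\ast$ sends the leftmost point of $\bdy D$ to the leftmost point of $\bdy\wh D$, and by uniqueness of the conformal map $D\to\wh D$ with this normalization, $f_\ast=f$.

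\textbf{Step 3 (conclusion and main obstacle).} Since every subsequential limit equals $f$, the full family $f_r$ converges to $f$ locally uniformly on $D\setminus\{0\}$. To upgrade this to convergence on all of $D_r$, one uses the maximum modulus principle on $D_r\cap B_\epsilon(0)$: the boundary of this annular region consists of $\gamma_r$ (on which $|f_r|\leq \diam(\wh\gamma_r)$) and part of $\bdy B_\epsilon(0)$ (on which $|f_r|$ is uniformly close to $|f|$, bounded by a modulus of continuity of $f$ at $0$). Sending $r\to 0$ and then $\epsilon\to 0$ gives $\sup_{z\in D_r}|f(z)-f_r(z)|\to 0$ a.s., and hence the stated convergence in law. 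The main obstacle will be making precise the boundary-normalization step in the Carath\'eodory limit: one must argue that the leftmost prime end of $\bdy D_r$ converges to that of $\bdy D$, which relies on the a.s.\ uniqueness of these leftmost points assumed in the definition of $f$, together with the shrinking of $\gamma_r$ into $B_\delta(0)$ for small $r$ so that only the contribution from $\gamma^1$ matters.
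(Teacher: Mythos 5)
Your approach (normal families plus Carath\'eodory-type identification of subsequential limits) is genuinely different from the paper's, which proves a quantitative deterministic statement (Lemma~\ref{lem-domain-invert}): after conjugating by the uniformizing maps $\phi:D\to\BB D$, $\wh\phi:\wh D\to\BB D$, the map $\wh\phi\circ f_r\circ\phi^{-1}$ goes between two annuli inside $\BB D$ whose \emph{outer} boundary is exactly $\bdy\BB D$ and whose inner boundaries are squeezed near small circles; one then controls $\log|f_n|$ by harmonic measure (Gambler's ruin), extends across $\bdy\BB D$ by Schwarz reflection to get gradient bounds, and recovers $\arg f_n$ from the Cauchy--Riemann equations and the normalization $f_n(1)=1$. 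Unfortunately, your proposal has a genuine gap precisely at the point you flag as ``the main obstacle,'' and it is not a technicality. A subsequential limit $f_\ast:D\to\wh D$ with $f_\ast(0)=0$ is determined only up to a one-parameter family (precompose with a rotation in the uniformizing coordinate), and the extra normalization that pins $f$ down lives on the boundary: $f$ sends the leftmost prime end of $\bdy D$ to that of $\bdy\wh D$. Locally uniform convergence $f_{r_n}\to f_\ast$ on the open set $D\setminus\{0\}$ gives you no control whatsoever over boundary values near that prime end (the domains here are bounded by fractal, non-simple $\SLE_\kappa$-type loops), so you cannot conclude that $f_\ast$ inherits the normalization from the $f_{r_n}$'s, and hence cannot conclude $f_\ast=f$. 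Some substitute for the paper's reflection-plus-Cauchy--Riemann step is needed to propagate the normalization through the limit; your writeup does not supply one.

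Two further points. First, the couplings of Lemma~\ref{lem-annulus-coupling} are constructed separately for each $r$, so the $f_r$ do not live on a single probability space and you cannot literally ``extract a further subsequence along which $f_{r_n}\to f_\ast$'' almost surely; the conclusion is convergence in law, which requires either a deterministic quantitative lemma applied on high-probability events (the paper's route) or a tightness/Skorokhod argument that you would need to set up. Second, the statement bounds $\max_{z\in D}$, which includes points arbitrarily close to the \emph{outer} boundary $\bdy D$; locally uniform convergence on $D\setminus\{0\}$ together with your maximum-modulus patch near $0$ does not control this region, whereas the paper's Lemma~\ref{lem-domain-invert} yields uniform convergence on all of $\ol{\BB D}\setminus\{0\}$ exactly because of the Schwarz reflection across the common outer boundary.
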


To prove Lemma~\ref{lem-coupling-conv}, we will need the following basic complex analysis lemma.

\begin{lem} \label{lem-domain-invert}
Suppose that $\{\mcl A_n\}_{n \in \BB N}$ and $\{\wh{\mcl A}_n\}_{n\in\BB N}$ are two sequences of sub-domains of $\BB D$ such that $\mcl A_n$ and $\wh{\mcl A}_n$ are each conformally equivalent to an annulus with the same modulus. 
Suppose further that the outer boundaries of $\mcl A_n$ and $\wh{\mcl A}_n$ are each equal to $\bdy\BB D$; for each $n\in\BB N$, there exists $C_n >1$ and $\delta_n \in (0,1)$ such that 
\eqb \label{eqn-domain-invert}
\BB D\setminus B_{C_n \delta_n   }(0)  \subset  \mcl A_n \subset \BB D\setminus B_{\delta_n / C_n }(0), \quad \delta_n \rta0,  \quad \text{and} \quad C_n  = o_n(\delta_n^{-1}) \: \text{as $n\rta\infty$} ; 
\eqe
and there exists $\wh C_n >1$ and $\wh\delta_n \in (0,1)$ such that the same is true with $\wh{\mcl A}_n$ in place of $\mcl A_n$. 
For $n\in\BB N$, let $f_n : \mcl A_n \rta \wh{\mcl A}_n$ be the conformal map which takes the inner (resp.\ outer) boundary of $\mcl A_n$ to the inner (resp.\ outer) boundary of $\wh{\mcl A}_R$, normalized so that $f_n(1 ) = 1$. 
Then $f_n$ converges uniformly to the identity map on $\ol{\BB D} \setminus \{0\} $, at a rate depending only on $\delta_n$ and $C_n$.  
\end{lem}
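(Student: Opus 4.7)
The plan is to extend $f_n$ by Schwarz reflection across $\bdy\BB D$ and then argue that the resulting map is close to the identity on compact subsets of $\BB C\setminus\{0\}$ via a normal-families / two-constants argument. First, since $f_n$ maps the outer boundary $\bdy\BB D$ of $\mcl A_n$ to the outer boundary $\bdy\BB D$ of $\wh{\mcl A}_n$ and extends analytically across it, Schwarz reflection gives a conformal map $F_n : \mcl A_n^* \to \wh{\mcl A}_n^*$ where $\mcl A_n^* := \mcl A_n \cup \bdy\BB D \cup \{1/\bar z : z \in \mcl A_n\}$ and analogously for $\wh{\mcl A}_n^*$. This map satisfies $F_n(1)=1$, $F_n(\bdy\BB D) = \bdy\BB D$ setwise, and the reflection relation $F_n(1/\bar z) = 1/\overline{F_n(z)}$. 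The hypothesis~\eqref{eqn-domain-invert} gives the sandwich $\{C_n\delta_n < |z| < (C_n\delta_n)^{-1}\} \subset \mcl A_n^* \subset \{\delta_n/C_n < |z| < C_n/\delta_n\}$, and similarly for $\wh{\mcl A}_n^*$. Since the original domains have equal moduli, so do the reflected ones, and the sandwich forces $|\log\delta_n - \log\wh\delta_n| \leq \log C_n + \log\wh C_n$.

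The key estimate is that $\log|F_n(z)/z|$ is uniformly small on compact subsets of $\BB C\setminus\{0\}$. Indeed, this quantity is a harmonic function on $\mcl A_n^*$ that vanishes identically on $\bdy\BB D$ (because $|F_n|=|z|=1$ there), and on the two remaining boundary components of $\mcl A_n^*$ (near the circles of radius $\delta_n^{\pm 1}$) it is bounded in absolute value by $|\log(\wh\delta_n/\delta_n)| + \log(C_n\wh C_n) = O(\log C_n + \log\wh C_n)$ via the sandwich on both $\mcl A_n^*$ and $\wh{\mcl A}_n^*$. For $z$ in a fixed compact subset $K\subset \BB C\setminus \{0\}$, the harmonic measure seen from $z$ of $\bdy\mcl A_n^*\setminus\bdy\BB D$ inside $\mcl A_n^*$ is $O(1/\log(1/\delta_n))$, as follows by comparison with the explicit harmonic measure on the round annulus $\{\delta_n/C_n < |z| < C_n/\delta_n\}$. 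Combining these via the two-constants theorem yields
\eqbn
\sup_{z\in K} \bigl|\log|F_n(z)/z|\bigr| = O\!\left( \frac{\log C_n + \log\wh C_n}{\log(1/\delta_n)}\right),
\eqen
which tends to $0$ as $n\to\infty$ by the hypothesis $C_n = o(\delta_n^{-1})$.

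With this modulus estimate in hand, I finish by a normal families argument. The bound $|F_n(z)| \le 2|z|$ for large $n$ on any compact $K\subset \ol{\BB D}\setminus\{0\}$ shows $\{F_n\}$ is a normal family on $\BB D\setminus\{0\}$. Any subsequential limit $F_*$ is holomorphic and nonvanishing (by Hurwitz), and satisfies $|F_*(z)| = |z|$ by the previous paragraph, so $F_*(z)/z$ has constant modulus $1$ on $\BB D\setminus\{0\}$; the open mapping principle then forces $F_*(z)/z$ to be a unimodular constant, which must equal $1$ by the normalization $F_*(1)=1$. Thus every subsequential limit equals the identity, proving $F_n\to \operatorname{id}$ uniformly on compact subsets of $\ol{\BB D}\setminus\{0\}$, and restricting to $\mcl A_n\subset\ol{\BB D}$ gives the claim. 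The main obstacle will be producing the harmonic measure bound $O(1/\log(1/\delta_n))$ with explicit constants, together with verifying that the resulting rate of convergence genuinely depends only on $\delta_n, C_n$ (and $\wh\delta_n, \wh C_n$), rather than on any further geometric data of the domains $\mcl A_n$ and $\wh{\mcl A}_n$.
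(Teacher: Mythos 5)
Your first step is essentially the same as the paper's. The paper also controls $\log|f_n(z)|-\log|z|$ by a harmonic-measure comparison: after reducing to the case $\wh{\mcl A}_n=\BB A_{\rho_n}$, it interprets $\log|f_n(z)|/\log\rho_n$ via the Gambler's ruin formula as the probability that Brownian motion from $z$ hits the inner boundary of $\mcl A_n$ before $\bdy\BB D$, and sandwiches this between the corresponding quantities for the round annuli $\BB D\setminus B_{C_n\delta_n}(0)$ and $\BB D\setminus B_{\delta_n/C_n}(0)$; your two-constants formulation is the same estimate in different clothing. (One caveat you share with the paper: making the resulting error tend to zero really uses $\log C_n=o(\log\delta_n^{-1})$ rather than the literal $C_n=o(\delta_n^{-1})$, which is how the lemma is applied in Lemma~\ref{lem-coupling-conv}, where $C$ is sent to infinity ``sufficiently slowly.'')

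Where you diverge is the second step, and the obstacle you flag at the end is a genuine gap rather than a technicality. The lemma asserts convergence \emph{at a rate depending only on $\delta_n$ and $C_n$}, and this quantitative, domain-uniform form is exactly what is needed in Lemma~\ref{lem-coupling-conv}, where $\mcl A_n$ and $\wh{\mcl A}_n$ are random. A normal-families argument yields only qualitative locally uniform convergence along a fixed sequence of domains. The paper's device for converting the modulus estimate into a quantitative bound on $f_n$ itself is the interior gradient estimate~\eqref{eqn-cac}, applied (after the same Schwarz reflection you perform) to the harmonic function $u=\log|f_n(\cdot)|-\log|\cdot|$: this bounds $|\nabla u|$, hence by the Cauchy--Riemann equations bounds $|\nabla(\op{arg}f_n(z)-\op{arg}z)|$, and together with the normalization $f_n(1)=1$ this gives an explicit bound on $|f_n(z)-z|$ in terms of the modulus estimate alone. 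Your compactness finish could in principle be upgraded by an extraction argument (if no parameter-only bound existed, pass to a sequence of counterexample domains and derive a contradiction), but you would then still need to address a second point that the subsequential-limit argument glosses over: locally uniform convergence on $\ol{\BB D}\setminus\{0\}$ does not give $\sup_{z\in\mcl A_n}|f_n(z)-z|\rta 0$, since points of $\mcl A_n$ near its inner boundary escape every compact set. The paper handles this separately, by combining uniform convergence on $\ol{\BB D}\setminus B_\ep(0)$ with the observation that the diameter of $f_n(B_\ep(0)\cap\mcl A_n)$ tends to $0$ as $n\rta\infty$ and then $\ep\rta 0$. The cleanest repair of your write-up is to replace the normal-families step with the gradient-estimate/Cauchy--Riemann step.
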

\begin{proof}
By conformally mapping each of $\mcl A_n$ and $\wh{\mcl A}_n$ to a set of the form $\BB A_{\rho_n} = \BB D\setminus B_{\rho_n}(0)$ for appropriate $\rho_n > 0$, we see that it suffices to prove the statement of the lemma with $\BB A_{\rho_n}$ in place of $\wh{\mcl A}_n $. 

Let us first note that the hypothesis~\eqref{eqn-domain-invert} implies that $\delta_n/C_n \leq \rho_n \leq C_n\delta_n$, and hence that $\log \delta_n / \log \rho_n \rta 1$ as $n\rta\infty$: indeed, otherwise one of the annular domains $\mcl A_n$ or $\BB A_{\rho_n}$ would be conformally equivalent to a proper subdomain of itself.

By the Gambler's ruin formula, for $z\in \mcl A_n$, $\log| f_n(z)| / \log \rho_n $ is equal to the probability that a Brownian motion started from $z$ hits the inner boundary of $\mcl A_n$ before the outer boundary. By~\eqref{eqn-domain-invert},
\eqbn
 \frac{\log |z|  }{\log (\delta_n /  C_n) } \leq   \frac{\log| f_n(z)| }{ \log \rho_n} \leq  \frac{ \log |z|  }{\log ( \delta_n C_n) }  ,
 \eqen
so $\log |f_n(z)|  = (1 + o_n(1)) \log |z|$ and hence $\log |f_n(z)| \rta \log |z|$ uniformly on compact subsets of $\ol{\BB D} \setminus \{0\}$. 

We will now bound the gradient of $ \log |f_n(z)| - \log |z|  $ using the following standard estimate for harmonic functions: if  $u$ is harmonic on $B_r(z_0)$, then
\eqb \label{eqn-cac}
\sup_{z \in B_{r/2} (z_0)} |\nabla u(z)| \leq C \sup_{z\in B_r(z_0)} |u(z) -u(z_0)| 
\eqe
for a universal constant $C>0$. We can extend $f_n$ by Schwarz reflection to be conformal on the union of $\mcl A_n$ and its reflection across $\bdy\BB D$.
Applying~\eqref{eqn-cac} and the conclusion of the preceding paragraph to finitely many Euclidean balls contained in this union, with the harmonic function $u = \log |f_n(\cdot)| - \log|\cdot|$, shows that $|\nabla (\log |f_n(z)| - \log |z|) | \rta 0$ uniformly on compact subsets of $\ol{\BB D}\setminus \{0\}$. 

By the Cauchy-Riemann equations, $|\nabla(\op{arg} f_n(z)  - \op{arg} z)| \rta 0$ uniformly on compact subsets of $\ol{\BB D} \setminus \{0\}$. Since $f_n(1) = 1$, this shows that $f_n(z) \rta z$ uniformly on compact subsets of $\ol{\BB D} \setminus \{0\}$. 
In particular, $f_n(z) \rta z$ uniformly on $\ol{\BB D}\setminus B_\ep(0)$ for each $\ep >0$ and the diameter of $f_n(B_\ep(0))$ tends to zero as $n\rta\infty$ and then $\ep\rta 0$. This shows that $f_n \rta f$ uniformly on all of $\ol{\BB D} \setminus \{0\}$. 
\end{proof}

\begin{proof}[Proof of Lemma~\ref{lem-coupling-conv}]
Let $\phi : D\rta  \BB D$ and $\wh\phi : \wh D\rta \BB D$ be the conformal maps which take 0 to 0 and the leftmost points of $\bdy D$ and $\bdy \wh D$, respectively, to $1$. 

We claim that for each $r\in (0,1)$, there exists a random $C_r > 1$ and $\delta_r > 0$ such that with probability tending to 1 as $r\rta 0$, 
\eqbn
\BB D \setminus B_{C_r \delta_r  }(0)  \subset  \phi(D_r) \subset \BB D\setminus B_{\delta_r  / C_r }(0), \quad \delta_r  = o_r(1) ,  \quad \text{and} \quad C_r  = o_r(\delta_r^{-1}) ;
\eqen
and there exists a random $\wh C_r > 1$ and $\wh\delta_r > 0$ such that the same is true with $\wh\Gamma$ in place of $\Gamma$.
Given the claim, we can apply Lemma~\ref{lem-domain-invert} to the domains $\phi(D_r)$ and $\wh\phi(\wh D_r)$ to find that the law of $\wh\phi  \circ f_r \circ  \phi^{-1}$ under our coupling converges uniformly to the identity map. Since $f = \wh\phi^{-1} \circ \phi$, this shows that $\max_{z\in D} (f-f_r) \rta 0$ in law. 

It remains to prove the above claim. 
We will prove the statement for $\Gamma$; the argument for $\wh\Gamma$ is identical.
By the scale invariance of the law of $\CLE_\kappa$, it holds with probability tending to 1 as $C\rta\infty$, uniformly in $r$, that $\gamma_r \subset B_{C r}(0)\setminus B_{r/C}(0)$. 
The Koebe distortion theorem applied to the conformal map $\phi$ shows that with probability tending to 1 as $r\rta 0$ and then $C\rta\infty$, we can find $\delta_r > 0$ (in particular, $\delta_r =  |\phi'(0)| r$) such that $\BB D \setminus B_{C \delta_r  }(0)  \subset  \phi(D_r) \subset \BB D\setminus B_{\delta_r  / C }(0)$. Sending $C \rta \infty$ sufficiently slowly as $r\rta 0$ concludes the proof.
\end{proof}

We can now prove that the analog of Lemma~\ref{lem-cle-markov} holds for the inverted $\CLE_\kappa$ $\wh\Gamma$. 

\begin{lem} \label{lem-inverse-cle-markov}
For each $R>0$, if we condition on the outermost origin-surrounding loop $\wh\gamma^{R} \in\wh\Gamma$ which intersects $\bdy B_R(0)$, then the conditional law of the restriction of $\wh\Gamma$ to the connected component of $\BB C\setminus \wh\gamma^R$ containing 0 is that of a $\CLE_\kappa$ in this connected component. 
\end{lem}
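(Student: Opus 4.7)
The plan is to first reduce to the case $R=1$ using scale invariance. Since whole-plane $\CLE_\kappa$ is invariant under the dilations $z\mapsto \lambda z$ for every $\lambda > 0$ and the inversion $z \mapsto 1/z$ conjugates $z \mapsto \lambda z$ with $z \mapsto z/\lambda$, the law of $\wh\Gamma$ is itself scale invariant, so the statement for general $R>0$ follows from the $R=1$ case by applying the dilation $z\mapsto z/R$.

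Fix $R=1$ and $r\in(0,1)$. I would apply the coupling of Lemma~\ref{lem-annulus-coupling}, so that on $\{D_r\ne\emptyset\}$ there is a conformal map $f_r:D_r\to\wh D_r$ sending $\gamma^1,\gamma_r$ to $\wh\gamma^1,\wh\gamma_r$ respectively, with $\wh\Gamma|_{\wh D_r}=f_r(\Gamma|_{D_r})$. Let $M_r$ denote the (random) number of origin-surrounding loops of $\Gamma|_D$ strictly between $\gamma^1$ and $\gamma_r$; by the coupling this number equals the corresponding count for $\wh\Gamma|_{\wh D}$ between $\wh\gamma^1$ and $\wh\gamma_r$, so $M_r$ is a common function of $\Gamma$ and $\wh\Gamma$. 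By Lemma~\ref{lem-cle-markov} applied to $\Gamma$ at the loop $\gamma^1$ (which arises at a branching $\SLE_\kappa(\kappa-6)$ stopping time), the conditional law of $\Gamma|_D$ given $\gamma^1$ is $\CLE_\kappa$ on $D$. Applying Theorem~\ref{thm-cle-markov} to this $\CLE_\kappa$ on $D$, with $M=M_r$ and $\gamma_r$ playing the role of the $(M_r+1)$st outermost origin-surrounding loop, the conditional law of $\Gamma|_{D_r}$ given $(\gamma^1,\gamma_r,M_r)$ is a $\CLE_\kappa$ on the annulus $D_r$ with $M_r$ inner-boundary-surrounding loops in the sense of Definition~\ref{def-cle-annulus}. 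Pushing forward by $f_r$ and invoking Corollary~\ref{cor-annulus-inversion}, the conditional law of $\wh\Gamma|_{\wh D_r}=f_r(\Gamma|_{D_r})$ given $(\gamma^1,\gamma_r,\wh\gamma^1,\wh\gamma_r,M_r)$ is $\CLE_\kappa$ on $\wh D_r$ with $M_r$ inner-boundary-surrounding loops. Since this conditional law depends on the $\Gamma$-side data only through $M_r$ (itself measurable with respect to $\wh\Gamma|_{\wh D_r}$), marginalizing over $(\gamma^1,\gamma_r)$ yields that, under the marginal law of $\wh\Gamma$, the conditional law of $\wh\Gamma|_{\wh D_r}$ given $(\wh\gamma^1,\wh\gamma_r,M_r)$ is $\CLE_\kappa$ on $\wh D_r$ with $M_r$ inner-boundary-surrounding loops.

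Finally, I would let $r\to 0$. Then $\wh\gamma_r$ shrinks to the origin, $\wh D_r\nearrow \wh D\setminus\{0\}$, $M_r\to\infty$, and $\wh\Gamma|_{\wh D_r}\to \wh\Gamma|_{\wh D}$ in the locally finite loop configuration topology. By Theorem~\ref{thm-cle-annulus0}, the conditional law (given $\wh\gamma^1$) of a $\CLE_\kappa$ on $\wh D_r$ with $M_r$ inner-boundary-surrounding loops is realized as the restriction to $\wh D_r$ of a $\CLE_\kappa$ on $\wh D$ whose $(M_r+1)$st outermost origin-surrounding loop coincides with $\wh\gamma_r$, and as $r\to 0$ these restrictions exhaust the full $\CLE_\kappa$ on $\wh D$. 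Integrating the identity from the previous step against a bounded continuous test function $G(\wh\gamma^1)$ and passing to the limit should then produce $E[F(\wh\Gamma|_{\wh D})\,G(\wh\gamma^1)] = E[G(\wh\gamma^1)\,\mu_{\wh D}(F)]$ for all bounded continuous $F$, where $\mu_{\wh D}$ denotes the $\CLE_\kappa$ law on $\wh D$; this is precisely the Markov property in the statement of the lemma. The main obstacle is justifying this last limit rigorously: one must verify the weak convergence, conditional on $\wh\gamma^1$, of the $\CLE_\kappa$ laws on the degenerating annuli $\wh D_r$ with growing number $M_r$ of inner-boundary-surrounding loops to the simply-connected $\CLE_\kappa$ law on $\wh D$, for instance by controlling the convergence of the finitely many ``large'' loops of the annular $\CLE_\kappa$ to the corresponding loops of the simply-connected $\CLE_\kappa$ using the branching $\SLE_\kappa(\kappa-6)$ construction of Section~\ref{sec-branching}.
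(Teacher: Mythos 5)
Your reduction to $R=1$ by scale invariance and your use of the coupling of Lemma~\ref{lem-annulus-coupling} match the paper's setup, but you depart from the paper at the decisive final step, and the route you choose leaves a genuine gap that you yourself flag but do not close. Knowing that the conditional law of $\wh\Gamma|_{\wh D_r}$ given $(\wh\gamma^1,\wh\gamma_r,M_r)$ is an annular $\CLE_\kappa$ with $M_r$ inner-boundary-surrounding loops does not by itself determine the conditional law of $\wh\Gamma|_{\wh D}$ given $\wh\gamma^1$: to reassemble the latter you must integrate over the joint conditional law of $(\wh\gamma_r,M_r)$ given $\wh\gamma^1$, and nothing in your argument identifies that law as the one induced by a $\CLE_\kappa$ on $\wh D$ (identifying it is essentially equivalent to the statement being proved). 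On top of this, the weak convergence of annular $\CLE_\kappa$ laws on degenerating annuli, with a growing random number of inner-boundary-surrounding loops, to the simply connected $\CLE_\kappa$ law is not established anywhere in the paper and would require a separate, nontrivial argument.

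The paper avoids both difficulties by using Lemma~\ref{lem-coupling-conv}, which you never invoke and which was proved precisely for this step: under the coupling of Lemma~\ref{lem-annulus-coupling} one has $\wh\Gamma|_{\wh D_r}=f_r(\Gamma|_{D_r})$ exactly, and Lemma~\ref{lem-coupling-conv} shows $f_r$ converges to the conformal map $f:D\to\wh D$ determined by $(D,\wh D)$ alone. Hence in the limit $\wh\Gamma|_{\wh D}=f(\Gamma|_{D})$; since by Lemma~\ref{lem-cle-markov} and the independence built into the coupling the conditional law of $\Gamma|_D$ given $(\gamma^1,\wh\gamma^1)$ is a $\CLE_\kappa$ in $D$, and $f$ is measurable with respect to $(\gamma^1,\wh\gamma^1)$, conformal invariance immediately gives that $\wh\Gamma|_{\wh D}$ given $\wh\gamma^1$ is a $\CLE_\kappa$ in $\wh D$. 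No limit of annular $\CLE_\kappa$ laws and no control of the law of $(\wh\gamma_r,M_r)$ is needed. To repair your proof, replace your final paragraph with this convergence-of-conformal-maps argument.
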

\begin{proof}
The statement of the lemma for $R =1$ follows from Lemma~\ref{lem-coupling-conv} and the fact that the conditional law of $\Gamma|_D$ given $\gamma^1$ is that of a $\CLE_\kappa$ in $D$ (Lemma~\ref{lem-cle-markov}). 
The statement for general values of $R$ follows from the scale invariance of the law of $\wh\Gamma$. 
\end{proof}

We are now ready to prove the main theorem. 

\begin{proof}[Proof of Theorem~\ref{thm:main_result}]
Let $\wh U_R$ for $R > 0$ be the connected component of $\BB C\setminus \wh \gamma^R$ containing the origin. By Lemma~\ref{lem-inverse-cle-markov}, the conditional law of $ \wh\Gamma|_{\wh U_R}$ given $\wh\gamma^R$ is that of a $\CLE_\kappa$ in $\wh U_R$. Almost surely, domains $\wh U_R$ increase to all of $\BB C$. 
By~\cite[Theorem A.1]{mww-nesting}, $\wh\Gamma|_{\wh U_R}$ converges in law to whole-plane $\CLE_\kappa$, so $\wh \Gamma\eqD\Gamma$.  
\end{proof}

\appendix

\section{Basic lemmas for SLE and CLE}
\label{sec-sle-cle-lemmas}

In this appendix we prove a number of basic properties of SLE and CLE which are used in Sections~\ref{sec-markov} and~\ref{sec-resampling} and recorded here to avoid interrupting the main argument. 
This section does not use any of the results proven elsewhere in the paper, although we do use some of the notation from Section~\ref{sec-basic}.

\subsection{SLE stays close to a simple path with positive probability}
\label{sec-pos-prob}

We will make frequent use of the following slight extension of~\cite[Lemma 2.5]{miller-wu-dim}.  

\begin{lem} \label{lem-miller-wu-dim}
Let $\kappa > 0$, let $\rho^L , \rho^R   \in (-2) \vee (\kappa/2-4)$ (which is the range for which $\SLE_\kappa(\rho^L;\rho^R)$ does not fill the boundary of its domain~\cite{ig4,dubedat-duality}), and let $\eta$ be a chordal $\SLE_\kappa(\rho^L;\rho^R)$ from $-i$ to $i$ in $\BB D$ (with arbitrary force point locations). Let $P : [0,1]\rta \ol{\BB D}$ be a simple path from $-i$ to $i$. For each $\ep >0$, it holds with positive probability that the distance from $\eta$ to $P$ with respect to the metric on curves modulo time parameterization is at most $\ep$. 
\end{lem}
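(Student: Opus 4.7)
My plan is to deduce the statement from \cite[Lemma 2.5]{miller-wu-dim}, which treats the case of ordinary chordal $\SLE_\kappa$, by a Girsanov-style absolute continuity comparison. First I would apply that lemma directly to obtain the analogous statement for chordal $\SLE_\kappa$ from $-i$ to $i$ in $\BB D$: for any $\ep' > 0$, the event $F_{\ep'}$ that such a curve lies within distance $\ep'$ of $P$ in the curves-modulo-parameterization metric has positive probability. I will take $\ep' = \ep/3$.

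To transfer this to $\SLE_\kappa(\rho^L;\rho^R)$, I would use the standard fact that, up to any stopping time $\tau$ at which the Loewner driving function $W_t$ is uniformly bounded away from the force-point trajectories $V_t^L, V_t^R$, the law of $\SLE_\kappa(\rho^L;\rho^R)$ is mutually absolutely continuous with respect to that of chordal $\SLE_\kappa$, with Radon-Nikodym derivative bounded above and below by positive constants. Under the hypothesis $\rho^L,\rho^R > (-2)\vee(\kappa/2-4)$ the curve $\eta$ does not hit $\bdy\BB D$ away from its endpoints~\cite{ig1,ig4,dubedat-duality}, and in particular does not hit the force points.

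I would then split $\eta$ into three pieces via stopping times $\sigma_\delta^1 :=$ first exit of $\eta$ from $B_\delta(-i)$ and $\sigma_\delta^2 :=$ first hitting time of $B_\delta(i)$, for a small $\delta > 0$. One chooses $\delta$ small enough that (i) $B_\delta(-i)$ and $B_\delta(i)$ are disjoint from each other and from any force points lying in $\bdy\BB D\setminus\{-i,i\}$, and (ii) $B_\delta(\pm i)\subset B_\ep(P)$. On the middle interval $[\sigma_\delta^1,\sigma_\delta^2]$ the curve is contained in a compact subset of $\ol{\BB D}$ bounded away from the force points, so absolute continuity applies there with a uniform RN-derivative bound; combining this with \cite[Lemma 2.5]{miller-wu-dim} applied via the strong Markov property of chordal SLE between $\sigma_\delta^1$ and $\sigma_\delta^2$ shows that $\BB P\big[\eta|_{[\sigma_\delta^1,\sigma_\delta^2]}\subset B_{\ep/3}(P)\big] > 0$. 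The initial and terminal segments $\eta|_{[0,\sigma_\delta^1]}\subset B_\delta(-i)$ and $\eta|_{[\sigma_\delta^2,T]}\subset B_\delta(i)$ then automatically lie in $B_\ep(P)$ by the choice of $\delta$; that they have positive probability of being as short as needed follows from the a.s.\ continuity of $\SLE_\kappa(\rho^L;\rho^R)$ at its endpoints, established in~\cite{ig1,ig4}.

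The main obstacle is the degenerate case where a force point coincides with an endpoint of $P$, most notably the standard configuration in which $V_0^L,V_0^R$ sit immediately to either side of the starting point $-i$. In that case the Radon-Nikodym derivative becomes singular at $t=0$, so absolute continuity cannot be started from time zero. I would handle this by inserting an additional initial cutoff $s_0 > 0$: for $t\le s_0$ small, the $\SLE_\kappa(\rho^L;\rho^R)$ curve lies in $B_\delta(-i)$ with probability close to $1$ (using the explicit chordal $\SLE_\kappa(\rho)$ Loewner SDE together with standard distortion estimates for the conformal maps $g_t$), while for $t\ge s_0$ the driving point and force points have separated by a controllable positive amount, after which the Girsanov comparison runs as before. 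A symmetric argument (or a coordinate change sending $i$ to a different point) handles a force point located at the terminal endpoint $i$.
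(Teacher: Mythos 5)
Your route --- transferring from ordinary chordal $\SLE_\kappa$ by a Girsanov comparison --- is genuinely different from the paper's, which applies \cite[Lemma 2.5]{miller-wu-dim} \emph{directly} to the $\SLE_\kappa(\rho^L;\rho^R)$ process (that lemma is stated for general $\SLE_\kappa(\underline\rho)$ weights, not only for ordinary SLE) as a one-step statement ``$\eta$ enters $U_j$ before exiting $U_{j-1}$,'' iterated along a chain of small sets covering $P$ via the domain Markov property; no absolute-continuity machinery is needed. Beyond this, your write-up contains a false claim: for $\rho^L,\rho^R>(-2)\vee(\kappa/2-4)$ the curve need \emph{not} avoid $\bdy\BB D\setminus\{-i,i\}$. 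Boundary avoidance requires $\rho\geq\kappa/2-2$; the stated range only rules out boundary \emph{filling} (ordinary $\SLE_6$, with $\rho^L=\rho^R=0>-1$, hits the boundary a.s.). Consequently the curve can approach interior force points, and your uniform Radon--Nikodym bound on the middle segment only holds after restricting to the event that the curve stays in a tube around $P$ avoiding the force points --- which additionally requires perturbing $P$ off $\bdy\BB D$, since the lemma allows $P\subset\ol{\BB D}$ to run along the boundary. These points are fixable but unaddressed.

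The more serious gap is the terminal segment. Hitting $B_\delta(i)$ at time $\sigma_\delta^2$ does not confine $\eta|_{[\sigma_\delta^2,T]}$ to $B_\delta(i)$: the curve may exit $B_\delta(i)$ and wander arbitrarily before terminating at $i$, and a.s.\ continuity at the endpoint gives no control over this. What is actually needed is that, with positive conditional probability given $\eta|_{[0,\sigma_\delta^2]}$, the curve reaches $i$ before re-exiting a small neighbourhood of $i$. This is precisely the step where the hypothesis $\rho^L,\rho^R>\kappa/2-4$ must enter (a boundary-filling curve can never terminate without leaving such a neighbourhood, since it must visit every boundary point), and it is also the regime where your Girsanov comparison degenerates, as the Radon--Nikodym derivative between $\SLE_\kappa(\rho^L;\rho^R)$ and ordinary $\SLE_\kappa$ is not bounded as $t\uparrow T$. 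The paper supplies this final step by one more application of \cite[Lemma 2.5]{miller-wu-dim} to the $\SLE_\kappa(\rho^L;\rho^R)$ process itself; your proposal has no substitute for it.
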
  
\begin{proof}
Since $P$ is a simple path, we can find simply connected open sets $U_0,\dots,U_n$ which cover $P$, are entered in order by $P$, each have diameter at most $\ep/2$, and satisfy $U_i \cap U_j \not=\emptyset$ if and only if $|i-j| = 1$. Indeed, to construct such sets, one can set $t_0 = 0$ and inductively let $t_j = 1 \wedge \min\{t\geq t_{j-1} : |P(t_j) - P(t_{j-1})| \geq \ep/4\}$. Using that $P$ is a simple curve, one can then choose $\delta > 0$ small enough that the $\delta$-neighborhoods of the sets $P([t_{j-1} , t_j])$ satisfy the desired properties. 
By~\cite[Lemma 2.5]{miller-wu-dim}, it holds with positive probability that $\eta$ enters $U_1$ before exiting $U_0$. Iterating this and applying the domain Markov property of $\SLE_\kappa(\rho^L;\rho^R)$, we see that with positive probability, $\eta$ enters $U_j$ before exiting $U_{j-1}$ for each $j=1,\dots, n$.
Conditioned on this, it is easily seen from~\cite[Lemma 2.5]{miller-wu-dim} that with positive conditional probability, $\eta$ reaches its target point before exiting $U_n$ (this is the only place where we use that $\eta$ is not boundary filling). 
If this is the case, then by parameterizing $\eta$ and $P$ so that they each take the same amount of time between first entering $U_{j-1}$ and $U_j$ for each $j=1,\dots,n$, we get that the distance from $\eta$ to $P$ with respect to the metric on curves modulo time parameterization is at most $\ep$, as required.
\end{proof}

When studying CLE, Lemma~\ref{lem-miller-wu-dim} is often useful in conjunction with the following lemma, which is proven as part of the proof of~\cite[Theorem 5.4]{shef-cle}.

\begin{lem}[\!\! \cite{shef-cle}] \label{lem-cle-concatenate}
Fix $\kappa \in (4,8)$.  Let $\Gamma$ be a $\CLE_\kappa$ on $\BB H$ and let $x ,y \in \BB R$ with $x < y$. 
Let $\Gamma^{\op{out}}(I)$ be the set of loops in $\Gamma$ which intersect $I$ and which are maximal in the sense that the interval $[\inf (\gamma\cap I) , \sup(\gamma\cap I)]$ is not contained in $[\inf(\gamma'\cap I) , \sup(\gamma'\cap I)]$ for any $\gamma \in \Gamma\setminus \{\gamma'\}$. 
Let $\eta$ be the curve obtained by concatenating, in order, the clockwise arcs of the loops $\gamma \in \Gamma^{\op{out}}(I)$ from $\inf(\gamma \cap I)$ to $\sup(\gamma\cap I)$. 
Then $\eta$ is a chordal $\SLE_\kappa(\kappa-6)$ from $x$ to $y$ with the force point started immediately to the right of~$x$.
\end{lem}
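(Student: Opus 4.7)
The plan is to realize $\eta$ as the branch targeted at $y$ of a branching $\SLE_\kappa(\kappa-6)$ exploration tree rooted at $x$, which is exactly the mechanism by which Sheffield constructs $\CLE_\kappa$ in~\cite{shef-cle} (and which we review, in the whole-plane setting, in Section~\ref{sec-branching}). Once this identification is made, the conclusion is built into the construction.

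First, I would couple $\Gamma$ to its exploration tree in $\BB H$, taking the root to be $x$ with initial force point immediately to the right of $x$, and let $\wt\eta$ denote the branch targeted at $y$. By target invariance of $\SLE_\kappa(\kappa-6)$ and the Schramm--Wilson coordinate change formula, $\wt\eta$ has the law of a chordal $\SLE_\kappa(\kappa-6)$ from $x$ to $y$ in $\BB H$ with force point immediately to the right of $x$. It therefore suffices to check that $\wt\eta$ coincides, modulo monotone reparameterization, with the curve $\eta$ described in the lemma.

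Next, I would identify the arcs of $\wt\eta$ with the arcs defining $\eta$ by an inductive argument on the loops of $\Gamma^{\op{out}}(I)$. Two observations drive the induction. First, the loops of $\Gamma$ that are traced by $\wt\eta$ are exactly the outermost loops of $\Gamma$ that intersect the currently-remaining sub-interval of $I$; the maximality condition defining $\Gamma^{\op{out}}(I)$ is precisely the condition that these loops are not contained inside larger boundary-intersecting loops. Second, when $\wt\eta$ first reaches such a loop $\gamma$, it does so at $\inf(\gamma\cap I)$ (since the force point lies to its right and $\gamma$ is non-crossing), and it then follows the clockwise arc of $\gamma$ in $\BB H$ until it returns to the real line at $\sup(\gamma\cap I)$. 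At that moment the Markov property of the branching $\SLE_\kappa(\kappa-6)$ (the half-plane analogue of Lemma~\ref{lem-branching-law}, combined with Lemma~\ref{lem:Sheffield} applied to the arc $[\sup(\gamma\cap I),y]$) reduces the remaining exploration to a branching $\SLE_\kappa(\kappa-6)$ in the connected component of $\BB H\setminus(\wt\eta\cup\gamma)$ containing $y$, with force point immediately to the right of the new tip. Iterating this step produces the same ordered list of clockwise arcs as in the definition of $\eta$.

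The main obstacle is that $\Gamma^{\op{out}}(I)$ is typically countably infinite, with small loops accumulating near $x$ and $y$, so the concatenation defining $\eta$ is not finite. To make the identification $\eta = \wt\eta$ rigorous, I would invoke local finiteness of $\CLE_\kappa$~\cite[Theorem 1.17]{ig4}: for every $\ep>0$ only finitely many loops of $\Gamma^{\op{out}}(I)$ have Euclidean diameter exceeding $\ep$. Combined with the continuity of $\wt\eta$ and the fact that after tracing the first finitely many large loops the remaining portion of $\wt\eta$ is confined to a small neighborhood of the shrinking residual sub-interval of $I$, this yields uniform convergence of the finite partial concatenations to $\wt\eta$ and hence $\eta = \wt\eta$ modulo time parameterization, completing the proof.
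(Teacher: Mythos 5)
The first thing to note is that the paper does not prove this lemma at all: it is stated as a quotation from \cite{shef-cle}, with the accompanying remark that it is proven as part of the proof of \cite[Theorem~5.4]{shef-cle}. So there is no in-paper argument to compare yours against; a citation is the intended ``proof.''

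That said, your reconstruction follows the mechanism that genuinely underlies Sheffield's result: realize $\eta$ as the branch of the exploration tree rooted at $x$ and targeted at the boundary point $y$, observe that by target invariance this branch is a chordal $\SLE_\kappa(\kappa-6)$ from $x$ to $y$ with force point immediately to the right of $x$, and handle the infinite concatenation via local finiteness (this last point is real and you treat it correctly). The weak link is the middle step. The two ``observations'' driving your induction --- that the branch reaches each maximal loop at $\inf(\gamma\cap I)$, traces exactly its clockwise arc, exits at $\sup(\gamma\cap I)$, and that the traced loops are precisely those singled out by the maximality condition --- do not follow from the general facts you invoke (Lemma~\ref{lem:Sheffield} concerns conditioning on the set of loops meeting a boundary arc, not the order in which, or the arcs along which, a single branch traces them; Lemma~\ref{lem-branching-law} concerns the radial whole-plane branches). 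Those observations \emph{are} the content of \cite[Theorem~5.4]{shef-cle}, where they are extracted from the excursion decomposition of the driving pair of the chordal $\SLE_\kappa(\kappa-6)$: each excursion of the driving function away from the force point process corresponds to one clockwise loop arc, and the identification with the outermost boundary-touching loops is read off from that decomposition. As written, your induction therefore assumes the correspondence it is meant to establish. If you are willing to cite Sheffield for that correspondence, your argument is fine and matches what the paper does; a self-contained proof would have to open up the excursion decomposition.
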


We call the curve $\eta$ from Lemma~\ref{lem-cle-concatenate} the \emph{branch from $x$ to $y$ of the branching $\SLE_\kappa(\kappa-6)$ process associated with $\Gamma$}.

\subsection{Hitting lemmas for SLE and CLE}
\label{sec-hitting}

The following lemma is used in the proof of Lemma~\ref{lem:general-markov}. 

\begin{lem} \label{lem-cle-hit}
Fix $\kappa \in (4,8)$.  Let $\Gamma $ be a $\op{CLE}_{\kappa }$ on $\BB D$. For each $\ep > 0$, there exists $\delta = \delta(\ep) > 0$ such that for each Borel set $J\subset \partial\BB D $ with 1-dimensional Lebesgue measure at least $\ep$, it holds with probability at least $\delta$ that the outermost origin-surrounding loop in $\Gamma$ intersects $J$. 
\end{lem}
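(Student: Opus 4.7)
The plan is to prove the lemma via a Paley--Zygmund (second moment) argument applied to a natural random Borel measure $\mu$ supported on $\gamma^* \cap \bdy\BB D$, where $\gamma^*$ denotes the outermost origin-surrounding loop, combined with the rotational invariance of the law of $\CLE_\kappa$ on $\BB D$. The key outputs needed are the first moment, computed from rotational invariance, and a uniform second moment bound, obtained from a two-point boundary hitting estimate for SLE.

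First, I would construct a finite, non-degenerate random measure $\mu$ on $\bdy\BB D$ concentrated on $\gamma^* \cap \bdy\BB D$. The natural candidate is the Minkowski content of order $d = 2 - 8/\kappa$, the Hausdorff dimension of the boundary intersection set. For $\kappa \in (4,8)$, the loop $\gamma^*$ is described locally by arcs of $\SLE_\kappa$ and $\SLE_\kappa(\kappa-6)$ via the branching construction of Section~\ref{sec-branching}, and the Minkowski content on the boundary is well defined for each of these SLE curves by standard boundary regularity results. The measure $\mu$ inherits rotational invariance from that of $\gamma^*$.

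Next, by the conformal (in particular, rotational) invariance of $\CLE_\kappa$ on $\BB D$, the deterministic measure $A \mapsto \BB E[\mu(A)]$ is a rotation-invariant Borel measure on $\bdy\BB D$, so equals $c\,d\theta$ for some $c > 0$. In particular,
\[
\BB E[\mu(J)] \geq c\ep \quad \text{whenever } |J| \geq \ep.
\]

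The third and most delicate step is a uniform upper bound $\BB E[\mu(J)^2] \leq C$ valid for all Borel $J \subset \bdy\BB D$. By rotational invariance, $\BB E[\mu(du)\mu(dv)] = k(|u-v|)\, |du|\,|dv|$ for some rotationally invariant two-point density $k$, and it suffices to show $k(r) \leq C' r^{-\alpha}$ for some $\alpha < 1$. This two-point boundary hitting estimate for $\SLE_\kappa$ and $\SLE_\kappa(\kappa-6)$ with $\kappa \in (4,8)$ produces $\alpha = 8/\kappa - 1 \in (0,1)$, which is integrable on the diagonal and yields the uniform second-moment bound. Paley--Zygmund then gives
\[
\BB P[\gamma^* \cap J \neq \emptyset] \geq \BB P[\mu(J) > 0] \geq \frac{(\BB E[\mu(J)])^2}{\BB E[\mu(J)^2]} \geq \frac{c^2\ep^2}{C} =: \delta(\ep) > 0.
\]

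The main obstacle will be the second-moment step: constructing $\mu$ and bounding the two-point intensity $k(r)$ requires SLE boundary regularity results, and these must be applied carefully to the concatenation of $\SLE_\kappa$ and $\SLE_\kappa(\kappa-6)$ arcs making up $\gamma^*$. While the input estimates are now standard in the $\kappa \in (4,8)$ literature, piecing them together along the boundary of $\gamma^*$ is where the technical work lies.
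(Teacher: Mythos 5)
Your strategy is genuinely different from the paper's, and in principle it could work, but as written it rests on inputs that are not actually available off the shelf. The paper's proof is a \emph{first-moment} argument: it takes a short reference arc $I\subset\bdy\BB D$, uses Lemma~\ref{lem-cle-concatenate} to realize the arcs of the boundary-touching loops meeting $I$ as a single chordal $\SLE_\kappa(\kappa-6)$ exploration $\eta$, waits for the first excursion of $\eta$ that surrounds $0$ clockwise (which is necessarily part of the outermost origin-surrounding loop), and then applies Lemma~\ref{lem-sle-hit} --- the statement that the first boundary hitting point of chordal $\SLE_\kappa$ has law mutually absolutely continuous with respect to Lebesgue measure, so a set of measure $\ep$ is hit with probability $\geq\delta(\ep)$ --- together with Lemma~\ref{lem-miller-wu-dim} to control the conformal distortion. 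No second moment, no fractal measure, and the only SLE input is an elementary scaling/absolute-continuity fact about the one-point hitting distribution.

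The concrete gaps in your plan are in the construction of $\mu$ and in the moment bounds, precisely where you locate ``the technical work.'' First, $\gamma^*$ is not a single chordal $\SLE_\kappa$: in the branching construction it is a concatenation of a segment of a radial/whole-plane $\SLE_\kappa(\kappa-6)$ branch with a chordal $\SLE_\kappa$ arc in a complementary component, and the existence, non-triviality ($\BB E[\mu(\bdy\BB D)]\in(0,\infty)$, noting that $\gamma^*\cap\bdy\BB D=\emptyset$ with positive probability), and finite second moment of a Minkowski-content measure on $\gamma^*\cap\bdy\BB D$ are not established results; deriving them from the chordal $\SLE_\kappa(\rho)$ boundary-measure literature would itself require exactly the kind of exploration-and-absolute-continuity bookkeeping that the paper's direct argument consists of. Second, the two-point estimate $k(r)\lesssim r^{-(8/\kappa-1)}$ would have to be proved uniformly for this composite object, not just for a chordal $\SLE_\kappa$ near a boundary interval. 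So while the Paley--Zygmund skeleton and the exponents you quote are consistent, the proposal as it stands defers the entire difficulty to unproved lemmas; the paper's route avoids the second moment altogether by exploiting the exploration process, and you should be aware that the one-point absolute continuity statement (Lemma~\ref{lem-sle-hit}) already suffices.
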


To prove Lemma~\ref{lem-cle-hit}, we first need an analogous statement for SLE.

\begin{lem} \label{lem-sle-hit}
Fix $\kappa \in (4,8)$.  Let $\eta$ be an $\op{SLE}_{\kappa}$ from $-i$ to $i$ in $\BB D$ and fix an arc $A\subset\bdy\BB D$ which lies at positive distance from $-i$. 
For each $\ep > 0$, there is a $\delta = \delta(\ep , A) > 0$ such that for any set $J\subset\bdy\BB D \setminus A$ with 1-dimensional Lebesgue measure at least $\ep$, it holds with probability at least $\delta$ that $\eta$ hits $J$ before $A$. 
\end{lem}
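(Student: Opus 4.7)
The plan proceeds in three steps: localize $J$ by pigeonhole, bring the SLE close to the localized target, and then apply a hitting estimate for positive-measure boundary sets.

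First, since $A$ lies at distance $d := \dist(A, -i) > 0$ from $-i$, I will partition $\bdy\BB D \setminus A$ into $N = N(\eps, A)$ disjoint sub-arcs $I_1, \dots, I_N$ of length at most $\eps/2$ each, with at most one of them (the one containing $-i$, if any) closer than $d/2$ to $-i$. For any Borel $J \subset \bdy\BB D \setminus A$ with $|J| \geq \eps$, pigeonhole yields some $I_k$ in which $J$ has density at least $\rho_0 = \rho_0(\eps, A) > 0$.

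Next, I will use Lemma~\ref{lem-miller-wu-dim} to arrange that $\eta$ approaches $I_k$ before hitting $A$. Assuming first that $I_k$ is bounded away from both $\pm i$, I choose a simple path $P : [0,1] \to \ol{\BB D}$ from $-i$ to $i$ that dives into a small neighborhood $U$ of the midpoint of $I_k$ en route and stays at positive distance from $A$; Lemma~\ref{lem-miller-wu-dim} then gives, with positive probability depending only on $\eps$ and $A$, that $\eta$ stays close to $P$ (modulo parametrization) and in particular enters $U$ before hitting $A$. The cases where $I_k$ is close to $-i$ or $i$ are handled separately: near $-i$ the approach is automatic since $\eta$ starts there, while near $i$ I would invoke the reversibility of SLE$_\kappa$ for $\kappa \in (4,8)$ from~\cite{ig4}. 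Letting $\tau$ denote the first time $\eta$ enters $U$, the domain Markov property then describes $\eta|_{[\tau,\infty)}$ as a chordal SLE$_\kappa$ in the remaining simply connected domain.

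Finally, after a conformal change of coordinates sending this remaining domain to $\BB H$, the tip $\eta(\tau)$ to $0$, and $i$ to $\infty$, the problem reduces to the sub-claim that for an SLE$_\kappa$ $\wt\eta$ in $\BB H$ from $0$ to $\infty$ with $\kappa \in (4,8)$, and any Borel $J' \subset [a, b] \subset (0, \infty)$ with Lebesgue measure $|J'| \geq \rho$, one has $\BB P[\wt\eta \cap J' \neq \emptyset] \geq \delta'(\rho, a, b) > 0$. I would prove this sub-claim using the a.s.\ nontrivial boundary natural measure (Minkowski content) of $\wt\eta \cap \BB R$, whose expectation on a Borel set is comparable to Lebesgue measure on compact subsets of $(0,\infty)$ and whose second moment on bounded sets is controlled via the two-point boundary hitting exponent for SLE$_\kappa$. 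A Paley--Zygmund inequality would then yield $\BB P[\wt\eta \cap J' \neq \emptyset] \gtrsim \rho^2$.

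The main obstacle is this sub-claim in the third step: showing that the hitting probability depends only on the Lebesgue measure of the target set, not on its (possibly fractal) arrangement. Everything else is a routine combination of pigeonhole, Lemma~\ref{lem-miller-wu-dim}, conformal invariance and the domain Markov property, and SLE reversibility; the quantitative lower bound on hitting a positive-measure boundary set requires the non-trivial second-moment input from the theory of SLE boundary natural parametrizations.
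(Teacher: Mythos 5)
Your reduction in the third step has a genuine gap: the lemma asks for $\eta$ to hit $J$ \emph{before} $A$, but your sub-claim only asks that the post-$\tau$ curve $\wt\eta$ hit $J'$ at \emph{some} time. After $\eta$ enters the neighborhood $U$ of $I_k$, the arc $A$ is in general still on the boundary of the remaining domain (in the application, $A$ is adjacent to the target point $i$ and so can never be disconnected from it), and a chordal $\SLE_\kappa$ in that domain may perfectly well visit $f(A)$ before it ever touches $J'$. So $\BB P[\wt\eta\cap J'\neq\emptyset]\geq\delta'$ does not imply the statement you need. The moment method can be repaired — one must work with the boundary natural measure of the curve restricted to times before the first hit of $f(A)$, and then show that the corresponding ``restricted'' boundary Green's function is still bounded below on a compact interval at positive distance from $f(A)$ — but as written the ordering is simply lost in the reduction. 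A secondary (repairable but unaddressed) point: for the sub-claim to apply you need the conformal map of the remaining domain to send $J\cap I_k$ into a fixed compact $[a,b]$ with Lebesgue measure bounded below, which requires a quantitative distortion estimate on the event produced by Lemma~\ref{lem-miller-wu-dim}.

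You should also know that the machinery you invoke (boundary Minkowski content, two-point boundary exponents, Paley--Zygmund) is much heavier than necessary, and the paper's argument both avoids it and automatically handles the ``before $A$'' issue. The paper maps $\BB D$ to $\BB H$ sending $-i\mapsto 0$, $i\mapsto\infty$, arranges $f(J)\subset[1,R_\ep]$ with $|f(J)|$ bounded below, and then uses the fact that the law of the \emph{first} point at which $f(\eta)$ hits the ray $[1,\infty)$ is mutually absolutely continuous with respect to Lebesgue measure, with density bounded below on the compact interval $[1,R_\ep]$ (a one-point scaling computation). Any Borel subset of $[1,R_\ep]$ of measure $\geq\rho$ is therefore the location of the first hit of the ray with probability $\gtrsim\rho$; conditioning on the first hit of $[1,R_\ep]$ landing off $f(A)$ then gives ``hits $J$ before $A$'' directly, with no second-moment input and no pigeonhole localization. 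If you want to keep your moment-method route, fix the reduction as indicated above; otherwise the first-hitting-density argument is the intended, far shorter path.
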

\begin{proof}
By possibly replacing $J$ by $J\setminus (B_{\ep/100}(i) \cup B_{\ep/100}(-i))$, we can assume without loss of generality that $J$ lies at distance at least $\ep/100$ from each of $-i$ and $i$. By reflection symmetry, we can also assume without loss of generality that $J$ is contained in the right semi-circle of $\bdy\BB D$. 
Let $f$ the conformal map from $\BB D$ to $\BB H$ which takes $-i$ to 0, $i$ to $\infty$, and the right endpoint of $B_{\ep/100}(-i)\cap\bdy\BB D$ to 1.
By our assumptions on $J$, we see that the 1-dimensional Lebesgue measure of $f(J)$ is bounded below by a parameter depending only on $\ep$.
Furthermore, if $R_\ep$ denotes the image under $f$ of the right endpoint of $B_{\ep/100}(i)\cap\bdy\BB D$, then $f(J)\subset [1, R_\ep]$. 

It is easily seen from a simple scaling argument that the law of the first place where $f(\eta)$ hits $[1,\infty)$ is mutually absolutely continuous w.r.t.\ Lebesgue measure on $[1,\infty)$.  

This implies that if we condition on the positive probability event that $f(\eta)$ hits $[1,R_\ep] \setminus f(A)$ before hitting $f(A)$, then the conditional probability that the first hitting location of $[1,\infty)$ lies in $f(J)$ is bounded below by a constant depending only on $\ep$ and $A$. 
The statement of the lemma follows.
\end{proof} 

\begin{proof}[Proof of Lemma~\ref{lem-cle-hit}]
Let $I\subset \bdy\BB D$ be the arc of length $\ep/100$ centered at $1$ and let $J'$ be the set of points in $J$ which lie at Euclidean distance at least $\ep/100$ from $J$. Then the Lebesgue measure of $J'$ is at least $\ep/2$. We will prove a lower bound for the probability that the outermost origin-surrounding loop hits $J'$ using Lemma~\ref{lem-sle-hit}. 
 
Let $x$ and $y$ be the endpoints of $I$ in clockwise order and let $\eta$ be the chordal $\SLE_\kappa(\kappa-6)$ from $x$ to $y$ obtained by concatenating arcs of $\Gamma$ which intersect $I$, as in Lemma~\ref{lem-cle-concatenate}.
By the definition in Lemma~\ref{lem-cle-concatenate}, each excursion of $\eta$ away from $I$ is contained in a single outermost loop of $\eta$.
If there is some such excursion which surrounds 0 in the clockwise direction and hits $J'$, then this excursion must be part of the outermost origin-surrounding loop and hence the origin-surrounding loop must intersect $J'$. 

We will now lower-bound the probability that an excursion as in the preceding sentence exists. 
Let $\tau$ be the first time that $\eta$ surrounds 0 in the clockwise direction (or $\tau = \infty$ if no such time exists). 
On the event $\{\tau < \infty\}$, let $z$ be the rightmost point of $\eta([0,\tau])\cap I$ (i.e., the point closest to $y$) and let $U$ be the connected component of $\BB D\setminus \eta([0,\tau])$ with $\eta(\tau)$ and $z$ on its boundary. By the strong Markov property and the target invariance of $\SLE_\kappa(\kappa-6)$~\cite{sw-coord}, the conditional law of the segment of $\eta$ from time $\tau$ until the first time after $\tau$ at which it hits $I$ agrees in law with a chordal $\SLE_\kappa$ from $\eta(\tau)$ to $z$ in $U$ run until it hits $I$. 

By Lemma~\ref{lem-miller-wu-dim}, we can find $\zeta  = \zeta(\ep ) >0$, such that with probability at least $\zeta$, it holds that $\eta([0,\tau])\cap\bdy\BB D$ is contained in the $\ep/100$-neighborhood of $I$ (so is disjoint from $J'$) and the inverse of the conformal map $f : U\rta \BB H$ which takes $\eta(\tau)$ to $-i$, $z$ to $i$, and $y$ to $-1$ is $1/\zeta$-Lipschitz on $f(\bdy \BB D\setminus B_{\ep/100}(I))$. 
If this is the case, then $f(J')$ has Lebesgue measure at least $\zeta \ep/2$. By the preceding paragraph and Lemma~\ref{lem-sle-hit} (applied to the image under $f$ of the segment of $\eta$ from $\tau$ until the first time after $\tau$ at which it hits $I$ and with $A$ the clockwise arc of $\bdy\BB D$ from $-1$ to $i$, which contains $f(I\cap\bdy U)$), there is a $\wt\delta = \wt\delta(\zeta \ep/2) > 0$ such that on the above event it holds with conditional probability at least $\wt\delta$ given $\eta|_{[0,\tau]}$, it holds that $\eta$ hits $J'$ after time $\tau$ and before returning to $I$. Hence the statement of the lemma holds with $\delta = \zeta\wt\delta$. 
\end{proof}

\subsection{Absolute continuity for SLE and CLE}
\label{sec-abs-cont}

Here we prove several lemmas about SLE and CLE which are used in Section~\ref{sec-resampling}. 
Our starting point is the a basic absolute continuity property of $\SLE_\kappa$, which is a consequence of, e.g.,~\cite[Lemma 2.8]{miller-wu-dim}. 
See Figure~\ref{fig-abs-cont}(a) for an illustration.

\begin{figure}[ht!] 
 \begin{center}
\includegraphics[scale=.85]{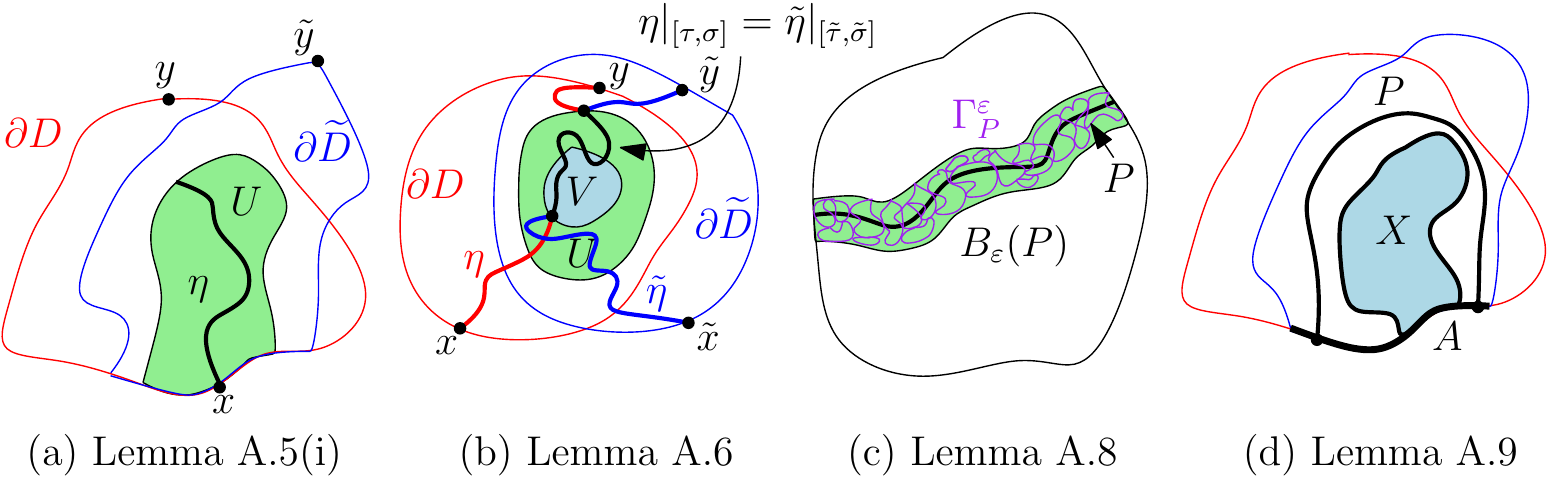} 
\caption{Illustrations of the setups for several of the lemmas in Section~\ref{sec-sle-cle-lemmas}. }\label{fig-abs-cont}

\end{center}
\end{figure}

\begin{lem} \label{lem-mw-sle-abs-cont}
Let $D,\wt D$ be simply connected domains, not all of $\BB C$, let $U$ be a connected open subset of $D\cap \wt D$ which is at positive distance from $\bdy D\setminus \bdy \wt D$ and $\bdy \wt D\setminus \bdy D$, and suppose $x \in \bdy D\cap \bdy \wt D\cap \bdy U$. Also let $\kappa > 0$ and $\rho^L , \rho^R > -2$. 
\begin{enumerate}
\item If $y\in \bdy D\setminus \bdy \wt D$, and $\wt y\in \bdy\wt D\setminus \bdy D$, the laws of chordal $\SLE_\kappa(\rho^L;\rho^R)$ (with force points immediately to the left and right of the starting point) from $x$ to $y$ in $D$ and from $x$ to $\wt y$ in $\wt D$ each run until it exits $U$ are mutually absolutely continuous.
\item Suppose $y \in \bdy D\cap \bdy \wt D\cap \bdy U$ and $y\not=x$.  
Then the laws of  chordal $\SLE_\kappa(\rho^L;\rho^R)$ (with force points immediately to the left and right of the starting point) from $x$ to $y$ in $D$ and from $x$ to $ y$ in $\wt D$ each run until it either hits $y$ or exits $U$ are mutually absolutely continuous.
\end{enumerate}
\end{lem}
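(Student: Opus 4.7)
The plan is to derive both parts from the absolute continuity statement \cite[Lemma 2.8]{miller-wu-dim} (cited just above the lemma), which handles comparison of $\SLE_\kappa(\underline\rho)$ driving functions via a Girsanov weight on a compactly contained stopping region. The reduction is accomplished by mapping $D$ and $\wt D$ conformally onto a common target domain so that both image processes become $\SLE_\kappa(\rho^L;\rho^R)$ processes in the \emph{same} domain, and then showing their laws are mutually absolutely continuous up to the relevant stopping time.

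For Part 1, I would let $\phi : D \rta \BB H$ and $\wt\phi : \wt D \rta \BB H$ be conformal maps sending $x \mapsto 0$ and sending $y , \wt y \mapsto \infty$ respectively, normalized so that, say, $\phi(p) = \wt\phi(p)$ at some fixed reference point $p \in U$. By conformal invariance, the images of the two stopped $\SLE_\kappa(\rho^L;\rho^R)$ curves are chordal $\SLE_\kappa(\rho^L;\rho^R)$ curves from $0$ to $\infty$ in $\BB H$, each stopped upon first exiting $\phi(U)$ or $\wt\phi(U)$ respectively. The positive-distance hypothesis on $U$ guarantees that both $\phi$ and $\wt\phi$ extend biholomorphically to an open neighborhood of $\ol U$ with uniformly bi-Lipschitz constants, so $\phi(U) \cap \wt\phi(U)$ contains a neighborhood in $\BB H$ of the initial point $0$. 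Then \cite[Lemma 2.8]{miller-wu-dim} applied to the two $\BB H$-processes (which differ only by their interpretation of the target at $\infty$) gives their mutual absolute continuity before they exit $\phi(U) \cap \wt\phi(U)$, and pulling back through $\phi$ and $\wt\phi$ yields the desired absolute continuity before the original curves exit $U$.

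For Part 2, the scheme is identical, but with the common target being a doubly marked domain: take $\phi : D \rta \BB D$ and $\wt\phi : \wt D \rta \BB D$ with $\phi(x) = \wt\phi(x) = -i$ and $\phi(y) = \wt\phi(y) = i$. The two images are then chordal $\SLE_\kappa(\rho^L;\rho^R)$ curves from $-i$ to $i$ in $\BB D$, and by the same Girsanov comparison they are mutually absolutely continuous before exiting the common region $\phi(U) \cap \wt\phi(U)$ or hitting $i$. The step I expect to be the main obstacle is verifying that the Radon-Nikodym derivative remains bounded uniformly up to the hitting time of $y$: this requires control on $|\phi'|$, $|\wt\phi'|$ and on the associated Loewner flow derivatives in a neighborhood of $y$ inside $\ol U$. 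This control is exactly what the positive-distance hypothesis provides, since it forces $\bdy D$ and $\bdy\wt D$ to coincide in a neighborhood of the closure of $U$ on the shared side (in particular near $y$). With this uniform bound in hand, \cite[Lemma 2.8]{miller-wu-dim} delivers the statement in both cases.
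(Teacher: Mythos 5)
The paper offers no proof of this lemma at all --- it is stated as a direct consequence of \cite[Lemma 2.8]{miller-wu-dim} --- so the substance of your task is precisely the reduction to that cited result, and that is where your argument breaks down. After you uniformize $D$ and $\wt D$ \emph{separately}, sending $x\mapsto 0$ and $y,\wt y\mapsto\infty$ respectively, the two image processes are both chordal $\SLE_\kappa(\rho^L;\rho^R)$ from $0$ to $\infty$ in $\BB H$ with force points adjacent to $0$: they have the \emph{same} law, so there is nothing for the cited lemma to compare, and the parenthetical ``differ only by their interpretation of the target at $\infty$'' does not describe any actual difference between the two laws. The desired conclusion concerns the laws of $\eta$ and $\wt\eta$ as curves in $U$, i.e.\ the pushforwards of one and the same law under the two \emph{different} maps $\phi^{-1}$ and $\wt\phi^{-1}$; mutual absolute continuity of such pushforwards is not automatic (indeed your argument would go through verbatim even if $U$ touched $\bdy D\setminus\bdy\wt D$, where the conclusion is false). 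The missing step is a genuine change of measure between two distinct laws on a common space: e.g.\ fix the single map $\phi$, write $\phi(\wt\eta)=F(\wt\phi(\wt\eta))$ with $F=\phi\circ\wt\phi^{-1}$ the transition map --- which, by the positive-distance hypothesis, is conformal on $\wt\phi$ of a neighborhood $V$ of $\ol U$ in $D\cap\wt D$, fixes $0$, and maps boundary to boundary there --- and then compare $\SLE_\kappa(\rho^L;\rho^R)$ in $\BB H$ with its image under $F$ up to exiting that region. That comparison (equivalently, comparing each of the two curves with an $\SLE_\kappa(\rho^L;\rho^R)$ in the common subdomain $V$) is the Girsanov computation that \cite[Lemma 2.8]{miller-wu-dim} encapsulates, and it is entirely absent from your write-up.

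Two further points. Your claim that $\phi$ and $\wt\phi$ ``extend biholomorphically to an open neighborhood of $\ol U$ with uniformly bi-Lipschitz constants'' is false for general simply connected domains: Schwarz reflection requires analytic boundary, and here $\bdy D$ near $U$ may be an arbitrary (e.g.\ fractal) prime-end boundary; this also undercuts your assertion that $\phi(U)\cap\wt\phi(U)$ contains an $\BB H$-neighborhood of $0$, which is neither needed nor true in general (and $U$ itself need not contain a half-disk at $x$). For part~2 you correctly identify that the delicate issue is controlling the Radon--Nikodym derivative up to the \emph{hitting time} of $y$ rather than up to exiting a region compactly contained away from the marked points, but the proposed resolution (Lipschitz control of $\phi,\wt\phi$ near $y$) does not address the Loewner-flow-dependent factors in the derivative as the curve approaches its target; one needs either an absolute-continuity statement valid up to and including the terminal point (using that the transition map is conformal in neighborhoods of \emph{both} marked points) or an appeal to reversibility, and neither is supplied.
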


Combining Lemmas~\ref{lem-miller-wu-dim} and~\ref{lem-mw-sle-abs-cont} with the reversibility of SLE yields the following coupling statement for a ``middle" segment of $\SLE_\kappa$ curves started from different points, which is illustrated in Figure~\ref{fig-abs-cont}(b).

\begin{lem} \label{lem-sle-abs-cont}
Let $\kappa \in (0,8)$. Let $D,\wt D\subset \BB C$ be simply connected domains, not all of $\BB C$. Let $ V \subset U \subset D \cap \wt D$ be connected open domains such that $V$ lies at positive distance from $\bdy U$ and $U$ lies at positive distance from $\bdy D\setminus \bdy \wt D$ and from $\bdy \wt D\setminus \bdy D$.  
Let $\eta$ (resp.\ $\wt\eta$) be a chordal $\SLE_\kappa$ in $D$ (resp.\ $\wt D$) between two points of $\bdy D $ (resp.\ $\bdy \wt D$) which lie at positive distance from $U$.
Let $\tau$ be the first time that $\eta$ enters $V$ and let $\sigma$ be the first time after $\tau$ at which $\eta$ exits $\ol U$. Define $\wt\tau$ and $\wt\sigma$ similarly with $\wt\eta$ in place of $\eta$. 
There is a coupling of $\eta$ and $\wt \eta$ such that 
\eqbn
\BB P\left[  \eta|_{[\tau,\sigma]} = \wt\eta|_{[\wt\tau,\wt\sigma]}  \right] > 0 .
\eqen
In fact, we can arrange that the following stronger statement is true. If $P$ is any simple path in $\ol U$ between a point of $\bdy V$ and a point of $\bdy U$, then
\eqb \label{eqn-sle-abs-cont}
\BB P\left[  \eta|_{[\tau,\sigma]} = \wt\eta|_{[\wt\tau,\wt\sigma]} ,\, \BB d(\eta|_{[\tau,\sigma]} ,P) \leq \ep \right] > 0 ,
\eqe 
where $\BB d$ is the metric on curves modulo time parameterization. 
\end{lem}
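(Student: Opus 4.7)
The plan is to couple the middle segments via a three-step argument combining Lemma~\ref{lem-miller-wu-dim} (to force $\eta$ and $\wt\eta$ close to a prescribed path with positive probability), Lemma~\ref{lem-mw-sle-abs-cont} (for mutual absolute continuity of the relevant conditional laws), and the standard \emph{maximal coupling} of two mutually absolutely continuous probability measures on a Polish space, which provides a coupling with positive probability of \emph{exact} equality.

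Concretely, I would first extend $P$ to a simple path $\Pi \subset \ol D$ joining the endpoints of $\eta$ and to a simple path $\wt\Pi \subset \ol{\wt D}$ joining the endpoints of $\wt\eta$, with the extensions contained in $\ol D \setminus \ol U$ and $\ol{\wt D} \setminus \ol U$ respectively; this is possible because $U$ lies at positive distance from $\bdy D \setminus \bdy \wt D$ and $\bdy \wt D \setminus \bdy D$, and the endpoints of $\eta, \wt\eta$ are at positive distance from $U$. By Lemma~\ref{lem-miller-wu-dim}, with positive probability each of $\eta, \wt\eta$ lies within a small distance of the corresponding extended path, which in particular forces $\eta|_{[\tau,\sigma]}$ and $\wt\eta|_{[\wt\tau,\wt\sigma]}$ to each lie within distance $\ep$ of $P$, addressing the strengthening~\eqref{eqn-sle-abs-cont}. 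On this event the initial segments $\eta|_{[0,\tau]}$ and $\wt\eta|_{[0,\wt\tau]}$ avoid $\ol U$, so by the domain Markov property of $\SLE_\kappa$ the conditional laws of the middle segments are those of $\SLE_\kappa$ in complementary domains that agree with $D$ and $\wt D$ on a neighborhood of $\ol U$. Lemma~\ref{lem-mw-sle-abs-cont} then yields mutual absolute continuity of these conditional laws whenever the entry points $\eta(\tau)$ and $\wt\eta(\wt\tau)$ coincide, and the maximal coupling of non-singular measures produces equal middle segments with positive probability. The full coupling of $(\eta, \wt\eta)$ is obtained by sampling the initial and final segments from their conditional laws given the (coupled) middle segments.

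The main obstacle is that $\eta(\tau)$ and $\wt\eta(\wt\tau)$ are continuous random points on $\bdy V$ with \emph{a priori} different distributions, so naively they never coincide. The resolution is that the maximal coupling for non-singular measures applies even to continuous distributions: if $\mu_1, \mu_2$ are probability measures on a Polish space whose infimum measure $\mu_1 \wedge \mu_2$ has positive mass, then there exists a coupling $(X_1, X_2)$ with $X_i \sim \mu_i$ and $\BB P[X_1 = X_2] > 0$. To apply this to the entry points, I would argue that the marginals of $\eta(\tau), \wt\eta(\wt\tau)$ on $\bdy V$ are each absolutely continuous with respect to arc length and put positive mass on a common open arc of $\bdy V$ near the initial endpoint of $P$; this follows from a further application of Lemma~\ref{lem-mw-sle-abs-cont} to the curves $\eta, \wt\eta$ run until the hitting time of $V$ together with the conformal invariance of chordal $\SLE_\kappa$. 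A first maximal coupling then produces positive-probability coincidence of the entry points, after which a second maximal coupling (applied to the middle segments) completes the proof.
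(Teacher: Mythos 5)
Your plan correctly isolates the real difficulty --- the entry point $\eta(\tau)$ is a continuous random variable, so the two middle segments start at a priori non-matching points --- but the proposed resolution does not close the gap. Matching the entry points by a maximal coupling is not enough, because the conditional law of $\eta|_{[\tau,\sigma]}$ given the past is that of a chordal $\SLE_\kappa$ in the complementary domain $D\setminus \eta([0,\tau])$ started from the tip $z=\eta(\tau)$, and this law depends on the whole trace $\eta([0,\tau])$ near $z$, not just on $z$. Since $z\in\bdy V\subset U$, both initial traces enter $U$ and terminate at $z$, so the two complementary domains $D\setminus\eta([0,\tau])$ and $\wt D\setminus\wt\eta([0,\wt\tau])$ differ in \emph{every} neighborhood of the common starting point. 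Lemma~\ref{lem-mw-sle-abs-cont} therefore does not apply to the continuations: its hypothesis requires the comparison region to lie at positive distance from $\bdy D'\setminus\bdy\wt D'$ and $\bdy\wt D'\setminus\bdy D'$, which fails for any region having $z$ on its boundary. (And this is not merely a limitation of the lemma --- the initial behaviour of chordal $\SLE_\kappa$ from a prime end genuinely depends on the local geometry of the domain at that prime end.) Trying to fix this by coupling the initial traces to agree inside $U$, rather than just their tips, is circular: that is essentially the statement you are trying to prove. A secondary issue: your first application of Lemma~\ref{lem-mw-sle-abs-cont} to the curves run until hitting $V$ also does not parse, since that lemma assumes a common starting point on $\bdy D\cap\bdy\wt D$, which $\eta$ and $\wt\eta$ need not have; and the claim that $\eta|_{[0,\tau]}$ avoids $\ol U$ is false as stated ($\eta(\tau)\in U$, and the curve may wander through $U\setminus V$ before time $\tau$).

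The paper's proof sidesteps the random entry point entirely by invoking the \emph{reversibility} of $\SLE_\kappa$. One first treats the case where the two curves share a deterministic starting point $x$ on a nontrivial arc of $\bdy D\cap\bdy\wt D$: there Lemma~\ref{lem-mw-sle-abs-cont} applies directly to the whole initial segments run until exiting a larger region $U'\supset U$ with $x\in\bdy U'$, and a maximal coupling (combined with Lemma~\ref{lem-miller-wu-dim} to pin the curve near a prescribed path and to prevent a return to $V$ after exiting $U'$) gives exact equality of these initial segments with positive probability. Time-reversal then converts this into the case of a common \emph{terminal} point with arbitrary starting points, which captures the middle segment; changing the target point via Lemma~\ref{lem-mw-sle-abs-cont} and modifying the domains away from $U$ handles the general case. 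If you want to repair your argument, this reduction to a shared deterministic endpoint is the missing ingredient.
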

\begin{proof}
Let $x$ and $y$ (resp.\ $\wt x$ and $\wt y$) be the initial and terminal points of $\eta$ (resp.\ $\wt\eta$). 
We first treat the case when $\partial D\cap \partial \wt D$ contains a non-trivial connected arc and the starting points $x = \wt x$ is a point of such an arc. By Lemmas~\ref{lem-miller-wu-dim} and~\ref{lem-mw-sle-abs-cont}, we get the following stronger statement. Let $U' \supset U$ be any connected sub-domain of $D$ with $x$ on its boundary which lies at positive distance from $\bdy D\setminus \bdy \wt D$ and from $\bdy \wt D\setminus \bdy D$ and let $\sigma'$ and $\wt\sigma'$ denote the exit times of $\eta$ and $\wt\eta'$ from $U'$. Let $P'$ be a simple path from $x$ to a point of $\bdy U'$. Then the laws of $\eta|_{[0,\sigma']}$ and $\wt \eta |_{[0,\wt\sigma']}$ are mutually absolutely continuous and we can couple $\eta$ and $\wt\eta$ in such a way that
\eqb \label{eqn-sle-abs-cont'}
\BB P\left[  \eta|_{[0,\sigma']} = \wt\eta|_{[0,\wt\sigma']} ,\, \BB d(\eta|_{[0,\sigma']} ,P') \leq \ep  \right] > 0 .
\eqe 

Using Lemma~\ref{lem-miller-wu-dim} and the domain Markov property, we can arrange that with positive probability, the event in~\eqref{eqn-sle-abs-cont'} holds and neither $\eta$ nor $\wt\eta$ enters $V$ after exiting $U'$. This means that the segment of the time-reversal of $\eta$ (resp.\ $\wt\eta$) after the first time it enters $V$ is contained in $\eta([0,\sigma'])$ (resp.\ $\wt\eta([0,\wt\sigma'])$. 
Suppose now that we choose the path $P'$ so that the segment of $P'$ between its last exit time from $V$ and the last time before this time when it enters $U$ is equal to $P$. 
By the reversibility of $\SLE_\kappa$ (see~\cite{zhan-reversibility} or~\cite{ig2} for the case when $\kappa \leq 4$ and~\cite{ig3} for the case when $\kappa \in (4,8)$)
and~\eqref{eqn-sle-abs-cont'}, we find that in the case when $\partial D\cap \partial \wt D$ contains a non-trivial arc, $y= \wt y$ is a point of this arc, and $x$ and $\wt x$ are arbitrary, the laws of $\eta|_{[\tau,\sigma]}$ and $ \wt\eta|_{[\wt\tau,\wt\sigma]}$ are mutually absolutely continuous and we can couple $\eta$ and $\wt\eta$ so that~\eqref{eqn-sle-abs-cont} holds with positive probability.

Now suppose that $\bdy D\cap \bdy\wt D$ contains a non-trivial connected arc, but that $y$ and $\wt y$ need not be equal or contained in this arc. Choose a point $y'$ which lies in a non-trivial connected arc of $\bdy D\cap \bdy\wt D$.  By the preceding case, the statement of the lemma is true if we target our $\SLE_\kappa$ curves at $y'$ instead of at $y$ and $\wt y$. By Lemma~\ref{lem-mw-sle-abs-cont}, the laws of $\eta|_{[\tau,\sigma]}$ is mutually absolutely continuous w.r.t.\ the law of the corresponding segment of $\SLE_\kappa$ in $D$ from $x$ to $y'$. A similar statement holds with $\wt\eta$ in place of $\eta$ and $(\wt D , \wt x , y')$ in place of $(D, x , y')$.
Therefore, the statement of the lemma holds in this case. 

In general, we can modify $\bdy D$ and $\bdy \wt D$ away from $U$, $x, y , \wt x , \wt y$ in such a way that $\bdy D\cap \bdy \wt D$ contains a non-trivial connected boundary arc and apply Lemma~\ref{lem-mw-sle-abs-cont} to compare $\SLE_\kappa$ in the modified domains to $\SLE_\kappa$ in the original domains.  
\end{proof}

We next prove some basic properties of CLE which are analogous to those stated above for SLE. 
   
\begin{lem} \label{lem-cle-small}
Let $\kappa\in (4,8)$ and let $\Gamma$ be a $\CLE_\kappa$ on a simply connected domain $D\subset \BB C$ bounded by a Jordan curve. For each $\ep > 0$, it holds with positive probability that each loop in $\Gamma$ has diameter at most $\ep$. 
\end{lem}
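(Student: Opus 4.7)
Since $\bdy D$ is a Jordan curve, the Riemann map $\phi : D \rta \BB D$ extends to a homeomorphism $\ol D \rta \ol{\BB D}$, and is in particular uniformly continuous on $\ol D$. By the conformal invariance of $\CLE_\kappa$, I may therefore reduce to the case $D = \BB D$: it suffices to show that all loops of $\Gamma$ have diameter at most some sufficiently small $\delta > 0$ with positive probability, where $\delta$ is chosen so that $\phi^{-1}$-images of subsets of $\BB D$ of diameter $\leq \delta$ have diameter $\leq \ep$. So assume henceforth that $D = \BB D$.

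The plan is to combine the branching $\SLE_\kappa(\kappa-6)$ construction of $\CLE_\kappa$ (Section~\ref{sec-branching}), Lemma~\ref{lem-miller-wu-dim}, the Markov property for $\CLE_\kappa$ (Lemma~\ref{lem-cle-markov}), and the local finiteness of $\CLE_\kappa$. The first step is a single-point version of the statement: for any fixed $z \in \BB D$ and any $\delta > 0$, the outermost loop $\gamma_z^*$ of $\Gamma$ around $z$ has diameter at most $\delta$ with positive probability. To see this, Lemma~\ref{lem-cle-markov} expresses $\gamma_z^*$ as the concatenation of a segment of the branch $\eta_z$ of the branching $\SLE_\kappa(\kappa-6)$ process (targeted at $z$) with an independent chordal $\SLE_\kappa$ in the appropriate complementary component. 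Applying Lemma~\ref{lem-miller-wu-dim} to each of these two pieces in turn, and to a prescribed simple curve tracing a small loop around $z$, yields the claim.

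The next step is to bootstrap to all loops via the Markov property. Fix a countable dense set $\{z_n\}_{n\in\BB N}\subset \BB D$; by local finiteness, almost surely only finitely many loops of $\Gamma$ have diameter at least $\ep$, and each such loop is the outermost loop around some $z_n$. I would iterate the single-point step along the enumeration $z_1, z_2, \dots$: at stage $j$, condition on the outermost loops already discovered around $z_1, \dots, z_{j-1}$ (the Markov property ensures that the restriction of $\Gamma$ to any complementary component that has not yet been fully explored is an independent $\CLE_\kappa$ in that component) and apply the single-point step within the component containing $z_j$ to force the outermost loop around $z_j$ in that component to have diameter at most $\ep$. Since only finitely many stages are needed to cover all loops of diameter $\geq \ep$ (by local finiteness), and each stage has positive conditional probability of success, the product of these positive conditional probabilities gives a positive probability for the event that every loop of $\Gamma$ has diameter at most $\ep$.

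The main obstacle is the bootstrap step: the index $j$ of loops that need to be controlled is random, so the iteration should be set up as a random but a.s.\ finite sequence of conditional applications of the single-point estimate, combined measurably via the Markov property. One can handle this by truncating at a large deterministic $J$, using local finiteness to show that $J$ can be chosen so that with high probability only loops with $j \leq J$ matter, and then applying the iteration up to stage $J$.
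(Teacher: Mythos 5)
There is a genuine gap in your bootstrap step. The Markov property you invoke (Lemma~\ref{lem-cle-markov}) does not let you condition only on the outermost loop $\gamma_{z_1}^*$ around $z_1$: it describes the conditional law of the remainder of $\Gamma$ given $\gamma_{z_1}^*$ \emph{and} the entire set of loops of $\Gamma$ lying outside $\gamma_{z_1}^*$ (this is unavoidable, since the branching $\SLE_\kappa(\kappa-6)$ exploration that discovers $\gamma_{z_1}^*$ necessarily discovers all the loops it passes through on the way). Your single-point estimate controls the diameter of $\gamma_{z_1}^*$ alone, so on the event $\{\op{diam}\gamma_{z_1}^*\leq\delta\}$ the revealed exterior loops are completely uncontrolled and may well have diameter larger than $\ep$; since they are already measurable with respect to the conditioning at stage $2$, no subsequent stage can fix them, and the product-of-positive-conditional-probabilities argument collapses. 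Relatedly, your claim that every loop of diameter $\geq\ep$ is ``the outermost loop around some $z_n$'' is false for nested loops, and while your iteration into complementary components could in principle reach deeper loops, it cannot repair the problem above. A secondary, more minor point: Lemma~\ref{lem-miller-wu-dim} is stated for chordal $\SLE_\kappa(\rho^L;\rho^R)$, whereas the branch $\eta_z$ targeted at an interior point is a radial process, so even the single-point step needs an extra argument.

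The paper circumvents exactly this difficulty by never conditioning on an event that reveals uncontrolled loops. It first proves Lemma~\ref{lem-cle-small-path}: along a prescribed path one can, with positive probability, discover a \emph{connected chain} of loops that are all simultaneously forced to be small and to stay in a small neighborhood of the path, using boundary-arc explorations in which every revealed loop is constrained by Lemma~\ref{lem-miller-wu-dim}. Finitely many such chains are then assembled into a grid whose complementary components all have diameter at most $\ep$; every undiscovered loop lies in a single such component (loops do not cross), so all remaining loops are small \emph{automatically}, with no need to control them one at a time. If you want to keep the spirit of your argument, you would need to replace your single-point event by an event controlling all loops revealed by the exploration simultaneously, which is essentially what Lemma~\ref{lem-cle-small-path} accomplishes.
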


To prove Lemma~\ref{lem-cle-small}, we will iteratively build a ``grid" of small loops of $\Gamma$ such that the complementary connected components of the union of the loops in the grid are small. The following lemma allows us to build a single path in this grid. See Figure~\ref{fig-abs-cont}(c) for an illustration.

\begin{lem} \label{lem-cle-small-path}
Let $\kappa \in (4,8)$ and let $\Gamma$ be a $\CLE_\kappa$ on $\BB D$. 
Let $P : [0,1] \rta \BB D$ be a simple path which does not hit $\bdy\BB D$ except at its endpoints and let $\ep > 0$. There is a random collection of loops $\Gamma_P^\ep \subset \Gamma$  such that a.s.\ each connected component of  $\BB D \setminus \ol{\bigcup \Gamma_P^\ep}$ is simply connected, if we condition on $\Gamma_P^\ep$, the conditional law of $\Gamma\setminus \Gamma_P^\ep$ is that of an independent $\CLE_\kappa$ in each connected component of $\BB D \setminus \ol{\bigcup \Gamma_P^\ep}$ and with positive probability, the following is true. 
\begin{enumerate}
\item Each loop in $\Gamma_P^\ep$ has diameter at most $\ep$ and is contained in the $\ep$-neighborhood of $P$.
\item The set $\ol{\bigcup \Gamma_P^\ep}$ is connected and intersects $B_\ep(P(0))\cap \bdy \BB D$ and $B_\ep(P(1)) \cap \bdy \BB D$. 
\end{enumerate}
\end{lem}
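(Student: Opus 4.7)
The plan is to construct $\Gamma_P^\ep$ by an iterative exploration along $P$, in $m$ stages, at each stage using the general Markov property (Lemma~\ref{lem:general-markov}) to inherit the $\CLE_\kappa$ structure of the unrevealed configuration in the complementary connected component containing the remainder of $P$, and using Lemma~\ref{lem-cle-concatenate} together with Lemma~\ref{lem-miller-wu-dim} to control the geometry of the newly revealed loops at each stage. The Markov property in the statement then follows by composing the general Markov property across all stages, simple connectedness of the complementary components follows from Lemma~\ref{lem-cle-cover} and the connectedness of the closure, and the required incidence of $\ol{\bigcup\Gamma_P^\ep}$ with $B_\ep(P(0))\cap\bdy\BB D$ and $B_\ep(P(1))\cap\bdy\BB D$ is built into the construction.

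Fix $\delta\ll\ep$ and a partition $0=t_0<t_1<\cdots<t_m=1$ with $\diam P([t_{i-1},t_i])<\delta$ for each $i$. Stage $1$: let $I_1\subset\bdy\BB D$ be a short arc through $P(0)$ of length at most $\delta$. By Lemma~\ref{lem-cle-concatenate}, the outermost loops of $\Gamma$ intersecting $I_1$ contribute the clockwise arcs of an $\SLE_\kappa(\kappa-6)$ curve $\eta_1$ from one endpoint of $I_1$ to the other. Choose a simple path $Q_1$ in $\BB D$ joining the two endpoints of $I_1$, passing within $\delta/2$ of $P(t_1)$, and contained in $B_\delta(P)$. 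Noting that $\SLE_\kappa(\kappa-6)$ satisfies the hypotheses of Lemma~\ref{lem-miller-wu-dim} for $\kappa\in(4,8)$ (since $\kappa-6>\kappa/2-4$), that lemma gives positive probability that $\eta_1\subset B_\delta(Q_1)$. On this event, $\eta_1\cup I_1$ bounds a ``thin tube'' region $R_1\subset B_{O(\delta)}(P)\subset B_\ep(P)$ of diameter $O(\delta)$; every loop in $\Gamma(I_1)$ lies in $\ol{R_1}$ (since such loops intersect $I_1$ and cannot cross the backbone $\eta_1$ by non-crossing), so each has diameter at most $\ep$ and lies in $B_\ep(P)$. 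Setting $\Gamma^{(1)}:=\Gamma(I_1)$, the endpoint of $\eta_1$ closest to $P(t_1)$ lies within $O(\delta)$ of $P(t_1)$, so $\bdy U_1$, where $U_1$ is the complementary connected component of $\ol{\bigcup\Gamma^{(1)}}$ containing $P([t_1,1])$, contains a short arc near $P(t_1)$ available for the next stage.

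Stage $i\geq 2$ proceeds analogously inside $U_{i-1}$: pick $I_i$ a short boundary arc of $U_{i-1}$ near $P(t_{i-1})$, conformally map $U_{i-1}$ to $\BB D$ sending the endpoints of $I_i$ to $-i$ and $i$, and repeat the stage-$1$ argument in the image with a target path passing close to the image of $P(t_i)$. Composing the $m$ conditionally positive-probability events (using Lemma~\ref{lem:general-markov} at each stage to decouple) yields the required joint positive probability. Setting $\Gamma_P^\ep:=\bigcup_{i=1}^m\Gamma^{(i)}$, the ensemble satisfies all the claimed properties: the Markov property for the full collection follows by iterating the general Markov property, and $\ol{\bigcup\Gamma_P^\ep}$ is connected and meets the required boundary arcs because each stage extends the previous chain by a short tube that is attached along a shared boundary arc.

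The main obstacle is the bookkeeping of the iterative exploration, in particular (a) verifying inductively that $\bdy U_{i-1}$ contains a short arc near $P(t_{i-1})$ suitable for applying Lemma~\ref{lem-cle-concatenate} (supplied by the previous stage's positive-probability event, which places the end of $\eta_{i-1}$ within $O(\delta)$ of $P(t_{i-1})$), and (b) ensuring that the conformal map $U_{i-1}\to\BB D$ does not distort distances so badly as to invalidate the Lemma~\ref{lem-miller-wu-dim} argument in the image. Both points are handled by taking $\delta$ sufficiently small relative to $\ep$ and the geometry of $P$, so the inductive control from each stage propagates cleanly to the next.
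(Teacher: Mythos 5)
Your overall strategy --- explore iteratively along $P$, at each stage using Lemma~\ref{lem-cle-concatenate} to realize the newly discovered loops as arcs of an $\SLE_\kappa(\kappa-6)$ branch, Lemma~\ref{lem-miller-wu-dim} to confine that branch to a thin tube with positive probability, and the Markov property to compose the stages --- is exactly the paper's. The genuine gap is in how you pass from one stage to the next. You fix a deterministic partition $0=t_0<\cdots<t_m=1$ in advance and at stage $i$ take $U_{i-1}$ to be ``the complementary connected component of $\ol{\bigcup\Gamma^{(i-1)}}$ containing $P([t_{i-1},1])$.'' For a general simple path this component need not exist: $P$ may return within $O(\delta)$ of $P([0,t_{i-1}])$ at a later time, and the stage-$(i-1)$ loops, which fill a tube of width $O(\delta)$ around $P([0,t_{i-1}])$ and are precisely designed to disconnect things there, can then cross $P([t_{i-1},1])$ and split it among several components. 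The component whose boundary carries a short arc near $P(t_{i-1})$ need not be the one containing the tail of $P$, so the induction does not close. The paper avoids this by making the partition times random: $t_n$ is the \emph{last} time $P$ lies on $\bdy D_n$, so the exploration automatically skips past any later returns of $P$ to the discovered region, and the one-step estimate is phrased as a disconnection statement (the loops hitting $I_n$ separate a definite initial segment of the remaining path from $\bdy U_n\setminus\bdy D_n$), which simultaneously guarantees progress of at least $\ep/4$ per step and the existence of the next arc $I_{n+1}$ on the boundary of the new unexplored component.

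Two smaller points. First, ``the endpoint of $\eta_1$ closest to $P(t_1)$'' is not meaningful: both endpoints of $\eta_1$ are the endpoints of $I_1$ and so lie near $P(0)$. What you actually need --- and what Lemma~\ref{lem-miller-wu-dim} gives, since the metric on curves modulo parameterization controls both inclusions $\eta_1\subset B_\delta(Q_1)$ and $Q_1\subset B_\delta(\eta_1)$ --- is that the \emph{trace} of $\eta_1$ comes within $O(\delta)$ of $P(t_1)$, so that $\bdy U_1$ has an arc there. Second, your worry about conformal distortion at later stages is resolvable as you suggest: since $\bdy U_{i-1}$ is locally connected, the inverse uniformizing map extends continuously to $\ol{\BB D}$, and one applies Lemma~\ref{lem-miller-wu-dim} in the image with a tolerance that is random but measurable with respect to the previous stages, which is legitimate for a positive-probability conclusion.
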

\begin{proof} 
By Lemma~\ref{lem-miller-wu-dim} and the branching $\SLE_\kappa(\kappa-6)$ construction of $\CLE_\kappa$, we find that the following is true. Let $I\subset \bdy \BB D$ be a connected boundary arc, let $U$ be a neighborhood of $I$ in $\BB D$, and let $K\subset U$ be a compact set. With positive probability, each loop of $\Gamma$ which intersects $I$ is contained in $U$ and the union of all such loops disconnects $K$ from $\bdy U \setminus \bdy \BB D$. 

We will construct $\Gamma_P^\ep$ via the following inductive procedure. Let $D_0 = \BB D$, let $t_0 = 0$, and let $I_0 := \bdy\BB D\cap B_\ep(P(0))$. 
Inductively, suppose $n\in\BB N$, a simply connected domain $D_{n-1}\subset \BB D$, and an arc $I_n\subset \bdy\BB D$ have been defined. 
If $D_{n-1}  \cap P =\emptyset$, set $D_n = I_n =  \emptyset$ and $t_n=1$. 
Otherwise, let $D_n$ be the connected component containing $P(1)$ on its boundary of $D_{n-1} \setminus \ol{\bigcup \Gamma(I_n)}$, with $\Gamma(I_n)$ as in~\eqref{eqn-loop-restrict}. 
Let $t_n$ be the largest $t \in [0,1]$ for which $P(t) \in \bdy D_n$ (or $t_n = 1$ of no such $t$ exists) and let $I_n$ be the connected component of $(\bdy D_n\setminus \bdy\BB D) \cap B_{\ep/4}(P(t_n))$ which contains $P(t_n)$. 
We set 
\eqbn
\Gamma_P^\ep := \bigcup_{n=1}^\infty \Gamma(I_n) .
\eqen

We will now argue that $\Gamma_P^\ep$ satisfies the conditions in the statement of the lemma. The description of the law of $\Gamma\setminus \Gamma_P^\ep$ is immediate from~\cite[Theorem 5.4, condition 5]{shef-cle}. 
To check that the two listed properties hold with positive probability, set $U_n := D_n \cap B_{\ep/2}(I_n)$, let $s_n$ be the first time after $t_n$ at which $P$ exits $B_{\ep/4}(P(t_n))$ (or $s_n = 1$ if no such time exists), and let $K_n := \eta([t_n,s_n]) \cap \ol{D_n}$. 
Applying the first paragraph and the Markov property of $\CLE_\kappa$~\cite[Theorem 5.4, condition 5]{shef-cle} shows that for each $n\in\BB N$, it holds with positive conditional probability given $(D_n,I_n,t_n)$ that each loop of $\Gamma|_{D_n}$ which intersects $I_n$ is contained in $U_n$ (so has diameter at most $\ep$) and the union of all such loops disconnects $\eta([t_n,s_n])$ from $\bdy U_n \setminus \bdy D_n$ in $D_n$ (which implies that $t_{n+1} \geq s_n$ and hence that $|P(t_{n+1}) - P(t_n)| \geq \ep/4$). 
Consequently, with positive probability there is a finite $n \in\BB N$ for which some loop in $\Gamma(I_n)$ intersects $\bdy\BB D\cap B_\ep(P(1))$.
\end{proof}

\begin{proof}[Proof of Lemma~\ref{lem-cle-small}]
The statement of the lemma follows by applying Lemma~\ref{lem-cle-small-path} finitely many times to build a ``grid" consisting of finitely many collections of loops of the form $\Gamma_P^{\ep/100}$ with the property that each connected component of $D$ minus the closed union of the loops in these collections has diameter at most $\ep$.

To be more precise, by applying a conformal map, we can assume without loss of generality that $D  = [0,1]^2$ is the Euclidean unit square.
Applying Lemma~\ref{lem-cle-small-path} $\lfloor 2\ep^{-1} \rfloor$ times shows that with positive probability, for each horizontal line segment of the form $[0,1] \times \{k\ep/2\}$ for $k = 1,\dots, \lfloor 2\ep^{-1} \rfloor$, there is a connected set of loops in $\Gamma$ with diameter at most $\ep/100$ which are each contained in the $\ep/100$ neighborhood of $[0,1] \times \{k\ep/2\}$ and whose union intersects the left and right boundaries of $D$. 
Furthermore, we can choose these collections in such a way that if $\Gamma^\ep$ denotes their union, then the conditional law of $\Gamma \setminus \Gamma^\ep$ given $\Gamma^\ep$ is that of an independent $\CLE_\kappa$ in each  of the connected components of $D\setminus \ol{\bigcup \Gamma^\ep}$. 

On the positive probability event that $\Gamma^\ep$ satisfies the conditions described above, each connected component of $D\setminus \ol{\bigcup \Gamma^\ep}$ is either contained in the $\ep$-neighborhood of one of the segments $[0,1] \times \{k\ep/2\}$ and has diameter at most $\ep$ or is a horizontal ``strip" between two connected components of $\ol{\bigcup\Gamma^\ep}$ corresponding to two consecutive vertical segments. Applying Lemma~\ref{lem-cle-small-path} to $\lfloor 2\ep^{-1} \rfloor$ evenly spaced vertical segments within each component of this latter type gives us a new collection of loops $\wh\Gamma^\ep \subset\Gamma$ such that with positive probability, each loop in this collection has diameter at most $\ep$ and each connected component of $\Gamma\setminus \ol{\bigcup \wh\Gamma^\ep}$ has diameter at most $\ep$. Since loops in $\Gamma$ do not cross one another, this concludes the proof. 
\end{proof}

The following is a partial analog of Lemma~\ref{lem-sle-abs-cont} for CLE. See Figure~\ref{fig-abs-cont}(d) for an illustration.

\begin{lem} \label{lem-cle-abs-cont}
Let $\kappa \in (4,8)$, let $D,\wt D\subset \BB C$ be simply connected domains, not all of $\BB C$, and let $\Gamma$ (resp.\ $\wt\Gamma$) be a $\CLE_\kappa$ on $D$ (resp.\ $\wt D$).
Let $X \subset \ol{ D\cap \wt D}$ be a closed set which lies at positive distance from $\bdy D\setminus \bdy\wt D$ and from $\bdy \wt D\setminus \bdy D$ and assume that there is a connected component $A$ of $\bdy D\cap \bdy \wt D$ with more than one point which contains $X\cap (\bdy D \cup \bdy\wt D) $. For each $\ep > 0$, there is a coupling of $\Gamma$ and $\wt\Gamma$ such that with positive probability, $\Gamma(X) = \wt\Gamma(X)$ and each loop in $\Gamma(X)$ has diameter at most~$\ep$. 
\end{lem}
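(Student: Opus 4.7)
The plan is to couple $\Gamma$ and $\wt\Gamma$ so that, with positive probability, they share a common ``barrier'' of small CLE loops separating $X$ from $\bdy D \cup \bdy\wt D$ outside $A$, and then invoke the CLE Markov property to force the two ensembles to agree near $X$. Specifically, I choose a simple path $P \colon [0,1] \to \ol{D\cap\wt D}$ with $P(0), P(1)$ distinct points in the interior of $A$ and with the interior of $P$ in $D\cap\wt D$, such that $P$ together with the subarc $A'\subset A$ from $P(0)$ to $P(1)$ on the side containing $X\cap(\bdy D\cup \bdy\wt D)$ bounds a simply connected region $W\subset D\cap\wt D$ with $X\subset W$. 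Such a $P$ exists because $X$ lies at positive distance from $\bdy D\setminus \bdy\wt D$ and $\bdy \wt D\setminus \bdy D$ and $A$ has more than one point; I further require $P$ to lie at distance at least $2\ep$ from $X$ and from $(\bdy D \setminus \bdy\wt D)\cup(\bdy\wt D\setminus\bdy D)$. Set $\ep' := \ep/100$. Applying Lemma~\ref{lem-cle-small-path} to $P$ in $D$ produces a subset $\Gamma_P^{\ep'}\subset\Gamma$ such that, with positive probability, $\ol{\bigcup\Gamma_P^{\ep'}}$ is a connected chain of loops of diameter at most $\ep'$ contained in $N_{\ep'}(P)\subset D\cap\wt D$ meeting $\bdy D$ near $P(0)$ and $P(1)$; moreover, conditional on $\Gamma_P^{\ep'}$, the remaining loops are independent $\CLE_\kappa$'s in each component of $D\setminus \ol{\bigcup\Gamma_P^{\ep'}}$. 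I produce $\wt\Gamma_P^{\ep'}\subset\wt\Gamma$ analogously.

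The key step is to couple $\Gamma_P^{\ep'}$ with $\wt\Gamma_P^{\ep'}$ to coincide with positive probability. This is done by running the iterative construction of Lemma~\ref{lem-cle-small-path} step by step and coupling at each step. Suppose the first $n-1$ steps have been coupled so that the already-discovered loops agree. Because these loops all lie in $N_{\ep'}(P)$, the current subdomains satisfy $D_{n-1} = \wt D_{n-1}$ in a neighborhood of the next arc $I_n \subset \bdy D_{n-1}\cap N_{\ep'/4}(P(t_n))$. By Lemma~\ref{lem-cle-concatenate}, the loops of $\Gamma$ hitting $I_n$ are produced by a branch of branching $\SLE_\kappa(\kappa-6)$ from one endpoint of $I_n$ to the other in $D_{n-1}$, together with independent $\CLE_\kappa$'s in its complementary components; the same holds for $\wt\Gamma$ with $\wt D_{n-1}$. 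Since $\kappa-6 > -2$, Lemma~\ref{lem-mw-sle-abs-cont} gives mutual absolute continuity of these two $\SLE_\kappa(\kappa-6)$ branches stopped on exiting a common neighborhood $V_n\subset D_{n-1}\cap \wt D_{n-1}$ of $I_n$. Combined with Lemma~\ref{lem-miller-wu-dim} to restrict to the positive-probability event that the branches stay in $V_n$ (so in particular all loops in $\Gamma(I_n)$ have diameter at most $\ep'$ and lie in $V_n$), this yields a coupling on which the two branches agree with positive probability; on that event the complementary $\CLE_\kappa$'s live in identical sub-domains and can be coupled to agree as well. Since the iteration terminates in boundedly many steps (depending only on $P$ and $\ep'$), composing gives a positive-probability coupling with $\Gamma_P^{\ep'} = \wt\Gamma_P^{\ep'}$.

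On the resulting coincidence event, let $W^*$ be the connected component of $D\setminus \ol{\bigcup\Gamma_P^{\ep'}}$ containing $X$; since the barrier and the arc $A'$ are shared, $W^*$ equals the corresponding component for $\wt\Gamma$, sits inside $W$, and is bounded by a Jordan curve. By the conditional independence statement in Lemma~\ref{lem-cle-small-path}, both $\Gamma|_{W^*}$ and $\wt\Gamma|_{W^*}$ have the law of a $\CLE_\kappa$ on $W^*$, so they can be coupled to be the same sample. Lemma~\ref{lem-cle-small} then tells us that with positive conditional probability every loop of this $\CLE_\kappa$ has diameter at most $\ep$. Since the barrier is at distance at least $\ep$ from $X$ and separates $X$ from every other complementary component, every loop of $\Gamma$ hitting $X$ lies in $W^*$, and similarly for $\wt\Gamma$. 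Thus on the intersection of these positive-probability events $\Gamma(X) = \wt\Gamma(X)$ and every loop in $\Gamma(X)$ has diameter at most $\ep$, as required.

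The main obstacle is the iterative barrier coupling of the second paragraph. The difficulty is threefold: (i) after each coupled step the evolving subdomains $D_n$ and $\wt D_n$ agree only near the part of the barrier that has been discovered, so one must verify that the next exploration genuinely stays in the region where they coincide; (ii) producing a positive-probability equality (rather than just absolute continuity) requires restricting to an event on which the Radon--Nikodym derivative is bounded, which is achieved by forcing the $\SLE_\kappa(\kappa-6)$ branches to remain in a small common neighborhood via Lemma~\ref{lem-miller-wu-dim}; and (iii) one must couple not only the $\SLE_\kappa(\kappa-6)$ branches but also the $\CLE_\kappa$'s filling the complementary components, which is done inductively by the same Lemma~\ref{lem-cle-small-path} apparatus.
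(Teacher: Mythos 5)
Your overall strategy is sound, but you take a genuinely different and substantially heavier route than the paper. The paper builds the separating barrier in a single stroke: it picks two points $x_0,x_1$ in $A$, forms the clockwise branch $\eta$ (resp.\ $\wt\eta$) of the branching $\SLE_\kappa(\kappa-6)$ from $x_0$ to $x_1$ associated with $\Gamma$ (resp.\ $\wt\Gamma$) as in Lemma~\ref{lem-cle-concatenate}, and applies Lemmas~\ref{lem-miller-wu-dim} and~\ref{lem-mw-sle-abs-cont} \emph{once} to couple $\eta=\wt\eta$ inside a thin neighborhood $V$ of a path separating $X$ from $(\bdy D\setminus\bdy\wt D)\cup(\bdy\wt D\setminus\bdy D)$. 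On that event the complementary components on the $X$-side of the interface are literally identical, the restrictions of $\Gamma$ and $\wt\Gamma$ to them are resamplable as identical $\CLE_\kappa$'s, and Lemma~\ref{lem-cle-small} finishes. Crucially, this argument never needs the \emph{loops} meeting the barrier region to agree --- only the interface itself and the $X$-side components.

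Your chain-of-small-loops barrier via Lemma~\ref{lem-cle-small-path} forces you to couple the full loop sets $\Gamma(I_n)=\wt\Gamma(I_n)$ at every stage, and this is where your argument is not correct as written. After you couple the two $\SLE_\kappa(\kappa-6)$ branches along $I_n$ inside $V_n$, the claim that ``the complementary $\CLE_\kappa$'s live in identical sub-domains'' fails for exactly the pieces you need: the counterclockwise closing arcs of the maximal loops of $\Gamma(I_n)$ live in complementary components on the \emph{far} side of the branch, and those components extend toward the regions where $D_{n-1}$ and $\wt D_{n-1}$ differ, so they are not identical and their conditional laws are only mutually absolutely continuous, not equal. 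Since there are infinitely many such closing arcs, getting them all to agree simultaneously with positive probability requires an extra argument (e.g.\ handle the finitely many macroscopic ones with Lemmas~\ref{lem-mw-sle-abs-cont} and~\ref{lem-miller-wu-dim} and argue that the remaining microscopic ones are confined to the common region, in the spirit of the proof of Lemma~\ref{lem-one-side-coupling}); alternatively one must reformulate the Markov property so that only the $X$-facing arcs of the barrier loops need to agree. This is repairable, but it is precisely the complication that the paper's one-shot interface argument is designed to avoid, and you should either fix it or switch to the single-branch barrier.
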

\begin{proof}
Assume without loss of generality that $D\not=\wt D$. 
Let $A$ be a connected component of $\bdy D\cap \bdy \wt D$ as in the statement of the lemma. 
Since $X$ lies at positive distance from $(\bdy D\setminus \bdy\wt D)\cup (\bdy \wt D\setminus \bdy D)$, the set $X$ cannot disconnect all of $A$ from $(\bdy D\setminus \bdy\wt D)\cup (\bdy \wt D\setminus \bdy D)$. Therefore, we can find a curve $P$ in $\ol{D\cup \wt D}$ between two points of $A$ which disconnects $X$ from $(\bdy D\setminus \bdy\wt D)\cup (\bdy \wt D\setminus \bdy D)$ and which lies at positive distance from $(\bdy D\setminus \bdy\wt D)\cup (\bdy \wt D\setminus \bdy D)$.

By possibly replacing $P$ with its time-reversal, we can assume without loss of generality that $X$ lies to the right of $P$.
By slightly fattening $P$, we can find non-trivial arcs $I_0, I_1 \subset \bdy D\cap \bdy \wt D$ and a connected open set $V\subset D\cap\wt D$ such that $I_0,I_1$ are each contained in the interior of an arc of $\bdy V\cap \bdy D\cap \bdy\wt D$. 
Choose points $x_0, x_1$ in the interiors of $I_0, I_1$, respectively.
Let $\eta$ (resp.\ $\wt\eta$) be the clockwise branch from $x_0$ to $x_1$ of the branching $\SLE_\kappa(\kappa-6)$ process associated with $\Gamma$ (resp.\ $\wt\Gamma$) as in Lemma~\ref{lem-cle-concatenate}.
That is, if $J$ denotes the clockwise arc of $\bdy D$ from $x_0$ to $x_1$, then $\eta$ is an $\SLE_\kappa(\kappa-6)$ in $D$ from $x_0$ to $x_1$ obtained by concatenating appropriate arcs of the loops in $\Gamma(J)$, and similarly for $\wt\eta$. 
By~\cite[Theorem 5.4, condition 5]{shef-cle}, applied to the arc $J$, if we condition on $\eta$, then conditional law of the restriction of $\Gamma$ to each connected component of $D\setminus \eta$ lying to the right of $\eta$ is that of a $\CLE_\kappa$ in the component. 
Similar statements hold for $\wt\eta$. 

By Lemmas~\ref{lem-miller-wu-dim} and~\ref{lem-mw-sle-abs-cont}, we can couple $\eta$ and $\wt\eta$ such that with positive probability, $\eta = \wt \eta \subset V$. 
On this event, the conditional laws of $\Gamma(X)$ and $\wt\Gamma(X)$ given $\eta$ and $\wt\eta$ agree. 
Combining this with Lemma~\ref{lem-cle-small} now yields a coupling as in the lemma.
\end{proof}

\bibliography{cibiblong,cibib}

\bibliographystyle{hmralphaabbrv}

\bigskip

\filbreak
\begingroup
\small
\parindent=0pt

\bigskip
\vtop{
\hsize=5.3in
eg558@cam.ac.uk (Ewain Gwynne)\\
jpmiller@statslab.cam.ac.uk (Jason Miller, \emph{corresponding author})\\
wq214@cam.ac.uk (Wei Qian)\\

\smallskip

Statistical Laboratory\\
Center for Mathematical Sciences\\
Wilberforce Road\\
Cambridge CB3 0WB
United Kingdom} \endgroup \filbreak

\end{document}